\documentclass[10pt]{article}%

\usepackage[show]{ed}
% To hide all notes, comment out above line and add in the line below
%\usepackage[hide]{ed}

\usepackage{tikz}
\usepackage{pgf}
\usetikzlibrary{arrows}
\usetikzlibrary{calc}
\usetikzlibrary{decorations.pathmorphing}
\usepackage[protrusion=true,expansion=true]{microtype}
\usepackage{xfrac}    
\usepackage[strict]{changepage}

\usepackage{faktor}
\usepackage{soul}
\setstcolor{red}
\setulcolor{red}

\sloppy

\usepackage{rotating} % needed for \begin{sidewaysfigure}\begin{sidewaystable}

%%%%%%%%%%%%%%%%%%%%%%%%%%%%SCRIPT FONTS%%%%%%%%%%%%%%%%%%%%%%
%\usepackage[pdftex]{graphicx}
%%NB both mathscr and mathrsfs are called using \mathscr so need to comment one out
%\usepackage[mathscr]{euscript}
\usepackage{mathrsfs}
\DeclareMathAlphabet{\mathpzc}{OT1}{pzc}{m}{it}

%%%%%%%%%%%%%%%%%%%%%%%%%%%%%%%%%%%%%%%

%\newcommand{\jw}[1]{\mar{JW: #1}}

%%%%%%%%%%%%%%%%%%%%%
\usepackage{romannum}

\usepackage{color}
\usepackage{amsmath}
\usepackage{graphicx}
\usepackage{amsfonts}
\usepackage{amssymb}%
\usepackage{latexsym}
\usepackage{psfrag}
\usepackage{subfigure}
\usepackage{accents}

\setcounter{MaxMatrixCols}{30}

%%%%%%%%These conflict with SIAM style%%%%%%%
\newtheorem{theorem}{Theorem}[section]
\newtheorem{corollary}[theorem]{Corollary}
\newtheorem{definition}[theorem]{Definition}
\newenvironment{proof}[1][Proof]{\noindent \emph{#1.} }
{\hfill \ \rule{0.5em}{0.5em}}
\newtheorem{lemma}[theorem]{Lemma}
\newtheorem{proposition}[theorem]{Proposition}

\newtheorem{assumption}[theorem]{Assumption}

\numberwithin{equation}{section}
\numberwithin{table}{section}
\numberwithin{figure}{section}

\usepackage[a4paper]{geometry}
\geometry{left={3cm}, right={3cm}, top={3cm}, bottom={3cm}}

%%%%%%%%%%end of conflict with SIAM%%%%%%%%%%
\newtheorem{remark}[theorem]{Remark}
\newtheorem{example}[theorem]{Example}

%%%%%%%%%%%%%%%%%%%%%%%%%%%%%%%%%%%%%%%%%%%%%%%%%%%%%%

%\newcommand{\N}{\mathbb{N}}

\newcommand{\cA}{{\cal A}}

\newcommand{\cE}{{\cal E}}
\newcommand{\cF}{{\cal F}}
\newcommand{\cH}{{\cal H}}

\newcommand{\cS}{{\cal S}}

\newcommand{\cD}{{\cal D}}

\newcommand{\bx}{x}%\mathbf{x}}

%\mathbf{n}}

%\mathbf{Q}}

%\newcommand{\hx}{\widehat{x}}

%\mathbf{Z}}

    \newcommand\quotient[2]{
        \mathchoice
            {% \displaystyle
                \text{\raise1ex\hbox{$#1$}\Big/\lower1ex\hbox{$#2$}}%
            }
            {% \textstyle
                #1\,/\,#2
            }
            {% \scriptstyle
                #1\,/\,#2
            }
            {% \scriptscriptstyle  
                #1\,/\,#2
            }
    }

\newcommand{\re}{{\rm e}}
\newcommand{\ri}{{\rm i}}
\newcommand{\rd}{{\rm d}}

%\newcommand{\nT}{\nabla_{\GammaN}}

%\newcommand{\LtG}{L^2(\GammaN)}

%\newcommand{\LtO}{L^2(\Omega)}

%%%%%%%%%%%%%%%%%%%%%%%%%%%%%%%%%%%%%%%%%%%%%%%%
%%%%%%%%%%%%%%%%% Euan's latex short-cuts%%%%%%%

%%%%%%%%%%%%%begin/end environment short-cuts%%%%%%%%%%%%%5
\newcommand{\beq}{\begin{equation}}
\newcommand{\eeq}{\end{equation}}
\newcommand{\beqs}{\begin{equation*}}
\newcommand{\eeqs}{\end{equation*}}
\newcommand{\bit}{\begin{itemize}}
\newcommand{\eit}{\end{itemize}}
\newcommand{\ben}{\begin{enumerate}}
\newcommand{\een}{\end{enumerate}}
\newcommand{\bal}{\begin{align}}
\newcommand{\eal}{\end{align}}
\newcommand{\bals}{\begin{align*}}
\newcommand{\eals}{\end{align*}}
\newcommand{\bse}{\begin{subequations}}
\newcommand{\ese}{\end{subequations}}
\newcommand{\bpr}{\begin{proposition}}
\newcommand{\epr}{\end{proposition}}
\newcommand{\bre}{\begin{remark}}
\newcommand{\ere}{\end{remark}}
\newcommand{\bpf}{\begin{proof}}
\newcommand{\epf}{\end{proof}}
\newcommand{\ble}{\begin{lemma}}
\newcommand{\ele}{\end{lemma}}
\newcommand{\bco}{\begin{corollary}}
\newcommand{\eco}{\end{corollary}}
\newcommand{\bex}{\begin{example}}
\newcommand{\eex}{\end{example}}
\newcommand{\bth}{\begin{theorem}}
\newcommand{\enth}{\end{theorem}}

%%%%%%%%%%%%%mathematical symbols%%%%%%%%%%%%%%%%%%%%%
\newcommand{\Rea}{\mathbb{R}}

\newcommand{\eps}{\varepsilon}

\newcommand{\pdiff}[2]{\frac{\partial #1}{\partial #2}}

%\pdiff{S_k}{T}}
%\pdiff{S_k}{T}}
%\pdiff{S_k}{T}}

\newcommand{\tendi}{\rightarrow \infty}

%%%%%%%%%% new for this paper

%\newcommand{\Oe}{\Omega_+}
%\newcommand{\Oi}{\Omega_-}

%\newcommand{\DtN}{P_{\rm{DtN}}}

%%%%%%%%%%%%%%PV integral%%%%%%%%%%%%

\def\XXint#1#2#3{{\setbox0=\hbox{$#1{#2#3}{\int}$}
     \vcenter{\hbox{$#2#3$}}\kern-.5\wd0}}

%%%%%%%%%%%%%%%%%%%%%%%%%%%%%%%%%%%%%%%%%%%%%%%
%%%%%%%%%%%%%%% ANDREAS %%%%%%%%%%%
\usepackage{hyperref}
\counterwithin{figure}{section}
\definecolor{myblue}{rgb}{0,0,0.6}
\hypersetup{colorlinks=true,linkcolor=myblue,citecolor=myblue,filecolor=myblue,urlcolor=myblue}
\newcommand*{\N}[1]{\left\|#1\right\|}

\allowdisplaybreaks[4]
%%%%%%%%%%%%%%%%%%%%%%%%%%%%%%%%%%%%%%%%%

\newcommand{\tfa}{\text{ for all }}
\newcommand{\tfor}{\text{ for }}

\newcommand{\tif}{\text{ if }}
\newcommand{\tin}{\text{ in }}
\newcommand{\ton}{\text{ on }}
\newcommand{\tas}{\text{ as }}
\newcommand{\tand}{\text{ and }}
\newcommand{\tst}{\text{ such that }}
\newcommand{\tfind}{\text{ find }}
\newcommand{\tthen}{\text{ then }}

\newcommand{\Hilb}{\cH}

%\Phi_{\rm MH}}
%\Phi_{\rm MH}}
%{\rm MH}}

%%%%%%%%%%%%%%%%%%%%%%%
%%% new for this paper
%%%%%%%%%%%%%%%%%%%%%%%

\newcommand{\vertiii}[1]{{\left\vert\kern-0.25ex\left\vert\kern-0.25ex\left\vert #1
    \right\vert\kern-0.25ex\right\vert\kern-0.25ex\right\vert}}

%C_{\text{comp}}^\infty(\Omega)}
%C_{\text{comp}}^\infty(\overline{\Omega})}

%{\partial B_R}%\partial B_R}
%\usepackage{fourier}

%%%%%%%%%%%%%%%%%%%%%%%%%%%%%%%%%%%%%%%%%%%%%%%

%\usepackage{soul}

\definecolor{jwcol}{RGB}{27, 137, 18}  %{rgb}{1,0.88,0.21} changed color for visibility (david)

\definecolor{dalcol}{rgb}{0.8,0,0}

\definecolor{escol}{rgb}{0,0,0.8}
\definecolor{estcol}{rgb}{0,0.5,0}
\definecolor{esnewcol}{rgb}{0,0.5,0}

%%%%%%%%%%%%%%%%%%%%%%%%%%%%%%%%%%%%%%%%%%%%%%%
\newcommand{\obstacle}{{\Omega}}

\newcommand{\Imag}{{\rm Im}}

\newcommand{\supp}{{\rm supp}}

\newcommand{\abs}[1]{{\left\lvert{#1}\right\rvert}}
\newcommand{\norm}[1]{{\left\lVert{#1}\right\rVert}}
\newcommand{\ang}[1]{{\left\langle{#1}\right\rangle}}

\newcommand{\RR}{\mathbb{R}}
\newcommand{\ZZ}{\mathbb{Z}}

\newcommand{\Cqo}{{C_{\rm qo}}}

\newcommand{\Ccont}{{C_{\rm cont}}}

\newcommand{\mythmname}[1]{\textbf{\emph{(#1)}}}

\newcommand{\uhigh}{u_{H^2}}
\newcommand{\ulow}{u_{\mathcal A}}
\newcommand{\vhigh}{v_{H^2}}
\newcommand{\vlow}{v_{\mathcal A}}

\newcommand{\Pilow}{\Pi_{\rm Low}}%\flat}}
\newcommand{\Pihigh}{\Pi_{\rm High}}%\sharp}}
%\flat}}
\newcommand{\Pihightheta}{\Pi_{\rm High, \theta}}%\sharp}}
%\sharp}}
\newcommand{\Lap}{\Delta}
\newcommand{\hsc}{\hbar}

\newcommand{\WFh}{\operatorname{WF}_{\hsc}}
\newcommand{\pa}{\partial}

\newcommand{\Op}{{\rm Op}}

\newcommand{\hilbert}{\mathcal{H}}
\newcommand{\domain}{\mathcal{D}}
\newcommand{\torus}{\mathbb{T}}

\newcommand{\F}{\mathcal{F}}
\newcommand{\Tbar}{\overline{T}}
\DeclareMathOperator{\RC}{\mathsf{RC}}

%\newcommand{\RR}{\mathbb{R}}

%%%%%%%%%%%

\newcommand{\RlocA}{R_{_{\rm  \Romannum{4}}}}
\newcommand{\RlocB}{R_{_{\rm \Romannum{3}}}}
\newcommand{\RfarA}{R_{_{\rm \Romannum{1}}}}
\newcommand{\RfarB}{R_{_{\rm \Romannum{2}}}}

%\newcommand{\RlocA}{R_{\rm loc}}
%\newcommand{\RlocB}{R^{\epsilon}_{\rm loc}}
%\newcommand{\RfarA}{R_{\flat}}
%\newcommand{\RfarB}{R^{\epsilon}_{\flat}}

%%% Different kinds of residuals %%%%
\newcommand{\residual}{O(\hsc^\infty)_{\mathcal D_{ \hsc}^{\sharp,-\infty} \rightarrow \mathcal D_{\hsc}^{\sharp,\infty}}}
\newcommand{\residualD}{O(\hsc^\infty)_{\cD^{\sharp, \infty}}}%\residual}

\newcommand{\vH}{v_{\rm High}}
\newcommand{\vHnear}{v_{\rm High, near}}
\newcommand{\vHfar}{v_{\rm High, far}}
\newcommand{\vL}{v_{\rm Low}}
\newcommand{\vP}{v_{{\rm PML}}}
\newcommand{\vLnear}{v_{\rm Low, near}}%\vL^{R_0}
\newcommand{\vLfar}{v_{\rm Low, far}}%\vLfar
\newcommand{\vAnear}{v_{{\cA}, {\rm near}}}
\newcommand{\vAfar}{v_{{\cA}, {\rm far}}}

\newcommand{\subsetH}{{\frak H}}
\newcommand{\Optorus}{\operatorname{Op}_\hsc^{\!\!\mathbb T^d_{R_{\sharp}}}}
\newcommand{\tr}{{\rm tr}}
\newcommand{\Helmsol}{u}
\newcommand{\PMLsol}{v}
\newcommand{\asol}{\mathcal S^*_k}

\newcommand{\Amin}{A_{\min}}
\newcommand{\Amax}{A_{\max}}
\newcommand{\cmin}{c_{\min}}
\newcommand{\cmax}{c_{\max}}

\newcommand{\EMZ}{\tau}

\newcommand{\scaled}{P_{\hsc,\theta}}

\newcommand{\refop}{P^\sharp_\hsc}

\newcommand{\Otr}{\Omega_\tr}

\newcommand{\tDelta}{\widetilde{\Delta}}

\newcommand{ \residualP}{O(\hsc^\infty)_{\Psi^{-\infty}_\hsc}}

\newcommand{\e}{\epsilon}
\newcommand{\vres}{v_{\rm residual}}
%%%%%%%%%%%%%%%%%%%%%%%%%%%%%%%%%%%%%%%%%%%%%%%
%\usepackage{refcheck}

\makeatletter
\newcommand{\settheoremtag}[1]{% \settheoremtag{<tag>}
  \let\oldthetheorem\thetheorem% Store \thetheorem
  \renewcommand{\thetheorem}{#1}% Redefine it to a fixed value
  \g@addto@macro\endtheorem{% At \end{theorem}, ...
    \addtocounter{theorem}{-1}% ...restore theorem counter value and...
    \global\let\thetheorem\oldthetheorem}% ...restore \thetheorem
  }
\makeatother

\usetikzlibrary{positioning}
\usepackage{lscape}

\definecolor{jeffColor}{RGB}{102, 0, 204}

\begin{document}

\pagenumbering{arabic}
\title{%Wavenumber-explicit convergence of t
The $hp$-FEM applied to the Helmholtz equation with PML truncation does not suffer from the pollution effect}

\author{J.~Galkowski\footnotemark[1],\,\, D.~Lafontaine\footnotemark[2]\,\,, E.~A.~Spence\footnotemark[3]\,\,, J.~Wunsch\footnotemark[4]}

\date{\today}

\footnotetext[1]{Department of Mathematics, University College London, 25 Gordon Street, London, WC1H 0AY, UK,   \tt J.Galkowski@ucl.ac.uk}
\footnotetext[2]{CNRS and Institut de Math\'ematiques de Toulouse, UMR5219; Universit\'e de Toulouse, CNRS; UPS, F-31062 Toulouse Cedex 9, France; \tt david.lafontaine@math.univ-toulouse.fr}
\footnotetext[3]{Department of Mathematical Sciences, University of Bath, Bath, BA2 7AY, UK, \tt E.A.Spence@bath.ac.uk }
\footnotetext[4]{Department of Mathematics, Northwestern University, 2033 Sheridan Road, Evanston IL 60208-2730, US, \tt jwunsch@math.northwestern.edu}

\maketitle

%\begin{AMS}
%35J05, 65N15, 65N30, 78A45
%\end{AMS}

\begin{abstract}
We consider approximation of the variable-coefficient Helmholtz equation in the exterior of a Dirichlet obstacle using perfectly-matched-layer (PML) truncation; it is well known that this approximation is exponentially accurate in the PML width and the scaling angle, and the approximation was recently proved to be exponentially accurate in the wavenumber $k$ in \cite{GLS2}.

We show that the $hp$-FEM applied to this problem
 does not suffer from the pollution effect, in that there exist $C_1,C_2>0$ such that if $hk/p\leq C_1$ and $p \geq C_2 \log k$ then the Galerkin solutions are quasioptimal (with constant independent of $k$), under the following two conditions 
 (i) the solution operator of the original Helmholtz problem is polynomially bounded in $k$ (which occurs for ``most'' $k$ by \cite{LSW1}), and (ii) \emph{either} there is no obstacle and the coefficients are smooth \emph{or} the obstacle is analytic and the coefficients are analytic in a neighbourhood of the obstacle and smooth elsewhere.

This $hp$-FEM result is obtained via a decomposition of the PML solution into ``high-'' and ``low-frequency" components, analogous to the decomposition for the original Helmholtz solution recently proved in \cite{LSW4}. The decomposition is obtained using tools from semiclassical analysis (i.e., the PDE techniques specifically designed for studying Helmholtz problems with large $k$).

\paragraph{Keywords:}
Helmholtz equation, high frequency, perfectly-matched layer, pollution effect, finite element method, error estimate, semiclassical analysis.

\paragraph{AMS subject classifications:}
35J05, 65N12, 65N15, 65N30

\end{abstract}

\section{Introduction and statement of the main results}\label{sec:intro}

\subsection{Recap of the Helmholtz exterior Dirichlet problem and $k$-dependence of its solution operator}\label{sec:intro1}

This paper is primarily concerned with computing solutions of the Helmholtz exterior Dirichlet problem when the wavenumber $k$ is large.

\begin{definition}[Helmholtz Exterior Dirichlet problem]\label{def:EDP}
Let $\Omega_- \subset B_{R_0}:= \{ x : |x| < R_0\}\subset \Rea^d$, $d=2,3$, be a bounded open set with $C^\infty$ boundary $\Gamma_D$ such that $\Omega_+:=\Rea^d\setminus \overline{\Omega_-}$ is connected. 
Let  $A_{\rm scat} \in C^{\infty} (\obstacle_+ , \Rea^{d\times d})$ be symmetric positive definite, let $c_{\rm scat} \in C^\infty(\Omega_+;\Rea)$ be strictly positive and bounded, and let $A_{\rm scat}$ and $c_{\rm scat}$ be such that 
 there exists $R_{\rm scat}>R_0>0$ such that 
\beqs
\overline{\Omega_-} \cup {\rm supp}(I- A_{\rm scat}) \cup {\rm supp}(1-c_{\rm scat}) \Subset B_{R_{\rm scat}}.
\eeqs
Given $g \in L^2(\obstacle_+)$ with $\supp \,g \Subset \Rea^d$ and $k>0$, $u \in H^1_{\rm loc}(\obstacle_+)$ satisfies the exterior Dirichlet problem if 
\begin{align}\label{eq:HelmholtzD}
c_{\rm scat}^2\nabla \cdot (A_{\rm scat}\nabla u) + k^2u= -g\,\quad\text{ in } \obstacle_+,\qquad
 u = 0 \quad\text{ on }\Gamma_D,
\end{align}
and $u$ is \emph{outgoing} in the sense that $u$ satisfies the Sommerfeld radiation condition
\beq\label{eq:src}
\partial_r u
(\bx) - \ri k u(\bx) = o \big(r^{-(d-1)/2}\big)
\quad \tas r:= |\bx|\tendi, \text{ uniformly in } \widehat x:= \bx/r.
\eeq
\end{definition}

Although the exterior Dirichlet problem makes sense for non-smooth domains and coefficients, our results below require (at least) the smoothness in Definition \ref{def:EDP}, and so we assume this smoothness from the start for simplicity.
Let $\|\cdot\|_{H^1_k}$ be the standard weighted $H^1$ norm 
\beq
\label{eq:1knorm}
 \|w\|_{H^1_k}^2:= \|\nabla w \|^2_{L^2}+k^2 \| w \|^2_{L^2}.
\eeq

\begin{definition}[Polynomial-boundedness of the solution operator]\label{def:poly_bound}
Given $k_0>0$,  $K\subset [k_0,\infty)$, the solution operator of the Helmholtz exterior Dirichlet problem is \emph{polynomially bounded for $k\in K$} if there exists $M\geq0$ such that given $R>0$ there exists $C>0$ such that 
 given $g\in L^2(\Omega_+)$ with $\supp \, g\subset B_R$, the solution $u$ of the Helmholtz exterior Dirichlet problem satisfies
\beq\label{eq:poly_bound}
\N{u}_{H^1_k(B_R\cap \Omega_+)}
\leq C k^M \|g\|_{L^2(B_R\cap \Omega_+)}\quad\tfa k\in K.
\eeq
\end{definition}

There exist $C^\infty$ coefficients $A_{\rm scat}$ and $c_{\rm scat}$ and obstacles $\Omega_-$ such that
the solution operator is \emph{not} polynomially bounded for all $k$. E.g., \cite{Ra:71} gives an example of a $c_{\rm scat} \in C^\infty$ such that 
the solution operator with this $c_{\rm scat}$ and $A_{\rm scat} \equiv I$ grows exponentially through a sequence 
$0<k_1<k_2<\ldots$ with $k_j\tendi$ as $j\tendi$. Note that this exponential growth is the worst-possible growth of the solution operator by \cite[Theorem 2]{Bu:98}.

\begin{theorem}\mythmname{Conditions under which the solution operator is polynomially bounded}\label{thm:polyboundD}
Suppose $\Omega_-, A_{\rm scat}$, and $c_{\rm scat}$ are as in Definition \ref{def:EDP}.

(i) If $\Omega_-$, $A_{\rm scat}$, and $c_{\rm scat}$ are additionally nontrapping (i.e.~all the trajectories of the generalised bicharacteristic flow defined by the semiclassical principal symbol of \eqref{eq:HelmholtzD} starting in $B_R$ leave $B_R$ after a uniform time), then 
given $k_0>0$,
\eqref{eq:poly_bound} holds with $M=0$ and $K=[k_0,\infty)$.

(ii) Given $k_0,\delta,\eps>0$ there exists a set $J\subset [k_0,\infty)$ with $|J|\leq \delta$ such that  
\eqref{eq:poly_bound} holds with $M=5d/2+\eps$ and $K= [k_0,\infty)\setminus J$.
\end{theorem}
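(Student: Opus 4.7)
\emph{Part (i).} This is the classical ``nontrapping $\Rightarrow$ $O(1)$ cutoff resolvent bound'' at high frequency, and I would prove it by the standard semiclassical defect-measure contradiction argument. Rescale to $h = 1/k$ and suppose the bound \eqref{eq:poly_bound} with $M=0$ fails: there exist $k_n \to \infty$, data $g_n$ with $\smallnorm{g_n}_{L^2(B_R \cap \Omega_+)} \to 0$, and solutions $u_n$ of \eqref{eq:HelmholtzD} with $\smallnorm{u_n}_{H^1_{k_n}(B_R \cap \Omega_+)} = 1$. Along a subsequence, $u_n$ generates a semiclassical defect measure $\mu$ on $T^*(\overline{\Omega_+} \cap B_R)$. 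The equation forces $\mu$ to be supported on the characteristic set of the rescaled principal symbol; Melrose--Sj\"ostrand propagation of singularities up to the Dirichlet boundary $\Gamma_D$ makes $\mu$ invariant under the generalised bicharacteristic flow; and a Rellich identity combined with the Sommerfeld radiation condition \eqref{eq:src} kills $\mu$ on outgoing rays once they have reached the constant-coefficient region. By nontrapping every trajectory enters that region in uniformly bounded time, so $\mu \equiv 0$; this contradicts the positive lower bound on $\mu(T^*(B_R \cap \Omega_+))$ coming from the normalisation of $u_n$.

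\emph{Part (ii).} This is essentially the main theorem of \cite{LSW1}, and I would obtain it by assembling three classical ingredients and then applying a Cartan/Blaschke reduction. (a) The cutoff resolvent $R_\chi(z) := \chi(P - z^2)^{-1}\chi$, where $P$ is the Dirichlet realisation of the Helmholtz operator, extends meromorphically from $\{\Imag z > 0\}$ to a logarithmic neighbourhood of the real axis, with poles exactly at the scattering resonances (black-box scattering of Sj\"ostrand--Zworski, applicable because $\Omega_-$, $I - A_{\rm scat}$, and $1 - c_{\rm scat}$ are all compactly supported). (b) Burq's bound \cite{Bu:98}: $\smallnorm{R_\chi(k)} \leq C_1 \exp(C_2 k)$ for real $k \geq k_0$. (c) The polynomial resonance count: the number of resonances $z_j$ with $|z_j - k^\ast| \leq 1$ and $\Imag z_j \geq -1$ is $O((k^\ast)^d)$.

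Given (a)--(c), on each unit interval $[k^\ast, k^\ast + 1]$ I would write $R_\chi(z) = B(z) F(z)$, where $B$ is a finite Blaschke-type product over the nearby resonances and $F$ is holomorphic on a complex neighbourhood of $[k^\ast, k^\ast + 1]$ and still inherits the exponential bound of (b). A quantitative Cartan/Jensen estimate on sublevel sets of $B$ controls the Lebesgue measure of the set on which $\smallabs{B(k)}$ (and hence $\smallnorm{R_\chi(k)}$) is large, yielding a bound of the form $\smallabs{\{k \in [k^\ast, k^\ast+1] : \smallnorm{R_\chi(k)} > (k^\ast)^{5d/2+\eps}\}} \leq C (k^\ast)^{-(1+\eps/2)}$; the exponent $5d/2 + \eps$ emerges from balancing the resonance count in (c), the Burq exponent in (b), and the $\eps$-margin needed to make the resulting series summable. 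Summing over $k^\ast \in \NN \cap [k_0,\infty)$ and taking $k_0 = k_0(\delta)$ sufficiently large produces the exceptional set $J$ with $\smallabs{J} \leq \delta$. The main technical obstacle is this Cartan/Blaschke step, where the precise exponent must be extracted from the exponential bound and the resonance count; this is the technical core of \cite{LSW1}. By contrast, (i) reduces to a now-textbook application of the semiclassical-measure machinery, with the only delicate point being propagation through glancing rays at $\Gamma_D$, fully handled by Melrose--Sj\"ostrand.
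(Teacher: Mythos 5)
The paper offers no proof of this theorem beyond a list of citations, so the right comparison is between your sketch and the arguments in the cited works; on that score your reconstruction is essentially faithful. For (i) you have chosen the Burq-style contradiction argument with semiclassical defect measures (the route of \cite{Bu:02}), which is one of the two routes the paper cites; the other is Melrose--Sj\"ostrand propagation of singularities giving exponential local energy decay for nontrapping geometries, converted into a resolvent bound by Vainberg/Lax--Phillips \cite{MeSj:82,Va:75,LaxPhi}. The defect-measure route is shorter and self-contained at high frequency; the propagation-of-singularities route gives more (decay of the wave group) but is heavier. For (ii) you correctly identify the content of \cite{LSW1}: meromorphic continuation of the cutoff resolvent in the black-box framework, the Burq--Vodev exponential bound \cite{Bu:98}, the polynomial resonance count, and a quantitative harmonic-analysis step (in \cite{LSW1} this is run through the Tang--Zworski semiclassical maximum principle together with excising small intervals around the real parts of nearby resonances, which is the same mechanism as your Cartan/Blaschke formulation) to produce the exceptional set $J$ and the exponent $5d/2+\eps$. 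Two small points. First, your normalisation in (i) is off as written: with the paper's norm \eqref{eq:1knorm}, $\smallnorm{u_n}_{H^1_{k_n}(B_R\cap\Omega_+)}=1$ forces $\smallnorm{u_n}_{L^2}\leq k_n^{-1}\to 0$, so the limiting defect measure is trivially zero and no contradiction results; one must instead normalise the semiclassical norm $\smallnorm{h\nabla u_n}^2_{L^2}+\smallnorm{u_n}^2_{L^2}=1$ (equivalently rescale $u_n$ by $k_n$), after which the argument goes through. Second, \cite{LSW1} treats $c_{\rm scat}\equiv 1$; the general case needs the observation (via \cite[Lemma 2.3]{LSW4}) that the variable-$c$ operator still fits the black-box framework of \cite{SjZw:91} with the weighted inner product, so the resonance count and exponential bound persist.
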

\bpf[References for the proof]
(i) follows from \emph{either} the results of \cite{MeSj:82} combined with either \cite[Theorem 3]{Va:75} or 
\cite{LaxPhi}, \emph{or} \cite[Theorem 1.3 and \S3]{Bu:02}.
(ii) is proved for $c=1$ in \cite[Theorem 1.1 and Corollary 3.6]{LSW1} and the proof for more-general $c$ follows from combining the results of \cite{LSW1} with \cite[Lemma 2.3]{LSW4};
we highlight that, under an additional assumption about the location of resonances, a similar result with a larger $M$ can also be extracted from \cite[Proposition 3]{St:01} by using the Markov inequality.
\epf

\subsection{Truncation of the exterior domain $\Omega_+$ using the exact Dirichlet-to-Neumann map and solution via the $hp$-FEM}

A popular way of solving boundary value problems involving variable-coefficient PDEs, such as the Helmholtz exterior Dirichlet problem of Definition \ref{def:EDP}, is the finite-element method (FEM).
When the FEM is used with standard piecewise-polynomial subspaces (i.e., piecewise polynomials of degree $p$ on a mesh with meshwidth $h$), the exterior domain $\Omega_+$ must be truncated before the FEM can be used.

One truncation option is to introduce $R> R_{\rm scat}$ such that $\supp \, g \subset B_R$, and then replace $\Omega_+$ by $\Omega_+ \cap B_R$, using as a boundary condition on $\partial B_R$ the exact Dirichlet-to-Neumann (DtN) map for the Helmholtz equation $\Delta u+k^2 u=0$ in the exterior of $B_R$ with the radiation condition \eqref{eq:src} (with this map given explicitly, by separation of variables, in terms of Fourier series and Hankel functions). The solution of this truncated problem is then the restriction of the solution of the exterior Dirichlet problem to $B_R$.

For the exterior Dirichlet problem with exact-DtN-map truncation, there has been a relatively large amount of analysis of the associated FEMs since the initial work of 
\cite{Ma:87, KeGi:89}.
In particular, for the $hp$-version of the FEM, where accuracy is increased by \emph{both} decreasing $h$ \emph{and} increasing $p$, the results of \cite[Theorem 5.8]{MeSa:11} (when $\Gamma_D$ is analytic, $A=I$, and $c=1$) and \cite[Theorem B1]{LSW4} (when $\Gamma_D$ is analytic and $A, c$ are analytic near $\Gamma_D$ -- see 
Assumption \ref{ass:LSW4} below) show that
if the solution operator is polynomially bounded in $k$ as $k\to \infty$ (in the sense of Definition \ref{def:poly_bound})
then there exist $C_1, C_2,$ and $\Cqo$ (independent of $k,h$, and $p$) such that if 
\beqs
\frac{hk}{p}\leq C_1 \quad\tand\quad p\geq C_2 \log k
\eeqs
then the Galerkin solution $u_N$ exists, is unique, and satisfies
\beqs
\N{u-u_N}_{H^1_k(B_R\cap \Omega_+)}\leq \Cqo \min_{v_N\in V_N} \N{u-v_N}_{H^1_k(B_R\cap \Omega_+)},
\eeqs
where $V_N$ is the $hp$ approximation space.

Since the total number of degrees of freedom of the approximation space is proportional to $(p/h)^d$, 
these results show there is a choice of $h$ and $p$ such that the Galerkin solution is quasioptimal, 
with quasioptimality constant (i.e.~$\Cqo$) independent of $k$, and
with the total number of degrees of freedom proportional to $k^d$.
The significance of this result is that it is well-known that the $h$-FEM (where accuracy is increased by decreasing $h$ with $p$ fixed) is \emph{not} quasioptimal with $\Cqo$ independent of $k$ when the total number of degrees of freedom $\sim k^d$ (i.e., when $h\sim k^{-1}$); see \cite{BaSa:00}. This feature is known as ``the pollution effect'' (with the term coined in \cite{IhBa:95a}), and the results of 
\cite{MeSa:11, LSW4} quoted above  therefore show that the $hp$-FEM applied to the exterior Dirichlet problem with exact-DtN-map truncation does not suffer from it.

\subsection{Truncation of $\Omega_+$ using a PML}\label{sec:PML_intro}

Although the solution of the problem truncated with the exact DtN map is the restriction of the solution of the true problem to $\Omega_+\cap B_R$, the exact DtN map is a non-local operator, and hence expensive to compute. 
A popular way of truncating in a less-computationally-expensive way is to use a \emph{perfectly-matched layer (PML)}, introduced by \cite{Be:94} (in cartesian coordinates) and \cite{CoMo:98a} (in spherical coordinates). 
In this paper we consider the following radial PMLs.

\paragraph{Radial PML definition.}
Let $R_{\tr}>R_1>R_{\rm scat}$ and let $\Omega_{\tr}\subset \mathbb{R}^d$ be a bounded Lipschitz open set with $B_{R_{\tr}}\subset \Omega_{\tr}$.
Let $\Omega:=\Omega_{\tr}\cap \Omega_+$, $\Gamma_{\tr}:=\partial\Omega_{\tr}$, and $0\leq \theta<\pi/2$. 
Let 
\beqs
P:= - c_{\rm scat}^2 \nabla\cdot (A_{\rm scat}\nabla).
\eeqs
so that the Helmholtz equation in \eqref{eq:HelmholtzD} is $(P-k^2)u=g$.
The PML method replaces~\eqref{eq:HelmholtzD}-\eqref{eq:src} by 
\beq\label{eq:PML}
(P_\theta -k^2) v=g \quad\tin \Omega, \quad v= 0 \ton \Gamma_D, \quad\tand\quad v=0 \ton \Gamma_{\tr},
\eeq
where
\beq\label{eq:P_theta_informal}
P_\theta := 
\begin{cases}
P, & r\leq R_1,\\
-\Delta_\theta, & r >R_1,
\end{cases}
\eeq
where $-\Delta_\theta$ is a second order differential operator that is given in spherical coordinates $(r,\omega)\in [0,\infty)\times S^{d-1}$  by
\begin{align}
\label{e:deltaTheta}
\Delta_\theta&= \Big(\frac{1}{1+\ri f_\theta'(r)}\pdiff{}{r}\Big)^2+\frac{d-1}{(r+\ri f_\theta(r))(1+\ri f'_\theta(r))}\pdiff{}{r} +\frac{1}{(r+\ri f_\theta(r))^2}\Delta_\omega,\\
&= \frac{1}{(1+\ri f_\theta'(r))(r+\ri f_\theta(r))^{d-1}}\frac{\partial}{\partial r}
\left( \frac{ (r+\ri f_\theta(r))^{d-1}}{1+\ri f_\theta'(r)}\frac{\partial}{\partial r}
\right)+\frac{1}{(r+\ri f_\theta(r))^2}\Delta_\omega,
\nonumber
\end{align}
with $\Delta_\omega$ the surface Laplacian on $S^{d-1}$ and $f_\theta(r)\in C^{\infty}([0,\infty);\mathbb{R})$  given by $f_\theta(r):=f(r)\tan\theta$ for some $f$ satisfying
\begin{equation}
\label{e:fProp}
\begin{gathered}
\big\{f(r)=0\big\}=\big\{f'(r)=0\big\}=\big\{r\leq R_1\big\},\qquad f'(r)\geq 0,\qquad f(r)\equiv r \text{ on }r\geq R_2;
\end{gathered}
\end{equation}
i.e., the scaling ``turns on'' at $r=R_1$, and is linear when $r\geq R_2$. We emphasize that $R_{\tr}$ can be $<R_2$, i.e., we allow truncation before linear scaling is reached. Indeed,
$R_2>R_1$ can be arbitrarily large and therefore, given any bounded interval $[0,R]$ and any function $\widetilde{f}\in C^{\infty}([0,R])$ satisfying 
\beqs
\begin{gathered}
\big\{\widetilde{f}(r)=0\big\}=\big\{\widetilde{f}'(r)=0\big\}=\big\{r\leq R_1\big\},\qquad \widetilde{f}'(r)\geq 0,
\end{gathered}
\eeqs
our results hold for an $f$ with $f|_{[0,R]}=\widetilde{f}$. 

\begin{remark}[Link with other notation used in the literature]\label{rem:notation}In \eqref{eq:PML}-\eqref{e:fProp} the PML problem is written using notation from the method of complex scaling (see, e.g., \cite[\S4.5]{DyZw:19}).
In the numerical-analysis literature on PML, the scaled variable is often written as $r(1+ i \widetilde{\sigma}(r))$ with $\widetilde{\sigma}(r)= \sigma_0$ for $r$ sufficiently large, see, e.g., \cite[\S4]{HoScZs:03}, \cite[\S2]{BrPa:07}. To convert from our notation, set $\widetilde{\sigma}(r)= f_\theta(r)/r$ and $\sigma_0= \tan\theta$.
\end{remark}

\begin{remark}[Smoothness of the PML scaling function $f_\theta$]
We assume that $f_\theta\in C^\infty$ because we  need the differential operator $-\Delta_\theta$ 
to be a semiclassical pseudodifferential operator (with the definition of these recapped in \S\ref{app:sct}).
More precisely, we need the operator $\widetilde{Q}_{\hsc,\theta}$, defined by \eqref{eq:widetildeQ} in terms of $-\Delta_\theta$, to be a semiclassical pseudodifferential operator.
While we could work with pseudodifferential operators with non-smooth symbols, and thus cover $f_\theta$ with lower regularity, this would be more technical.
\end{remark}

\paragraph{Accuracy of PML truncation.}

It is well-known that, for fixed $k$, the error $\|u-v\|_{H^1_k(B_{R_1}\setminus \Omega)}$ decays exponentially in $R_{\rm tr}-R_1$ and $\tan\theta$ -- see 
\cite[Theorem 2.1]{LaSo:98}, \cite[Theorem A]{LaSo:01}, \cite[Theorem 5.8]{HoScZs:03} (with analogous results for cartesian PMLs in  \cite[Theorem 5.5]{KiPa:10}, \cite[Theorem 5.8]{BrPa:13}). 

It was recently proved in \cite{GLS2} that the error $\|u-v\|_{H^1_k(B_{R_1}\setminus \Omega)}$ also decreases exponentially in $k$; indeed, 
the following theorem is a simplified version of \cite[Theorems 1.2 and 1.5]{GLS2}.

\begin{theorem}[Radial PMLs are exponentially accurate for $k$ large]\label{thm:GLS}
Suppose that $f_\theta \in C^3(0,\infty)$ and the solution operator of exterior Dirichlet problem is polynomially bounded in $k$ (in the sense of Definition \ref{def:poly_bound}). Given $\epsilon>0$, there exist $C_1, C_2, k_0>0$ such that for all $\theta\geq \epsilon$, $R_{\rm tr}\geq R_1(1+\epsilon)$, and $k\geq k_0$ the following is true.

Given $g\in L^2(\Omega_+)$ with $\supp\,g\subset B_{R_1}$, the solution $\PMLsol$ to \eqref{eq:PML} exists, is unique, and satisfies 
\beq\label{eq:PML_error}
\|\Helmsol-\PMLsol\|_{H^1_k(B_{R_1}\setminus\Omega_-)} \leq C_1 \exp \Big( - C_2 k\big(R_{\rm tr}-R_1(1+\epsilon)\big)\tan\theta\Big) \|g\|_{L^2(\Omega_+)},
\eeq
where $\Helmsol$ is the solution to the exterior Dirichlet problem of Definition \ref{def:EDP}.
\end{theorem}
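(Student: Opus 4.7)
The strategy is to compare the PML solution $v$ to a complex-scaled version $u_\theta$ of the true outgoing solution $u$, which agrees with $u$ on $\{r\leq R_1\}$ and exhibits exponential decay in the PML region. Concretely, I would first define $u_\theta$ by applying the complex scaling $r\mapsto r+\mathrm i f_\theta(r)$ to the outgoing solution $u$ outside the support of $g$; since $f_\theta\equiv 0$ on $r\leq R_1$, we have $u_\theta|_{\{r\leq R_1\}}=u|_{\{r\leq R_1\}}$, and a direct computation from~\eqref{e:deltaTheta} shows $(P_\theta-k^2)u_\theta = g$ on all of $\Omega_+$. The residual $w:=u_\theta - v$ then satisfies $(P_\theta-k^2)w=0$ in $\Omega$, $w=0$ on $\Gamma_D$, and $w=u_\theta$ on $\Gamma_{\tr}$.

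The heart of the argument is to prove the exponential bound
\[
\|u_\theta\|_{H^1_k(\{R_1(1+\epsilon)\leq r\})}\leq C_1\exp\!\bigl(-C_2 k (R_{\tr}-R_1(1+\epsilon))\tan\theta\bigr)\|g\|_{L^2(\Omega_+)}.
\]
Because $g$ is supported in $B_{R_1}$, on the PML region $u$ solves the free outgoing Helmholtz equation with coefficients $A_{\rm scat}=I$, $c_{\rm scat}=1$. One can therefore write $u$ on $r\geq R_1$ via an outgoing representation (Green's formula against the free outgoing resolvent kernel, or a DtN-type expansion on $\partial B_{R_1}$). Each outgoing mode carries a factor $e^{\mathrm i k r}$, and composition with the complex scaling $r\mapsto r+\mathrm i f_\theta(r)$ converts this into $e^{\mathrm i k r}e^{-k f_\theta(r)}$, yielding the claimed exponential smallness once $f_\theta(r)\gtrsim (R_{\tr}-R_1(1+\epsilon))\tan\theta$. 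The main obstacle is handling the variable-coefficient and obstacle case rigorously: one cannot write $u$ in the PML region as a clean Hankel expansion when the incoming data from $\partial B_{R_1}$ is produced by trapping-type interactions inside. This is where the polynomial boundedness hypothesis enters — it controls $\|u\|_{H^1_k(B_{R_1})}$ by $\|g\|_{L^2}$, which then controls the Cauchy data of $u$ on $\partial B_{R_1}$, after which standard semiclassical propagation/outgoing-estimate machinery (say, a semiclassical parametrix for the free outgoing resolvent, as developed in the companion paper) pushes the bound out into the scaled region with the exponential gain in $k$.

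Once the decay bound on $u_\theta$ near $\Gamma_{\tr}$ is in hand, the final step is to translate a trace bound on $w|_{\Gamma_{\tr}}$ into an interior bound on $w$ in $B_{R_1}\setminus\Omega_-$. To do this I would establish polynomial well-posedness of the truncated scaled Dirichlet problem: there exist $M'\geq 0$ and $C>0$ such that for any boundary data $\phi$ on $\Gamma_{\tr}$ and any right-hand side $F$, the solution of $(P_\theta-k^2)z=F$ in $\Omega$, $z=0$ on $\Gamma_D$, $z=\phi$ on $\Gamma_{\tr}$ satisfies $\|z\|_{H^1_k(\Omega)}\lesssim k^{M'}(\|F\|_{L^2}+\|\phi\|_{H^{1/2}_k})$. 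This follows from polynomial boundedness of the original solution operator together with a Fredholm/perturbation argument comparing $P_\theta$ to the DtN-truncated operator (complex scaling and the exact DtN truncation coincide on $\{r\leq R_1\}$ up to an operator which is exponentially small in $k$ times the PML width, by the same mechanism used above). Applying this estimate to $w$ with $F=0$ and $\phi=u_\theta|_{\Gamma_{\tr}}$, and absorbing the polynomial factor $k^{M'}$ into the exponential by shrinking the constant $C_2$, yields the claimed bound~\eqref{eq:PML_error}, with existence and uniqueness of $v$ coming for free from the same resolvent estimate.
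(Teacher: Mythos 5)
The paper does not actually prove this statement: Theorem \ref{thm:GLS} is imported verbatim as ``a simplified version of [Theorems 1.2 and 1.5]'' of the companion paper \cite{GLS2}, so there is no in-paper argument to compare against line by line. Your outline does follow the strategy that underlies such proofs (compare $v$ with the complex-scaled outgoing solution $u_\theta$, show $u_\theta$ is exponentially small at $\Gamma_{\tr}$, then invert the truncated scaled operator on the residual $w=u_\theta-v$), and your Steps 1--2 (that $u_\theta=u$ on $r\le R_1$ and $(P_\theta-k^2)u_\theta=g$ on all of $\Omega_+$) are correct since $f_\theta$ vanishes to high order at $R_1$ and $u$ solves the free equation and continues holomorphically for $r\ge R_{\rm scat}$.

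However, the two steps that carry all the difficulty are asserted rather than proved, and each is essentially a main theorem of \cite{GLS2}. First, the $k$-uniform polynomial bound for the truncated scaled Dirichlet problem is exactly \cite[Theorem 1.6]{GLS2} (quoted later in this paper as Theorem \ref{t:scaledResolve}); it does not follow from ``a Fredholm/perturbation argument comparing $P_\theta$ to the DtN-truncated operator''. A Fredholm argument gives invertibility for each fixed $k$ with no control on the $k$-dependence of the inverse, and your claim that PML truncation differs from DtN truncation by an operator that is exponentially small in $k$ is essentially equivalent to the estimate you are trying to prove, so as written this step is circular. The actual proof requires quantitative estimates in the scaling region up to $\Gamma_{\tr}$ --- in \cite{GLS2} these are the near-boundary ellipticity estimate and the Carleman estimate quoted here as Lemmas \ref{l:nearBoundary} and \ref{l:carleman}. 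Second, the exponential decay of $u_\theta$ across the layer is justified only by the heuristic $e^{\ri kr}\mapsto e^{\ri kr}e^{-kf_\theta(r)}$ applied ``to each outgoing mode''. That computation is valid only for angular momenta $\nu\ll kr$; for $\nu\gtrsim kr$ the uniform large-order asymptotics of $H_\nu^{(1)}(k(r+\ri f_\theta(r)))$ behave differently, and controlling the full sum (with Cauchy data on $\partial B_{R_1}$ bounded only polynomially in $k$) is a genuine piece of work even in the constant-coefficient radial case; the cited proof again replaces this by a Carleman/propagation estimate rather than mode-by-mode asymptotics. Finally, note that absorbing the polynomial factor $k^{M'}$ into the exponential only works because of the slack between $R_1$ (where the scaling turns on, so $f(R_1(1+\epsilon))>0$) and $R_1(1+\epsilon)$ appearing in \eqref{eq:PML_error}; your inequality $f_\theta(r)\gtrsim (R_{\tr}-R_1(1+\epsilon))\tan\theta$ gives no gain when $R_{\tr}=R_1(1+\epsilon)$, where the stated bound is still nontrivial. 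So: right skeleton, but the two load-bearing lemmas are assumed.
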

We make four remarks regarding Theorem \ref{thm:GLS}.
\bit
\item The order of the quantifiers in Theorem \ref{thm:GLS} (and also later results in the paper) dictates what the constants depend on; e.g., in Theorem \ref{thm:GLS}, $C_1, C_2,$ and $k_0$ depend on $\epsilon$, but are independent of $R_{\tr}, R_1$, and $\theta$. 

\item A similar bound on the error holds even when the solution operator is \emph{not} polynomially bounded and grows exponentially in $k$; see \cite[Theorems 1.2 and 1.5]{GLS2}.

\item Results showing exponential decay in $k$ (similar to in \eqref{eq:PML_error}) for the model problem of $A_{\rm scat}\equiv I, c_{\rm scat}\equiv 1$, and $\Omega_-=\emptyset$ (i.e., no scatterer) were given in  \cite[Lemma 3.4]{ChXi:13} for $d=2$ and \cite[Theorem 3.7]{LiWu:19} for $d=2,3$, using 
 the fact that the solution of this problem can be written explicitly via the fundamental solution or separation of variables.

\item The exponential decay of the error \eqref{eq:PML_error} in $k$ is in contrast to truncation with local absorbing boundary conditions (introduced in 
\cite{Li:75, EnMa:77, 
EnMa:79, BaTu:80, BaGuTu:82}) which give $O(1)$ relative errors as $k\to \infty$ when approximating the solutions of scattering problems; see \cite{GLS1}.
\eit

\paragraph{The variational formulation of the PML problem.}

Given $f_\theta(r)$, let 
\beq\label{eq:alpha_beta}
\alpha(r) := 1 + \ri f_\theta'(r) \quad \tand\quad \beta(r) := 1 + \ri f_\theta(r)/r.
\eeq
Let 
\beq\label{eq:Ac}
A := 
\begin{cases}
A_{\rm scat} & \tfor r < R_1 \\
HDH^T &\tfor r\geq R_1,
\end{cases}
\quad\tand\quad
\frac{1}{c^2} := 
\begin{cases}
c_{\rm scat}^{-2} & \tfor r < R_1 \\
\alpha(r) \beta(r)^{d-1} &\tfor r\geq R_1,
\end{cases}
\eeq
where, in polar coordinates $(r,\varphi)$,
\beqs
D =
\left(
\begin{array}{cc}
\beta(r)\alpha(r)^{-1} &0 \\
0 & \alpha(r) \beta(r)^{-1}
\end{array}
\right) 
\quad\tand\quad
H =
\left(
\begin{array}{cc}
\cos \varphi & - \sin\varphi \\
\sin \varphi & \cos\varphi
\end{array}
\right) 
\tfor d=2,
\eeqs
and, in spherical polar coordinates $(r,\varphi,\phi)$,
\beqs
D =
\left(
\begin{array}{ccc}
\beta(r)^2\alpha(r)^{-1} &0 &0\\
0 & \alpha(r) &0 \\
0 & 0 &\alpha(r)
\end{array}
\right) 
\,\,\tand\,\,
H =
\left(
\begin{array}{ccc}
\sin \varphi \cos\phi & \cos \varphi \cos\phi & - \sin \phi \\
\sin \varphi \sin\phi & \cos \varphi \sin\phi & \cos \phi \\
\cos \varphi & - \sin \varphi & 0 
\end{array}
\right) 
\tfor d=3.
\eeqs
(since $A_{\rm scat}=I$ and $c_{\rm scat}^{-2}=1$ when $r=R_1$, $A$ and $c^{-2}$ are continuous at $r=R_1$).

\ble[Variational formulation of the PML problem \eqref{eq:PML}]
Given $g\in L^2(\Omega_+)$ with $\supp \, g\subset B_{R_1}$, the variational formulation of the PML problem \eqref{eq:PML} is
\beq\label{eq:PML_vf}
\text{ find } v \in H^1_0(\Omega) \,\tst\, a(v,w) = G(w) \,\tfa w \in H^1_0(\Omega),
\eeq
where 
\beqs
a(v,w) := \int_\Omega A \nabla v \cdot\overline{\nabla w} - \frac{k^2}{c^2} v \overline{w}
\quad\tand\quad
G(w):= \int_{B_{R_1}} \frac{ g}{c^2} \overline{w}.
\eeqs
\ele

\bpf
With $\alpha$ and $\beta$ defined by \eqref{eq:alpha_beta} (with this notation used by \cite{LaSo:98, LiWu:19}), $\Delta_\theta$ defined by \eqref{e:deltaTheta} becomes
\beqs
\Delta_\theta = \frac{1}{\alpha(r\beta)^{d-1}} \pdiff{}{r} \left(\frac{(\beta r)^{d-1}}{\alpha}\pdiff{}{r} \right) + \frac{1}{(r\beta)^2} \Delta_\omega.
\eeqs
Multiplying the PDE in \eqref{eq:PML} by $c_{\rm scat}^{-2} \alpha \beta^{d-1}$, using that $c_{\rm scat}\equiv 1$ for $r\geq R_1$, $\alpha\equiv \beta\equiv 1$ for $r\leq R_1$, and $\supp\, g\subset B_{R_1}$, and then changing variables to cartesian coordinates, we find that
$\nabla\cdot(A\nabla u) + (k^2/c^2) u = -g/c^2;$
the variational formulation \eqref{eq:PML_vf} follows.
\epf

\begin{remark}[Plane-wave scattering]\label{rem:pw}
The exterior Dirichlet problem of Definition \ref{def:EDP} considers the Helmholtz equation with right-hand side $g$. Another important Helmholtz problem is that of plane-wave scattering; that is, with $\Omega_-, A_{\rm scat}$, and $c_{\rm scat}$ as above, 
given $a \in \Rea^d$ with $|a|=1$, 
let $\Helmsol^I(x):= \exp (\ri k x\cdot a)$ and find $\Helmsol\in H^1_{\rm loc}(\Omega_+)$ such that 
\begin{align*}
&c_{\rm scat}^2\nabla\cdot( A_{\rm scat} \nabla \Helmsol ) + k^2 \Helmsol = 0\quad\tin \Omega_+,\qquad\Helmsol = 0  \,\,\ton \Gamma_D,
\end{align*}
and $\Helmsol^S:= \Helmsol - \Helmsol^I$ is outgoing (i.e., satisfies \eqref{eq:src}).
Since $\Helmsol$ itself is not outgoing, it cannot be directly approximated by the solution of a problem with PML truncation.
However, let $\chi \in C^{\infty}_{\rm comp}(\Rea^d;[0,1])$ be such that $\chi \equiv 1$ for $r\leq R_{\rm scat}$ and $\chi \equiv 0$ for $r\geq R_1$, and let 
\beqs
\widetilde{\Helmsol}: =\chi \Helmsol^I  + \Helmsol^S = \Helmsol- (1-\chi)\Helmsol^I \quad\tand\quad g:= 2 \nabla \chi\cdot \nabla \Helmsol^I + \Helmsol^I \Delta \chi.
\eeqs
Observe that $\widetilde{\Helmsol}$ then satisfies the PDE in \eqref{eq:HelmholtzD}, and that the right-hand side $g$ is 
supported in $R_{\rm scat}\leq r\leq R_1$. Therefore  PML truncation can be used to approximate $\widetilde{\Helmsol}$. Observe further that 
 $\widetilde{\Helmsol}\equiv \Helmsol$ for $r\leq R_{\rm scat}$, with this usually the region where one is interested in finding the solution $\Helmsol$.
\end{remark}

\begin{assumption}\label{ass:PML}
When $d=3$, $f_\theta(r)/r$ is nondecreasing.
\end{assumption}

Assumption \ref{ass:PML} is standard in the literature (in the notation described in Remark \ref{rem:notation} it is that $\widetilde{\sigma}$ is non-decreasing; see, e.g., \cite[\S2]{BrPa:07}) and ensures that the matrix $A$ \eqref{eq:Ac} satisfies $\Re A>0$ (in the sense of quadratic forms) for all $\theta$; see Lemma \ref{lem:strong_elliptic} and Remark \ref{rem:strong_elliptic} below.

\subsection{The main result:~accuracy of the $hp$-FEM applied to the Helmholtz exterior Dirichlet problem with PML truncation}

\paragraph{Existing results on the accuracy of the FEM applied to Helmholtz problems with PML truncation.} Although the FEM with PML truncation is widely used to compute solutions of the Helmholtz exterior Dirichlet problem (and other boundary value problems involving the Helmholtz or Maxwell equations), until now there have been no rigorous $k$-explicit results guaranteeing the accuracy of the computed solutions of the Helmholtz exterior Dirichlet problem with PML truncation as described in \S\ref{sec:PML_intro}.

Indeed, the only existing $k$-explicit results on the accuracy of the FEM applied to Helmholtz problems with PML truncation are the following.
\bit
\item  The result \cite[Theorem 4.4]{LiWu:19} concerns the model problem of $A_{\rm scat}\equiv I, c_{\rm scat}\equiv 1$, and $\Omega_-=\emptyset$ (i.e., no scatterer), and shows that $\N{v-v_N}_{H^1_k(\Omega)}$ is bounded (independently of $k$) in terms of the data if $hk^{3/2}$ is sufficiently small; this threshold is observed empirically to be sharp when $p=1$ and is the same threshold that appears for the problem with DtN truncation \cite{LSW2} or a first-order absorbing boundary condition \cite{Wu:14}. \footnote{Since the preprint of the present paper appeared, the preprint \cite{GS3} generalised the result of \cite[Theorem 4.4]{LiWu:19} to general scattering problems with PML truncation and $h$-FEM spaces of arbitrary polynomial degree, proving quasi-optimality if $(hk)^{p} k^{1+M}$ is sufficiently small, where $M$ is as in \eqref{eq:poly_bound}, and a bound on the relative error if $(hk)^{2p}k^{1+M}$ is sufficiently small.}
\item
The result \cite{ChGaNiTo:22} considers $\Omega_-$ starshaped, $A_{\rm scat}\equiv I$, and $c_{\rm scat}\equiv 1$, and obtains the same thresholds for quasioptimality (for arbitrary fixed $p>0$) as for both the problem with DtN truncation or a first-order absorbing boundary condition \cite{MeSa:11}. However, \cite{ChGaNiTo:22} considers scaling functions of the form $f_\theta(r) = r \widetilde{\sigma}/k$ (with $\widetilde{\sigma}$ independent of $k$), and with such scaling the PML error is not exponentially small in $k$.
\item
The result \cite[Theorem 6.6.7]{Be:21}/\cite[Theorem 5.5]{BeChMe:22} covers the exterior Dirichlet problem with PML truncation, with a Robin boundary condition on $\Gamma_{\tr}$, under the assumptions that (i) the PML scaling angle, $\theta$, is sufficiently small and (ii) the solution operator 
for this problem is polynomially bounded (in the sense of Definition \ref{def:poly_bound});
we discuss the results of \cite{Be:21, BeChMe:22} further in \S\ref{sec:idea2} below.
\eit

\paragraph{Statement of the main result.}
We consider the exterior Dirichlet problem with domain and coefficients satisfying one of the following two assumptions.

\begin{assumption}\label{ass:LSW3}
(i)  $\Omega_-=\emptyset$. 

(ii) $A_{\rm scat}$ and $c_{\rm scat}$ are as in Definition \ref{def:EDP}.

(iii) 
$\Gamma_{\tr}$ is $C^{1,1}$. 
\end{assumption}

\begin{assumption}\label{ass:LSW4}
(i) $\Omega_-, A_{\rm scat},$ and $c_{\rm scat}$ are as in Definition \ref{def:EDP}.  

(ii) $\obstacle_-$ is analytic, and both $A_{\rm scat}$ and $c_{\rm scat}$ are analytic in $B_{R_*}$ for some $R_0<R_*<R_1$.

(iii) 
$\Gamma_{\tr}$ is $C^{1,1}$. 
\end{assumption}

The reasons we consider these classes of domain and coefficients is explained in \S\ref{sec:idea2}/\S\ref{sec:mainbb_discuss} below. We note here that 
the assumption that $\Gamma_{\tr}$ is $C^{1,1}$ ensures 
that the PML solution is in $H^2(\Otr)$.

\begin{theorem}\mythmname{Quasioptimality of $hp$-FEM for the exterior Dirichlet problem with PML truncation}
\label{thm:FEM1}
Suppose that  $\Omega_-, A_{\rm scat}, $ $c_{\rm scat}$, and $\Omega_\tr$ satisfy \emph{either} Assumption \ref{ass:LSW3} \emph{or} Assumption \ref{ass:LSW4}.
Suppose further that  $\Omega_-, A_{\rm scat}, $ $c_{\rm scat}$, and $K\subset[k_0,\infty)$ are such that the solution operator of the exterior Dirichlet problem is polynomially bounded (in the sense of Definition \ref{def:poly_bound}).
Suppose that the PML scaling function $f_\theta\in C^\infty$ and satisfies Assumption \ref{ass:PML}.
Let $(V_N)_{N=0}^\infty$ be the piecewise-polynomial approximation spaces described in \cite[\S5]{MeSa:10}, \cite[\S5.1.1]{MeSa:11} (where, in particular, the triangulations are quasi-uniform and allow curved elements). 

Given $\epsilon>0$, there exist $k_1, C_1,C_2, \Cqo>0$ such that the following is true.
Given $G\in (H^1_k(\Omega))^*$, for all $k\in K\cap[k_1,\infty)$, $\epsilon\leq \theta \leq \pi/2-\epsilon$, and $R_{\tr}\geq R_1(1 +\epsilon)$,
the solution $\PMLsol$ to the PML problem \eqref{eq:PML}/ \eqref{eq:PML_vf} exists and is unique. Furthermore, 
if 
\beq\label{eq:thresholdD}
\frac{hk}{p} \leq C_1 \quad\tand\quad p \geq C_2 \log k, 
\eeq 
then 
the Galerkin solution $v_N$ of the PML problem \eqref{eq:PML_vf}, satisfying 
\beq\label{eq:FEM}
a(v_N, w_N) = G(w_N) \quad\tfa w_N \in V_N,
\eeq
 exists, is unique, and satisfies the quasioptimal error bound 
\beq\label{eq:qo}
\N{v-v_N}_{H^1_k(\Omega)}\leq \Cqo \min_{w_N\in V_N} \N{v-w_N}_{H^1_k(\Omega)}.
\eeq
\end{theorem}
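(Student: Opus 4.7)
The plan is to combine a standard Schatz duality argument for $hp$-FEM applied to sign-indefinite problems (as in \cite{MeSa:11, LSW4}) with a frequency splitting of the PML solution analogous to the one proved for the exterior Dirichlet problem in \cite{LSW4}. Existence and uniqueness of $v$, and a polynomial-in-$k$ bound $\|v\|_{H^1_k(\Omega)}\lesssim k^M \|g\|_{L^2(\Omega)}$, follow by combining Theorem~\ref{thm:GLS} with the polynomial-boundedness hypothesis and the scaled-resolvent bound of \cite[Theorem 1.6]{GLS2}; analogous bounds hold for the adjoint PML solution operator $\asol$ obtained by scaling at angle $-\theta$ and complex conjugating $A_{\rm scat},c_{\rm scat}$. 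The Schatz argument then reduces \eqref{eq:qo} to showing that the adjoint approximability
$$
\eta(V_N) := \sup_{0\neq f\in L^2(\Omega)} \frac{\inf_{w_N\in V_N} \|\asol f - w_N\|_{H^1_k(\Omega)}}{\|f\|_{L^2(\Omega)}}
$$
can be made smaller than a fixed $k$-independent constant whenever \eqref{eq:thresholdD} holds.

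The core step is to decompose $\asol f = \vlow + \vhigh$ with $\vhigh\in H^2_k(\Omega)$ satisfying $\|\vhigh\|_{H^2_k}\leq C\|f\|_{L^2}$, and with $\vlow$ ``semiclassically analytic'', in the sense that on any subset $K\subset\Omega$ on which the coefficients of $P_\theta$ are real-analytic
$$
\|\nabla^n \vlow\|_{L^2(K)}\leq C\,(C' k)^n\, \|f\|_{L^2(\Omega)}\qquad\tfa n\in\bNo,
$$
with $C,C'$ independent of $n$ and $k$ (and with $n$-dependent constants where the coefficients are only $C^\infty$). The decomposition is produced by a semiclassical frequency cut-off: set $\vlow := \Pilow \asol f$ and $\vhigh := (I-\Pilow)\asol f$, where $\Pilow := \Op_\hsc(\chi(|\xi|))$ for a fixed $\chi\in C^\infty_c(\RR^d)$ equal to $1$ near $0$, and $\hsc = k^{-1}$. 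The $H^2$ bound on $\vhigh$ follows from semiclassical elliptic regularity for $P_\theta-k^2$ on the high-frequency set. In the physical region $\{r<R_1\}$ the operator $P_\theta$ coincides with $P$, so under Assumption~\ref{ass:LSW3} or \ref{ass:LSW4} the analytic-type bound on $\vlow$ reduces to the one proved in \cite{LSW4}. In the PML annulus $\{R_1<r<R_\tr\}$ the coefficients of $P_\theta$ are smooth (and analytic on any chosen subset), and one obtains the analogous bound by iterating semiclassical pseudodifferential calculus for the complex-scaled operator, using the scaled-resolvent bound to absorb the $L^2$ terms generated at each step.

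Given the decomposition, the estimate on $\eta(V_N)$ follows from standard $hp$-interpolation as in \cite{MeSa:10, MeSa:11, LSW4}: the piece $\vhigh$ is approximated with $H^1_k$ error $\lesssim (h/p)\|f\|_{L^2}$ by the usual polynomial estimates for $H^2$ functions on quasi-uniform meshes with curved elements, while $\vlow$ is approximated with error $\lesssim k^{-1}(C''hk/p)^p \|f\|_{L^2}$ plus an exponentially small in $p$ remainder, using analytic-regularity interpolation on the analytic zones and exploiting $p\geq C_2\log k$ to absorb the $n$-dependent constants arising from any purely $C^\infty$ regions. Choosing $C_1$ small and $C_2$ large then drives $\eta(V_N)$ below the Schatz threshold and gives \eqref{eq:qo}.

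The main obstacle --- and the novelty compared with the DtN-truncation analyses of \cite{MeSa:11, LSW4} --- is proving the frequency decomposition for the \emph{complex-scaled}, non-self-adjoint operator $P_\theta$ in the PML annulus. Because its semiclassical principal symbol is complex-valued, the real-principal-type parametrix and propagation arguments used in \cite{LSW4} do not transfer verbatim; Assumption~\ref{ass:PML} together with the angular restriction $\epsilon\leq\theta\leq\pi/2-\epsilon$ is what ensures the semiclassical ellipticity of $P_\theta-k^2$ throughout the PML region needed to replace these arguments, and the scaled-resolvent estimate of \cite{GLS2} plays the role of the black-box polynomial resolvent bound used in \cite{LSW4}. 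Packaging these ingredients into a bound controlling every $n$-fold derivative of $\vlow$ uniformly in $k$ is the technical heart of the proof.
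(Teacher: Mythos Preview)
Your overall architecture (Schatz duality reducing quasioptimality to a bound on $\eta(V_N)$, a high/low-frequency splitting of the adjoint PML solution, then $hp$-interpolation on each piece) is exactly the paper's, and the role you assign to the scaled-resolvent bound of \cite{GLS2} is correct. The gaps are in the decomposition step.

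First, the cutoff you propose, $\Pilow=\Op_\hsc(\chi(|\xi|))$, is a pseudodifferential operator on $\mathbb{R}^d$; under Assumption~\ref{ass:LSW4} there is an obstacle, and you have not explained how $\Pilow$ acts on functions on $\Omega$ or why $\Pilow v$ should satisfy the Dirichlet condition on $\Gamma_D$. The paper instead uses the functional calculus $\psi_\mu(P^\sharp_\hsc)$ for the self-adjoint reference operator on a torus containing $\Omega_\tr$, which respects the domain (hence the boundary condition on $\Gamma_D$) and commutes with $P^\sharp_\hsc$; so your claim that in the physical region the bound ``reduces to the one proved in \cite{LSW4}'' is not literally true, since \cite{LSW4} uses the functional-calculus cutoff, not $\Op_\hsc$. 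Second, you misidentify the source of the analytic bounds on $\vlow$. Away from the obstacle the entire-type bound $\|\partial^\alpha\vlow\|\lesssim(Ck)^{|\alpha|}$ has nothing to do with analyticity of coefficients or with iterating pseudodifferential calculus: it comes from the fact that, after localising away from the black box, the cutoff is a Fourier multiplier on the torus with compactly supported symbol, so each derivative costs at most $Ck$. In particular, under Assumption~\ref{ass:LSW3} the coefficients are only $C^\infty$, yet $\vlow$ is entire. Your plan to accept ``$n$-dependent constants'' on $C^\infty$ regions and absorb them via $p\geq C_2\log k$ cannot work: $hp$-approximation needs Cauchy-type control $(Ck)^n$ or at worst $C^n\max\{n,k\}^n$, not unspecified growth in $n$. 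Near the obstacle (Assumption~\ref{ass:LSW4}) the paper obtains $C^n\max\{n^n,k^n\}$ from a heat-equation estimate of \cite{EsMoZh:17}; this, not iterated elliptic regularity, is where analyticity of the boundary and coefficients enters. Finally, Assumption~\ref{ass:PML} is used only to secure $\Re A>0$ and hence the G\aa rding inequality needed for the Schatz step; it does not give semiclassical ellipticity of $P_\theta-k^2$ throughout the PML region. The paper instead introduces a spatial cutoff $\varphi_\tr$, places $(1-\varphi_\tr)v$ into $\vhigh$ via the near-boundary elliptic estimate and Carleman estimate of \cite{GLS2}, and acquires a residual term $\vres$ to restore the zero boundary condition on $\Gamma_\tr$, yielding a three-piece (not two-piece) decomposition.
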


The error on $B_R\cap \Omega_+$ between the true solution $u$ and the Galerkin approximation to the PML solution $v_N$ is then controlled by combining \eqref{eq:qo} with \eqref{eq:PML_error}.

\bre[Non-conforming error]
Theorem \ref{thm:FEM1} assumes that the domain $\Omega$ is triangulated exactly. 
In practical applications, however, exact triangulations are seldom used, and some analysis of the geometric error is therefore necessary. We ignore this issue here (just as in the previous work on the $hp$-FEM in \cite{MeSa:10, MeSa:11, EsMe:12, MePaSa:13, LSW3, LSW4}), but note that, empirically,  at least for the $h$-FEM, 
the geometric error caused by using simplicial elements 
is smaller than the pollution error. 
\ere

\subsection{The idea behind the $hp$-FEM result of Theorem \ref{thm:FEM1}:~decompositions of high-frequency Helmholtz solutions}\label{sec:idea1}

\paragraph{Decomposition of constant-coefficient Helmholtz solutions in \cite{MeSa:10, MeSa:11, EsMe:12}.}

The celebrated papers \cite{MeSa:10, MeSa:11, EsMe:12, MePaSa:13} established a $k$-explicit convergence theory for the $hp$-FEM applied to the constant-coefficient Helmholtz equation 
$\Delta u +k^2 u =-f.$
This theory is based on decomposing solutions of this equation as 
\beq\label{eq:decomp_H}
u = \ulow + \uhigh,
\eeq
where
\bit
\item[(i)] $\ulow$ is analytic, and satisfies bounds with the same
  $k$-dependence as those satisfied by the full Helmholtz solution,
  but with explicit $k$-dependence built into the Cauchy estimates, and 
\item[(ii)] $\uhigh$ has finite regularity (normally $H^2$), and satisfies bounds with improved $k$-dependence compared to those satisfied by the full Helmholtz solution.
\eit
The papers \cite{MeSa:10, MeSa:11, EsMe:12} obtained such a decomposition for a variety of constant-coefficient Helmholtz problems, 
with the idea of the decomposition
that $\ulow$ corresponds to the low-frequency components of the solution $u$ (i.e., components with frequencies $\lesssim k$) 
$\uhigh$ corresponds to the high-frequency components of solution (i.e., components with frequencies $\gtrsim k$)
-- we discuss this ``decomposing-via-frequencies'' idea further in \S\ref{sec:idea2}.

\paragraph{How the decomposition shows that the $hp$-FEM does not suffer from the pollution effect under the conditions \eqref{eq:thresholdD}.}
The classic duality argument (originating from ideas introduced in \cite{Sc:74} and then refined by \cite{Sa:06}) 
gives a condition for the Galerkin solutions to be quasioptimal in terms of how well solutions of the adjoint problem 
are approximated by the finite-element space (see \S\ref{sec:FEM_overview} below and the discussion/references therein).
Note that solutions of the adjoint problem for the Helmholtz equation are just complex-conjugates of Helmholtz solutions (see Lemma \ref{lem:asol_sol} below), so in this argument one only needs to consider approximation of Helmholtz solutions.

When applying the classic duality argument to the Helmholtz equation, approximating the Helmholtz solution directly (without any decomposing) and using the sharp bound (in terms of $k$-dependence) on its $H^2$ norm results in the condition ``$hk^2/p$ sufficiently small'' for quasioptimality; this is the sharp condition when $p=1$ -- see, e.g., \cite[Figure 8]{IhBa:95a}. 

The fact that $\uhigh$ satisfies a bound one power of $k$ better than that satisfied by $u$ means that the analogue of the condition 
``$hk^2/p$ sufficiently small'' with $u$ replaced by $\uhigh$ is the improved ``$hk/p$ sufficiently small''; i.e., the first condition in \eqref{eq:thresholdD}.
Provided that the solution operator is polynomially bounded in the sense of \eqref{eq:poly_bound}, 
the analogue of the condition ``$hk^2/p$ sufficiently small'' with $u$ replaced by $\ulow$ (and using the first $p+1$ derivatives of $\ulow$) is essentially
\beq\label{eq:explain1}
k^{1+M} \left( \frac{hk}{\sigma p}\right)^p
\eeq
sufficiently small 
(with $\sigma$ constant); see \eqref{eq:etabound_LSW4} below.
With $hk/p$ sufficiently small, \eqref{eq:explain1} can be made arbitrarily small if $p/\log k$ is sufficiently large, leading to the second condition in \eqref{eq:thresholdD}; note that the analyticity of $\ulow$ is crucial here, since it allows us to take $p$ arbitrarily large.

\paragraph{The recent paper \cite{LSW4}:~analogous decompositions for very general Helmholtz scattering problems.}

The recent paper \cite{LSW4} (following \cite{LSW3}) showed that similar decompositions can be obtained for very general Helmholtz scattering problems, namely, those fitting into the so-called ``black-box'' framework of Sj\"ostrand--Zworski \cite{SjZw:91}, with this framework including problems where the scattering is caused by variable coefficients, penetrable obstacles, or
impenetrable obstacles.
For these general Helmholtz solutions, $\ulow$ is not necessarily analytic, but the regularity is determined by properties of the scatterer. 
The paper \cite{LSW4} then showed that, if the domain and coefficients satisfy either Assumptions \ref{ass:LSW3} or \ref{ass:LSW4}, then $\ulow$ is analytic (possibly modulo a remainder that is super-algebraically small in $k$), and then
the arguments of \cite{MeSa:10, MeSa:11} can be used to show that
 the $hp$-FEM applied to these Helmholtz problems does not suffer from the pollution effect.

\paragraph{The main contribution of the present paper.}

The main contribution of the present paper is \emph{showing that the decompositions of outgoing Helmholtz solutions obtained in \cite{LSW4} also hold for the corresponding Helmholtz solutions with PML truncation.} Indeed, our main decomposition result for PML solutions, stated informally in the next subsection as Theorem \ref{thm:informal}, and then rigorously in Theorem \ref{thm:mainbb}, is the exact analogue of the corresponding decomposition result in \cite{LSW4} for outgoing Helmholtz solutions. 

The results in \cite{LSW4} that show that $\ulow$ is analytic if the domain and coefficients satisfy either Assumption \ref{ass:LSW3} or \ref{ass:LSW4}, then show the corresponding result for the low-frequency components of the PML solution. Thus, exactly as in \cite{LSW4}, the  arguments of \cite{MeSa:10, MeSa:11} can be used to show that the $hp$-FEM applied to these PML problems does not suffer from the pollution effect, i.e., Theorem \ref{thm:FEM1}.

We emphasise that the proof of Theorem \ref{thm:mainbb} involves several new technical ideas compared to the proof of the analogous result in \cite{LSW4} for outgoing Helmholtz solutions. These differences arise from the fact that in \cite{LSW4} the notion of ``high-frequency'' and ``low-frequency" components of the solution is defined via the functional calculus for self-adjoint operators (see \S\ref{sec:idea2} below)
but the PML operator is not self-adjoint.
To overcome this obstacle, we use (i) the ellipticity of the PML operator in the scaling region and the recent results of \cite{GLS2}, (ii) the fact that the functional calculus is pseudolocal (see Lemma \ref{lem:funcloc1} below), and (iii) the fact that, away from the scatterer and the PML truncation boundary, the functional calculus is pseudodifferential (see Lemma \ref{lem:funcloc2} below).

\paragraph{Recap of $k$-explicit analyticity.}

Before stating informally the main decomposition result for PML solutions (Theorem \ref{thm:informal}), we record the following lemma about 
how the bound an analytic function depending on $k$ satisfies dictates the $k$-dependence of the region of analyticity; we use this lemma below to understand the properties of the $\vlow$s in Theorems \ref{thm:informal}, \ref{thm:LSW3}, 
and \ref{thm:LSW4}.

\ble[$k$-explicit analyticity]\label{lem:analytic}
With $D$ a bounded open set, let $u\in C^\infty(D)$ be a family of functions depending on $k$.

(i) If 
there exist $C, C_u>0$ such that, for all multiindices $\alpha$,
\beqs
\N{\partial^\alpha u}_{L^2(D)}\leq C_u (C k)^{|\alpha|}.
\eeqs
then $u$ is real analytic in $D$ with infinite radius of convergence, i.e., $u$ is entire.

(ii) If 
there exist $C, C_u>0$ such that, for all multiindices $\alpha$,
\beqs
\N{\partial^\alpha u}_{L^2(D)}\leq C_u (Ck)^{|\alpha|} |\alpha|!,
\eeqs
then $u$ is real analytic in $D$ with radius of convergence proportional to $(C k)^{-1}$. 

(iii) If 
there exist $C, C_u>0$ such that, for all multiindices $\alpha$,
\beqs
\N{\partial^\alpha u}_{L^2(D)}\leq C_u C^{|\alpha|}\max \big\{ |\alpha|, k \big\}^{|\alpha|},
\eeqs
then $u$ is real analytic in $D$ with radius of convergence independent of $k$.
\ele

\bpf
In each part, we use the Sobolev embedding theorem to obtain a bound on $\|\partial^\alpha u \|_{L^\infty(D)}$, and then sum the 
remainder in the truncated Taylor series. For this procedure carried out in Part (iii), see, e.g., \cite[Proof of Lemma C.2]{MeSa:10}; the proofs for the other cases are similar.
\epf

\subsection{Informal statement of the main decomposition result for Helmholtz problems with PML truncation}\label{sec:informal}

\begin{theorem}[Informal statement of the main decomposition result]\label{thm:informal}
Let $P$ be a 
formally self-adjoint operator with $P = -\Delta$ outside a sufficiently-large ball (``the black box''). Suppose that  $P -k^2$ is well defined and that
\begin{itemize}
    \item[(H1)] the solution operator associated with $P-k^2$ is polynomially bounded:~there exists $M\geq 0$ so that for any $\chi \in C^\infty_{\rm comp}$ and any compactly-supported $g\in L^2$, the outgoing solution of $(P-k^2)u = g$ satisfies
    $$
    \Vert \chi u \Vert_{L^2} \lesssim  k^{-1+M} \Vert g\Vert_{L^2},
    $$
    \item[(H2)] one has an estimate quantifying the regularity of $P$ 
    inside the black box.
\end{itemize}

Let $P_\theta$ be defined by \eqref{eq:P_theta_informal}, and let $\Omega_{\tr}$ and $\Omega$ be as in \S\ref{sec:intro1}.
Then any solution of $(P_\theta-k^2) v = g$ in $\Omega$ can be written as
$$
v = \vhigh + \vlow + \vres
$$
where 
\begin{itemize}
    \item[(i)] $\vhigh$ satisfies the same boundary conditions as $v$ and the bound
    $$\Vert \vhigh\Vert_{L^2(\Omega)} + k^{-2}\Vert P_\theta\vhigh\Vert_{L^2(\Omega)} \lesssim k^{-2}\Vert g \Vert_{L^2(\Omega)},$$
    \item[(ii)] $\vlow$ is \emph{regular}, with an estimate depending on \emph{both} the regularity of the underlying problem (as measured by (H2)) \emph{and} $M$. In addition, the part of $\vlow$ away from the black box is entire (in the sense of Lemma \ref{lem:analytic}(i)).
    \item[(iii)] $\vres$ is negligible, in the sense that all of its norms are smaller than any algebraic power of $k$.
\end{itemize}
Finally, given $\epsilon>0$, the constants in the bounds on $\vhigh,\vlow$, and $\vres$ are uniform in $\theta$ for $\epsilon\leq \theta\leq \pi/2-\epsilon$.
\end{theorem}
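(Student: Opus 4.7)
The plan is to follow the decomposition strategy of \cite{LSW4} for the outgoing Helmholtz problem, substituting PML-specific tools at the places where that argument uses self-adjointness of $P$. The structural observation driving the proof is that $P_\theta = P$ on $\{r\leq R_1\}$ (and in particular on the black-box), while on the scaling region $\{r\geq R_1\}$ the operator $P_\theta - k^2$ is $\theta$-uniformly elliptic by \cite{GLS2}; consequently one can still use the functional calculus of the self-adjoint $P$ in the physical region, and handle the scaling region by elliptic methods.

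First I would introduce the cutoffs. Pick $\psi\in C_c^\infty(\RR;[0,1])$ equal to $1$ on a neighbourhood of $[0,1]$ and set $\Pilow := \psi(P/k^2)$ and $\Pihigh := I-\Pilow$, seen as bounded operators on the black-box $L^2$-space. Choose also a spatial cutoff $\chi\in C_c^\infty(\RR^d;[0,1])$ with $\chi\equiv 1$ on $\{r\leq R_1\}$ and $\supp\,\chi \Subset \{r<R_1+\delta\}$ for a small $\delta>0$, and provisionally define
\[
\vlow := \Pilow(\chi v),\qquad \vhigh := v - \vlow,
\]
collecting all $O(k^{-\infty})$ corrections needed at the end into $\vres$. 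The analyticity of $\vlow$ away from the black-box stems from the compact support of $\psi$ together with the pseudodifferential description of $\psi(P/k^2)$ away from $\overline{\obstacle_-}$ (Lemma \ref{lem:funcloc2}) and the Cauchy-type estimate of Lemma \ref{lem:analytic}(i); the interior regularity comes from (H2); and the $k^M$ factor in the $\vlow$-bound arises from applying (H1) to the equation governing $\vlow$, whose right-hand side is $\Pilow(\chi g)$ modulo the commutators $[P,\chi]v$ and $[\Pilow,\chi]v$, the second of which is residual by the pseudolocality of the functional calculus (Lemma \ref{lem:funcloc1}).

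Second, for $\vhigh$ the identity $(P-k^2)\Pihigh(\chi v) = \Pihigh\big(\chi g + [P,\chi]v\big)$ (using $[P,\Pihigh]=0$ and $\chi(P-P_\theta)\equiv 0$) combined with the spectral theorem bound $\N{\Pihigh(\chi v)}_{L^2}\leq C k^{-2}\N{(P-k^2)\Pihigh(\chi v)}_{L^2}$ delivers $k^{-2}\N{g}_{L^2}$-control on the physical-region part. For the part of $v$ supported where $\chi < 1$ I would invoke the $\theta$-uniform resolvent estimates of \cite{GLS2}, which yield $H^2$-control of $v$ in the scaling region by $\N{g}_{L^2}$, and hence the required bound on $P_\theta\vhigh$ there. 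Gluing the two pieces and absorbing the $O(k^{-\infty})$ commutator errors from pseudolocality into $\vres$ produces the decomposition; a final correction term (added to $\vhigh$ and subtracted from $\vres$) restoring the Dirichlet condition on $\Gamma_{\tr}$ is constructed by solving a small auxiliary elliptic boundary-value problem in the scaling region, again relying on \cite{GLS2}.

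The main obstacle will be the bookkeeping needed to keep all constants uniform in $\theta \in [\epsilon,\pi/2-\epsilon]$: each of the three ingredients---the self-adjoint functional calculus of $P$ (used only where $P_\theta=P$), the pseudolocality of Lemma \ref{lem:funcloc1}, and the PML resolvent estimates of \cite{GLS2}---must be deployed so that the losses from commutators near $\{r=R_1\}$ are quantitatively $k$-negligible and $\theta$-uniform, with Assumption \ref{ass:PML} securing uniform ellipticity of the complex-scaled operator across the allowed range of $\theta$.
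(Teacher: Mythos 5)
Your overall architecture matches the paper's: frequency cut-offs from a functional calculus, a spatial cut-off isolating the scaling region, pseudolocality to generate the residual, and ellipticity to upgrade the high-frequency bound. But two steps as written would fail, and they are precisely the points where the PML case genuinely differs from \cite{LSW4}.

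First, the identity $(P-k^2)\Pihigh(\chi v)=\Pihigh\big(\chi g+[P,\chi]v\big)$ is not valid with your choice of $\chi$. You need $\supp\chi$ to reach past $r=R_1$ (otherwise $(1-\chi)v$ is not supported strictly inside the scaling region and the boundary elliptic estimate does not apply to it), but then on the annulus $\{R_1<r<R_1+\delta\}$ one has $P\neq P_\theta$, so $\chi(P-k^2)v=\chi g+\chi(P-P_\theta)v$ and the second term is an $O(1)$ second-order error, not a residual. The paper resolves exactly this tension by splitting the high-frequency piece further: $\vHnear=\Pihigh(\varphi_0 v)$ with an inner cut-off $\varphi_0$ supported in $B_{R_1}$ (where the reference operator, $P$, and $P_\theta$ all coincide, so the commutation argument is legitimate), plus a ``far'' piece $\vHfar$ treated not by the functional-calculus identity but by an elliptic parametrix for the \emph{complex-scaled} operator $\widetilde{Q}_{\hsc,\theta}-1$ on high frequencies (Lemmas \ref{lem:funcloc2} and \ref{lem:Gaston_late}). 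Your sketch has no mechanism for this far piece.

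Second, the bound $\Vert\vP\Vert+\Vert P_\theta\vP\Vert\lesssim\Vert g\Vert$ on the scaling-region component does not follow from ``the $\theta$-uniform resolvent estimates of \cite{GLS2}'': the resolvent estimate (Theorem \ref{t:scaledResolve}) loses a factor $k^{M+1}$, which would destroy the improved high-frequency bound. The loss-free estimate requires (i) the elliptic estimate near $\Gamma_\tr$ exploiting the sign of the imaginary part of the scaled symbol (Lemma \ref{l:nearBoundary}), which reduces matters to the commutator $[P_{\hsc,\theta},\varphi_\tr]v$ in the annulus, and (ii) the Carleman estimate (Lemma \ref{l:carleman}), which propagates the resolvent bound outward with an $e^{-Ck}$ factor that absorbs the $k^{M+1}$. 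Without naming these two inputs the central estimate of part (i) of the theorem is unproved. A smaller but related point: you build $\Pilow$ from $P$ itself rather than from the torus reference operator $\refop$; the hypothesis (H2), the pseudolocality and pseudodifferential lemmas you cite, and the explicit Fourier-multiplier computation giving the Cauchy estimates (hence entirety) of the far low-frequency part are all formulated for $\refop$, so the construction should be carried out there.
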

We make the following immediate remarks:
\bit
\item The assumptions in Theorem \ref{thm:informal} (involving the unscaled operator $P$) are exactly the same as in the analogue of Theorem \ref{thm:informal} for outgoing Helmholtz solutions; see \cite[Theorem A$^\prime$]{LSW4}. The conclusions of Theorem \ref{thm:informal} are essentially the same as those \cite[Theorem A$^\prime$]{LSW4}, except with $u$ replaced by $v$, $P$ replaced by $P_\theta$, and the addition of the ``residual'' term $\vres$ (the reason why this residual term appears here, but not in \cite[Theorem A$^\prime$]{LSW4}, is 
to make $\vhigh$ satisfy the zero Dirichlet boundary condition on $\Gamma_\tr$ -- see the 
discussion after Theorem \ref{thm:mainbb}).
\item If $P$ is the Dirichlet Laplacian with both $\Gamma_D$ and $\Gamma_\tr \in C^{1,1}$ 
then $\|P_\theta\vhigh\|_{L^2}$ controls $\|\vhigh\|_{H^2}$ up to $\|\vhigh\|_{L^2}$ by elliptic regularity, and thus the bound in (i) is a bound on $\|\vhigh\|_{H^2}$ -- hence the notation $\vhigh$. (Assumptions \ref{ass:LSW3} and \ref{ass:LSW4} contain these assumptions on $\Gamma_D$ and $\Gamma_\tr$ 
precisely to ensure this $H^2$ regularity of $\vhigh$.)

\item The paper \cite{LSW1} shows that the assumption (H1) holds in the black-box framework for ``most'' frequencies (see Part (i) of Theorem \ref{thm:polyboundD} for a more precise statement of this). Therefore, to apply this result to specific situations,
the key point is to check that an estimate of the type (H2) holds; we discuss this further in \S\ref{sec:mainbb_discuss}. 
\eit

\paragraph{Transferring the results in \cite{LSW4} for particular Helmholtz solutions to the corresponding Helmholtz solutions with PML truncation.}

Since (i) the assumptions of Theorem \ref{thm:informal} (and its precise version Theorem \ref{thm:mainbb}) are exactly the same (by design) as the assumptions of  \cite[Theorem A$^\prime$/Theorem A]{LSW4}, and (ii) these assumptions are checked in \cite{LSW4} for the particular Helmholtz problems we are interested in here,
analogous decompositions to those in \cite{LSW4} for outgoing Helmholtz solutions then immediately hold for the analogous PML problems. 
Indeed, \cite{LSW4} proves the decomposition $u=\ulow + \uhigh$ \eqref{eq:decomp_H} with $\ulow$ analytic under Assumptions \ref{ass:LSW3} and \ref{ass:LSW4}, with (H2) corresponding to, respectively, an explicit estimate on the eigenfunctions of the Laplace operator on the torus and an analytic estimate for solutions of the heat equation. The PML analogues of these results then follow immediately 
and are stated in Theorems \ref{thm:LSW3} and 
\ref{thm:LSW4} in the next section.

We highlight that \cite{LSW4} also decomposes the solution of the Helmholtz transmission problem, and thus an analogous result holds for the corresponding PML problem. This result shows only finite-regularity of $\vlow$ (as opposed to analyticity), and so gives a (sharp) result about quasioptimality of the $h$-FEM, but not the $hp$-FEM. Since we focus on the $hp$-FEM in the present paper, we do not state this decomposition for the transmission problem with PML truncation (but highlight here that it exists).

\subsection{The main decomposition result applied to the Helmholtz exterior Dirichlet problem with PML truncation under Assumptions \ref{ass:LSW3} or \ref{ass:LSW4}}

\begin{theorem}[Decomposition of the PML solution under Assumption \ref{ass:LSW3}]
\label{thm:LSW3}
Suppose that $\Omega_-, A_{\rm scat}, $ $c_{\rm scat}$, and $\Omega_\tr$ satisfy  Assumption \ref{ass:LSW3}.
Suppose further that $A_{\rm scat}, $ $c_{\rm scat}$, and $K\subset[k_0,\infty)$ are such that the solution operator is polynomially bounded (in the sense of Definition \ref{def:poly_bound}).
 
 Given $\epsilon>0$, there exist $C_j, j=1,2,3,$ and $k_1>0$ such that the following is true.
For all $R_{\tr}>R_1+\e$, $B_{R_{\tr}}\subset  \Omega_{\tr}\Subset \mathbb{R}^d$, 
and $\e<\theta<\pi/2-\e$, 
given $g\in L^2(\Omega)$, the solution $v$ of the PML problem \eqref{eq:PML} exists, is unique, and is such that
\beqs
v =  \vhigh+\vlow + \vres,
\eeqs
where $\vlow, \vhigh$, and $\vres$ satisfy the following. 
$\vhigh\in H^2(\Omega)\cap H^1_0(\Omega)$ with 
\begin{equation} \label{eq:decHFLSW3}
\Vert \partial^\alpha \vhigh \Vert_{L^2(\Omega)} \leq C_1 k^{|\alpha|-2}\Vert g \Vert_{L^2(\Omega)} \quad \tfa  k \in K\cap [k_1,\infty) \text{ and for all } |\alpha|\leq 2.
\end{equation}
$\vlow$ satisfies
\begin{equation} \label{eq:decLFLSW3}
\Vert \partial^\alpha \vlow \Vert_{L^2(\Omega) } \leq C_2 (C_3)^{|\alpha|} k ^{ |\alpha| -1+M} \,\Vert g \Vert_{L^2(\Omega)} \quad\tfa  k \in K\cap [k_1,\infty) \text{ and for all }   \alpha
\end{equation}
and is negligible in the scaling region in the sense that for any $N,m>0$ there exists $C_{N,m}>0$ (independent of $\theta$) such 
\beqs
\Vert \vlow \Vert_{\mathcal H^m((B_{R_1(1+\epsilon)})^c)} \leq C_{N,m}k^{-N} \Vert g \Vert_{\mathcal H(\Otr)}   \quad \tfa  k \in K \cap [k_1,\infty).
\eeqs
Finally $\vres$ is negligible in the sense that for any $N,m>0$ there exists $C_{N,m}>0$ (independent of $\theta$) so that
\begin{equation} \label{eq:residual_neg}
\N{\vres}_{H^m(\Omega)}
\leq C_{N,m}k^{-N}  \,\Vert g \Vert_{L^2(\Omega)}    \tfa  k \in K\cap [k_1,\infty).
\end{equation}
\end{theorem}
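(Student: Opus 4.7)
\bpf[Proof proposal]
The plan is to obtain this result as a direct application of the general decomposition machinery, namely the rigorous version of Theorem \ref{thm:informal} (stated below as Theorem \ref{thm:mainbb}) which produces the splitting $v=\vhigh+\vlow+\vres$ whenever the two hypotheses (H1) and (H2) are satisfied. Under Assumption \ref{ass:LSW3} the operator $P=-c_{\rm scat}^2\nabla\cdot(A_{\rm scat}\nabla)$ equals $-\Delta$ outside a ball of radius $R_{\rm scat}$, so it fits the black-box framework. Hypothesis (H1) is precisely the polynomial boundedness assumed in the theorem. For (H2), since there is no obstacle and $A_{\rm scat}, c_{\rm scat}\in C^\infty$, the regularity estimate inside the black-box reduces to a $k$-explicit Sobolev-type estimate for the Laplacian on a flat torus of suitable size into which the black-box is embedded; this is exactly the estimate used in \cite{LSW4} to deduce the analogous decomposition for the outgoing Helmholtz solution in this same setting. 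Thus both (H1) and (H2) hold, yielding the abstract splitting.

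Having secured the decomposition, the remaining work is to convert the abstract bounds into the stated form. For $\vhigh$, the abstract theorem (together with zero Dirichlet boundary data on $\Gamma_D\cup\Gamma_{\rm tr}$) gives
\[
\|\vhigh\|_{L^2(\Omega)}+k^{-2}\|P_\theta \vhigh\|_{L^2(\Omega)}\lesssim \|g\|_{L^2(\Omega)}.
\]
Because $P_\theta$ is a second-order elliptic operator with bounded coefficients (Assumption \ref{ass:PML} ensures $\Re A>0$ on the PML region via Lemma \ref{lem:strong_elliptic}), and because $\Gamma_D=\emptyset$ and $\Gamma_{\rm tr}$ is $C^{1,1}$, standard elliptic regularity upgrades this to the full $H^2$ bound \eqref{eq:decHFLSW3}; the case $|\alpha|\leq 1$ is immediate from the $L^2$ bounds on $v$ and its gradient, and $|\alpha|=2$ follows from elliptic regularity up to the $C^{1,1}$ boundary applied to $-\Delta \vhigh$ (using that $P_\theta=-\Delta$ outside the scaling region, combined with interior regularity inside).

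For $\vlow$, Theorem \ref{thm:mainbb} produces a function which on a neighbourhood of the black-box is a finite linear combination of images of $g$ under powers of $P$ composed with a smooth cutoff of the resolvent at low frequencies. Using (H2) one controls arbitrary derivatives by $(Ck)^{|\alpha|}$, and the polynomial growth $k^M$ enters through the resolvent estimate in (H1); combining these yields \eqref{eq:decLFLSW3}. The negligibility of $\vlow$ outside $B_{R_1(1+\epsilon)}$ is built into the construction: in the scaling region the functional calculus defining the low-frequency cutoff is pseudodifferential (Lemma \ref{lem:funcloc2}) and the symbol of the scaled operator is elliptic there, so any contribution from a compactly supported low-frequency spectral projection is $O(\hsc^\infty)$, giving the claimed $k^{-N}$ bound.

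The most delicate step---and the only one that is genuinely new compared to \cite{LSW4}---is the construction and estimation of $\vres$. The residual arises precisely because the non-self-adjointness of $P_\theta$ prevents a direct use of the functional calculus that was available in \cite{LSW4} for outgoing solutions of the self-adjoint operator $P$; one has to splice together the low-frequency part built from $P$ inside the black-box (via (H2)) with the elliptic parametrix for $P_\theta$ in the scaling region (via \cite{GLS2}), and the error in this splicing, together with the adjustment needed to make $\vhigh$ vanish on $\Gamma_{\rm tr}$, is absorbed into $\vres$. The $k^{-N}$ bound \eqref{eq:residual_neg} in every Sobolev norm follows from the pseudolocality of the functional calculus (Lemma \ref{lem:funcloc1}) together with the polynomial boundedness of the PML solution operator established in Theorem \ref{t:scaledResolve}. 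All constants above are independent of $\theta\in[\epsilon,\pi/2-\epsilon]$ and of $R_{\rm tr}\geq R_1+\epsilon$, precisely because the uniform-in-$\theta$ estimates of \cite{GLS2} are used in place of any $\theta$-dependent resolvent bound.
\epf
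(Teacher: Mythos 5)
Your proposal follows essentially the same route as the paper: Theorem \ref{thm:LSW3} is deduced from the abstract decomposition of Theorem \ref{thm:mainbb} by verifying the first hypothesis via the assumed polynomial bound and the second via the explicit Fourier-multiplier estimate for the flat Laplacian on the torus (with $\rho=1$, so that $\vlow$ satisfies the global bound \eqref{eq:decLF5}), and then upgrading the abstract bound \eqref{eq:decHF} on $\vhigh$ to the $H^2$ bound \eqref{eq:decHFLSW3} by Green's identity, Lemma \ref{lem:strong_elliptic}, and $C^{1,1}$ elliptic regularity, with the residual controlled via Theorem \ref{t:scaledResolve}. Two small misattributions of mechanism, neither fatal: the negligibility of $\vlow$ in the scaling region comes from pseudolocality of the functional calculus applied to the disjointly supported cut-offs $(1-\widetilde\varphi_\tr)$ and $\varphi_\tr$ (as in the proof of \eqref{eq:decLF5a}), not from ellipticity of the scaled symbol there, and the frequency cut-off is applied to the spatially truncated \emph{solution} $\varphi_\tr v$ rather than to the data $g$.
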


By Part (i) of Lemma \ref{lem:analytic}, $v_\cA$ in Theorem \ref{thm:LSW3} is entire.

\begin{figure}[h]
\begin{center}
    \includegraphics[scale=0.8]{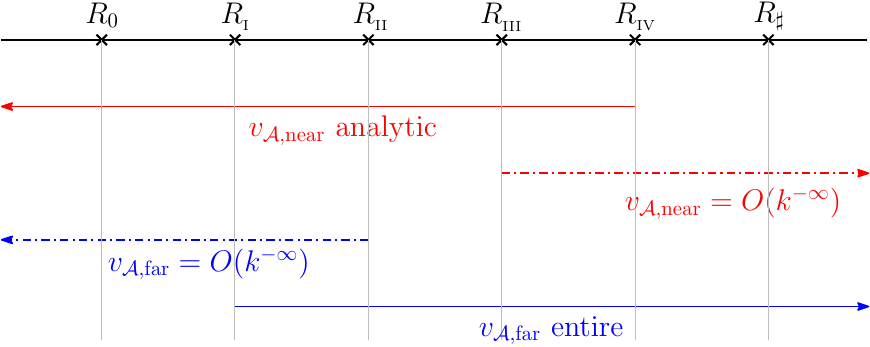}
  \end{center} 
      \caption{The regions where $\vAnear$ and $\vAfar$ appearing in Theorem \ref{thm:LSW4} are analytic, entire, or $O(k^{-\infty})$.}\label{fig:line2}
\end{figure}

\begin{theorem}[Decomposition of the PML solution under 
Assumption 
\ref{ass:LSW4}]
\label{thm:LSW4}
Suppose that $\Omega_-, A_{\rm scat}, $ $c_{\rm scat}$, and $\Omega_\tr$
satisfy 
Assumption \ref{ass:LSW4}.
Suppose further that $\Omega_-,A_{\rm scat}, $ $c_{\rm scat}$, and $K\subset[k_0,\infty)$ are such that the solution operator is polynomially bounded (in the sense of Definition \ref{def:poly_bound}).

Given $\epsilon>0$, there exist 
$C_j>0, j=1,\ldots,5,$ and $R_0<\RfarA<\RfarB<\RlocB<\RlocA<R_1$
such that the following is true.
For all $R_{\tr}>R_1+\e$, $B_{R_{\tr}}\subset  \Omega_{\tr}\Subset \mathbb{R}^d$, 
and $\e<\theta<\pi/2-\e$, 
given $g\in L^2(\Omega)$, the solution $v$ of the PML problem \eqref{eq:PML} exists, is unique, and is such that
\beqs
v = \vhigh + \vlow + \vres
\eeqs
where $\vlow, \vhigh$, and $\vres$ satisfy the following. $\vhigh\in H^2(\Omega)\cap H^1_0(\Omega)$ with 
\begin{equation} \label{eq:decHFDir}
\Vert \partial^\alpha \uhigh \Vert_{L^2(\obstacle)} \leq C_1 k^{|\alpha|-2}\Vert g \Vert_{L^2(\obstacle)} \quad \tfa  k \in K\cap [k_1,\infty) \text{ and for all } |\alpha|\leq 2.
\end{equation}
$\vlow = \vAnear + \vAfar$, 
where $ \vAnear $ 
has zero Dirichlet trace on $\Gamma_D$, $\vAfar$ 
has 
zero Dirichlet trace on $\Gamma_{\tr}$, and, for all $k\in K\cap [k_1,\infty)$ and all $\alpha$,  
\begin{equation} \label{eq:decLFDir1}
\Vert \partial^\alpha \vAnear \Vert_{L^2(B_{\RlocA}\cap \obstacle) } \leq C_2  (C_3)^{|\alpha|}
\max \big\{ |\alpha|^{|\alpha|}, k^{|\alpha|}\big\} k ^{ -1+M} \,\Vert g \Vert_{L^2(\obstacle)},
\end{equation}
\begin{equation*}
\Vert \partial^\alpha \vAfar \Vert_{L^2((B_{\RfarA})^c\cap \obstacle) } \leq C_4 (C_5)^{|\alpha|} k ^{ |\alpha| -1+M} \,\Vert g \Vert_{L^2( \obstacle)}, 
\end{equation*}
and, for any $N,m>0$ there exists $C_{N,m}>0$ (independent of $\theta$) so that
\begin{equation*}
\Vert \vAfar \Vert_{H^{m}(B_{\RfarB}\cap \obstacle) } + \Vert \vAnear \Vert_{H^{m}((B_{\RlocB})^c\cap \obstacle) }  \leq C_{N,m}k^{-N}  \,\Vert g \Vert_{L^2(\Omega)}    \tfa  k \in K\cap [k_1,\infty).
\end{equation*}
Finally $\vres$ is negligible in the sense that for any $N,m>0$ there exists $C_{N,m}>0$ (independent of $\theta$) so that \eqref{eq:residual_neg} holds.
\end{theorem}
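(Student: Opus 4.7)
The plan is to deduce Theorem \ref{thm:LSW4} as a direct corollary of the abstract decomposition result Theorem \ref{thm:informal} (whose precise version will be Theorem \ref{thm:mainbb}), by verifying hypotheses (H1) and (H2) under Assumption \ref{ass:LSW4} and then producing the near/far split of $\vlow$ from its $k$-explicit Cauchy bounds. Take the unscaled operator to be $P = -c_{\rm scat}^2\nabla\cdot(A_{\rm scat}\nabla\cdot)$ with Dirichlet condition on $\Gamma_D$. Then (H1) is precisely the polynomial boundedness hypothesis we have assumed. For (H2), the analyticity of $\Omega_-$, $A_{\rm scat}$, and $c_{\rm scat}$ in a neighbourhood of $\overline{\Omega_-}$ gives the desired analytic regularity estimate for $P$ inside the black-box via the Morrey--Nirenberg-style analytic smoothing for the associated heat semigroup $e^{-tP}$; this is exactly the input used in the outgoing case in \cite{LSW4} under the same assumption, and nothing about it sees the scaling.

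With (H1) and (H2) verified, Theorem \ref{thm:mainbb} furnishes $v = \vhigh + \vlow + \vres$ together with the $L^2$ bound $\|\vhigh\|_{L^2(\Omega)} + k^{-2}\|P_\theta \vhigh\|_{L^2(\Omega)} \lesssim \|g\|_{L^2(\Omega)}$, the negligibility \eqref{eq:residual_neg} of $\vres$, and a $k$-explicit regularity estimate on $\vlow$ that records the analytic estimate of (H2) near $\overline{\Omega_-}$ and $(Ck)^{|\alpha|}$ growth in the region away from the black-box. The bound \eqref{eq:decHFDir} on $\vhigh$ then follows by upgrading the $P_\theta$-control to full $H^2$ via interior elliptic regularity together with up-to-the-boundary elliptic regularity at the analytic $\Gamma_D$ and at $\Gamma_\tr\in C^{1,1}$; the $\theta$-uniformity of the constants is guaranteed by the $\theta$-uniform ellipticity of $P_\theta$ provided by Assumption \ref{ass:PML} (as recorded in Lemma \ref{lem:strong_elliptic}).

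For the two analytic regimes of $\vlow$, the analytic estimate from (H2) translates through the functional-calculus construction underlying Theorem \ref{thm:mainbb} into the factorial Cauchy bound
\beqs
\|\partial^\alpha \vlow\|_{L^2} \leq C^{|\alpha|}\max\big\{|\alpha|^{|\alpha|},\,k^{|\alpha|}\big\}\,k^{M-1}\|g\|_{L^2}
\eeqs
on a fixed neighbourhood of $\overline{\Omega_-}$, giving analyticity with $k$-independent radius by Lemma \ref{lem:analytic}(iii); on any set bounded away from the black-box and the scaling region the operator acting is just $-\Delta$, so the bound sharpens to $(Ck)^{|\alpha|}$ and Lemma \ref{lem:analytic}(i) yields entirety. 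In the scaling region $|x| > R_1(1+\epsilon)$, Theorem \ref{thm:mainbb} further provides that $\vlow$ is $O(k^{-\infty})$ in every Sobolev norm.

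Finally, pick radii $R_0<\RfarA<\RfarB<\RlocB<\RlocA<R_1$ and smooth cutoffs $\chi_{\rm near}, \chi_{\rm far}$ with $\chi_{\rm near}\equiv 1$ on $B_{\RlocB}$, $\mathrm{supp}\,\chi_{\rm near}\subset B_{\RlocA}$, $\chi_{\rm far}\equiv 1$ on $\Omega\setminus B_{\RfarB}$, $\mathrm{supp}\,\chi_{\rm far}\subset \Omega\setminus B_{\RfarA}$, and set $\vAnear := \chi_{\rm near}\vlow$, $\vAfar := \chi_{\rm far}\vlow$. These inherit the respective Cauchy bounds on their supports, and vanish on $\Gamma_D$ and $\Gamma_\tr$ respectively by choice of cutoff, giving the claimed estimates on $\vAnear$ and $\vAfar$. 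The discrepancy $(\chi_{\rm near}+\chi_{\rm far}-1)\vlow$ is supported in the annular overlap $\RfarB<|x|<\RlocB$, which lies in the region where $\vlow$ is already $O(k^{-\infty})$ by the pseudolocality argument built into the proof of Theorem \ref{thm:mainbb}, so it can be absorbed into $\vres$ without spoiling \eqref{eq:residual_neg}. The main obstacle is entirely absorbed into the proof of Theorem \ref{thm:mainbb}; once that is in place, the remaining work here is bookkeeping, with the only subtlety being the $\theta$-uniform tracking of constants, which is secured by Assumption \ref{ass:PML} together with the resolvent estimates of \cite{GLS2} quoted as Theorem \ref{t:scaledResolve}.
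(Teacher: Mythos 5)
Your high-level skeleton --- verify (H1) from polynomial boundedness, verify (H2) via analytic smoothing of the heat semigroup, and feed both into the abstract decomposition theorem --- is exactly the paper's route (Corollary \ref{cor:heat} together with the heat-kernel estimate of Theorem \ref{thm:heat}). The genuine gap is in your construction of $\vAnear$ and $\vAfar$ by multiplying a single $\vlow$ by spatial cutoffs $\chi_{\rm near},\chi_{\rm far}$ \emph{after} the decomposition. First, the discrepancy $(\chi_{\rm near}+\chi_{\rm far}-1)\vlow$ equals $\vlow$ itself on the annulus $\RfarB<|x|<\RlocB$, and $\vlow$ is \emph{not} $O(k^{-\infty})$ there: that annulus lies between the obstacle and the scaling region, where the low-frequency part of the solution propagates and is generically of size $k^{M-1}\Vert g\Vert_{L^2}$. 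The $O(k^{-\infty})$ smallness in \eqref{eq:decLF5a} holds only in the scaling region $|x|>R_1(1+\epsilon)$ and only in the $\rho=1$ case of Theorem \ref{thm:LSW3}; under Assumption \ref{ass:LSW4} the regularity estimate \eqref{eq:lowenest} is localized by $\rho$ near the obstacle and no such statement is available on the overlap annulus. So the discrepancy cannot be absorbed into $\vres$. Second, even with an exact partition of unity, a compactly supported $C^\infty$ cutoff is not analytic, so Leibniz applied to $\chi_{\rm near}\vlow$ destroys the factorial Cauchy bound \eqref{eq:decLFDir1} precisely in the transition region of $\chi_{\rm near}$ (which lies inside $B_{\RlocA}$, where that bound is required), and $\chi_{\rm near}\vlow$ remains of full size, not $O(k^{-\infty})$, on $(B_{\RlocB})^c\cap\,{\rm supp}\,\chi_{\rm near}$.

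The mechanism you are missing (\S\ref{subsubsec:cutoff}--\S\ref{sec:334}) is to localize the \emph{input} first, writing $\varphi_\tr v=\rho_1 v+(1-\rho_1)\varphi_\tr v$, and only then apply the frequency localization: $\vAnear:=E\circ\bigl(\bigl[\tfrac{1}{\mathcal E}\psi_\mu\bigr](\refop)\bigr)\rho_1 v$ and $\vAfar:=\gamma_1\Optorus(\widetilde{\varphi}(|\xi|^2))\Pilow^\Psi\gamma_1(1-\rho_1)\varphi_\tr v$. The negligibility of each piece outside its good region then comes from pseudolocality of the functional calculus (Lemma \ref{lem:funcloc1}) applied to operators sandwiched between cutoffs with disjoint supports --- not from any smallness of the solution --- and the factorial bound on $\vAnear$ survives because the localization $\rho$ sits \emph{inside} the operator estimate \eqref{eq:lowenest} rather than being multiplied onto the output. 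You also omit the tuning of the heat-flow time $t$ (and the exponent $\EMZ$) as functions of $|\alpha|$ and $k$, which is what converts the raw estimate \eqref{eq:EMZheat} into the $\max\{|\alpha|^{|\alpha|},k^{|\alpha|}\}$ factor in \eqref{eq:decLFDir1}; this exploits the fact that $E_\infty=0$ for the heat semigroup, so the decomposition is independent of $t$ and one may optimize over it.
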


By Parts (iii) and (i) of Lemma \ref{lem:analytic}, $\vAnear$ is analytic in $B_{\RlocA}$ with $k$-independent radius of convergence, and $\vAfar$ is entire in $(B_{\RfarA})^c$; see Figure \ref{fig:line2}.

\subsection{The ideas behind the decomposition result of Theorems \ref{thm:informal}, \ref{thm:LSW3}, and \ref{thm:LSW4} and previous decomposition results for Helmholtz problems}
\label{sec:idea2}

Table \ref{tab:summary} summarises the problems considered and approaches to the decompositions in the papers \cite{MeSa:10, MeSa:11, EsMe:12, LSW3, LSW4}, and the present paper. 
We now discuss the six main ideas/ingredients used in the proof of Theorem \ref{thm:informal} (and its precise statement in Theorem \ref{thm:mainbb}).

\begin{landscape}
\global\pdfpageattr\expandafter{\the\pdfpageattr/Rotate 90}
\begin{table}
\begin{tabular}{|c|c|c|c|c|c|c|}
\hline
Paper & Helmholtz equation & Problem & Freq.~cut-offs  & Freq.~cut-offs   & Proof of bound  & Proof of bound \\
&&& defined by & applied to & on HF part & on LF part \\
\hline\hline
\cite{MeSa:10} & $\Delta u + k^2 u=-f$ & in $\Rea^d$ with SRC & Fourier transform on $\Rea^d$ & data & 
asymptotics of & asymptotics of \\
& & &
with sharp cut-off & &  
Bessel/Hankel 
&Bessel/Hankel \\
&&&&&functions &functions\\
\hline
\cite{MeSa:11} & $\Delta u + k^2 u=-f$ &EDP obstacle analytic  & as in \cite{MeSa:10} plus  & data & bounds on  & analytic estimate\\
&&  IIP convex polygon & extension operators && cut-offs from \cite{MeSa:10} & on Helm.~solutions \\
&& \quad or smooth &&& 
& with analytic data\\
\hline 
\cite{EsMe:12} & $\Delta u + k^2 u=-f$ & IIP convex polygon & as in \cite{MeSa:10} plus & data & bounds on & analytic estimate \\
&&&extension operators && cut-offs from \cite{MeSa:10}&on Helm.~solutions\\
&&&&&
&with analytic data\\
\hline
\cite{LSW3} & $\nabla\cdot(A\nabla u ) + k^2 c u=-f$ & in $\Rea^d$ with SRC & Fourier transform on $\Rea^d$ & solution  & semiclassical ellip. & immediate  \\
 && $A, c$ smooth & smooth cut-off & $\times$ spatial cut-off & of Helmholtz on HF &from FT\\
 \hline
 \cite{LSW4} & equations that are   & any problem & functional calculus  & solution  & semiclassical ellip. & abstract  regularity \\
(general & $\Delta u +k^2u=0$ & fitting in framework & (i.e., eigenfunction  & $\times$ spatial cut-off& pseudo.~prop. & estimate in \\
\, result) & outside large ball &of black-box scattering& \quad expansion), && of func.~calc. & black box\\
&& & smooth cut-off &&&\\
\hline
 \cite{LSW4} & $\nabla\cdot(A\nabla u ) + k^2 c u=-f$ & EDP obstacle analytic & functional calculus, & 
 solution  & semiclassical ellip. & heat-flow  \\
(specific & & $A, c$ analytic near obstacle &  smooth cut-off &$\times$ spatial cut-off& pseudo.~prop. & estimate \\
\, result) &&&&& of func.~calc. &\\
\hline 
this & $\nabla\cdot(A\nabla u ) + k^2 c u=-f$ & \emph{either} $A$, $c$ smooth, no obs.  & functional calculus, & 
 solution  & semiclassical ellip. & heat-flow \\
paper  &  + PML truncation & \emph{or} EDP obstacle analytic  &  smooth cut-off &$\times$ spatial cut-off&pseudo.~prop. &estimate \\
&&$A, c$ analytic near obstacle && supported & of func.~calc. &\\
&&&& into PML region &&\\
\hline
\end{tabular}
\caption{Summary of the decomposition results in the papers \cite{MeSa:10, MeSa:11, EsMe:12, LSW3, LSW4} and the present paper.
``SRC'' stands for ``Sommerfeld radiation condition'', ``EDP'' stands for ``exterior Dirichlet problem'', ``IIP'' stands for ``interior impedance problem'', ``HF'' stands for ``high-frequency'', and ``LF'' stands for ``low-frequency''. To keep the notation concise, we abbreviate $A_{\rm scat}$ and $c_{\rm scat}$ by $A$ and $c$, respectively.
}\label{tab:summary}
\end{table}
\end{landscape}
\global\pdfpageattr\expandafter{\the\pdfpageattr/Rotate 0}

\paragraph{Ingredient 1:~semiclassical ellipticity of the Helmholtz operator on high frequencies.}

The reason the high-frequency component $\vhigh$ satisfies a bound with better $k$-dependence than the solution $v$ is because 
\emph{the Helmholtz operator is semiclassically elliptic on frequencies with modulus $>k$}. While this feature was observed in \cite{LSW3} in the variable-coefficient setting, its essence is most easily illustrated in the constant-coefficient setting. 
With the Fourier transform defined by 
\beq\label{eq:FT}
\mathcal F_k\phi(\xi) := \int_{\mathbb R^d} \exp\big( -\ri k x \cdot \xi\big)
\phi(x) \, \rd x
\eeq
(i.e., the standard Fourier transform with the Fourier variable scaled by $k$), the constant-coefficient Helmholtz operator is Fourier multiplier with Fourier symbol $|\xi|^2-1$; i.e.,
\beq\label{eq:FM}
\big((-k^{-2}\Delta -1 )v\big)(x) = \cF_k^{-1}\Big( (|\xi|^2-1)\cF_k v(\xi)\Big) (x).
\eeq
If $\lambda 
>1$ then there exists $C>0$ such that 
\beqs
\big||\xi|^2-1\big|\geq C \langle\xi\rangle^2 \quad \tfor |\xi|\geq  \lambda;
\eeqs
i.e., the Fourier symbol of the constant-coefficient Helmholtz operator is elliptic on $|\xi|>1$, 
with this range of $\xi$ corresponding to the standard Fourier variable (i.e., with no scaling by $k$ in \eqref{eq:FT}) having modulus $>k$. 
The ``high-frequency'' components of the solution are then defined as those with frequency $>k$, and the ``low-frequency'' ones defined as those with frequencies $\lesssim k$.

\paragraph{Ingredient 2:~semiclassical pseudodifferential operators.}

The variable-coefficient Helmholtz operator $\nabla\cdot (A_{\rm scat}\nabla) + k^2 c_{\rm scat}$ is no longer a Fourier multiplier (i.e., it cannot be written in the form \eqref{eq:FM}). It is, however, a pseudodifferential operator; indeed, recall that part of the motivation for the development of pseudodifferential operators was to extend Fourier analysis to study variable-coefficient (as opposed to constant-coefficient) PDEs. 
\emph{Semiclassical} pseudodifferential operators are those defined with Fourier transform defined by \eqref{eq:FT}, i.e., with the large 
parameter $k$ (or small parameter $k^{-1}$) built in; thus semiclassical pseudodifferential operators are precisely the pseudodifferential operators tailor-made to study problems with a large/small parameter.

The paper \cite{LSW3} uses the ``nice'' behaviour of elliptic semiclassical pseudodifferential operators (namely, they are invertible up to a small error) to prove the required bound on the high-frequency components of the decomposition for the (non-truncated) Helmholtz equation in $\Rea^d$ (i.e., $\Omega_-= \emptyset$) with smooth $A_{\rm scat}$ and $c_{\rm scat}$. Note that (i) the polynomial boundedness condition of Definition \ref{def:poly_bound} is needed to show that the $O(k^{-\infty})$ error terms in the pseudodifferential calculus acting on the solution are indeed small (which is not guaranteed if the solution operator grows exponentially in $k$), and (ii) 
the theory of pseudodifferential operators is the least technical when the symbols are smooth, thus \cite{LSW3} used smooth frequency cut-offs (as opposed to those defined by an indicator function in \cite{MeSa:10, MeSa:11}). \footnote{The expository paper \cite{S1} shows that a frequency cut-off defined by an indicator function can nevertheless be used in the constant-coefficient case; this is because Fourier multipliers can be formulated without any differentiability requirements on the symbols. The paper \cite{S1} gives an alternative proof of the decomposition result in \cite{MeSa:10} using just elementary properties of the Fourier transform and integration by parts (in particular, without any of the Bessel/Hankel-function asymptotics used in \cite{MeSa:10}).
}

\paragraph{Ingredient 3:~frequency cut-offs defined as functions of the operator (i.e., eigenfunction expansion).}

For problems posed in domains other than $\Rea^d$, it is difficult to use the Fourier transform to define frequency cut-offs. 
The papers \cite{MeSa:11, EsMe:12} tackle this issue by using the composition of the frequency cut-offs on $\Rea^d$ and a suitable extension operator from the domain to $\Rea^d$. Here, following \cite{LSW4}, we instead define frequency cut-offs using the eigenfunctions of the Helmholtz operator considered on a large torus including $\Otr$ (and the black box inside it); this approach has the advantage that the frequency cut-offs then commute with the Helmholtz operator used to define them.

More precisely, recall that the functional calculus defines functions of a self-adjoint elliptic operator in terms of eigenfunction expansions.
Here we choose the operator to be the so-called \emph{reference operator} in the framework of black-box scattering; this is just the operator $P_\hsc^\sharp:= -k^{-2}c_{\rm scat}^2 \nabla\cdot(A_{\rm scat}\nabla)$ considered on the torus $\mathbb{T}^d_{R_\sharp}$ with $R_{\sharp}$ sufficiently large 
so that the torus contains $\Omega_\tr$ (see \S\ref{subsec:bb} below). Then, with $\lambda_j^\sharp$ and $\phi_j^\sharp$ the eigenvalues and eigenfunctions of $P_\hsc^\sharp$ and $f$ a real-valued Borel function,
\beqs
f(P_\hsc^\sharp)v =\sum_j a_j f(\lambda_j^\sharp) \phi_j^\sharp \quad\tfor \quad v=\sum_j a_j \phi_j^\sharp
\eeqs
(see \S\ref{sec:BBFC} below). Given 
$\psi \in C^\infty_{\rm comp}(\mathbb R;[0,1])$ with $\supp \,\psi \subset [-2,2]$ and $\psi\equiv 1$ on $[-1,1]$, we define
$\psi_\mu := \psi(\cdot/\mu)$ and let 
\beqs
\Pilow:= \psi_\mu (P_\hsc^\sharp) \quad\tand \quad\Pihigh:= (1-\psi_\mu) (P_\hsc^\sharp)= I-\Pilow;
\eeqs
see \eqref{eq:PiL} and \eqref{eq:Pihigh} below.
As mentioned above, a crucial fact about these frequency cut-offs is that they commute with $\refop$.

\paragraph{Ingredient 4:~introduce a spatial cut-off and use
ellipticity of the PML operator in
  the scaling region.}

We choose $\varphi_\tr \in C^\infty_{\rm comp}(\Rea^d; [0,1])$ such that $\varphi_\tr \equiv 1$ on $B_{R_1(1+\delta)}$ and $\supp\, \varphi_\tr \subset B_{R_1(1+2\delta)}$ for a suitably chosen $\delta>0$. 
We then decompose $v$ as 
\beq\label{eq:v_split_explain}
 v = \underbrace{\Pihigh(\varphi_\tr v)}_{=: \vH} +  \underbrace{\Pilow(\varphi_\tr v)}_{=: \vL} + \underbrace{(1-\varphi_\tr)v}_{=:\vP}.
\eeq
We then use results from the recent paper \cite{GLS2} to bound $\vP$ in terms of the data with one power better $k$-dependence than the bound on the solution $v$; thus $\vP$ can be included in the component $\vhigh$ (note that the conditions on $\Gamma_\tr$ in Assumptions \ref{ass:LSW3} and \ref{ass:LSW4} ensure that the PML solution is $H^2$ up to the boundary $\Gamma_\tr$).

The ingredients used to bound $\vP$ are (i) the fact that, at highest order, the imaginary part of $-k^{-2}\Delta_\theta-1$ has a sign in the scaling region (see, e.g., \cite[Equation 4.22]{GLS2}, with this behind Lemma \ref{l:nearBoundary} below) 
and (ii) a Carleman estimate describing how $v$ propagates in the scaling region (see Lemma \ref{l:carleman} below).

In bounding $\vP$, it is crucial that 
$(1-\varphi_\tr)$ (and hence also $(1-\varphi_\tr)v$) is supported only in the PML scaling region $(B_{R_1})^c$. However, the fact that $\supp \, \varphi_\tr$ enters the scaling region causes the following issue. 
When bounding $\vH$, we consider 
\begin{align}
(\refop-I)\Pihigh({\varphi}_\tr v) &= \Pihigh (\refop-I)({\varphi}_\tr v) =
 \Pihigh \Big(
[\refop, \varphi_\tr ] v  + \varphi_\tr (\refop-I) v\Big).\label{eq:temp1}
\end{align}
We would now like to say that $(\refop-I)v$ equals the data $(P_{\hsc,\theta} -I)v$, but this is not the case since $\refop \neq P_{\hsc,\theta}$ on $\supp \,\varphi_\tr$ (which enters the scaling region).

The solution is twofold:~we first split $\vH = \Pihigh (\varphi_0 v) + \Pihigh(1-\varphi_0)\varphi_\tr v$ (see \eqref{eq:vH_decomp1} below), where $\varphi_0\in C^\infty_{\rm comp}(\Rea^d; [0,1])$ such that $\varphi_0 \equiv 1$ on $B_{R_0}$ and $\supp\, \varphi_0 \subset B_{R_1}$, and thus $\refop = P_{\hsc,\theta}$ on $\supp\, \varphi_0$. 
We argue as above for $\Pihigh (\varphi_0 v)$ and then deal with the component $\Pihigh(1-\varphi_0)\varphi_\tr v$, as well as the commutator term in \eqref{eq:temp1}, using the next ingredient.

\paragraph{Ingredient 5:~away from the black box, functions of $\refop$ are semiclassical pseudodifferential operators.}

When bounding $\vH$ and $\vL$, we use repeatedly the result that, when $f$ is sufficiently well-behaved and $\chi \in C^\infty(\Rea^d;[0,1])$ is zero in a neighbourhood of the black box, $\chi f(\refop)\chi$ is a pseudodifferential operator (up to a negligible error term); see Lemma \ref{lem:funcloc2} below. In particular, this result allows us to treat $\Pihigh$ and $\Pilow$ as pseudodifferential operators away from the black box.

The context of this result, due to Sj\"ostrand \cite{Sj:97}, 
 is the following:~in the setting of the homogeneous
pseudodifferential calculus, 
Strichartz \cite{St:72} proved  that a well-behaved function 
of a self-adjoint
elliptic differential operator is a pseudodifferential
operator. Helffer--Robert \cite{HeRo:83} proved the corresponding result in the semiclassical setting (see, e.g., the account
\cite[Chapter 8]{DiSj:99}), with this result using the 
Helffer--Sj\"ostrand approach to the functional calculus
\cite{HeSj:89}.
In the setting of black-box scattering, we cannot expect such a result to hold everywhere, because we don't know what's inside the black box. However, thanks to 
Sj\"ostrand \cite{Sj:97} this pseudodifferential property holds when localised away from the black box.

\paragraph{Ingredient 6:~regularity estimates inside the black box.}

While the analysis of $\vH$ is insensitive to the contents of the
black-box (see Ingredient~3)
understanding the properties of the low-frequency piece $\vL$ necessarily involves
``opening'' the black box.
 Intuitively, the fact that the spectral parameter in  $\Pilow( \varphi_\tr v)$ is compactly supported indicates that strong elliptic
estimates should hold, but knowing that $\vL$ is analytic 
is dependent on the coefficients and domain inside the black box.

The abstract result Theorem \ref{thm:mainbb} contains 
the abstract regularity hypothesis \eqref{eq:lowenest}.
The choices of this hypothesis to prove Theorems \ref{thm:LSW3} and \ref{thm:LSW4} are discussed 
in \S\ref{sec:mainbb_discuss} (after the statement of Theorem \ref{thm:mainbb}), but we highlight here that bound \eqref{eq:decLFLSW3} on $\vL$ in Theorem \ref{thm:LSW3} is proved using explicit calculation involving the eigenvalues of $-\Delta$ on the torus, and the bound \eqref{eq:decLFDir1} on $\vL$ in Theorem \ref{thm:LSW4} is proved using heat equation bounds from \cite{EsMoZh:17}. Indeed, for the latter, because of the compact support of the spectral parameter in $\Pilow$, 
we can run the \emph{backward heat equation} on $\Pilow (\varphi_\tr v)$ for
as long as we like and obtain $L^2$ estimates on the result.  If the
boundary and coefficients are analytic then known heat kernel estimates yield the necessary Cauchy-type estimates on $\pa^\alpha \Pilow( \varphi_\tr v)$; see Corollary \ref{cor:heat} and Theorem \ref{thm:heat} below.

\paragraph{Discussion of the recent results \cite{Be:21, BeChMe:22} that extend the approach of \cite{MeSa:10, MeSa:11, EsMe:12} to variable-coefficient problems.}

The recent thesis \cite{Be:21} is an extension of the approach of \cite{MeSa:10, MeSa:11, EsMe:12} to variable-coefficient Helmholtz problems. 
Since the preprint of the present paper appeared, the results of \cite{Be:21} appeared as the preprint \cite{BeChMe:22}. We make the following three remarks comparing and contrasting the approach of \cite{Be:21,BeChMe:22} (following \cite{MeSa:10, MeSa:11, EsMe:12}) and the approach of \cite{LSW3}/\cite{LSW4}/the present paper.
\ben
\item \emph{(Boundary conditions.)} The approach of \cite{Be:21,BeChMe:22}
  in principle covers a variety of boundary conditions. For example, 
    \cite[Theorem 5.5]{BeChMe:22} proves an analogous result to Theorem \ref{thm:FEM1} 
    for the PML problem with an impedance boundary condition on $\Gamma_{\tr}$ under (i) assumptions about the coefficients and domain discussed in Point 2 below, (ii) the assumption that the solution operator of the PML problem is polynomially bounded in $k$, and (iii) the assumption that the PML scaling angle $\theta$ is sufficiently small.
Theorem \ref{t:scaledResolve} below (from \cite{GLS2}) verifies the assumption (ii) for truncation with a Dirichlet boundary condition
(under the assumptions on the scaling function in \S\ref{sec:PML_intro}) and this result also holds for truncation with an impedance boundary condition (indeed, the boundary condition on $\Gamma_{\tr}$ enters the analysis in \cite{GLS2} via \cite[Lemma 4.4]{GLS2}, and this lemma -- relying on integration by parts near $\Gamma_{\tr}$ -- goes through as before provided the impedance parameter has the correct sign).

We note that truncation via the exact DtN map, which is the easiest boundary condition to deal with in the approach of 
\cite{LSW3}/\cite{LSW4}/the present paper, is the most difficult boundary condition to deal with in the approach of \cite{Be:21}. Indeed, the decomposition for the DtN map required in the latter approach is proved using results about boundary integral operators from \cite{Me:12} (see \cite[Lemma 6.5.12 and its proof in \S6.9]{Be:21}).

\item \emph{(Assumptions on the coefficients/domain.)} As in \cite{MeSa:10, MeSa:11, EsMe:12}, the frequency cut-offs in \cite{Be:21, BeChMe:22} are applied to the data; $\vlow$ is then the solution of a Helmholtz problem with (piecewise) analytic data, and one needs (piecewise) analytic coefficients (where the pieces are separated by analytic surfaces) and an analytic domain to get that $\vlow$ is analytic \cite[Lemma 6.5.8]{Be:21}. 
In contrast, the approach in \cite{LSW3}/\cite{LSW4}/the present paper 
 can deal with smooth coefficients (everywhere when $\Omega_-=\emptyset$, and away from the obstacle in the general case) as a result of applying the cut-offs to the solution itself.

 \item \emph{(Bound on the high-frequency part.)}
In \cite{Be:21, BeChMe:22}, the semiclassical ellipticity of the Helmholtz operator on high frequencies -- although not explicitly mentioned -- is again behind the improved bound on $\vhigh$ compared to $v$. Indeed, with $S_k^-$ the solution operator to the Helmholtz equation $(\Delta +k^2)v=-f$ and $S_k^+$ the solution operator to $(\Delta -k^2)v=f$, \cite[Page 98]{Be:21} writes ``we will later see that $S_k^-$ and $S_k^+$ act very similar on high-frequency data'' (with ``later'' referring to \cite[Remark 6.3.7]{Be:21}).
\een

\subsection{Outline of the rest of the paper}

Section \ref{sec:FEM} proves the $hp$-FEM convergence result of Theorem \ref{thm:FEM1} using Theorems \ref{thm:LSW3} and 
\ref{thm:LSW4}, as discussed in \S\ref{sec:idea1}, this follows closely the arguments in \cite{MeSa:10, MeSa:11, LSW3, LSW4} and so, for brevity, quotes several results from these papers without proof.

Section~\ref{sec:blackbox} recalls the framework of black-box scattering, and sets up the associated functional calculus; this section is similar to \cite[\S2]{LSW4} (and refers to that for some of the proofs) except that it now has to deal with both the (unscaled) operator $P$ and the scaled operator $P_\theta$, whereas \cite[\S2]{LSW4} only deals with $P$.

Section \ref{sec:mainresult} states the main decomposition result for Helmholtz solutions in the black-box framework with PML truncation
(Theorem \ref{thm:mainbb}), with this result then proved in Section~\ref{sec:blackboxresult}.

Section \ref{sec:LSW34} shows how Theorems \ref{thm:LSW3} and \ref{thm:LSW4} follow from Theorem \ref{thm:mainbb} -- by design, these proofs are essentially identical to the proofs  in \cite{LSW4} of the analogous results for outgoing Helmholtz solutions; we therefore 
give a sketch of the main steps.

Appendix~\ref{app:sct} recalls results about semiclassical pseudodifferential operators on the torus.

\section{Proof of Theorem \ref{thm:FEM1} using Theorems \ref{thm:LSW3} and 
\ref{thm:LSW4}}\label{sec:FEM}

\subsection{Overview}\label{sec:FEM_overview}

The two ingredients for the proof of Theorem \ref{thm:FEM1} are
\bit
\item Lemma \ref{lem:Schatz}, which is the classic duality argument giving a condition for quasioptimality to hold in terms of how well the solution of the adjoint problem 
is approximated by the finite-element space (measured by the quantity  $\eta(V_N)$ defined by \eqref{eq:etadef}), and  
\item Lemma 
\ref{lem:MS_LSW4} that bounds $\eta(V_N)$ using the decomposition from Theorem \ref{thm:LSW4}.
\eit 
Regarding Lemma \ref{lem:Schatz}:~this argument came out of ideas introduced in \cite{Sc:74}, was formalised in its present form in \cite{Sa:06}, and has been used extensively in the analysis of the Helmholtz FEM; see, e.g., \cite{AzKeSt:88, DoSaShBe:93, Me:95, Ih:98, Sa:06, MeSa:10, MeSa:11, ZhWu:13, Wu:14, DuWu:15, ChNi:18, LiWu:19, ChNi:20, ChGaNiTo:22, GrSa:20, GaSpWu:20, LSW3}.

Regarding Lemma 
\ref{lem:MS_LSW4}:~given the decomposition in Theorem \ref{thm:LSW4}, the bound on $\eta(V_N)$ when Assumption \ref{ass:LSW4} is satisfied is identical to the corresponding proof of \cite[Lemma 5.5]{LSW4} (which is also very similar to the proof of \cite[Theorem 5.5]{MeSa:10}). 

The main work in this section is therefore recalling that the PML variational formulation \eqref{eq:PML_vf} satisfies a G\aa rding inequality and therefore fits in the framework of Lemma \ref{lem:Schatz}.

\subsection{The sesquilinear form $a(\cdot,\cdot)$ is continuous and satisfies a G\aa rding inequality}

In the following lemma $(\cdot,\cdot)_2$ and $\|\cdot\|_2$ denote, respectively, the
Euclidean inner product and associated norm on $\mathbb{C}^d$.

\ble
Given $A_{\rm scat}$ and $c_{\rm scat}$ as in Definition \ref{def:EDP}, a scaling function $f(r)$ satisfying \eqref{e:fProp}, and $\epsilon>0$ there exist $\Amax$ and $\cmax$ such that, for all $\epsilon \leq \theta \leq \pi/2-\epsilon$, $x \in \Omega$, and $\xi,\zeta \in \mathbb{C}^d$,
\beqs
|(A(x)\xi,\zeta)_2|\leq \Amax \|\xi\|_2 \|\zeta\|_2
\quad\tand\quad
\frac{1}{|c(x)|^2} \geq \frac{1}{(\cmax)^2}.
\eeqs
\ele

\bpf
This follows from the definitions of $A$ and $c$ in \eqref{eq:Ac}, the definitions of $\alpha$ and $\beta$ in \eqref{eq:alpha_beta}, and the fact that $f_\theta(r):= f(r)\tan\theta$.
\epf

\begin{corollary}[Continuity of $a(\cdot,\cdot)$]
If
$\Ccont := \max\{ \Amax, \cmin^{-2} \}$,
then
\beqs
|a(v,w)|\leq \Ccont \N{v}_{H^1_k(\Omega)} \N{w}_{H^1_k(\Omega)} \quad\tfa v,w \in H^1_0(\Omega).
\eeqs
\end{corollary}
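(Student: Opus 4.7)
The plan is to bound the two terms in $a(v,w)$ separately using the pointwise estimates from the preceding lemma, and then combine the resulting $L^2$ contributions via a Cauchy--Schwarz inequality on $\Rea^2$ to match the $H^1_k$ norms on the right-hand side.

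\textbf{Step 1:} By the triangle inequality,
\beqs
|a(v,w)| \leq \int_\Omega \bigl|\bigl(A(x) \nabla v(x), \overline{\nabla w(x)}\bigr)_2\bigr|\,\rd x + k^2 \int_\Omega \frac{1}{|c(x)|^2} |v(x)||w(x)|\,\rd x.
\eeqs

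\textbf{Step 2:} I would apply the preceding lemma to the integrands. The bound $|(A(x)\xi,\zeta)_2| \leq \Amax \|\xi\|_2\|\zeta\|_2$ (valid for $\xi = \nabla v(x)$, $\zeta = \overline{\nabla w(x)}$) together with Cauchy--Schwarz in $L^2(\Omega)$ gives
\beqs
\int_\Omega \bigl|\bigl(A\nabla v, \overline{\nabla w}\bigr)_2\bigr| \leq \Amax \|\nabla v\|_{L^2(\Omega)} \|\nabla w\|_{L^2(\Omega)}.
\eeqs
Similarly, the bound $|c(x)|^{-2} \leq \cmin^{-2}$ and Cauchy--Schwarz give
\beqs
k^2 \int_\Omega \frac{1}{|c|^2} |v||w| \leq \cmin^{-2}\, \bigl(k\|v\|_{L^2(\Omega)}\bigr)\bigl(k\|w\|_{L^2(\Omega)}\bigr).
\eeqs

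\textbf{Step 3:} With $\Ccont := \max\{\Amax, \cmin^{-2}\}$, combining the two bounds and applying the discrete Cauchy--Schwarz inequality on $\Rea^2$ yields
\beqs
|a(v,w)| \leq \Ccont \Bigl( \|\nabla v\|_{L^2}^2 + k^2 \|v\|_{L^2}^2\Bigr)^{1/2} \Bigl( \|\nabla w\|_{L^2}^2 + k^2 \|w\|_{L^2}^2\Bigr)^{1/2},
\eeqs
which is exactly the claimed estimate in view of the definition of $\|\cdot\|_{H^1_k(\Omega)}$ in \eqref{eq:1knorm}.

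There is no real obstacle here: the content is entirely in the pointwise bounds proved in the preceding lemma, and this corollary is just the bookkeeping that transfers them to an $H^1_k$--continuity statement for the sesquilinear form. The only mild point to keep in mind is to arrange the factors of $k$ so that they group into $k\|v\|_{L^2}$ and $k\|w\|_{L^2}$, so that the final $\Rea^2$ Cauchy--Schwarz step reproduces the weighted norm defined in \eqref{eq:1knorm} exactly, without any spurious factors.
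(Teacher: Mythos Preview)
Your proof is correct and follows exactly the approach the paper indicates: the paper's own proof is the single sentence ``This follows by the Cauchy--Schwarz inequality and the definition of $\|\cdot\|_{H^1_k(\Omega)}$ \eqref{eq:1knorm},'' and you have simply written out these steps in full.
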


\bpf This follows by the  Cauchy-Schwarz inequality and the definition of $\|\cdot\|_{H^1_k(\Omega)}$ \eqref{eq:1knorm}.
\epf

\begin{lemma}\label{lem:strong_elliptic}

Suppose that Assumption \ref{ass:PML} holds when $d=3$.
With $A$ defined by \eqref{eq:Ac}, given $\epsilon>0$
there exists $\Amin>0$ such that, 
for all $\epsilon \leq \theta\leq \pi/2-\epsilon$,
\beqs
\Re \big( A(x) \xi, \xi\big)_2 \geq \Amin \|\xi\|_2^2 \quad\tfa \xi \in \mathbb{C}^d \tand x \in \Omega_+.
\eeqs
\end{lemma}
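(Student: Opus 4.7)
The plan is to split $\Omega_+$ into the region $\{r < R_1\}$, where $A = A_{\rm scat}$ is independent of $\theta$, and the scaling region $\{r \geq R_1\}$, where $A = HDH^T$, and to establish a uniform positive lower bound on $\Re(A(x)\xi,\xi)_2/\|\xi\|_2^2$ in each.

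On $\{r < R_1\}$, $A_{\rm scat}$ is real, symmetric, and positive-definite by Definition~\ref{def:EDP}, continuous on $\overline{\Omega_+}$, and equals $I$ outside $B_{R_{\rm scat}}$. Its smallest eigenvalue is therefore bounded below by a positive constant on the compact set $\overline{\Omega_+} \cap \overline{B_{R_{\rm scat}}}$ and equals $1$ elsewhere; since $A_{\rm scat}$ is real and symmetric, the associated Hermitian quadratic form on $\mathbb{C}^d$ satisfies $\Re(A_{\rm scat}\xi,\xi)_2 \geq \lambda_{\min}\|\xi\|_2^2$, and this bound is $\theta$-independent.

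On $\{r \geq R_1\}$ the key observation is that $H$ is a \emph{real} orthogonal matrix, so the substitution $\zeta := H^T\xi$ preserves the Euclidean norm on $\mathbb{C}^d$ and gives
\beqs
\Re(HDH^T\xi,\xi)_2 = \sum_k \Re(D_{kk})|\zeta_k|^2\quad\tand\quad \|\zeta\|_2 = \|\xi\|_2.
\eeqs
It therefore suffices to bound each $\Re D_{kk}$ from below, uniformly in $r\in [R_1,\infty)$ and $\theta \in [\epsilon,\pi/2-\epsilon]$. Writing $f'_\theta(r) = f'(r)\tan\theta$ and $f_\theta(r)/r = (f(r)/r)\tan\theta$, I would first note from \eqref{e:fProp} that $f'$ and $f/r$ are bounded on $[0,\infty)$ (both are continuous, vanish on $[0,R_1]$, equal $1$ on $[R_2,\infty)$, and are continuous on the compact interval $[R_1,R_2]$), while $\tan\theta \leq \tan(\pi/2-\epsilon)$; hence $|\alpha|$ and $|\beta|$ are uniformly bounded above.

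For $d=2$ the diagonal entries are $\beta/\alpha$ and $\alpha/\beta$, and since $f_\theta,f'_\theta\geq 0$ an elementary calculation gives
\beqs
\Re(\beta/\alpha) = \frac{1 + f'_\theta (f_\theta/r)}{|\alpha|^2} \geq \frac{1}{|\alpha|^2},\qquad \Re(\alpha/\beta) = \frac{1 + f'_\theta (f_\theta/r)}{|\beta|^2} \geq \frac{1}{|\beta|^2},
\eeqs
both uniformly bounded away from zero. For $d=3$ the entries are $\beta^2/\alpha$, $\alpha$, $\alpha$; here $\Re\alpha = 1$ is trivial, and
\beqs
\Re(\beta^2/\alpha) = \frac{1 - (f_\theta/r)^2 + 2(f_\theta/r) f'_\theta}{|\alpha|^2}.
\eeqs
The main difficulty is bounding this numerator from below, since it is a priori sign-indefinite. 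This is precisely where Assumption~\ref{ass:PML} enters: monotonicity of $f_\theta/r$ gives $(f_\theta/r)' = (rf'_\theta - f_\theta)/r^2 \geq 0$, i.e.\ $f'_\theta \geq f_\theta/r$, so $2(f_\theta/r)f'_\theta \geq 2(f_\theta/r)^2$ and the numerator is bounded below by $1 + (f_\theta/r)^2 \geq 1$. Taking $\Amin$ to be the minimum of all the lower bounds obtained then completes the proof.
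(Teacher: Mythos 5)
Your proof is correct and follows essentially the same route as the paper's: reduce to the scaling region, use the real orthogonality of $H$ to pass to the diagonal matrix $D$, compute $\Re D$ explicitly, and invoke Assumption~\ref{ass:PML} in the form $f_\theta'(r)\geq f_\theta(r)/r$ to control the sign-indefinite numerator of $\Re(\beta^2/\alpha)$ when $d=3$. Your treatment of the uniformity in $\theta$ (via explicit upper bounds on $|\alpha|^2$ and $|\beta|^2$) is slightly more explicit than the paper's appeal to continuity and boundedness of $\tan\theta$ on $[\epsilon,\pi/2-\epsilon]$, but the substance is the same.
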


\begin{corollary}[$a(\cdot,\cdot)$ satisfies a G\aa rding inequality]
\beq\label{eq:Garding}
\Re a(w,w) \geq \Amin \|w\|^2_{H^1_k(\Omega)} - \big(\Amin + (\cmin)^{-2}\big)k^2 \|w\|^2_{L^2(\Omega)} \quad\tfa w\in H^1_0(\Omega).
\eeq
\end{corollary}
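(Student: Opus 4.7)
The plan is to directly compute $\Re a(w,w)$ from the definition of $a(\cdot,\cdot)$, bound the principal-part term from below using the uniform ellipticity in Lemma~\ref{lem:strong_elliptic}, estimate the zeroth-order term in absolute value using the uniform bound on $|1/c^2|$ (which is the content of the second inequality in the preceding lemma, giving $|1/c(x)^2| \leq \cmin^{-2}$ pointwise), and then reorganise the resulting $\|\nabla w\|_{L^2}^2$ into the weighted norm $\|w\|_{H^1_k(\Omega)}^2 = \|\nabla w\|_{L^2}^2 + k^2\|w\|_{L^2}^2$. The required lower bound then drops out with the claimed constants.

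Concretely, for $w \in H^1_0(\Omega)$, taking real parts in the definition
\[
a(w,w) = \int_\Omega A \nabla w \cdot \overline{\nabla w} \; - \; k^2 \int_\Omega \frac{1}{c^2} |w|^2
\]
gives
\[
\Re a(w,w) = \int_\Omega \Re\bigl(A(x)\nabla w(x),\nabla w(x)\bigr)_2\, dx \; - \; k^2 \int_\Omega \Re\!\bigl(c(x)^{-2}\bigr) |w(x)|^2\, dx.
\]
Lemma~\ref{lem:strong_elliptic} (which uses Assumption~\ref{ass:PML} when $d=3$) yields, uniformly in $\epsilon \leq \theta \leq \pi/2 -\epsilon$, the pointwise bound $\Re(A(x)\xi,\xi)_2 \geq \Amin \|\xi\|_2^2$, so the first integral is at least $\Amin \|\nabla w\|_{L^2(\Omega)}^2$. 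For the second integral I bound $|\Re(c^{-2})| \leq |c^{-2}| = |c|^{-2} \leq \cmin^{-2}$, so the zeroth-order contribution is at most $k^2 \cmin^{-2} \|w\|_{L^2(\Omega)}^2$ in absolute value.

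Combining these two estimates,
\[
\Re a(w,w) \;\geq\; \Amin \|\nabla w\|_{L^2(\Omega)}^2 - \cmin^{-2} k^2 \|w\|_{L^2(\Omega)}^2.
\]
Writing $\Amin\|\nabla w\|_{L^2(\Omega)}^2 = \Amin \|w\|_{H^1_k(\Omega)}^2 - \Amin k^2 \|w\|_{L^2(\Omega)}^2$ and collecting the $k^2\|w\|_{L^2}^2$ terms gives exactly
\[
\Re a(w,w) \geq \Amin\|w\|_{H^1_k(\Omega)}^2 - (\Amin + \cmin^{-2}) k^2 \|w\|_{L^2(\Omega)}^2,
\]
which is \eqref{eq:Garding}. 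There is no genuine obstacle here: the only substantive input is the coercivity of $\Re A$ uniformly in $\theta$ on $[\epsilon,\pi/2-\epsilon]$, and that has already been isolated in Lemma~\ref{lem:strong_elliptic}; everything else is a one-line rearrangement.
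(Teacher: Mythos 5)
Your proof is correct and is exactly the one-line argument the paper leaves implicit: combine the pointwise coercivity $\Re(A(x)\xi,\xi)_2\ge \Amin\|\xi\|_2^2$ from Lemma~\ref{lem:strong_elliptic} with the uniform bound $|c(x)^{-2}|\le \cmin^{-2}$, and then rewrite $\Amin\|\nabla w\|_{L^2(\Omega)}^2$ as $\Amin\|w\|_{H^1_k(\Omega)}^2-\Amin k^2\|w\|_{L^2(\Omega)}^2$. The only point worth flagging is that the preceding lemma as printed states $1/|c(x)|^2\ge 1/\cmin^2$, whereas what your argument (and the continuity corollary) actually needs, and what is clearly intended, is the upper bound $|c(x)^{-2}|\le \cmin^{-2}$.
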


\bpf[Proof of Lemma \ref{lem:strong_elliptic}]
By assumption, $A_{\rm scat}(x)$ is symmetric positive definite for all $x\in \Omega$ with $r\leq R_1$. We therefore only need to consider the region $r\geq R_1$

Let $\eta:= H^T \xi$; since $H$ is orthogonal, $\|\eta\|_2 = \|\xi\|_2$. Then $\Re(A\xi,\xi)_2 = \Re(D\eta,\eta)_2$. Explicit calculation from the definition of $D$ shows that 
\beq\label{eq_ReD}
\Re D =
\left(
\begin{array}{cc}
\frac{1+r^{-1}f_\theta f_\theta'}{1+ (f_\theta')^2} &0 \\
0 & \frac{1+ r^{-1} f_\theta f_\theta'}{1+r^{-2} f_\theta^2}
\end{array}
\right), 
\,d=2, 
\,\,\tand\,\,
\Re D =
\left(
\begin{array}{ccc}
\frac{1- r^{-2} f_\theta^2 + 2 r^{-1} f_\theta' f_\theta}{1+ f_\theta^2} &0 &0\\
0 & 1 &0 \\
0 & 0 &1
\end{array}
\right),
\,d=3.
\eeq

We now claim that there exists $C>0$ (depending on $\tan \theta$) such that 
\beq\label{eq_Dpos}
\Re (D(x)\eta,\eta)_2 = (\Re D(x) \eta,\eta)_2 \geq C \|\eta\|_2^2 \quad\tfa \eta \in \mathbb{C}^d \tand r\geq R_1;
\eeq
the result then follows since $\tan \theta$ depends continuously on $\theta$ and is bounded above and below (with bounds depending on $\epsilon$) for $\epsilon \leq \theta\leq \pi/2-\epsilon$.

When $d=2$, \eqref{eq_Dpos} follows immediately from \eqref{eq_ReD} and the fact that both $r^{-1} f_\theta$ and $f_\theta'$ are non-negative. When $d=3$, \eqref{eq_Dpos} follows if we can show that 
$r^{-1} f_\theta(r) \leq f_\theta'(r)$ 
for all $r\geq R_1,$
which in turn follows from Assumption \ref{ass:PML} since $f_\theta'(r) =f_\theta(r)/r + r (f_\theta(r)/r)'$.
\epf

\bre[Assumption \ref{ass:PML} and Lemma \ref{lem:strong_elliptic}]\label{rem:strong_elliptic}
Without assumptions on $f_\theta(r)$ additional to \eqref{e:fProp} (such as Assumption \ref{ass:PML}) the eigenvalues of the matrix $D$ will not all lie in a half plane. Indeed, $\alpha$ (defined in \eqref{eq:alpha_beta}) lies in the first quadrant of the complex plane for all $\theta\in [0,\pi/2]$. Explicit calculation shows that 
\beqs
\frac{\beta^2}{\alpha} = 
\big(1 + (f'_\theta)^2\big)^{-1}\left[
1- \left(\frac{f_\theta}{r}\right)^2 + \frac{2f_\theta f'_\theta}{r} + \ri \left( \frac{2f_\theta}{r} + f'_\theta \left( \left(\frac{f_\theta}{r}\right)^2 -1 \right)\right)
\right].
\eeqs

If $f_\theta(r)/r$ is small compared to both $1$ and $f_\theta'(r)$ (which can occur when the scaling ``turns on'' sufficiently quickly at a large $R_1$)
\beqs
\frac{\beta^2}{\alpha} \approx 
\big(1 + (f'_\theta)^2\big)^{-1}\left[
1+\frac{2f_\theta f'_\theta}{r} - \ri  f'_\theta 
\right]
\eeqs
and so is in the fourth quadrant of the complex plane. If, in addition, $f_\theta'(r)$ is large compared to $1$, then $3\pi/2\leq \arg (\beta^2/\alpha)\leq 7\pi/8$. 

If there exists $r^*\in (R_1,R_2)$ such that $f_\theta'(r^*)$ is small 
then 
\beqs
\frac{\beta^2}{\alpha} \approx
1-\left(\frac{f_\theta}{r}\right)^2 + \ri  \frac{2 f_\theta}{r}.
\eeqs
Suppose, furthermore, that $f_\theta(r^*)> r^* \tan\theta$.
Then if $\tan \theta>1$ (i.e., $\theta>\pi/4$), then when $r=r^*$, $\beta^2/\alpha$ lies in the second quadrant of the complex plane. Furthermore, as $\theta\to \pi/2$, the argument of $\beta^2/\alpha$ tends to $\pi$. 

Therefore, for an $f_\theta(r)$ combining the two types of behaviour above, $\beta^2/\alpha$ and $\alpha$ are not contained in the same half plane for all $R_1\leq r\leq R_2$ and $\epsilon\leq \theta\leq \pi/2-\epsilon$.
\ere

\subsection{The standard duality argument}\label{sec:Schatz}

\begin{definition}[Adjoint solution operator $\cS^*$]\label{def:asol}
Given $f\in L^2(\Omega)$, let $\cS^*f$ be defined as the solution of the variational problem
\beq\label{eq:S*vp}
\tfind\,\, \cS^*f \in H^1_0(\Omega) \,\,\tst\,\, a(v,\cS^*f) = \int_{\Omega}v\,\overline{f}\,\,\tfa v\in H^1_0(\Omega).
\eeq
\end{definition}

The conditions for quasioptimality below are formulated in terms of $\cS^*$. However, we record immediately 
in the following lemma that $\cS^* f$ is just the complex-conjugate of a solution of the PML variational problem \eqref{eq:PML_vf}. 

\begin{lemma}[The adjoint solution is the complex conjugate of a Helmholtz solution]\label{lem:asol_sol}
With $\cS^*$ is defined by \eqref{eq:S*vp},
\beqs
a(\overline{\asol f}, w) = \int_\Omega\overline{f}\,\overline{w} \quad \tfa w \in H^1_0(\Omega).
\eeqs
\end{lemma}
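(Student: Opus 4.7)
The plan is to exploit a symmetry of the sesquilinear form $a(\cdot,\cdot)$ that arises from the fact that, although $A$ may be complex-valued in the PML region, it is \emph{complex symmetric} ($A = A^T$, not $A = A^*$) by its construction in \eqref{eq:Ac}, and $1/c^2$ is a scalar. The upshot is the identity
\beq\label{eq:aSym}
a(v,w) = a(\overline{w}, \overline{v}) \quad \tfa v,w \in H^1_0(\Omega),
\eeq
which I would verify by writing the quadratic form in coordinates: $(A\nabla v) \cdot \overline{\nabla w} = (\nabla v)^T A^T \overline{\nabla w}$, then using $A^T=A$ and commutativity of the pointwise products $v\,\overline{w}$ and $\overline{w}\,v$ to match with the analogous expression for $a(\overline{w},\overline{v})$.

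Once \eqref{eq:aSym} is established, the proof is essentially a relabelling. First I would apply \eqref{eq:aSym} to the defining variational equation \eqref{eq:S*vp} of $\cS^* f$, yielding
\beqs
a(\overline{\asol f}, \overline{v}) = a(v,\asol f) = \int_\Omega v\,\overline{f} \quad \tfa v \in H^1_0(\Omega).
\eeqs
Then I would substitute $v = \overline{w}$ (which is permissible since $H^1_0(\Omega)$ is closed under complex conjugation) to obtain
\beqs
a(\overline{\asol f}, w) = \int_\Omega \overline{w}\,\overline{f} = \int_\Omega \overline{f}\,\overline{w} \quad \tfa w \in H^1_0(\Omega),
\eeqs
which is exactly the claim.

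There is no real obstacle here; the only subtlety is making sure that the symmetry \eqref{eq:aSym} does indeed hold in the PML region, where $A$ and $1/c^2$ are genuinely complex. This is where it is essential that the radial scaling produces a complex \emph{symmetric} matrix $A = HDH^T$ (with $D$ diagonal and $H$ real orthogonal), rather than an Hermitian matrix; had $a(\cdot,\cdot)$ been Hermitian in the usual sense, $\asol f$ would equal $\sol\overline{f}$, but here instead we obtain $\overline{\asol f} = \sol\overline{f}$ in the weak sense, which is the content of the lemma.
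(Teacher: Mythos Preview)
Your proposal is correct and follows essentially the same approach as the paper: the paper establishes the equivalent identity $a(\overline{v},w) = a(\overline{w},v)$ using that $H$ is real and $D$ is diagonal (hence $A=HDH^T$ is complex symmetric), and then invokes the definition of $\cS^*$. Your version simply writes this symmetry as $a(v,w)=a(\overline{w},\overline{v})$ and makes the substitution $v=\overline{w}$ explicit, which is a cosmetic difference.
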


\bpf
By the definitions of $a(\cdot,\cdot)$ and the coefficients $A$ and $c^{-2}$ \eqref{eq:Ac}, and the facts that $H$ is real and $D$ is diagonal (and hence symmetric), 
$a(\overline{\PMLsol},w) = a(\overline{w},\PMLsol)$ 
for all $\PMLsol,w \in H^1_0(\Omega);$
the result then follows from the definition of $\asol$ \eqref{eq:S*vp}.
\end{proof}

\begin{definition}[$\eta(V_N)$]
Given a sequence $(V_N)_{N=0}^\infty$ of finite-dimensional subspaces of $H_0^1(\Omega)$,
let
\beq\label{eq:etadef}
\eta(V_N):= \sup_{0\neq f\in L^2(\Omega)}
\min_{w_N\in V_N} \frac{\N{S^*f-w_N}_{H^1_k(\Omega)}}{\big\|
f\big\|_{L^2(\Omega)}}.
\eeq
\end{definition} 

\ble[Conditions for quasioptimality]\label{lem:Schatz}
If 
\beqs
k\,\eta(V_N) \leq \frac{1}{\Ccont} \sqrt{ \frac{\Amax}{2\big( \Amin+\cmin^{-2}\big)}},
\eeqs
then the Galerkin equations \eqref{eq:FEM} have a unique solution which satisfies
\beqs
\N{v-v_N}_{H^1_k(\Omega)}
 \leq \frac{2\Ccont}{\Amin}\left(\min_{w_N\in V_N} \N{v-w_N}_{H^1_k(\Omega)}\right).
\eeqs
\ele

\bpf[References for the proof] 
This is based on the G\aa rding inequality \eqref{eq:Garding}; see, e.g., \cite[Theorem 4.3]{MeSa:10} (when $A\equiv I$ and $c\equiv 1$) or \cite[Lemma 6.4]{LSW3} (for general $A$ and $c$). 
\epf

\subsection{The bound on $\eta(V_N)$ obtained using Theorems \ref{thm:LSW3} and 
\ref{thm:LSW4}}

\ble[Bound on $\eta(V_N)$ under Assumption \ref{ass:LSW3} or 
\ref{ass:LSW4}]\label{lem:MS_LSW4}
Suppose that $\obstacle_-, A_{\rm scat},$ and $c_{\rm scat}$ satisfy \emph{either} Assumption \ref{ass:LSW3} \emph{or} \ref{ass:LSW4}.
Suppose further that $\Omega_-, A_{\rm scat}, $ $c_{\rm scat}$, and $K\subset[k_0,\infty)$ are such that the solution operator of the exterior Dirichlet problem is polynomially bounded (in the sense of Definition \ref{def:poly_bound}).

Given $N>0$ there exist 
\bit
\item
$k_1, C_1, C_2, \sigma>0$, all  independent of $k$, $h$, $p$, and $N$, and  
\item $C_N>0$, independent of $k$, $h$, $p$, 
\eit
such that, for $k\in K\cap [k_1,\infty)$,
\beq\label{eq:etabound_LSW4}
k\,\eta(V_N) \leq C_1 \frac{hk}{p}\left(1+ \frac{hk}{p}\right) + C_2
k^{M}
 \left(
\left(\frac{h}{h+\sigma}\right)^p  
+k\left(
 \frac{hk}{\sigma p}\right)^p\right)
+C_N k^{1-N}.
\eeq
\ele

\bpf
The proof of the bound \eqref{eq:etabound_LSW4} using
Theorems \ref{thm:LSW3}/\ref{thm:LSW4} is 
 identical to the proof of \cite[Lemma 5.5]{LSW4}, which uses the results  
 \cite[Theorem 5.5]{MeSa:10} and \cite[Proposition 5.3]{MeSa:11}.
 The only difference between the present set up and \cite[Lemma 5.5]{LSW4} is that here we have $v= \vhigh + \vlow +\vres$, whereas \cite[Lemma 5.5]{LSW4} only has $v= \vhigh+ \vlow$. The term $\vres$, however, can be approximated by zero giving a term of the form $C_N k^{1-N}$ (other terms of this form arise, exactly as in the proof of \cite[Lemma 5.5]{LSW4}, from approximating in the regions where they are negligible \emph{either} $\vAfar$ and $\vAnear$ in Theorem \ref{thm:LSW4} \emph{or} $\vlow$ in Theorem \ref{thm:LSW3}).
  \epf

\subsection{Proof of Theorem \ref{thm:FEM1} from the bound on $\eta(V_N)$}

The existence of the solution $\PMLsol$ to the variational problem \eqref{eq:PML_vf} follows from \cite[Theorem 1.6]{GLS2}. Indeed, this result proves existence and uniqueness of the PML solution 
for $k$ is sufficiently large when $G(w)= \int_\Omega g \, \overline{w}$ for $g\in L^2(\Omega)$. Existence and uniqueness of the PML solution for $G\in (H^1_k(\Omega))^*$ follows from existence and uniqueness for $L^2$ right-hand sides since the problem is Fredholm (via the G\aa rding inequality \eqref{eq:Garding}).

To prove existence of the Galerkin solution $\PMLsol_N$ to \eqref{eq:FEM} under the conditions \eqref{eq:thresholdD}, we combine Lemmas \ref{lem:Schatz} and \ref{lem:MS_LSW4}
Indeed, the bound on $k\,\eta(V_N)$ \eqref{eq:etabound_LSW4} holds by Lemma \ref{lem:MS_LSW4}.
We choose $N>1$, and then increase $k_1>0$ (if necessary) so that
\beqs
C_N k^{1-N} \leq 
 \frac{1}{2\Ccont} \sqrt{ \frac{\Amin}{2\big( \Amin+\cmin^{-2}\big)}} \quad\tfa k \geq k_1.
\eeqs
After using this bound in \eqref{eq:etabound_LSW4}, we see that the conditions \eqref{eq:thresholdD} with $C_1$ sufficiently small and $C_2$ sufficiently large then ensure that $k\,\eta(V_N)$ is sufficiently small (independent of $k$), and the result follows from Lemma \ref{lem:Schatz}.

\section{The black-box framework and functional calculus}\label{sec:blackbox}

\subsection{Recap of the black-box framework} \label{subsec:bb}

Let $\hsc:= k^{-1}$ be the semiclassical parameter; in the literature, the semiclassical parameter is often denoted by $h$, but we use $\hsc$ to avoid a notational clash with the meshwidth of the FEM 
appearing in \S\ref{sec:intro} and \S\ref{sec:FEM}.

In this subsection, we briefly recap the abstract framework of \emph{black-box scattering} introduced in \cite{SjZw:91};
for more details, see the comprehensive presentation in \cite[Chapter 4]{DyZw:19}. 
In fact, we use the approach of \cite[\S2]{Sj:97}, where the black-box operator is a variable-coefficient Laplacian (with smooth coefficients) outside the black box, and not the Laplacian $-\hsc^2\Delta$ itself as in \cite[Chapter 4]{DyZw:19}
(although the operator still agrees with $-\hsc^2 \Delta$ outside a
  sufficiently large ball).

\paragraph{The operator $P_\hsc$.}
Let $\mathcal{H}$ be a Hilbert space with an orthogonal decomposition
\begin{equation} \label{eq:bbdec}\tag{BB1}
\mathcal{H}=\mathcal{H}_{R_{0}}\oplus L^{2}(\mathbb{R}^{d}\backslash B_{R_0}, \omega(x)\rd x),
\end{equation}
where the weight-function $\omega : \mathbb R^d \rightarrow \mathbb R$ is measurable and $\supp(1-\omega)$ is compact in $\Rea^d$.
We call $\mathcal{H}_{R_{0}}$ the ``black box''. We emphasise that, although standard examples of the subspace $\cH_{R_0}$ are $L^2(B_{R_0})$ or $L^2(B_{R_0} \cap \Omega_+)$
(see \S\ref{sec:bbexamples} below), $\cH_{R_0}$ need only be an abstract Hilbert space; see the discussion at the end of \cite[\S4.1]{DyZw:19}.
Let $\mathbf 1_{B_{R_0}}$ and $\mathbf 1_{\mathbb{R}^{d}\backslash B_{R_0}}$ denote the corresponding orthogonal projections. 
Let $P_{\hsc}$ be a family in $\hsc$ of self adjoint operators $\mathcal{H}\rightarrow\mathcal{H}$
with domain $\mathcal{D}\subset\mathcal{H}$ independent of $\hsc$ (so that, in particular, $\mathcal{D}$ is dense in $\mathcal{H}$). 
Outside the black box $\mathcal H_{R_0}$,
we assume that $P_\hsc$ equals $Q_\hsc$ defined as follows.
We assume that, for any multi-index $|\alpha|\leq 2$, there exist functions $a_{\hsc, \alpha} \in C^\infty(\mathbb R^d)$, uniformly bounded with respect to $\hsc$, independent of $\hsc$ for $|\alpha|=2$, and such that (i) for some $C_1>0$

\begin{equation} \label{eq:propq_ell}
\sum_{|\alpha|=2} a_{\hsc, \alpha}(x)\xi^\alpha \geq C_1|\xi|^2\quad \tfa x\in \Rea^d
\end{equation}
(where $\xi^\alpha:= \xi_1^{\alpha_1}  \ldots \xi_d^{\alpha_d}$),
(ii) for some $R_{\rm scat}>R_0$ 
\begin{equation*} 
\sum_{|\alpha|\leq 2} a_{\hsc, \alpha}(x)\xi^\alpha = |\xi|^2
\hspace{0.3cm}\text{for }|x|\geq R_{\rm scat},
\end{equation*}
and (iii) the operator $Q_\hsc$ defined by
\beq\label{eq:Qdef}
Q_\hsc := \sum_{|\alpha|\leq2}a_{\hsc,\alpha}(x)(\hsc D_x)^\alpha
\eeq
(where $D:= -\ri \partial$) is formally self-adjoint on $L^2(\mathbb R^d, \omega(x) \rd x)$.

We require the operator $P_{\hsc}$ to be equal to $Q_\hsc$ outside 
the black box $\mathcal{H}_{R_{0}}$ in the sense that 
\beq\label{eq:bbreq1}\tag{BB2}
 \boldsymbol{1}_{\mathbb{R}^{d}\backslash B_{R_0}}(P_{\hsc}u)=Q_\hsc (u\vert_{\mathbb{R}^{d}\backslash B_{R_0}}) \quad\tfor u\in \cD, \quad\tand\quad
\boldsymbol{1}_{\mathbb{R}^{d}\backslash B_{R_0}}\mathcal{D} \subset H^{2}(\mathbb{R}^{d}\backslash B_{R_0}).
\eeq
We further assume that  if, for some $\eps>0$,
\beq\label{eq:bbreq2a}\tag{BB3}
v\in H^{2}(\mathbb{R}^{d})\quad\tand \quad v\vert_{B_{R_{0}+\eps}}=0,\quad \text{ then }\quad v\in\mathcal{D},
\eeq
(with the restriction to $B_{R_0+\epsilon}$ defined in terms
  of the projections in \eqref{eq:bbreq1}; see also \eqref{eq:restriction} below)
and that
\beq\label{eq:bbreq2}\tag{BB4}
\boldsymbol{1}_{ B_{R_0}}(P_{\hsc}+\ri)^{-1}\text{ is compact from } \cH \rightarrow \cH.
\eeq
Under these assumptions, the semiclassical resolvent 
$
R(z,\hsc):=(P_{\hsc}-z)^{-1}:\mathcal{H}\rightarrow\mathcal{D}
$
is meromorphic for $\Imag \,z>0$ and extends to a meromorphic
family of operators of $\mathcal{H_{\rm comp}}\rightarrow\mathcal{D}_{\rm loc}$
in the whole complex plane when $d$ is odd and in the logarithmic
plane when $d$ is even \cite[Theorem 4.4]{DyZw:19};
where $\mathcal H_{\rm comp}$ and $\mathcal{D}_{\rm loc}$ are defined by
$$
\mathcal H_{\rm comp} := \Big\{ u\in \mathcal H \; : \; \mathbf 1_{\mathbb R^d\backslash B_{R_0}}u \in L^2_{\rm comp}(\mathbb R^d \backslash B_{R_0}) \Big\},
$$
(where $L^2_{\rm comp}$ denotes compactly-supported $L^2$ functions) and
\begin{align*}\nonumber
\mathcal D_{\rm loc} := \Big\{ u\in \mathcal{H}_{R_{0}}\oplus L_{\rm loc}^{2}(\mathbb{R}^{d}\backslash B_{R_0}) :  \tif\,\chi \in C^\infty_{\rm comp}(\mathbb R^d), \; \chi|_{B_{R_0}} \equiv 1   
\text{ then }  (\mathbf 1_{B_{R_0}}u,
\chi \mathbf 1_{\mathbb R^d\backslash B_{R_0}} u) \in \mathcal D \Big\}.
\end{align*}

\paragraph{The reference operator $P^{\text{\ensuremath{\sharp}}}_\hsc$.}

We now define the so-called \emph{reference operator} using the torus $ \mathbb T^d_{R_{\sharp}} := \quotient{\mathbb{R}^d}{(2R_{\sharp}\mathbb{Z})^d}$ for some $R_\sharp>0$ such that $\supp(1-\omega)\subset B_{R_\sharp}$.
We work with $[-R_{\sharp}, R_{\sharp}]^d$ as a fundamental domain for this torus. 
The black-box framework by itself requires that $R_{\sharp}>R_{\rm scat}$; for simplicity we take $R_{\sharp}> {\rm diam}(\Otr)$, so that $\Otr \subset [-R_{\sharp}, R_{\sharp}]^d$ 
(where we assume, without loss of generality, the origin is inside $\Otr$).
\footnote{In fact, we could modify the arguments below to work for $R_{\sharp}> R_1$ only, since we just need $\supp\,\varphi_\tr$ contained inside $B_{R_\sharp}$.}

Let
\beqs
\mathcal H^{\sharp} := \mathcal H_{R_0} \oplus  L^{2}(\mathbb T^d_{R_{\sharp}}\backslash B_{R_0},\omega(x)\rd x),
\eeqs
and let $\mathbf 1_{B_{R_0}}$ and  $\mathbf 1_{\mathbb T^d_{R_{\sharp}}\backslash B_{R_0}}$ denote the corresponding orthogonal
projections. We
 define
\begin{gather} 
\mathcal D^{\sharp} := \Big\{ u \in \mathcal H^{\sharp}: \; \tif\,\chi \in C^\infty_{\rm comp}(B_{R_{\sharp}}), \; \,\chi = 1 \text{ near } B_{R_0}, \tthen (\mathbf 1_{B_{R_0}}u,
\chi \mathbf 1_{\mathbb T^d_{R_{\sharp}}\backslash B_{R_0}} u) \in \mathcal D, \nonumber \\
 \tand\, (1-\chi)\mathbf 1_{\mathbb T^d_{R_{\sharp}}\backslash B_{R_0}}u \in H^2(\mathbb T^d_{R_{\sharp}}) \Big\}, \label{eq:defdomsharp}
\end{gather}
and, for any $\chi$ as in \eqref{eq:defdomsharp} and $u \in \mathcal D^{\sharp}$,
\begin{gather} 
\refop u := P_{\hsc}(\mathbf 1_{B_{R_0}}u, \chi \mathbf 1_{\mathbb T^d_{R_{\sharp}}\backslash B_{R_0}} u) 
+ Q_\hsc\big((1-\chi)\mathbf 1_{\mathbb T^d_{R_{\sharp}}\backslash B_{R_0}}u\big), \label{eq:defref}
\end{gather}
where we have identified functions supported in $B(0,
  R_{\sharp})\backslash B(0, R_0)\subset
\mathbb{T}^d_{R_\sharp}\backslash B(0, R_0)$ with the corresponding functions on $\mathbb R^d\backslash
B(0, R_0)$ -- see the paragraph on notation below.

Let $q_\hsc \in S^2(\mathbb T^d_{R_\sharp})$ denote the principal symbol of $Q_\hsc$ as a semiclassical pseudodifferential operator acting on the torus $\mathbb T_{R_\sharp}^d$ (see Appendix \ref{app:sct} for a review of semiclassical pseudodifferential operators on $\mathbb
T^d_{R_{\sharp}}$); i.e., 
\beqs
q_\hsc(x,\xi)= \sum_{|\alpha|\leq2}a_{\hsc,\alpha}(x)\xi^\alpha.
\eeqs
We record for later the fact that \eqref{eq:propq_ell}, \eqref{eq:Qdef}, and the uniform boundedness of $a_{\hsc,\alpha}(x)$ with respect to $\hsc$ imply that there exist $C_1, C_2>0$ such that 
\beq\label{eq:Qnew}
C_1|\xi|^2 \leq q_\hsc(x,\xi) \leq C_2|\xi|^2 \quad\text{ for sufficiently large $\xi$ and all $x$}.
\eeq
The idea behind these definitions is that we have glued our black box
into a torus instead of $\mathbb{R}^d$, and then defined on the torus an operator $\refop$ that can be thought of as $P_{\hsc}$ in $\mathcal{H}_{R_{0}}$ and $Q_\hsc$ in
$(\mathbb{R}/2R_{\sharp}\mathbb{Z})^{d}\backslash B_{R_0}$;
see Figure \ref{fig:bb}.
The resolvent $(\refop + i)^{-1}$ is compact (see \cite[Lemma 4.11]{DyZw:19}), and hence the spectrum of $\refop$, denoted by $\operatorname{Sp}\refop$, is discrete (i.e., countable and with no accumulation point).

We assume that 
the eigenvalues of $\refop$ satisfy the \emph{polynomial
growth of eigenvalues condition }
\begin{equation}\label{eq:gro}\tag{BB5}
N\big(\refop,[-C,\lambda]\big)=O(\hsc^{-d^{\sharp}}\lambda^{d^{\sharp}/2}),
\end{equation}
for some $d^{\sharp}\geq d$, where $N(\refop,I)$ is the number of eigenvalues of
$\refop$ in the interval $I$, counted with their multiplicity. 
When $d^{\sharp}=d$, the asymptotics \eqref{eq:gro} correspond to a Weyl-type upper bound, 
and thus \eqref{eq:gro} can be thought of as a weak Weyl law. 

We summarise with the following definition.

\begin{definition} \mythmname{Semiclassical black-box operator} \label{def:bb}
We say that a family of self-adjoint operators $P_{\hsc}$ on a Hilbert space $\mathcal H$, with dense domain $\cD$, independent of $\hsc$, is a semiclassical black-box operator if $(P_{\hsc}, \mathcal H)$ satisfies (\ref{eq:bbdec}),  (\ref{eq:bbreq1}),  (\ref{eq:bbreq2a}), (\ref{eq:bbreq2}), (\ref{eq:gro}).
\end{definition}

\begin{figure}[h!]
\begin{center}
    \includegraphics[scale=0.4]{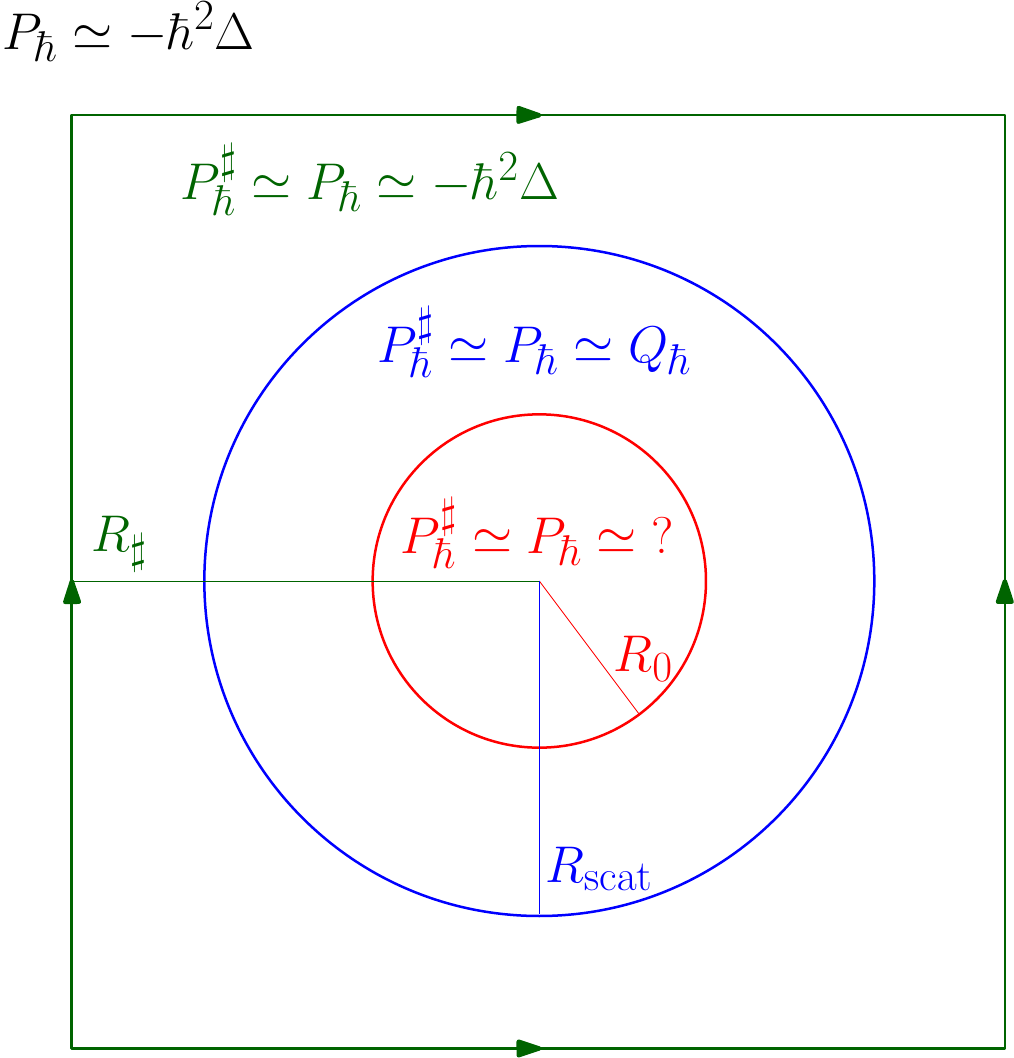}
  \end{center}
    \caption{The black-box setting. The symbol $\simeq$ is used to denote equality in the sense of \eqref{eq:bbreq1} and \eqref{eq:defref}.}
       \label{fig:bb}
\end{figure}

\paragraph{Notation.} We identify in the natural way:
\begin{itemize}
\item the elements of $\{ 0 \} \oplus L^2(\mathbb T_{R_{\sharp}}^d \backslash B_{R_0}) \subset \mathcal H^{\sharp}$,
\item the elements of $L^2(\mathbb T_{R_{\sharp}}^d \backslash B_{R_0})$, 
\item the elements of $L^2(\mathbb T_{R_{\sharp}}^d)$ 
  supported outside $B_{R_0}$,
\item the elements of $L^2(\mathbb R^d)$ supported in $[-R_{\sharp},R_{\sharp}]^d  \backslash B_{R_0}$,
\item and the elements of $\{ 0 \} \oplus L^2(\mathbb R^d \backslash B_{R_0}) \subset \mathcal H$ whose orthogonal projection onto 
$L^2(\mathbb R^d \backslash B_{R_0})$ is supported in $[-R_{\sharp},R_{\sharp}]^d \backslash B_{R_0}$.
\end{itemize}
If $v \in \mathcal H$ and $\chi \in C^\infty_{\rm comp}(\mathbb R^d)$ is equal to some constant $\alpha$ on a neighbourhood of $B_{R_0}$, we \emph{define}
\beq\label{eq:mult}
\chi v := (\alpha\mathbf 1_{B_{R_0}} v,  \chi \mathbf 1_{\mathbb R^d \backslash B_{R_0}} v) \in \mathcal H.
\eeq
(for example, using this notation, the requirements on $u$ in the definition of $\cD^\sharp$ \eqref{eq:defdomsharp} are $\chi u \in \cD$ and $(1-\chi)u \in H^2(\mathbb{T}^d_{R_{\sharp}})$ for $\chi$ equal to $1$ near $B_{R_0}$).
If $v\in \mathcal H$ and $R>R_0$, we define
\beq\label{eq:restriction}
v|_{B_{R}} := \big(\mathbf 1_{B_{R_0}} v, (\mathbf 1_{\mathbb R^d \backslash B_{R_0}} v )|_{B_{R}\setminus B_{R_0}}\big) \in \mathcal{H}_{R_{0}}\oplus L^{2}(B_{R}\backslash B_{R_0}),
\eeq
(where the restriction of $\mathbf 1_{\mathbb R^d \backslash B_{R_0}} v$ to $B_{R}\setminus B_{R_0}$ is restriction in the standard sense, since $\mathbf 1_{\mathbb R^d \backslash B_{R_0}} v \in L^2 (\mathbb R^d\setminus B_{R_0})$) and, if  $v\in \mathcal H^{\sharp}$,
$$
v|_{B_{R}} := \big(\mathbf 1_{B_{R_0}} v, (\mathbf 1_{\mathbb T_{R_{\sharp}}^d \backslash B_{R_0}} v )|_{B_{R}\setminus B_{R_0}}\big) \in \mathcal{H}_{R_{0}}\oplus L^{2}(B_{R}\backslash B_{R_0}).
$$
Finally, if 
$R_0\leq r \leq R_\sharp$, we define the partial norms 
$$
\Vert u \Vert_{\mathcal H^\sharp(B_r)} =\Vert u \Vert_{\mathcal
  H(B_r)}:= \Vert u \Vert_{\mathcal H_{R_0} \oplus L^2(B_r\backslash B_{R_0})}, 
\qquad\Vert u \Vert_{\mathcal H^\sharp(B^c_r)} := \Vert \mathbf 1_{\mathbb{T}^d_{R_{\sharp}}\backslash B_{R_0}} u \Vert_{ L^2(\mathbb T^d_{R_\sharp}\backslash B_r)}
$$
and
$$
\Vert u \Vert_{\mathcal H(B^c_r)} := \Vert \mathbf 1_{\mathbb R^d \backslash B_{R_0}} u \Vert_{ L^2(\mathbb{R}^d\backslash B_r)}.
$$

\subsection{Scattering problems fitting in the black-box framework}\label{sec:bbexamples}

A wide variety of scattering problems fit in the black-box framework; see  \cite[\S4.1]{DyZw:19}, \cite[\S2.2]{LSW4}.
The present paper only uses that the exterior Dirichlet problem of Definition \ref{def:EDP} fits in this framework.

\begin{lemma}[Scattering by a Dirichlet obstacle in the black-box framework]\label{lem:obstacle}
Let $\obstacle_-, A_{\rm scat}, c_{\rm scat}, R_0,$ and $R_{\rm scat}$ be as in Definition \ref{def:EDP}. 
Then 
the family of operators
\beqs
P_\hsc v := - \hsc^{2} c_{\rm scat}^2 \nabla \cdot \big(A_{\rm scat}\nabla v)
\quad\text{ with the domain }\quad
\cD:= H^2(\obstacle_+)\cap H^1_0(\obstacle_+)
\eeqs
is a semiclassical black-box operator (in the sense of Definition \ref{def:bb}) with $\omega=c_{\rm scat}^{-2}$,
$Q_\hsc= -\hsc^2 c_{\rm scat}^2\nabla\cdot(A_{\rm scat}\nabla)$, 
and
\beqs 
\cH_{R_0} = L^2\big( B_{R_0} \cap \obstacle_+; c_{\rm scat}^{-2}(x)\rd x\big)
\quad \text{ so that } \quad \cH = L^2\big(\obstacle_+; c_{\rm scat}^{-2}(x)\rd x\big).
\eeqs
Furthermore the corresponding reference operator $\refop$ satisfies \eqref{eq:gro} with $d^{\sharp}=d$.
\end{lemma}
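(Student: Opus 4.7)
\emph{Proof proposal.} The plan is to verify each of the five black-box axioms (BB1)--(BB5) of Definition \ref{def:bb} in turn, plus the (semi)-self-adjointness of $P_\hsc$ on the stated domain; these are essentially standard verifications in the framework of \cite[\S4.1]{DyZw:19}, \cite[\S2]{Sj:97}, the only ingredient requiring care being the weighting by $c_{\rm scat}^{-2}$.

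First, I would set up self-adjointness. The weight $\omega(x):=c_{\rm scat}^{-2}(x)$ is precisely what is needed so that, for $u,v\in \cD=H^2(\obstacle_+)\cap H^1_0(\obstacle_+)$,
\[
\int_{\obstacle_+}(P_\hsc u)\,\overline{v}\,\omega\,\rd x \;=\; -\hsc^2\int_{\obstacle_+}\nabla\!\cdot\!(A_{\rm scat}\nabla u)\,\overline{v}\,\rd x \;=\;\hsc^2\int_{\obstacle_+}A_{\rm scat}\nabla u\cdot\overline{\nabla v}\,\rd x
\]
(integrating by parts and using $v|_{\Gamma_D}=0$); by symmetry of $A_{\rm scat}$ this sesquilinear form is symmetric, and positivity combined with smoothness of $\Gamma_D$ and elliptic regularity up to the boundary yield self-adjointness of $P_\hsc$ on $\cD$ as the Friedrichs extension. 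Next, I would verify (BB1) with $\cH_{R_0}:=L^2(B_{R_0}\cap \obstacle_+;\omega\,\rd x)$ and $\omega=c_{\rm scat}^{-2}$: the decomposition is legitimate because $\overline{\obstacle_-}\subset B_{R_0}$, so $\obstacle_+\setminus B_{R_0}=\mathbb R^d\setminus B_{R_0}$, and $\supp(1-\omega)\subset B_{R_{\rm scat}}$ is compact by Definition \ref{def:EDP}.

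For the symbol conditions on $Q_\hsc$, expand
\[
Q_\hsc u = -\hsc^2 c_{\rm scat}^2\sum_{i,j}(A_{\rm scat})_{ij}\partial_i\partial_j u - \hsc^2 c_{\rm scat}^2 \sum_j\Big(\sum_i\partial_i (A_{\rm scat})_{ij}\Big)\partial_j u;
\]
the top-order coefficients $a_{\hsc,\alpha}(x)=c_{\rm scat}^2 (A_{\rm scat})_{ij}$ are $\hsc$-independent, smooth, and bounded; ellipticity \eqref{eq:propq_ell} follows from positive-definiteness of $A_{\rm scat}$ and positivity of $c_{\rm scat}$; and for $|x|\geq R_{\rm scat}$ we have $A_{\rm scat}=I$, $c_{\rm scat}=1$, giving symbol $|\xi|^2$. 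Formal self-adjointness on $L^2(\mathbb R^d,\omega\,\rd x)$ follows from the same integration-by-parts computation as above (with no boundary terms since we are on $\mathbb R^d$ and the weight $\omega$ cancels the $c_{\rm scat}^2$ factor). Then (BB2) is immediate: since $\obstacle_-\subset B_{R_0}$ and $\supp(I-A_{\rm scat}), \supp(1-c_{\rm scat})\subset B_{R_0}$ (up to shrinking $R_0$ if needed, or using that $R_{\rm scat}>R_0$), the action of $P_\hsc$ and $Q_\hsc$ agree on functions supported outside $B_{R_0}$, and elements of $\cD$ restrict to $H^2(\mathbb R^d\setminus B_{R_0})$ by definition. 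For (BB3), a function $v\in H^2(\mathbb R^d)$ vanishing on $B_{R_0+\eps}$ vanishes in a neighbourhood of $\Gamma_D\subset \overline{\obstacle_-}\subset B_{R_0}$, hence its restriction to $\obstacle_+$ lies in $H^2(\obstacle_+)\cap H^1_0(\obstacle_+)=\cD$.

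Axiom (BB4) follows from the mapping property $(P_\hsc+\ri)^{-1}\colon \cH\to \cD\subset H^2_{\mathrm{loc}}$ combined with the Rellich--Kondrachov compact embedding $H^2(B_{R_0}\cap\obstacle_+)\hookrightarrow L^2(B_{R_0}\cap\obstacle_+)$, which is applicable because the region $B_{R_0}\cap\obstacle_+$ has Lipschitz (in fact $C^\infty$) boundary. The main technical point is (BB5):~the reference operator $\refop$ acts on the manifold with boundary obtained by gluing the obstacle exterior $B_{R_\sharp}\cap\obstacle_+$ to $\mathbb T^d_{R_\sharp}\setminus B_{R_\sharp}$, with Dirichlet condition on $\Gamma_D$ and no boundary on the torus side. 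For fixed $\hsc$ this is a self-adjoint second-order elliptic operator on a compact manifold with $C^\infty$ boundary, and the classical Weyl law (e.g.\ \cite[Ch.\ 14]{DyZw:19} or standard heat-kernel/Minakshisundaram--Pleijel arguments) gives $N(\refop|_{\hsc=1},[-C,\mu])=O(\mu^{d/2})$; semiclassical rescaling $\lambda=\hsc^2\mu$ then yields \eqref{eq:gro} with $d^\sharp=d$. I expect (BB5) to be the step requiring the most care, because one must justify the Weyl asymptotic uniformly in $\hsc$ for the glued operator; this is, however, a routine consequence of the fact that the principal symbol of $\refop$ is $\hsc$-independent and elliptic in the sense \eqref{eq:Qnew}, so the leading Weyl constant depends only on this symbol and the volume of the underlying manifold.
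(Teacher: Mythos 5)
Your verification is correct in substance, but it takes a genuinely different route from the paper: the paper's proof is a two-line citation to \cite[Lemma 2.3]{LSW4}, where the black-box axioms are checked for Lipschitz $\Omega_-$ and $L^\infty$ coefficients with the abstract domain $\{v\in H^1(\obstacle_+):\nabla\cdot(A_{\rm scat}\nabla v)\in L^2(\obstacle_+),\ v=0 \text{ on }\partial\obstacle_+\}$, followed by the single observation that elliptic regularity identifies this domain with $H^2(\obstacle_+)\cap H^1_0(\obstacle_+)$ when $\Gamma_D$ and $A_{\rm scat}$ are smooth. What your self-contained axiom-by-axiom check buys is independence from \cite{LSW4} and an explicit record of where the weight $\omega=c_{\rm scat}^{-2}$ and the smoothness hypotheses are used; what the citation buys is brevity and greater generality (the rough-data case), with the smooth case obtained as a corollary. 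All your individual steps go through, including the Weyl-law argument for (BB5) via $P_\hsc=\hsc^2 P_1$ and the rescaling $N(\refop,[-C,\lambda])=N(P^\sharp_1,[-C\hsc^{-2},\lambda\hsc^{-2}])=O(\hsc^{-d}\lambda^{d/2})$. One inaccuracy worth fixing: in your (BB2) step you assert $\supp(I-A_{\rm scat}),\,\supp(1-c_{\rm scat})\subset B_{R_0}$ ``up to shrinking $R_0$''; Definition \ref{def:EDP} only places these supports inside $B_{R_{\rm scat}}$ with $R_{\rm scat}>R_0$, and one cannot arrange them inside $B_{R_0}$. This does not damage the conclusion, because in the Sj\"ostrand variant of the framework used here $Q_\hsc$ is permitted to be the \emph{variable-coefficient} operator $-\hsc^2 c_{\rm scat}^2\nabla\cdot(A_{\rm scat}\nabla)$ (nontrivial on $B_{R_{\rm scat}}\setminus B_{R_0}$, equal to $-\hsc^2\Delta$ only for $|x|\geq R_{\rm scat}$), so $P_\hsc=Q_\hsc$ outside $B_{R_0}$ holds by definition of $Q_\hsc$ rather than by triviality of the coefficients there; the justification should be restated accordingly.
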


\begin{proof}
In \cite[Lemma 2.3]{LSW4} the result 
 is proved for Lipschitz $\Omega_-$ and $A_{\rm scat}$ and $c\in L^\infty$ with domain 
\beqs
\Big\{ v\in H^1(\obstacle_+), \, \nabla\cdot \big(A_{\rm scat}\nabla v\big) \in L^2(\obstacle_+), \, v=0 \ton \partial \obstacle_+\Big\};
\eeqs
by elliptic regularity, this domain equals $H^2(\obstacle_+)\cap H^1_0(\obstacle_+)$ when $\Omega_-$ and $A_{\rm scat}$ are smooth.
 \end{proof}

\subsection{The scaled operator $\scaled$ and its truncation}\label{sec:scaled}

\paragraph{The scaled operator $\scaled$.}
With $\chi \in C_{\rm comp}^\infty(B_{R_1})$ equal to 1 on $B_{R_0}$, we define the scaled operator
\begin{equation}
\label{e:defP}
\begin{gathered}
\scaled u:= P_\hsc(\chi u)+ (-\hsc^2\Delta_\theta)((1-\chi )u),
\end{gathered}
\end{equation}
where $\Delta_\theta$ is defined by \eqref{e:deltaTheta}
Although the domain and range of $P_{\hsc,\theta}$ strictly involve the scaled manifold (see \cite[Definition 4.31]{DyZw:19}, \cite[Equation A.3]{GLS1}), they can be naturally identified with $\cD$ and $\cH$, respectively. 

\paragraph{Truncation of the scaled operator (i.e., PML truncation).}
For the PML truncation, just as in \S\ref{sec:PML_intro}, we let $\Omega_{\tr}\subset \mathbb{R}^d$ be a bounded Lipschitz open set with $B_{R_{\tr}}\subset \Omega_{\tr}$. 
Just as for $P_{\hsc,\theta}$ on the whole exterior domain, the domain and range of $P_{\hsc,\theta}$ on the truncated domain strictly involve the scaled manifold (see \cite[\S A.3]{GLS2}). However, we can naturally identify them with the following:
\begin{align*}
\cH(\Omega_\tr)&:=\cH_{R_0}\oplus L^2(\Omega_\tr \setminus B_{R_0}),\\
\cD(\Omega_\tr)&:=\Big\{u\in \cH(\Otr) \,:\, \tif \chi \in C_{\rm comp}^\infty(B_{R_1}) \text{ with } \chi \equiv 1 \text{ near } B_{R_0} \text{ then } 
\\
&\hspace{2cm}
 \chi u\in \cD,\, (1-\chi)u\in H_0^1(\Omega_\tr),\, -\Delta_\theta ((1-\chi)u)\in L^2(\Omega_\tr)\Big\}.
\end{align*}

\bre[A different choice of reference operator]\label{rem:trunc_bb}
Instead of defining the reference operator $\refop$ using the torus $\mathbb T^d_{R_{\sharp}}$, we could instead define $\refop$ using a large ball or hypercube with zero Dirichlet boundary conditions; see \cite[Remark on Page 236]{DyZw:19}. We could therefore define the reference operator $\refop$ on the domain $\Omega_{\tr}$ used for the PML truncation, which would have the advantage that the domain of $\refop$ could be naturally identified with the domain of $\scaled$. 
We choose not to do this, however, since
our arguments extensively use pseudodifferential operators defined on the torus $\mathbb T^d_{R_{\sharp}}$, and part of our proof of the decomposition of Theorem \ref{thm:informal}/\ref{thm:mainbb} involve explicit computation with the eigenvalues of the Laplacian on $\mathbb T^d_{R_{\sharp}}$; see \S\ref{sec:334}. 
\ere

\paragraph{Definition of a suitable scaled operator on the torus.}
Fix $\delta >0$ so that $R_1 + 4 \delta < R_{\rm tr}$. In the course of the proof of the main result, we need a operator defined on $\mathbb{T}^d_{R_\sharp}$ and equal to $P_{\hsc,\theta}$ on $B_{R_1(1+3\delta)}\setminus B_{R_0}$. 
We therefore let $-\tDelta_\theta$ be defined by \eqref{e:deltaTheta} with $f_\theta$ replaced by a non-negative function $\widetilde{f}_\theta \in C^{\infty}([0,\infty);\Rea)$ such that 
\beq\label{eq:widetildef}
\widetilde{f}_\theta(r) = f_\theta(r) \quad\tfor r\leq R_1 + 3\delta \quad\tand \quad \widetilde{f}(r)= 0 \quad\tfor r\geq R_1+ 4 \delta;
\eeq
i.e., $-\tDelta_\theta = -\Delta_\theta$ for $r\leq R_1 + 3\delta$ and $-\tDelta_\theta = -\Delta$ for $r\geq R_1+ 4\delta$ (so that the coefficients of $-\tDelta_\theta$ are periodic on the torus $\mathbb{T}^d_{R_\sharp}$).
Define the operator $\widetilde{Q}_{\hsc,\theta}$ on $H^2(\mathbb{T}^d_{R_\sharp})$ by 
\beq\label{eq:widetildeQ}
\widetilde{Q}_{\hsc,\theta} u = Q_\hsc \big(\psi u\big) + ( -\hsc^2 \tDelta_\theta) \big( (1-\psi)u\big),
\eeq
where $\psi \in C^{\infty}_{\rm comp}(B_{R_1})$ with $\psi \equiv 1$ on $B_{R_{\rm scat}}$
(we use a tilde in the notation to denote that $\widetilde{Q}_{\hsc,\theta}$ is not just the natural scaling of $Q_\hsc$).
Let $\widetilde{q}_{\hsc,\theta}\in S^2(\mathbb{T}^d_{R_\sharp})$ denote the principal symbol of $\widetilde{Q}_{\hsc,\theta}$ as a semiclassical pseudodifferential operator acting on the torus $\mathbb{T}^d_{R_\sharp}$ (see \S\ref{app:sct}). 

\subsection{A black-box functional calculus for $\refop$}\label{sec:BBFC}

\paragraph{The Borel functional calculus.}

The operator $\refop$ on the torus with domain
$\mathcal{D}^\sharp$ is self-adjoint with compact resolvent
\cite[Lemma 4.11]{DyZw:19}, hence we can describe the Borel functional
calculus \cite[Theorem VIII.6]{ReeSim72} for this operator explicitly in terms of the orthonormal
basis of eigenfunctions $\phi^\sharp_j \in \hilbert^\sharp$ 
(with
eigenvalues $\lambda_j^\sharp$, appearing with multiplicity and depending on $\hsc$):~for $f$ a real-valued Borel function on $\RR,$
$f(\refop)$ is self-adjoint with domain
\begin{equation*}
\domain_f:=\bigg \{\sum a_j \phi_j^\sharp \in \cH^\sharp
\,\,: \,\,
\sum \big \lvert f(\lambda^\sharp_j)
a_j \big \rvert^2 <\infty\bigg\},
\end{equation*}
and if $v =\sum a_j \phi_j^\sharp\in \domain_f$ then
\beqs
f(\refop)(v):=\sum a_j f(\lambda^\sharp_j) \phi^\sharp_j.
\eeqs
For $f$ a bounded Borel function, $f(P^\sharp_\hsc)$ is a bounded
operator, hence in this case we can dispense with the definition of
the domain and allow $f$ to be complex-valued.

For $m \geq 1$, we then define
$\mathcal D_\hsc^{\sharp,m}$ as the domain of $(\refop)^m$,
i.e.,
\beqs
\mathcal D_\hsc^{\sharp,m}:= \Big\{
v \in \cH^\sharp \,:\, (\refop)^{\ell} v \in \cD^\sharp, \,\ell=0,\ldots,m-1
\Big\},
\eeqs
equipped with the norm 
\begin{equation} \label{eq:BB:norms}
\Vert v \Vert_{\mathcal D_\hsc^{\sharp, m}} :=  \Vert  v\Vert_{\mathcal H^{\sharp}} + \Vert (\refop)^m  v\Vert_{\mathcal H^{\sharp}},
\end{equation}
and $\mathcal D_\hsc^{\sharp,-m}$ as its dual (note that, in the exterior of the black box, the regularity imposed in the definition of $\mathcal D_\hsc^{\sharp,m}$ is that of periodic functions on the torus with $2m$ derivatives in $L^2$).
 We also  define the partial norms, for $m>0$, 
\beqs
\Vert v \Vert_{\mathcal D_\hsc^{\sharp, m}(B)} :=  \Vert  v\Vert_{\mathcal H^{\sharp}(B)} + \Vert (\refop)^m  v\Vert_{\mathcal H^{\sharp}(B)},
\eeqs
 where $B$ equals one of $B_r$ or $(B_r)^c$ (with $R_0 \leq r \leq R_\sharp$) or $\Otr$. In addition, we let
\beq\label{eq:Dinfty}
\mathcal D_\hsc^{\sharp,\infty} := \bigcap_{m\geq0} \mathcal D_\hsc^{\sharp,m},
\eeq
so that $v \in \mathcal{D}_\hsc^{\sharp,\infty}$ iff $(\refop)^m v \in \mathcal D_\hsc^\sharp$ for all $m\in \mathbb{Z}^+$.

The following theorem is proved in \cite[Pages 23 and 24]{Da:96}; see also \cite[Theorem VIII.5]{ReeSim72}.

\begin{theorem} \label{thm:fundfc}
The Borel functional calculus enjoys the following properties.
  \begin{enumerate}
\item $f\rightarrow f(\refop)$ is a $\star$-algebra homomorphism. 
\label{it:fc3}
\item for $z \notin \mathbb R$, 
if $r_z(w):=(w-z)^{-1}$ then $r_z(P^\sharp)= (\refop - z)^{-1}$. \label{it:fc4}
\item If $f$ is bounded, $f(\refop)$ is a bounded operator
  for all $\hsc$, with $\Vert f(\refop) \Vert_{\mathcal L(\mathcal H^{\sharp})} \leq \sup_{\lambda \in \mathbb R}|f(\lambda)|$. \label{it:fc5}
\item If $f$ has disjoint support from $\operatorname{Sp}\refop$, then $f(\refop) = 0$. \label{it:fc6}
\end{enumerate}
\end{theorem}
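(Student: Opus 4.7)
The plan is to verify each of the four properties directly from the explicit eigenfunction expansion that was used to define $f(\refop)$. The essential input is that $\refop$ is self-adjoint with compact resolvent (cited via \cite[Lemma 4.11]{DyZw:19}), so the spectral theorem guarantees an orthonormal basis $\{\phi_j^\sharp\}$ of $\cH^\sharp$ consisting of eigenvectors with real eigenvalues $\lambda_j^\sharp$, and $f(\refop)$ acts on $v = \sum a_j \phi_j^\sharp$ by multiplying the coefficient on each eigenspace by $f(\lambda_j^\sharp)$. In this discrete-spectrum setting, every property reduces to a scalar identity applied eigenvalue-by-eigenvalue together with Parseval.

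For property (1), I would check separately that $(\alpha f + \beta g)(\refop) = \alpha f(\refop) + \beta g(\refop)$, that $(fg)(\refop) = f(\refop) g(\refop)$ (on the natural common domain), and that $\overline{f}(\refop) = f(\refop)^*$. All three reduce to the analogous identity at each scalar $\lambda_j^\sharp$ and the orthonormality of $\{\phi_j^\sharp\}$; the adjoint identity additionally uses that $\overline{f(\lambda_j^\sharp)}$ is the complex conjugate of a scalar, combined with the definition of $\mathcal D_f$. For property (2), observe that $(\refop - z)\phi_j^\sharp = (\lambda_j^\sharp - z)\phi_j^\sharp$ and, since $z\notin\RR$ and $\lambda_j^\sharp\in\RR$, we have $\lambda_j^\sharp - z \neq 0$, so $(\refop - z)^{-1}\phi_j^\sharp = r_z(\lambda_j^\sharp)\phi_j^\sharp$; this agrees with the definition of $r_z(\refop)\phi_j^\sharp$, and the two bounded operators then coincide on the dense span of eigenvectors, hence everywhere.

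Property (3) follows from Parseval: if $\sup_\lambda |f(\lambda)|\le C$, then for any $v=\sum a_j\phi_j^\sharp \in \cH^\sharp$,
\[
\|f(\refop) v\|_{\cH^\sharp}^2 = \sum_j |f(\lambda_j^\sharp)|^2 |a_j|^2 \leq C^2 \sum_j |a_j|^2 = C^2 \|v\|_{\cH^\sharp}^2,
\]
so $f(\refop)$ is bounded with operator norm at most $\sup_\lambda |f(\lambda)|$; in particular $\mathcal D_f = \cH^\sharp$ and there is no obstruction to allowing complex-valued $f$. Property (4) is then immediate: if $\supp f \cap \operatorname{Sp}\refop = \emptyset$, then $f(\lambda_j^\sharp) = 0$ for every $j$, so $f(\refop) v = \sum_j a_j f(\lambda_j^\sharp)\phi_j^\sharp = 0$ for all $v$.

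There is no real obstacle here, as this is a standard consequence of the spectral theorem in the discrete-spectrum case; indeed the full proof is given in \cite[pp.~23--24]{Da:96}, and the only mildly delicate point is the bookkeeping of domains for unbounded $f$ in property (1), which is handled exactly as in \cite[Theorem VIII.5]{ReeSim72}.
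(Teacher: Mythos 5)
Your proposal is correct and follows essentially the same route as the paper, which simply cites \cite[pp.~23--24]{Da:96} and \cite[Theorem VIII.5]{ReeSim72} for these standard facts; your eigenfunction-expansion verification is precisely the argument those references give, specialized to the compact-resolvent setting already established for $\refop$ in \S\ref{sec:BBFC}. The only point requiring any care is the domain bookkeeping for unbounded $f$ in property (1), which you correctly flag and defer to the cited references.
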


\paragraph{The Helffer--Sj\"ostrand construction.}
In describing the \emph{structure} of the operators produced by the
functional calculus, at least for well-behaved functions $f,$ it is
useful to recall the Helffer--Sj\"ostrand construction of the
functional calculus \cite{HeSj:89}, \cite[\S2.2]{Da:96} (which can also be used to prove the spectral
theorem to begin with; see \cite{Da:95}).

We say that $f \in \mathcal A$ if $f \in C^\infty(\mathbb R)$ and
there exists $\beta < 0$, such that, for all $r>0$, there exists  $C_r>0$ such that $|f^{(r)}(x)| \leq C_r (1+|x|^2)^{(\beta - r)/2}$.

Let $\tau \in C^\infty(\mathbb R)$ be such that $\tau(s) = 1$ for $|s|\leq 1$ and $\tau(s) = 0$ for $|s|\geq 2$. 
Finally, let $\frak n \geq 1$.
We define an 
$\frak n$-almost-analytic
extension of $f$, denoted by $\widetilde f$, by
$$
\widetilde f(z) :=\left( \sum_{m=0}^{\frak n} \frac{1}{m!} \big(\partial^m f(\operatorname{Re}z)\big)\,(\ri\operatorname{Im}z)^m \right) \tau\left(\frac{\operatorname{Im}z}{\langle\operatorname{Re}z\rangle}\right)
$$
(observe that $\widetilde{f}(z)=f(z)$ if $z$ is real).
For $f\in \mathcal A$, we define
\beq\label{eq:HS1}
f(\refop) := - \frac 1 \pi \int_{\mathbb C} \frac{\partial \widetilde f}{\partial \bar z}(\refop - z)^{-1} \; \rd x\rd y,
\eeq
where $\rd x\rd y$ is the Lebesgue measure on $\mathbb C$. The integral on
the right-hand side of \eqref{eq:HS1} converges; see, e.g., \cite[Lemma 1]{Da:95}, \cite[Lemma 2.2.1]{Da:96}.  This definition can be shown to be independent of the choices of
$\frak n$ and $\tau,$ and to agree with the operators defined by the
Borel functional calculus for $f \in \mathcal{A}$; see \cite[Theorems 2-5]{Da:95}, \cite[Lemmas 2.2.4-2.2.7]{Da:96}. 

\paragraph{Pseudodifferential properties of the functional calculus.}

We say that $E_\infty \in \mathcal L(\mathcal H^{\sharp})$ is 
$\residual$
if, for any
$N>0$ and any $m>0$, there exists $C_{N,m}>0$ such that
\begin{equation*}
\Vert E_\infty \Vert _{\mathcal D_\hsc^{\sharp,-m} \rightarrow \mathcal D_\hsc^{\sharp,m}} \leq C_{N,m} \hsc^N
\end{equation*}
(compare to \eqref{eq:residual} below).
Operators in the functional calculus are pseudo-local in the following sense.

\begin{lemma}[Pseudolocality]
\label{lem:funcloc1}
Suppose $f\in \mathcal{A}$ is independent of $\hsc$, and $\psi_1, \psi_2 \in C^\infty(\mathbb T_{R_{\sharp}}^d)$ are constant
near $B_{R_0}$. If $\psi_1$ and $\psi_2$ have disjoint supports, then
\beq\label{eq:pseudolocal1}
\psi_1 f(\refop) \psi_2= O(\hsc^\infty)_{\mathcal D_\hsc^{\sharp,-\infty} \rightarrow \mathcal D_\hsc^{\sharp,\infty}}.
\eeq
\end{lemma}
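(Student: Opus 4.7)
The plan is to combine the Helffer--Sj\"ostrand representation \eqref{eq:HS1} with an iterated commutator argument. Starting from
\[
\psi_1 f(\refop)\psi_2 = -\frac{1}{\pi}\int_{\mathbb C} \frac{\partial \widetilde f}{\partial \bar z}\,\psi_1(\refop-z)^{-1}\psi_2\,\rd x\rd y,
\]
the task reduces to obtaining, for each $N$, a bound of the form $\smallnorm{\psi_1(\refop-z)^{-1}\psi_2}_{\cD^{\sharp,-m}_\hsc\to\cD^{\sharp,m}_\hsc}\leq C_{m,N}\hsc^N|\Imag z|^{-L(m,N)}$ for some fixed $L(m,N)$, and then choosing the order $\mathfrak n$ of the almost-analytic extension large enough that the integral converges.

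\textbf{Commutator identity and iteration.} Since $\psi_1\psi_2=0$ and $\psi_1\in C^\infty(\mathbb T^d_{R_\sharp})$ is constant near $B_{R_0}$, applying $(\refop-z)$ to $\psi_1(\refop-z)^{-1}\psi_2$ gives $[\refop,\psi_1](\refop-z)^{-1}\psi_2$, so
\[
\psi_1(\refop-z)^{-1}\psi_2 = (\refop-z)^{-1}[\refop,\psi_1](\refop-z)^{-1}\psi_2.
\]
Because $\psi_1$ is constant near $B_{R_0}$, the commutator $[\refop,\psi_1]=[Q_\hsc,\psi_1]$ is a semiclassical first-order differential operator supported on $\supp(\nabla\psi_1)$, and the expansion of $[Q_\hsc,\psi_1]$ in powers of $\hsc D$ shows it carries an overall factor $\hsc$. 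I would then choose a sequence of cutoffs $\chi_1,\ldots,\chi_{N-1}\in C^\infty(\mathbb T^d_{R_\sharp})$ that are constant near $B_{R_0}$, with nested supports such that $\chi_j\equiv 1$ on $\supp(\nabla \chi_{j-1})$ (setting $\chi_0=\psi_1$) and $\chi_j\psi_2=0$. This is possible since the supports of $\psi_1$ and $\psi_2$ are disjoint. Inserting $\chi_j$ as a factor of $1$ next to each commutator and iterating the identity $N$ times yields
\[
\psi_1(\refop-z)^{-1}\psi_2 = (\refop-z)^{-1}[\refop,\psi_1](\refop-z)^{-1}[\refop,\chi_1]\cdots(\refop-z)^{-1}[\refop,\chi_{N-1}](\refop-z)^{-1}\psi_2,
\]
with an overall $\hsc^N$ prefactor from the $N$ commutators.

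\textbf{Resolvent bounds and convergence.} By self-adjointness of $\refop$, $\smallnorm{(\refop-z)^{-1}}_{\cH^\sharp\to\cH^\sharp}\leq |\Imag z|^{-1}$; combined with the functional calculus bound $\smallnorm{\refop^{1/2}(\refop+C)^{-1/2}}=O(1)$ for $C$ large, one also controls $\smallnorm{(\refop-z)^{-1}[\refop,\chi_j]}_{\cH^\sharp\to\cH^\sharp}$ by $|\Imag z|^{-1}(1+|\Real z|)^{1/2}$ times a constant, using that $[\refop,\chi_j]=\hsc R_j$ with $R_j$ of semiclassical order $1$ and $(\refop+C)^{-1/2}R_j$ bounded on $\cH^\sharp$. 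Putting these estimates together gives a bound of polynomial order in $|\Imag z|^{-1}$ and $\smallang{\Real z}$, which is made integrable against $|\partial_{\bar z}\widetilde f|\lesssim |\Imag z|^{\mathfrak n}\smallang{\Real z}^{\beta-\mathfrak n -1}$ by taking $\mathfrak n$ sufficiently large. To get mapping into $\cD^{\sharp,m}_\hsc$ one premultiplies the output by $(\refop)^m$, which by \eqref{eq:defref} and the disjointness of supports can be absorbed into additional resolvent factors that the almost-analytic extension still accommodates; symmetrically, the $\cD^{\sharp,-m}_\hsc$ input side is handled by duality.

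\textbf{Main obstacle.} The delicate step is the bookkeeping of negative powers of $|\Imag z|$ produced by each resolvent (and by the unbounded factors $[\refop,\chi_j]$, which require one extra half-power of $|\Imag z|^{-1}$ via $\refop^{1/2}$ tricks) against the vanishing order $\mathfrak n$ of the almost-analytic extension. Since the number of resolvents grows linearly in $N$, we must choose $\mathfrak n=\mathfrak n(N,m)$ correspondingly large; because $\mathfrak n$ is a free parameter in the Helffer--Sj\"ostrand construction and the resulting operator is independent of $\mathfrak n$, this causes no obstruction, but it is the place where care is needed.
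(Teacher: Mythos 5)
Your proposal follows essentially the same route as the paper: both reduce \eqref{eq:pseudolocal1} to pseudolocality of the resolvent $\psi_1(\refop-z)^{-1}\psi_2$ via the Helffer--Sj\"ostrand formula \eqref{eq:HS1} and then integrate in $z$. The only difference is that the paper cites the resolvent estimate from \cite[Lemma 4.1]{Sj:97}, whereas you supply its proof via the standard iterated-commutator argument; your version of that argument (nested cutoffs constant near $B_{R_0}$, one factor of $\hsc$ gained per commutator, polynomial losses in $|\Imag z|^{-1}$ and $\langle \Real z\rangle$ absorbed by taking the order $\mathfrak n$ of the almost-analytic extension large) is sound.
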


\bpf
On a smooth manifold with boundary, this result
follows from the fact that $f(\refop)$ is a
pseudodifferential operator, and hence pseudo-local.  Here, it follows from combining the corresponding result about the
resolvent \cite[Lemma 4.1]{Sj:97} (i.e., \eqref{eq:pseudolocal1} with
$f(w):= (w-z)^{-1})$) with \eqref{eq:HS1} and then integrating (as
discussing in a slightly different context in \cite[Paragraph after
proof of Lemma 4.2]{Sj:97}).
\epf

\

Furthermore, we can show from \cite[\S4]{Sj:97} that, modulo a negligible term,  away from the black box the functional calculus is given by the semiclassical pseudodifferential calculus in the sense of our next lemma.
The following lemma uses the notion of semiclassical pseudodifferential operators on $\mathbb
T^d_{R_{\sharp}}$ (including the concept of the \emph{operator wavefront set} $\WFh $), recapped in Appendix \ref{app:sct}.

\begin{lemma}[Pseudodifferential properties away from the black box]
 \label{lem:funcloc2}

If $f\in C^\infty_{\rm comp}(\mathbb R)$ is independent of $\hsc$ and $\chi\in C^\infty(\mathbb T_{R_{\sharp}}^d)$ is equal to zero near $B_{R_0}$, then $f(Q_\hsc)\in \Psi^{-\infty}_\hsc(\mathbb T^d_{R_\sharp})$ with
\beq\label{eq:Sj1}
\chi f(\refop) \chi = \chi f(Q_\hsc) \chi + O(\hsc^\infty)_{\mathcal D_\hsc^{\sharp,-\infty} \rightarrow \mathcal D_\hsc^{\sharp,\infty}}.
\eeq
\end{lemma}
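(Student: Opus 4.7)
The result follows the strategy of Sj\"ostrand \cite{Sj:97}, using the Helffer--Sj\"ostrand formula \eqref{eq:HS1} together with a pseudolocal comparison of the two resolvents.

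\emph{Step 1: $f(Q_\hsc)\in \Psi_\hsc^{-\infty}(\mathbb{T}^d_{R_\sharp})$.} The operator $Q_\hsc$ is a semiclassical differential operator of order $2$ on the closed manifold $\mathbb{T}^d_{R_\sharp}$, and by \eqref{eq:Qnew} its principal symbol $q_\hsc$ is elliptic at fiber infinity. The standard Helffer--Robert theory in the semiclassical calculus (see, e.g., \cite[Chapter 8]{DiSj:99}) then produces $f(Q_\hsc)$ as a semiclassical pseudodifferential operator whose full symbol is obtained by an asymptotic expansion in $\hsc$ via the Helffer--Sj\"ostrand formula. Since $f\in C^\infty_{\rm comp}(\mathbb{R})$ and $q_\hsc\to\infty$ as $|\xi|\to\infty$, every term of this expansion is supported in $\{q_\hsc \in \supp f\}$, which is compact in $\xi$; hence $f(Q_\hsc)\in \Psi_\hsc^{-\infty}(\mathbb{T}^d_{R_\sharp})$.

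\emph{Step 2: Resolvent comparison away from the black box.} For $z\notin\mathbb{R}$, Sj\"ostrand's pseudolocality result \cite[Lemma 4.1]{Sj:97}, applied to the reference operator $\refop$ (and to $Q_\hsc$ on the torus), gives that for any $\chi\in C^\infty(\mathbb{T}^d_{R_\sharp})$ vanishing in a neighbourhood of $B_{R_0}$,
\[
\chi\bigl[(\refop-z)^{-1}-(Q_\hsc-z)^{-1}\bigr]\chi = E(z,\hsc),
\]
where $E(z,\hsc)=\residual$ with an operator norm controlled by a negative power of $|\operatorname{Im}z|$ uniformly in $\hsc$. This step is where it matters that $\refop$ and $Q_\hsc$ coincide outside $B_{R_0}$ (in the sense of \eqref{eq:defref}): the interior structure of the black box enters only through a localized source term that is then absorbed by the second copy of $\chi$.

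\emph{Step 3: Helffer--Sj\"ostrand and the $z$-integral.} Writing the two functional calculi via \eqref{eq:HS1}, we obtain
\[
\chi f(\refop)\chi - \chi f(Q_\hsc)\chi = -\frac{1}{\pi}\int_{\mathbb{C}} \frac{\partial\widetilde f}{\partial\bar z}(z)\,E(z,\hsc)\,\rd x\rd y.
\]
Because $\partial_{\bar z}\widetilde f$ vanishes to arbitrary order at the real axis (the order is controlled by the parameter $\frak n$ in the almost-analytic extension), choosing $\frak n$ large enough beats any negative power of $|\operatorname{Im}z|$ appearing in the estimate on $E(z,\hsc)$, while the cutoff $\tau$ ensures compact support in $\operatorname{Re}z$ (since $f$ is compactly supported). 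Thus the integral converges absolutely and yields an $\residual$ remainder. Combined with Step 1, this gives \eqref{eq:Sj1}.

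\emph{Main obstacle.} The delicate point is Step 3: one must track the growth of $E(z,\hsc)$ in $(\operatorname{Im}z)^{-1}$ as $\operatorname{Im}z\to 0$ (for each fixed $m,N$) and simultaneously take $\frak n$ in the almost-analytic extension large enough, depending on $m$ and $N$, so that the integrand is $O(\hsc^N)$ in the $\mathcal D_\hsc^{\sharp,-m}\to\mathcal D_\hsc^{\sharp,m}$ norm with an integrable majorant. This bookkeeping is exactly the content of \cite[\S4]{Sj:97}, and with it in hand the lemma follows.
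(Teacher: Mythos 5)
Your proposal is correct and follows essentially the same route as the paper, which simply cites \cite[Lemma 4.2 and the subsequent two paragraphs]{Sj:97} for the comparison \eqref{eq:Sj1} and \cite[Th\'eor\`eme 4.1]{HeRo:83} for $f(Q_\hsc)\in\Psi^{-\infty}_\hsc$; your Steps 1--3 are precisely an unpacking of those two citations (Helffer--Robert for the pseudodifferential structure of $f(Q_\hsc)$, Sj\"ostrand's resolvent-level pseudolocality for the comparison, and the Helffer--Sj\"ostrand integral with the $\frak n$-bookkeeping to absorb the $|\operatorname{Im}z|^{-1}$ growth). No gaps.
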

\begin{proof}[References for the proof]
The relation \eqref{eq:Sj1} follows from \cite[Lemma 4.2 and the subsequent two paragraphs]{Sj:97} 
(similar to in the proof of Lemma \ref{lem:funcloc2}). 
The result \cite[Th\'eor\`eme 4.1]{HeRo:83} (see also \cite[Th\'eor\`eme III-11]{Rob87}, \cite[Theorem 8.7]{DiSj:99}) imply that $f(Q_\hsc)$ is a pseudodifferential operator on $\mathbb T^d_{R_\sharp}$. 
\end{proof}

\subsection{Black-box differentiation operator}

Finally, we define the (non-standard) notion of a family of black-box differentiation operators as a family of operators 
agreeing with differentiation outside the black box (note that there
is no a priori notion of derivative inside the black box itself).

\begin{definition}[Black-box differentiation operator]\label{def:BBdiff}
$(D(\alpha))_{\alpha \in \frak A}$ is a family of black-box
differentiation operators on $\mathcal D^{\sharp, \infty}_{\hsc}$ (defined by  \eqref{eq:Dinfty})
 if $\frak A$ is a family of $d$--multi-indices, and 
for any $\alpha$ and any $v \in C^\infty_{\rm comp}(\mathbb T^d_{R_{\sharp}}\backslash \overline{B_{R_0}})$,
$
D(\alpha)v 
=\partial^\alpha v.
$
\end{definition}

\section{The main decomposition result in the black-box setting}\label{sec:mainresult}

\subsection{The precise statement of Theorem \ref{thm:informal}}

In addition to the black-box notation introduced in \S\ref{sec:blackbox}, 
we use the notation that 
\beq\label{eq:C0}
C_0(\RR) :=\Big\{f \in C(\RR)\colon \lim_{\lambda \to
  \pm \infty} f(\lambda)=0\Big\}.
\eeq

\begin{theorem} \mythmname{The decomposition in the black-box setting} \label{thm:mainbb}
Let $P_{\hsc}$ be a semiclassical black-box operator on $\mathcal H$ (in the sense of Definition \ref{def:bb}). 
There exists $\Lambda>0$ 
such that the following holds.
Suppose that, for some $\hsc_0>0$,
there exists 
$\subsetH \subset (0,\hsc_0]$ such that the following two assumptions hold. 
\begin{enumerate}
\item There exists $M\geq0$ such that 
for any $\chi \in C^\infty_{\rm comp}(\mathbb R^d)$ equal to one near
$B_{R_0}$, there exists $C>0$ such that if $u \in \mathcal D$
  is an outgoing solution to $(P_\hsc -I)u=\chi g$, then
\begin{equation} \label{eq:res}
\Vert \chi u \Vert_{\mathcal H} \leq C \hsc^{-1-M}\Vert \chi g\Vert_{\mathcal H} \quad \tfa  \hsc \in \subsetH.
\end{equation} 
\item 
There exists $\mathcal E \in C_0(\mathbb R)$ that is nowhere zero on $[-\Lambda, \Lambda]$ such that 
\beq\label{eq:Friday1}
\mathcal E(\refop) = E + \residual,
\eeq
where $E$ has the following property:~there exists $\rho \in C^\infty(\mathbb T^d_{R_{\sharp}})$ equal to one near $B_{R_0}$, such that, for some $\alpha$-family of black-box differentiation operators $(D(\alpha))_{\alpha \in \frak A}$ and for some $C_{\mathcal E}(\alpha, \hsc)>0$,
\begin{equation} \label{eq:lowenest}
\N{\rho D(\alpha) E w}_{\mathcal H^{\sharp}} \leq C_{\mathcal E}(\alpha, \hsc)\Vert w \Vert_{\mathcal H^\sharp} \quad \tfa  w \in \mathcal D_\hsc^{\sharp, \infty} \tand \hsc \in \subsetH.
\end{equation}
\end{enumerate}

Given $\e>0$, there exist $\hsc_1>0$, $C_j>0,\, j=1,2,3,$ and $\lambda>1$ such that 
for all $R_{\tr}>(1+\e)R_1$, $B_{R_{\tr}}\subset  \Omega_{\tr}\Subset \mathbb{R}^d$, 
$\e<\theta<\pi/2-\e$, 
all $g\in \cH(\Otr)$, 
and all $\hsc \in \subsetH \cap (0,\hsc_1]$, the following holds.
The solution $v \in \mathcal D(\Otr)$ to 
\begin{equation} \label{eq:pde}
(P_{\hsc,\theta} -I)v = g \,\,\ton \Otr\quad \tand \quad v= 0 \,\,\ton \Gamma_\tr
\end{equation}
exists and is unique and there exists $\vhigh\in \cD(\Otr),\vlow\in \cD^{\sharp,\infty}_\hsc$, and $\vres\in \cD^{\sharp,\infty}_\hsc$ such that
\begin{equation} \label{eq:maindec}
v= \vhigh + \vlow + \vres
\end{equation}
and $\vhigh,\vlow,$ and $\vres$ satisfy the following properties.
The component $\vhigh\in \cD(\Otr)$ 
satisfies
\begin{equation} \label{eq:decHF}
\Vert \vhigh \Vert_{\mathcal H(\Otr) } + \big\| P_{\hsc,\theta}\vhigh\big\|_{\mathcal H(\Otr)}  \leq C_1 \Vert g \Vert_{\mathcal H(\Otr)} \quad \tfa  \hsc \in \subsetH \cap (0,\hsc_1].
\end{equation}
There exist
$\RfarA, \RfarB, \RlocB, \RlocA$ with
$R_0<\RfarA<\RfarB<\RlocB<\RlocA<R_1$ 
such that $\vlow\in \cD^{\sharp,\infty}_\hsc$ decomposes as
\begin{equation} \label{eq:decLF0}
\vlow = \vAnear + \vAfar,
\end{equation}
where $ \vAnear\in \mathcal{D}^\sharp$ is regular near the black box and negligible away from it, in the sense that
\begin{equation} \label{eq:decLF1}
\Vert D(\alpha) \vAnear \Vert_{\mathcal H^{\sharp}(B_{\RlocA}) } \leq C_2 C_{\mathcal E}(\alpha, \hsc) \sup_{\lambda\in[-\Lambda, \Lambda]} \big|\mathcal E(\lambda)^{-1}\big|\; \hsc^{-1-M} \Vert g \Vert_{\mathcal H(\Otr)} 
\end{equation}
for all $\hsc \in \subsetH \cap (0,\hsc_1], \alpha \in \mathfrak A$, 
and, for any $N,m>0$ there exists $C_{N,m}>0$ (independent of $\theta$) such that
\begin{equation} \label{eq:decLF2}
\Vert \vAnear \Vert_{\mathcal D_\hsc^{\sharp,m}((B_{\RlocB})^c) } \leq C_{N,m}\hsc^N \Vert g \Vert_{\mathcal H(\Otr)}   \quad \tfa  \hsc \in \subsetH \cap (0,\hsc_1] 
\end{equation}
and  $ \vAfar\in \cD(\Otr)$ is entire away from the black box and negligible near it, in the sense that
\begin{equation} \label{eq:decLF3}
\Vert \partial^\alpha \vAfar \Vert_{\mathcal H^{\sharp}((B_{\RfarA})^c) } \leq C_3 \lambda^{|\alpha|} \hsc^{-|\alpha|-1-M} \Vert g \Vert_{\mathcal H(\Otr)} \quad \tfa  \hsc \in \subsetH \cap (0,\hsc_1] \tand \alpha \in \mathfrak A,
\end{equation}
and, for any $N,m>0$ there exists $C_{N,m}>0$ (independent of $\theta$) such that
\begin{equation} \label{eq:decLF4}
\Vert \vAfar \Vert_{\mathcal D_\hsc^{\sharp,m}(B_{\RfarB}) } \leq C_{N,m}\hsc^N \Vert g \Vert_{\mathcal H(\Otr)}   \quad \tfa  \hsc \in \subsetH \cap (0,\hsc_1]. 
\end{equation}
Finally, $\vres\in \cD^{\sharp,\infty}_\hsc$ is negligible in the sense that for any $N,m>0$ there exists $C_{N,m}>0$ (independent of $\theta$) such that
\begin{equation} \label{eq:vresidual}
\Vert \vres \Vert_{\mathcal D_\hsc^{\sharp,m}(\Otr) } \leq C_{N,m}\hsc^N \Vert g \Vert_{\mathcal H(\Otr)}   \quad \tfa  \hsc \in \subsetH \cap (0,\hsc_1]. 
\end{equation}
In addition, if $\rho = 1$ in \eqref{eq:lowenest}, then the decomposition (\ref{eq:maindec}) can be constructed in such a way that instead of (\ref{eq:decLF0})--(\ref{eq:decLF4}), $\vlow\in\cD^{\sharp,\infty}_\hsc$ satisfies the global regularity estimate
\begin{equation} \label{eq:decLF5}
\Vert D(\alpha) \vlow \Vert_{\mathcal H^{\sharp} } \lesssim C_{\mathcal E}(\alpha, \hsc)  \sup_{\lambda\in[-\Lambda, \Lambda]} \big|\mathcal E(\lambda)^{-1}\big| \; \hsc^{-1-M} \Vert g \Vert_{\mathcal H(\Otr)} \quad \tfa  \hsc \in \subsetH\tand \alpha \in \mathfrak A
\end{equation}
and is negligible in the scaling region in the sense that for any $N,m>0$ there exists $C_{N,m}>0$ (independent of $\theta$) such 
\beq\label{eq:decLF5a}
\Vert \vlow \Vert_{\mathcal D_\hsc^{\sharp,m}((B_{R_1(1+\epsilon)})^c)} \leq C_{N,m}\hsc^N \Vert g \Vert_{\mathcal H(\Otr)}   \quad \tfa  \hsc \in \subsetH \cap (0,\hsc_1].
\eeq
Finally, 
If $\mathcal E(P^\sharp_\hbar) = E$ (i.e., with no $\residual$ remainder in \eqref{eq:Friday1}), then the functions 
$\vhigh$, $\vlow$, $\vAnear$, and $\vAfar$ are all independent of $\mathcal E$, and all the constants in the bounds above are independent of $\mathcal E$ as well.
\end{theorem}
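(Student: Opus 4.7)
The plan is to adapt the strategy of \cite[Theorem A$'$]{LSW4} to the PML setting, following Ingredients 1--6 of \S\ref{sec:idea2}. The principal difficulty is that $P_{\hsc,\theta}$ is not self-adjoint and does not act on the torus $\mathbb T^d_{R_\sharp}$, so all frequency cut-offs must be built via the functional calculus of the reference operator $\refop$, which agrees with $P_{\hsc,\theta}$ only on $B_{R_1}$. Fix a small $\delta>0$ and choose $\varphi_0,\varphi_\tr\in C^\infty_{\rm comp}(\mathbb R^d;[0,1])$ with $\varphi_0\equiv 1$ on $B_{R_0}$, $\supp\varphi_0\Subset B_{R_1}$, $\varphi_\tr\equiv 1$ on $B_{R_1(1+\delta)}$, and $\supp\varphi_\tr\Subset B_{R_1(1+2\delta)}$. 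Choose $\psi\in C^\infty_{\rm comp}(\mathbb R;[0,1])$ equal to $1$ on $[-1,1]$ and supported in $[-2,2]$, set $\psi_\mu(\lambda):=\psi(\lambda/\mu)$ with $\mu$ large but $2\mu<\Lambda$, and put $\Pilow:=\psi_\mu(\refop)$, $\Pihigh:=I-\Pilow$. Decompose
\begin{equation*}
v=\Pihigh(\varphi_\tr v)+\Pilow(\varphi_\tr v)+(1-\varphi_\tr)v.
\end{equation*}

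The piece $(1-\varphi_\tr)v$ is supported in the scaling region, where the signed imaginary part of the principal symbol of $-\hsc^2\Delta_\theta-I$ and the Carleman propagation estimate from \cite{GLS2} give $\|(1-\varphi_\tr)v\|_{\mathcal H}+\|P_{\hsc,\theta}(1-\varphi_\tr)v\|_{\mathcal H}\lesssim\|g\|_{\mathcal H(\Otr)}$; this is absorbed into $\vhigh$. For $\Pihigh(\varphi_\tr v)$, split $\varphi_\tr=\varphi_0+(1-\varphi_0)\varphi_\tr$. On $\supp\varphi_0$ one has $\refop=P_{\hsc,\theta}$, so
\begin{equation*}
(\refop-I)\Pihigh(\varphi_0 v)=\Pihigh\bigl([\refop,\varphi_0]v+\varphi_0 g\bigr);
\end{equation*}
for $\mu$ large, \eqref{eq:Qnew} makes $\widetilde q_{\hsc,\theta}-1$ elliptic on $\WFh\Pihigh$ outside $B_{R_0}$, so a parametrix combined with \eqref{eq:res} and Lemma~\ref{lem:funcloc1} produces the contribution to $\vhigh$ with the bound \eqref{eq:decHF}. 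The remaining term $\Pihigh((1-\varphi_0)\varphi_\tr v)$ has its second factor in the scaling region, and Lemma~\ref{lem:funcloc2} writes $\chi\Pihigh\chi$ as a semiclassical pseudodifferential operator modulo $O(\hsc^\infty)$; composition with the \cite{GLS2} bound again contributes to $\vhigh$, the $O(\hsc^\infty)$ remainder being placed into $\vres$.

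For $\vlow:=\Pilow(\varphi_\tr v)$, factor $\psi_\mu=(\psi_\mu/\mathcal E)\cdot\mathcal E$ (admissible since $\supp\psi_\mu\subset[-\Lambda,\Lambda]$ and $\mathcal E$ is nonvanishing there) to obtain
\begin{equation*}
\Pilow=(\psi_\mu/\mathcal E)(\refop)\,\mathcal E(\refop)=(\psi_\mu/\mathcal E)(\refop)\,E+O(\hsc^\infty).
\end{equation*}
Commuting $D(\alpha)$ past $(\psi_\mu/\mathcal E)(\refop)$ via Lemmas~\ref{lem:funcloc1}--\ref{lem:funcloc2}, applying the regularity estimate \eqref{eq:lowenest} to $E(\varphi_\tr v)$, and bounding $\|\varphi_\tr v\|_{\mathcal H}$ via \eqref{eq:res} (with scaling-region commutator corrections controlled by \cite{GLS2}) yields the global bound \eqref{eq:decLF5}. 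When $\rho\not\equiv 1$ in \eqref{eq:lowenest}, choose radii $R_0<\RfarA<\RfarB<\RlocB<\RlocA<R_1$ and let $\vAnear$, $\vAfar$ be $\vlow$ multiplied respectively by spatial cut-offs supported near and away from the black-box: \eqref{eq:decLF1} follows from \eqref{eq:lowenest}, \eqref{eq:decLF3} follows from Lemma~\ref{lem:funcloc2} (which replaces the functional calculus by a genuine pseudodifferential operator on the outer region and hence delivers entire regularity), and both \eqref{eq:decLF2} and \eqref{eq:decLF4} come from the pseudolocality of Lemma~\ref{lem:funcloc1}.

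Finally, $\vres$ collects all the $O(\hsc^\infty)$ remainders generated by the pseudodifferential calculus and by the replacement $\mathcal E(\refop)=E+O(\hsc^\infty)$, together with an $O(\hsc^\infty)$ correction that restores the zero Dirichlet trace of $\vhigh$ on $\Gamma_\tr$ (small because $v$ itself vanishes there and the functional-calculus contributions to $v-(1-\varphi_\tr)v$ are smoothing of $v$ on $\Gamma_\tr$ by pseudolocality); this gives \eqref{eq:vresidual}. The main obstacle is controlling, uniformly in $\hsc$ and in $\theta$, every commutator of a $\refop$-functional-calculus cut-off with a spatial cut-off that reaches into the scaling region, where $\refop\neq P_{\hsc,\theta}$; each such commutator is either pseudodifferential of lower order (via Lemmas~\ref{lem:funcloc1}--\ref{lem:funcloc2}) or handled by the PML estimates of \cite{GLS2}, and uniformity in $\theta\in[\epsilon,\pi/2-\epsilon]$ is inherited from the uniformity of those two ingredients on this range.
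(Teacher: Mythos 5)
Your architecture coincides with the paper's: the same three-way split $v=\Pihigh(\varphi_\tr v)+\Pilow(\varphi_\tr v)+(1-\varphi_\tr)v$ with $\varphi_\tr$ reaching into the scaling region, the PML piece controlled by the elliptic-near-boundary and Carleman estimates of \cite{GLS2} and absorbed into $\vhigh$, the high-frequency piece split via $\varphi_0+(1-\varphi_0)\varphi_\tr$ with the inner part treated by the commutator identity and the outer part by ellipticity of $\widetilde{Q}_{\hsc,\theta}-1$, the factorisation $\psi_\mu=(\psi_\mu/\mathcal E)\,\mathcal E$ for the low-frequency piece, and $\vres$ collecting the $O(\hsc^\infty)$ remainders so that $\vhigh$ retains the Dirichlet trace on $\Gamma_\tr$. (Two cosmetic points: the paper sets $\Lambda:=2\mu$ rather than requiring $2\mu<\Lambda$, and the inner high-frequency piece $\Pihigh(\varphi_0 v)$ is first bounded by a purely abstract $C_0$-functional-calculus argument using $\Pihigh'\Pihigh=\Pihigh$, with the elliptic parametrix reserved for the commutator $\Pihigh[\refop,\varphi_0]v$; your sketch merges these but the ingredients are the same.)

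There is, however, one genuine gap: your justification of the entire bound \eqref{eq:decLF3} on $\vAfar$. You assert that Lemma \ref{lem:funcloc2} ``delivers entire regularity'' because it replaces the functional calculus by a pseudodifferential operator away from the black box. That is not enough. For a general $A\in\Psi^{-\infty}_\hsc$ with $x$-dependent symbol, $\partial^\alpha A w$ involves $x$-derivatives of the symbol, which typically grow like $|\alpha|!\,C^{|\alpha|}$; by Lemma \ref{lem:analytic} this yields at best analyticity with $\hsc$-dependent radius of convergence, not the factorial-free Cauchy estimates $C\lambda^{|\alpha|}\hsc^{-|\alpha|}$ required for \eqref{eq:decLF3}. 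The missing idea is to use the frequency localisation \eqref{eq:PiLPWF}, $\WFh\Pilow^\Psi\subset\{|\xi|\le\lambda/2\}$, to insert an $x$-independent Fourier multiplier: one writes $\Pilow^\Psi=\Optorus(\widetilde\varphi(|\xi|^2))\Pilow^\Psi+O(\hsc^\infty)_{\Psi^{-\infty}_\hsc}$ with $\widetilde\varphi$ compactly supported, defines $\vAfar$ as in \eqref{eq:uainf} so that on $(B_{\RfarA})^c$ (where $\gamma_1\equiv1$) every derivative lands on $\widetilde\varphi(-\hsc^2\Delta)$, and then the explicit Fourier-series computation \eqref{eq:low:A:3} on the torus — summing only over $|j|\lesssim\lambda/\hsc$ — gives $\|\partial^\alpha\widetilde\varphi(-\hsc^2\Delta)w\|_{L^2}\le\lambda^{|\alpha|}\hsc^{-|\alpha|}\|w\|_{L^2}$ with no factorial. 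Without this step your argument proves smoothness of $\vAfar$ but not the claimed entirety, which is precisely what the $hp$-FEM application needs in order to send $p\to\infty$.
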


\subsection{Discussion of Theorem \ref{thm:mainbb}}\label{sec:mainbb_discuss}

\paragraph{The first assumption (involving \eqref{eq:res}).}
This assumption is 
that the solution operator is polynomially bounded in $\hsc$. 
In the black-box setting, \cite{LSW1} proved that this assumption always holds with $M>5d/2$ and 
$\{\hbar^{-1}: \hbar \in \subsetH\}^c$ having arbitrarily small measure in $\Rea^+$ (see Part (ii) of Theorem \ref{thm:polyboundD}). The solution operator is then polynomially bounded because $\subsetH$ excludes (inverse) frequencies close to resonances.
(Under an additional assumption about the location of resonances, a similar result with a larger $M$ can also be extracted from \cite[Proposition 3]{St:01} by using the Markov inequality.)  For nontrapping problems, one expects 
(\ref{eq:res}) to hold with $M=0$ and $\subsetH = (0, h_0]$ (see Theorem \ref{thm:polyboundD} and the references therein).

\paragraph{The second assumption (involving \eqref{eq:Friday1} and \eqref{eq:lowenest}).}
This assumption is a regularity assumption that depends on the contents of the black box.
We later refer to \eqref{eq:lowenest} as the ``low-frequency estimate'', since the fact that $\mathcal E$ is nowhere zero on $[-\Lambda,\Lambda]$ means that it bounds low-frequency components.
The cut-off $\rho$ in \eqref{eq:lowenest} is needed when the black box
  contains, e.g., an analytic obstacle and the operator inside has
  analytic coefficients and we want to show that $Ew$ is analytic inside the black box.
  
To prove Theorem \ref{thm:LSW3}, we choose $\mathcal E \in C^\infty_{\rm comp}(\Rea^d)$ with $\mathcal E \equiv 1$ on $[-\Lambda, \Lambda]$, and $\rho\equiv 1$;
the low-frequency estimate \eqref{eq:lowenest} then corresponds to a bound on the eigenfunctions of $P_\hsc^\sharp$.
By using the flexibility to write $\mathcal E(P^\sharp_\hbar)$ as $E + \residual$, we can actually obtain the low-frequency estimate \eqref{eq:lowenest} from a bound on the eigenfunctions of $-\Delta$ on the torus, instead of those of the variable-coefficient operator; see 
\S\ref{sec:LSW3proof}. 

To prove Theorem \ref{thm:LSW4}, we choose $\mathcal E(\lambda) = \re^{-t|\lambda|}$, corresponding to a heat-flow estimate; see \S\ref{sec:LSW4proof}.
Since $E_\infty=0$, the decomposition is independent of $\mathcal E$, and this allows us to use a \emph{family} of $\mathcal E$s, depending on $t$, 
and hence a family of estimates as \eqref{eq:lowenest}. This feature allows us to tune the choice of the parameter $t$, depending on $\hbar$ and $\alpha$, to get the best possible estimate on $\vAnear$ in \eqref{eq:decLF1}.  
       
\paragraph{The component $\vhigh$.}
    
Comparing \eqref{eq:res} and \eqref{eq:decHF}, and recalling that in the nontrapping case \eqref{eq:res} holds with $M=0$, we see that $\vhigh$ satisfies a bound 
that is better, by at least one power of $\hsc$, than the bound satisfied by $u$;
this is the analogue of the property (ii) in \S\ref{sec:idea1} of the results of 
\cite{MeSa:10, MeSa:11, EsMe:12, MePaSa:13}, 
and is a consequence of the semiclassical ellipticity of $P_\hsc -1$ on high-frequencies (as discussed in \S\ref{sec:idea2}). The regularity of $\vhigh$ depends on the domain of the operator but not on any other features of the black box (in particular, not on the regularity estimate \eqref{eq:lowenest}).

\paragraph{The component $\vlow$.}

$\vlow$ is in the domain of arbitrary powers of the operator ($\vlow \in \mathcal D_\hsc^{\sharp,\infty}$) and so is smooth in an abstract sense. $\vlow$ is split further into two parts: $\vAnear$ and $\vAfar$, with 
$\vAnear$ regular near the black box and negligible away from it, and
$ \vAfar$ entire away from the black box and negligible near it;
Figure \ref{fig:line2} illustrates this set up (with ``$\vAnear$ analytic'' replaced by ``$\vAnear$ regular'').
Comparing \eqref{eq:res} and \eqref{eq:decLF1}/\eqref{eq:decLF3}, we see that, in the regions where they are not negligible,
$\vAnear$ and $\vAfar$
satisfy bounds with the same $\hsc$-dependence as $u$, but with improved regularity. These properties are the analogue of the property (i) in \S\ref{sec:idea1} of the results of 
\cite{MeSa:10}, \cite{MeSa:11}, \cite{EsMe:12}, \cite{MePaSa:13}. In particular, the regularity of $\ulow$ depends on the regularity inside the black box (from \eqref{eq:lowenest}), and, for the exterior Dirichlet problem with analytic obstacle and coefficients analytic in a neighbourhood of the obstacle, $\ulow$ is analytic.

\paragraph{The boundary conditions satisfied by each component.}

On \emph{both} $\Gamma_\tr$ \emph{and} on any boundaries in the
interior of the black box, each of the main components $\vhigh$,
$\vAfar$, and $\vAnear$ \emph{either} satisfies the same boundary
condition as the PML solution $v$ \emph{or} is negligible in a
neighbourhood of that boundary. Indeed, both $\vhigh$ and
$\vAfar \in \cD(\Otr)$, and thus satisfy the same boundary
conditions as $v$ in both the black box and on $\Gamma_\tr$. The
component $\vAnear\in \cD^{\sharp,\infty}_\hsc$, and thus satisfies
the same boundary condition(s) (if any) as the PML solution $v$ in the
black box; furthermore, by \eqref{eq:decLF2}, $\vAnear$ is negligible
near $\Gamma_\tr$.  This discussion was all for the case $\rho\neq 1$
in \eqref{eq:lowenest} (where $\vlow$ is split into $\vAfar$ and
$\vAnear$).  When $\rho=1$ in \eqref{eq:lowenest},
$\vlow\in \cD^{\sharp,\infty}_\hsc$, and thus satisfies the same
boundary condition(s) (if any) as the PML solution $v$ in the black
box, and is negligible itself in a neighbourhood of $\Gamma_\tr$ by
\eqref{eq:decLF5a} and the fact that $R_{\tr} > R_1(1+\e)$.

These facts about the boundary conditions are important when using the decomposition of Theorem \ref{thm:LSW4} (obtained from the general decomposition in Theorem \ref{thm:mainbb}) in proving Theorem \ref{thm:FEM1} about the $hp$-FEM. 
Indeed, Lemma \ref{lem:Schatz} reduces proving quasioptimality of the Galerkin solution to determining how well $v$ is approximated by the sequence of finite-element spaces $(V_N)_{N=0}^\infty$, with each $V_N \subset \cD(\Otr)$ (i.e., the spaces have the boundary conditions for $v$ ``built in'').  Via the decomposition $v=\vhigh + \vlow$, we then seek to determine how well $\vhigh$ and $\vlow$ are approximated in these spaces -- hence why we care about the boundary conditions.

\paragraph{The error term $\vres$.}
The reason the negligible error term $\vres$ appears in the decomposition \eqref{eq:maindec} is so that  $\vhigh$ satisfies the zero Dirichlet boundary condition on $\Gamma_\tr$, the importance of which is highlighted above. Note that if we did not care about
$\vhigh$ satisfying this boundary condition, we could include $\vres$ in $\vhigh$.

\paragraph{Comparison with the analogous result for the (non-truncated) Helmholtz solution in \cite[Theorem A]{LSW4}.}

By design, the assumptions of Theorem \ref{thm:mainbb} are exactly the same as the assumptions in the analogue of Theorem \ref{thm:mainbb} for the non-truncated Helmholtz problem, i.e., \cite[Theorem A]{LSW4}. The conclusions of Theorem \ref{thm:mainbb} are essentially the same as those of \cite[Theorem A]{LSW4}, except for the fact that 
the decomposition has the residual term $\vres$; as discussed in the previous paragraph, the reason for this is that we want $\vhigh$ to satisfy the zero Dirichlet boundary condition on $\Gamma_\tr$ (which is not present for the non-truncated Helmholtz problem).

\section{Proof of Theorem \ref{thm:mainbb}}\label{sec:blackboxresult}

The decomposition (\ref{eq:maindec}) is defined in \S \ref{subsec:abdec}
(and illustrated schematically in Figures \ref{fig:split} and \ref{fig:split_uL}). The estimates (\ref{eq:decHF}) and  (\ref{eq:decLF1})--(\ref{eq:decLF5})  are proved in \S \ref{subsec:high} and
 \ref{subsec:low} respectively.

In this proof, we shorten the notation $\residual$ to $\residualD$ to keep expressions compact. 

\subsection{The decomposition} \label{subsec:abdec}

\paragraph{Definition of the frequency cut-offs.}
Let
$\psi \in C^\infty_{\rm comp}(\mathbb R;[0,1])$ be such that 
$\supp \,\psi \subset [-2,2]$ and $\psi\equiv 1$ on $[-1,1]$.
For $\mu,\mu'>0$, let 
\begin{equation} \label{eq:psimu}
\psi_\mu := \psi\left(\frac{\cdot}{\mu}\right) \quad\tand\quad \psi_{\mu'} := \psi\left(\frac{\cdot}{\mu'}\right).
\end{equation}
We now assume that $\mu>2$ and choose $\mu'$ as a function of $\mu$ so that 
\beq\label{eq:psi_prop}
(1- \psi_{\mu'}) (1- \psi_\mu) = (1-\psi_{\mu} )
\quad\tand\quad 1 \notin \supp(1-\psi_{\mu'});
\eeq
these two conditions are ensured if $1\leq \mu' \leq \mu/2$ (hence the assumption that $\mu>2$). 

\paragraph{Choice of the parameter $\mu$.}

We now impose additional conditions on $\mu$. By \eqref{eq:Qnew}, there exists $\mu_0$ such that if $\mu\geq \mu_0$, then
\beq\label{eq:John1}
\big\{ (x,\xi) \, :\, |q_\hsc(x,\xi)|\geq \mu\big\}=\big\{ (x,\xi) \, :\, q_\hsc(x,\xi)\geq \mu\big\}.
\eeq
We then choose $\mu \geq \max\{\mu_0,\mu_1\}$, where $\mu_1$ is given by the following lemma.

\ble[Semiclassical ellipticity of $Q_\hsc$ and $\widetilde{Q}_{\hsc,\theta}$ for $\mu$ large enough]\label{lem:mu_elliptic}
Given $\epsilon>0$, there exists $\mu_1>2$ and $c_{\rm ell}>0$ such that if $\mu\geq \mu_1$ then the following hold.

(i) If $q_\hsc(x,\xi)\geq \mu$, then 
\begin{equation} \label{eq:newdefmu}
\langle \xi \rangle^{-2}(q_\hsc(x, \xi) -1) \geq c_{\rm ell} >0
\eeq
(i.e., $Q_\hsc- 1$ is semiclassically elliptic in this region of phase space).

(ii) If $\epsilon\leq \theta\leq \pi/2-\epsilon$, $x\in B_{R_1(1+3\delta)}\setminus B_{R_0}$, and $q_{\hsc}(x,\xi)\geq\mu$, then
\begin{equation*}
\langle \xi \rangle^{-2}\big|\widetilde{q}_{\hsc,\theta}(x, \xi) -1\big| \geq c_{\rm ell} >0.
\eeqs
(i.e., $\widetilde{Q}_{\hsc,\theta}- 1$ defined by  \eqref{eq:widetildeQ} is semiclassically elliptic in this region of phase space). 
\ele

\bpf
In each part we show that there exists a $\mu_1$ such that the conclusion holds, and set the final constant $\mu$ to be the maximum of the two.

(i) By the lower bound in (\ref{eq:Qnew}), there exists $\widetilde{\mu} > 1$ and $c_{\rm ell}>0$ such that
\beqs
|\xi| \geq \widetilde{\mu} \quad\text{ implies that } \langle \xi \rangle^{-2}(q_\hsc(x, \xi) -1) \geq c_{\rm ell} >0.
\eeqs
The lower bound \eqref{eq:Qnew} also ensures that there exists $\mu > 1$ such that $q_\hsc(x,\xi) \geq \mu$ implies that 
$|\xi| \geq \widetilde{\mu}$, and thus 
 \eqref{eq:newdefmu} holds. 

(ii) Recall from \S\ref{sec:scaled} that 
$\widetilde{Q}_{\hsc,\theta}= Q_\hsc$ on $B_{R_{\rm scat}}$ and
$\widetilde{Q}_{\hsc,\theta}= -\hsc^2 \Delta_\theta$ on $B_{R_1(1+3\delta)}\setminus B_{R_{\rm scat}}$. Therefore, by \cite[Theorem 4.32]{DyZw:19}, given $\epsilon>0$, there exist $C_1,C_2>0$ such that
\beqs
C_1 |\xi|^2 \leq \big|\widetilde{q}_{\hsc,\theta}(x,\xi)\big| \leq C_2 |\xi|^2
\eeqs
for all $x\in B_{R_1(1+3\delta)}\setminus B_{R_{\rm scat}}$, for all $\xi$, and for $\epsilon \leq \theta\leq \pi/2-\epsilon$.
The result then follows in a similar way to the proof of Part (i).
\epf

\

 Let 
\beq\label{eq:Lambda}
\Lambda := 2 \mu
\quad\text{ so that } \quad
\supp\,  \psi_{\mu} \subset [-\Lambda, \Lambda].
\end{equation}
Note that both $\mu$ and $\Lambda$ only depend on $q_\hsc$ and $\{\widetilde{q}_{\hsc,\theta}\}_{\epsilon \leq \theta\leq \pi/2-\epsilon}$.

\paragraph{The frequency cut-offs.}

We define, using the Borel functional calculus for $\refop$ (Theorem \ref{thm:fundfc}), 
\begin{equation}\label{eq:PiL}
\Pilow := \psi_{\mu}(\refop),
\end{equation}
and additionally
\beq\label{eq:Pihigh}
\Pihigh :=  (1 - \psi_{\mu})(\refop) = I-\Pilow \quad\tand\quad \Pihigh' := (1 - \psi_{\mu'})(\refop).
\eeq
By the first property in \eqref{eq:psi_prop} and the fact the Borel functional calculus is an algebra homomorphism (Part \ref{it:fc3} of Theorem \ref{thm:fundfc}), 
\begin{equation} \label{eq:PiPi}
\Pihigh' \Pihigh = \Pihigh.
\end{equation}
By Part  \ref{it:fc5} of Theorem \ref{thm:fundfc}, the operators $\Pilow, \Pihigh,$ and $\Pihigh'$ are bounded on $\Hilb^{\sharp}$, with
\begin{equation} \label{eq:boundPi}
\Vert \Pilow\Vert_{\mathcal L(\mathcal H^{\sharp})}, \; \Vert \Pihigh\Vert_{\mathcal L(\mathcal H^{\sharp})}, \; \Vert \Pihigh'\Vert_{\mathcal L(\mathcal H^{\sharp})} \leq 1,
\end{equation}
and they commute with $\refop$ by Part \ref{it:fc3} of Theorem \ref{thm:fundfc}.

By the definition of $\psi_\mu$ (\ref{eq:psimu}), $\langle t\rangle^m \psi_\mu(t)$ is a bounded function for all $m$, and thus $\Pilow: \cD^\sharp \to \cD_\hsc^{\sharp,\infty}$. Then, $\Pihigh:= I- \Pilow: \cD^\sharp \to \cD^\sharp$.

\paragraph{Definition of the decomposition.}

Let $v$ be the solution of \eqref{eq:pde}.
Given $\epsilon>0$, fix $\delta >0$ so that $R_1(1 + 4 \delta) < R_1(1 + \epsilon)$; the condition that $R_1(1 +\epsilon)< R_{\tr}$ implies that $R_1(1 + 4\delta)< R_\tr$ (which is what we assumed in \S\ref{sec:scaled}).
Let $\varphi_{\rm tr} \in C_{\rm comp}^\infty(\Rea^d;[0,1])$ be such that $\varphi_{\rm tr} \equiv 1$ on $B_{R_1(1 + \delta)}$ and $\supp \,\varphi_{\rm tr} \subset B_{R_1(1 + 2\delta)}$. 
After writing
\beqs
v = \varphi_\tr v + (1- \varphi_\tr) v,
\eeqs
we then treat $ \varphi_\tr v$ as an element of $\cD^\sharp$ and let
\beq\label{eq:vHigh_vLow}
\vH := \Pihigh
(\varphi_{\rm tr} v) \in \cD^\sharp, \quad \vL := 
\Pilow
(\varphi_{\rm tr} v) \in \cD^{\sharp,\infty}_\hsc, \quad \vP := (1-\varphi_{\rm tr}) v \in \cD(\Otr),
\eeq
so that
\beq\label{eq:decomp1}
v=( \vH + \vL) + \vP.
\eeq

\bre
The parentheses in \eqref{eq:decomp1} are present because, 
strictly speaking, one cannot add \emph{either} $\vL$ and $\vP$ \emph{or} $\vH$ and $\vP$ individually, since
$\vL,\vH\in \cD^\sharp$ are functions on the torus and $\vP \in \cD(\Otr)$ is a function on $\Otr$. However, by construction, $\vH+ \vL$ is identically zero on $\overline{(\Otr)^c}$, and hence can be thought of as an element of $\cD(\Otr)$ by restriction. Similar sums, e.g., \eqref{eq:LEG1}, arise below, but we omit the parentheses.
\ere

We show below that, given $\e>0$, there exist $\hsc_1>0$, $C>0$ 
such that, for all $R_{\tr}>R_1(1+\e)$, $B_{R_{\tr}}\subset  \Omega_{\tr}\Subset \mathbb{R}^d$ with Lipschitz boundary, $\e<\theta<\pi/2-\e$, 
all $g\in \cH(\Otr)$, 
and all $\hsc \in \subsetH \cap (0,\hsc_1]$, 
\beq\label{eq:HF_eq}
\Vert \vH \Vert_{\mathcal H(\Otr)} + \Vert P_{\hsc,\theta}
 \vH \Vert_{\mathcal H(\Otr)} \leq C \Vert g \Vert_{\mathcal H(\Otr)},
\eeq
and
\beq\label{eq:PML_bound}
\Vert \vP \Vert_{\mathcal H(\Otr)} + \Vert P_{\hsc,\theta} \vP \Vert_{\mathcal H(\Otr)} \leq C \Vert g \Vert_{\mathcal H(\Otr)},
\eeq
When $\rho=1$ in the assumption \eqref{eq:lowenest}, we show that 
\beq\label{eq:rho=1}
\vL = \vlow + \residualD\varphi_\tr v,
\eeq
with $\vlow\in \cD^{\sharp,\infty}_\hsc$ satisfying \eqref{eq:decLF5} and \eqref{eq:decLF5a}. Otherwise, 
we show that 
\beq\label{eq:decomp2}
\vL = \vAnear + \vAfar + \residualD \varphi_\tr v,
\eeq
where $\vAnear$ and $\vAfar$ satisfy \eqref{eq:decLF1}-\eqref{eq:decLF4}, $\vAnear \in \cD_\hsc^{\sharp,\infty}$, and $\vAfar \in \cD(\Otr)$.

The idea now is to let 
$\vhigh$ equal $\vH + \vP$, and then the decomposition \eqref{eq:maindec} would hold by \eqref{eq:decomp1} and \eqref{eq:rho=1}/\eqref{eq:decomp2}.
However, we want $\vhigh$ to be in $\cD(\Otr)$, which is not guaranteed since,
although $\vP\in \cD(\Otr)$ (as noted above), $\vH$ need not be in $\cD(\Otr)$.
We therefore let $\widetilde\varphi_\tr\in C^\infty_{\rm comp}(\Rea^d;[0,1])$ be such that $\widetilde\varphi_\tr \equiv 1$ on a neighbourhood of $\supp \,\varphi_{\rm tr}$ and such that $\supp \,\widetilde \varphi_{\rm tr} \subset B_{R_1 + 3\delta}$. 
Then, by the definitions of $\vH$ \eqref{eq:vHigh_vLow} and $\Pihigh$ \eqref{eq:Pihigh} and Lemma \ref{lem:funcloc1}, 
\beq\label{eq:decomp3}
\vH = \widetilde\varphi_\tr\vH + \residualD \varphi_\tr v.
\eeq
We then set 
\beqs
\vhigh:= \widetilde\varphi_\tr\vH + \vP,
\eeqs
so that, by \eqref{eq:decomp1}, \eqref{eq:decomp2}, and \eqref{eq:decomp3},
\beq\label{eq:LEG1}
v = \vhigh + \vlow + \vres 
\quad\text{ where } \quad 
\vres= \residualD \varphi_\tr v.
\eeq

The bound \eqref{eq:vresidual} on $\vres$ (which completes the proof) follows from the result of \cite{GLS2} (recapped in Theorem \ref{t:scaledResolve} below) that $v$ inherits the polynomial bound on the resolvent enjoyed by $u$ \eqref{eq:res}.

This decomposition strategy is
summed-up in Figure \ref{fig:split}; with an overview of the decomposition of the low-frequency component $\vL$ in Figure \ref{fig:split_uL}.

\paragraph{Organisation of the rest of the proof.}

In \S\ref{subsec:PML} we prove the bound \eqref{eq:PML_bound} on $\vP$. 
In \S \ref{subsec:high} we prove the bound (\ref{eq:HF_eq}) on $\vH$.
In \S\ref{subsec:low} we prove that the decomposition (\ref{eq:decomp2}) holds, with $\vAnear$ and $\vAfar$ satisfying \eqref{eq:decLF1}-\eqref{eq:decLF4}.

In the rest of the proof we assume that $\hsc \in \subsetH$ and we omit the quantifiers and the explicit statement that the bounds hold uniformly for 
$R_{\tr}>R_1(1+\e)$ and $\e<\theta<\pi/2-\e$. We use the notation $\lesssim$ in bounds to indicate that the omitted constant is independent of $\hsc$.

\begin{figure}
\scalebox{0.85}{
%\begin{adjustwidth}{0em}{-18em}
\hspace{-5em}
 \begin{tikzpicture}%
  [>=stealth,
   shorten >=1pt,
   align = center,
   node distance=3cm and 4.5cm,
   on grid
  ]
\node (1)  {$v$};
\node (2) [below=of 1] {\begin{minipage}{0.2\textwidth}
            \begin{gather*} 
            \varphi_{\tr} v \\ 
            \text{\small considered as an element} \\
            \text{\small of the reference torus} 
            \end{gather*}
        \end{minipage}
};
\node (21) [right=of 2] {\begin{minipage}{0.2\textwidth}
            \begin{gather*} 
            \vP:= (1-\varphi_{\tr}) v  
            \end{gather*}
        \end{minipage}};
\node (31) [below=of 2] {\begin{minipage}{0.2\textwidth}
            \begin{gather*} 
            \vL \\ 
            \text{\small low-frequency part} 
            \end{gather*}
        \end{minipage}};
\node (32) [right=of 31] {\begin{minipage}{0.2\textwidth}
            \begin{gather*} 
            \vH \\ 
            \text{\small high-frequency part} 
            \end{gather*}
        \end{minipage}};
\node (A) [right=of 32]
   {\,\,$\widetilde{\varphi}_\tr \vH$\,\,}; %\\ 
\node (B) [right=of A] {\,\,$\vhigh$};
\node (42) [below=of 31] {
\begin{minipage}{0.2\textwidth}
            \begin{gather*} 
            \vAfar \\ 
           \text{\small negligible near $B_{R_0}$} \\
            \text{\small entire away from $B_{R_0}$} 
            \end{gather*}
        \end{minipage}};
\node (41) [left=of 42] {\begin{minipage}{0.2\textwidth}
            \begin{gather*} 
            \vAnear \\ 
            \text{\small regular near $B_{R_0}$} \\
            \text{\small negligible away from $B_{R_0}$} 
            \end{gather*}
        \end{minipage}};
\node (43) [right=of 42] {\begin{minipage}{0.2\textwidth}
            \begin{gather*} 
\residualD \varphi_\tr v\\ 
            \text{small part}
            \end{gather*}
        \end{minipage}};
\node (44) [right=of 43] {\begin{minipage}{0.2\textwidth}
            \begin{gather*} 
\residualD \varphi_\tr v\\ 
            \text{small part}
            \end{gather*}
        \end{minipage}};
%\node (44) [right=of 43] {$\;$};
\node (51) [below=of 42] {$\vlow$};
%\node (52) [below=of 44] {$\uhigh$};
\node (54) [below=of 44] {$\vres$};

\path[->]
(1) edge node[left] {$\varphi_\tr \in C^\infty_{\rm comp}(B_{R_1(1+2\delta)})$} (2)
(1) edge node[right] {\,\,$1-\varphi_\tr$} (21)
(2) edge node[left] {$\Pilow$} (31)
    edge node[right] {$\hspace{0.2cm}\Pihigh$} (32)   
(31) edge node {} (42)
(31) edge node {} (41)
(31)  edge node {} (43) 
(32) edge node[right] {\,\,$1-\widetilde{\varphi}_\tr$} (44)
(32) edge node[above] {\vspace{-2ex}$\widetilde{\varphi}_\tr$} (A) % this one not working
(A) edge node {} (B) % this one not working 
(21) edge node {} (B)
(41) edge node {} (51)
(42) edge node {} (51)
(43) edge node {} (54)
(44) edge node {} (54)
;
\end{tikzpicture}
%\end{adjustwidth}
}
\caption{Decomposition of the PML solution described in \S\ref{subsec:abdec}
(when $\rho\neq 1$ in \eqref{eq:lowenest})
} \label{fig:split}
\end{figure}

\subsection{The component near the PML boundary}\label{subsec:PML}

In this subsection we prove that the bound \eqref{eq:PML_bound} on $\vP$ holds.
We first recap results from \cite{GLS2} about PML truncation.

\subsubsection{Recap of three results from \cite{GLS2}}

The first result is a special case of the result from \cite[Theorem 1.6]{GLS2} that the solution operator of the PML problem ``inherits" the $\hsc$-dependence of the solution operator of the original (nontruncated) Helmholtz problem.

\begin{theorem}\mythmname{Simplified version of \cite[Theorem 1.6]{GLS2}}
\label{t:scaledResolve}
Suppose Point 1 in Theorem \ref{thm:mainbb} holds; i.e., the solution operator of the black-box problem is polynomially bounded for $\hsc \in \subsetH$. 
Given $\e>0$, there exist $C,\hsc_0>0$ such that the following holds. 
For all $R_{\tr}>R_1(1+\e)$, $B_{R_{\tr}}\subset  \Omega_{\tr}\Subset \mathbb{R}^d$ with Lipschitz boundary, $\e<\theta<\pi/2-\e$, 
all $g\in \cH$ with $\supp \,g\subset \Omega_{\tr}$, and all 
$\hsc \in \subsetH\cap [0,\hsc_0]$, the solution $v$ to
\beqs
(P_{\hsc,\theta} -I)v = g \tin \Otr \quad\tand\quad v= 0 \ton \Gamma_\tr
\eeqs
(i.e., \eqref{eq:pde}) exists, is unique, and satisfies
\beq\label{eq:res_PML}
\N{v}_{\cH(\Otr)} + \N{P_{\hsc,\theta}v}_{\cH(\Otr)} \leq C \hsc^{-1-M} \N{g}_{\cH(\Otr)}.
\eeq
\end{theorem}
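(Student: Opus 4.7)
The plan is to reduce Theorem~\ref{t:scaledResolve} to the polynomial bound \eqref{eq:res} on the \emph{unscaled} resolvent by a cut-off-and-compare strategy, with Fredholm theory handling existence and uniqueness once the a priori bound \eqref{eq:res_PML} is in hand.

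First, I observe that the sesquilinear form on $H^1_0(\Omega_\tr) \cap \cD(\Otr)$ associated with $P_{\hsc,\theta}-I$ satisfies a G\aa rding inequality uniformly in $\theta \in [\epsilon,\pi/2-\epsilon]$ -- by Lemma~\ref{lem:strong_elliptic} in the concrete Helmholtz case, and by an analogous symbol computation in the abstract black-box setting, using that $-\hsc^2\Delta_\theta$ differs from $-\hsc^2\Delta$ by a rotation with $\theta$ bounded away from $0$ and $\pi/2$. Consequently $P_{\hsc,\theta}-I$ is Fredholm of index zero on $\cD(\Otr)$, and both existence and uniqueness of $v$ follow once \eqref{eq:res_PML} is established (uniqueness because any homogeneous solution would, after restriction to the scattering region, correspond to a real resonance of $P_\hsc$, which is excluded by \eqref{eq:res} for $\hsc \in \subsetH$).

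For the a priori bound, let $\chi \in C^\infty_{\rm comp}(B_{R_1};[0,1])$ equal $1$ on a neighbourhood of the black box, and let $u \in \cD_{\rm loc}$ be the outgoing solution of $(P_\hsc-I)u = g$ given by the unscaled resolvent (with $g$ extended by zero outside $\Otr$). Then $\chi u \in \cD(\Otr)$ and, by \eqref{eq:res} applied with a cut-off slightly larger than $\chi$,
\begin{equation*}
\|\chi u\|_{\cH} \lesssim \hsc^{-M-1}\|g\|_{\cH(\Otr)}.
\end{equation*}
Since $P_{\hsc,\theta}$ agrees with $P_\hsc$ on $\supp \chi$, the difference $w := v - \chi u$ satisfies
\begin{equation*}
(P_{\hsc,\theta}-I)\,w \,=\, -[P_{\hsc,\theta},\chi]\,u \quad \text{in } \Omega_\tr, \qquad w = 0 \text{ on } \Gamma_\tr,
\end{equation*}
and the commutator is supported in the thin annulus $\supp \nabla \chi$, where \eqref{eq:res} (plus interior elliptic regularity) controls $u$ at the same polynomial rate $\hsc^{-M-1}\|g\|_{\cH(\Otr)}$.

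The main obstacle, and the core of \cite{GLS2}, is estimating $w$ in the PML region $\{|x|\geq R_1\}$. The mechanism is that on this region the highest-order \emph{imaginary} part of $-\hsc^2\Delta_\theta - 1$ has a definite sign (by Assumption~\ref{ass:PML}, as in the proof of Lemma~\ref{lem:strong_elliptic}), which upgrades to a semiclassical Carleman estimate of the flavour of Lemma~\ref{l:carleman} propagating the zero Dirichlet condition on $\Gamma_\tr$ inward to $\{|x|=R_1\}$. Coupling this estimate with the commutator forcing yields $\|w\|_{\cH(\Otr)} \lesssim \hsc^{-M-1}\|g\|_{\cH(\Otr)}$. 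Summing with $\|\chi u\|_{\cH(\Otr)}$ gives the desired bound on $\|v\|_{\cH(\Otr)}$, and the bound on $\|P_{\hsc,\theta}v\|_{\cH(\Otr)}$ is then immediate from the PDE $P_{\hsc,\theta}v = v + g$. The delicate point is matching the Carleman bound uniformly across the transition region where $f_\theta$ turns on, and this is precisely where the full strength of the analysis in \cite{GLS2} is needed; here I simply import it.
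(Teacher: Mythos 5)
The paper gives no proof of this statement: it is quoted verbatim as a simplified version of \cite[Theorem 1.6]{GLS2}, so there is nothing internal to compare your argument against. Your sketch is a fair reconstruction of the strategy used in that reference --- existence/uniqueness via Fredholm theory plus an a priori bound, comparison of $v$ with a cut-off of the outgoing solution $u$ using that $P_{\hsc,\theta}=P_\hsc$ on $\supp\chi$, commutator forcing supported in an annulus, and a Carleman/sign-of-the-imaginary-part estimate in the scaling region --- and you correctly identify that the Carleman analysis is the real content and defer it to \cite{GLS2}, exactly as the paper does. The only soft spot is the parenthetical claim that a homogeneous solution of the \emph{truncated} PML problem would ``correspond to a real resonance of $P_\hsc$''; that correspondence is not immediate (truncation destroys the exact identification between the scaled operator and the meromorphically continued resolvent), and the cleaner route, which your main line already contains, is that the a priori bound itself gives injectivity and hence, by Fredholm index zero, surjectivity.
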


The next result is an elliptic estimate on the PML solution near the boundary (proved using the structure of $-\Delta_\theta$ in the scaling region).

\begin{lemma}\mythmname{Estimate on the PML solution near the boundary \cite[Lemma 4.4]{GLS2}}
\label{l:nearBoundary}
For any $\eps>0$, there exists $\hbar_0>0$ and $C>0$ so that for any $\eps<\theta<\pi/2-\eps$, $R_{\tr}>R_1(1+\eps)$, $B_{R_1}\Subset\Omega_{\tr}\subset \mathbb{R}^d$ with Lipschitz boundary, if $v$ is supported in 
$\Omega_{\tr}\setminus B_{R_1+\eps}$ and $v=0$ on $\Gamma_{\tr}$, then, for all $0<\hbar \leq \hbar_0$,
\begin{align}\label{eq:boundary1}
 &\Vert v \Vert_{H^1_\hbar(\Omega_{\tr})} \leq C \Vert  (P_{\hsc,\theta} -I)v \Vert_{L^2(\Omega_{\tr})}.
\end{align}
\end{lemma}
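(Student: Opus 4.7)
The plan is to use the fact that, in the scaling region (i.e., for $r\geq R_1$), the highest-order imaginary part of the operator $-\hbar^2\Delta_\theta - 1$ has a definite sign, so that a direct integration-by-parts argument yields coercivity. Since $v$ is supported in $\Omega_{\tr}\setminus B_{R_1+\eps}$, on $\supp v$ we have $P_{\hbar,\theta} v = -\hbar^2\Delta_\theta v$, and in particular only the radial/angular structure of $-\Delta_\theta$ recalled in \eqref{e:deltaTheta} plays a role. The boundary conditions are used crucially: $v$ vanishes on $\Gamma_{\tr}$, and $v$ together with its derivatives vanishes on $\{r = R_1+\eps\}$ by the support assumption, so no boundary contributions appear in the integration by parts.

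First I would multiply $-\hbar^2\Delta_\theta - 1$ by the Jacobian-like factor $J(r) := \alpha(r)(r+if_\theta(r))^{d-1}$ with $\alpha(r) = 1+if_\theta'(r)$; this puts the operator into divergence form and lets me test with $\bar v$. After integration by parts,
\[
\int_{\Omega_{\tr}} J(r)\big((-\hbar^2\Delta_\theta - 1)v\big)\bar v
= \int_{\Omega_{\tr}}\!\Big[\tfrac{(r+if_\theta)^{d-1}}{1+if_\theta'}\hbar^2|\partial_r v|^2 + \tfrac{(r+if_\theta)^{d-3}\alpha}{1}\hbar^2|\nabla_\omega v|^2 - J(r)|v|^2\Big].
\]
Taking imaginary parts, each weight factors as $(\text{modulus})\cdot e^{i\varphi(r;\theta)}$ with phases $\varphi$ lying strictly inside $(0,\pi)$ for $\e\le\theta\le\pi/2-\e$ and $r\geq R_1+\eps$ (this follows from $f_\theta(r)>0$, $f'_\theta(r)\ge 0$ on $r\geq R_1$, and Assumption \ref{ass:PML}). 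Consequently
\[
\Imag \int_{\Omega_{\tr}} J(r)((-\hbar^2\Delta_\theta-1)v)\bar v \gtrsim \hbar^2 \|\nabla v\|_{L^2}^2 + \|v\|_{L^2}^2,
\]
uniformly for $\e\le\theta\le\pi/2-\e$. Applying Cauchy--Schwarz to the left-hand side and using $|J|\lesssim 1$ gives
\[
\hbar^2\|\nabla v\|_{L^2}^2 + \|v\|_{L^2}^2 \lesssim \|(P_{\hbar,\theta}-1)v\|_{L^2}\|v\|_{L^2},
\]
which yields $\|v\|_{L^2}\lesssim \|(P_{\hbar,\theta}-1)v\|_{L^2}$ and then $\hbar\|\nabla v\|_{L^2}\lesssim \|(P_{\hbar,\theta}-1)v\|_{L^2}$, i.e.\ the claimed bound in the $H^1_\hbar$ norm.

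The main obstacle is verifying that the imaginary parts of the weights appearing in front of $|\partial_r v|^2$, $|\nabla_\omega v|^2$, and $|v|^2$ are simultaneously non-negative (or can be combined into a coercive expression) uniformly in $\theta$ for $\e\le\theta\le\pi/2-\e$; this is where Assumption \ref{ass:PML} (which gives $r^{-1}f_\theta \leq f'_\theta$ when $d=3$) enters, exactly as in Lemma \ref{lem:strong_elliptic}. The smallness hypothesis $\hbar\leq \hbar_0$ is only used to absorb lower-order contributions coming from the $\hbar^2 \Delta_\omega$ piece into the main positive terms. Since the result is stated in the paper as a quotation from \cite[Lemma 4.4]{GLS2}, I would ultimately just invoke that reference, but the outline above is the conceptual content of that proof.
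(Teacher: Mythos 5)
The paper does not actually prove this lemma: it is quoted verbatim from \cite[Lemma 4.4]{GLS2}, so your closing remark that you would ultimately just invoke that reference is exactly what the paper does. Your sketch is also in the right conceptual neighbourhood — a multiplier/energy argument exploiting the sign of the imaginary part of $-\hsc^2\Delta_\theta-1$ in the scaling region is precisely what the paper says (in \S\ref{sec:idea2}, ``Ingredient 4'') lies behind this lemma.

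However, the pivotal step of your sketch is wrong as stated. After multiplying by $J(r)=\alpha(r)(r+\ri f_\theta(r))^{d-1}$ and integrating by parts, the weights in front of $\hbar^2|\partial_r v|^2$, $\hbar^2|\nabla_\omega v|^2$ and $|v|^2$ are $(r+\ri f_\theta)^{d-1}/\alpha$, $\alpha(r+\ri f_\theta)^{d-3}$ and $-\alpha(r+\ri f_\theta)^{d-1}$ respectively, and it is \emph{not} true that their arguments all lie in $(0,\pi)$. For $d=2$ one computes $\Imag\big[-\alpha(r+\ri f_\theta)\big]=-(f_\theta+rf_\theta')<0$, so the zeroth-order weight points into the lower half-plane; moreover $\Imag\big[(r+\ri f_\theta)/\alpha\big]$ has the sign of $f_\theta/r-f_\theta'$ while $\Imag\big[\alpha/(r+\ri f_\theta)\big]$ has the sign of $f_\theta'-f_\theta/r$, i.e.\ the radial and angular weights have imaginary parts of \emph{opposite} signs, and both vanish identically in the linear-scaling region $f_\theta=r\tan\theta$ (where $f_\theta/r=f_\theta'$). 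Hence a single $\Imag$ (or $-\Imag$) of the form controls neither $\|v\|_{L^2}^2$ and the gradient simultaneously, nor any gradient at all where the scaling is linear; and no single rotation $\re^{\ri\gamma}$ can place all three weights in a common half-plane, since $\arg\big[-\alpha(r+\ri f_\theta)^{d-1}\big]-\arg\big[(r+\ri f_\theta)^{d-1}/\alpha\big]=\pi+2\arg\alpha>\pi$. The correct input is that the full semiclassical symbol $\xi_r^2(1+\ri f_\theta')^{-2}+|\xi_\omega|^2(r+\ri f_\theta)^{-2}-1$ is uniformly elliptic for $r\geq R_1+\eps$ (its imaginary part is strictly negative wherever its real part is not already bounded away from zero), and converting that into the boundary-value estimate \eqref{eq:boundary1} requires a more careful combination of the real and imaginary parts of (rotations of) the quadratic form — this is what \cite[Lemma 4.4]{GLS2} actually carries out. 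So the conceptual content you describe is right, but the one-line coercivity ``all phases in $(0,\pi)$, take imaginary parts'' would fail.
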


The final result is a Carleman estimate describing how solutions of $(-\hbar^2\Delta_\theta-1)v=f$ propagate in the scaling region.

\begin{lemma}\mythmname{Simplified version of \cite[Lemma 4.2, Equation 4.6]{GLS2}}
\label{l:carleman}
Given $\eps>0$ there exist $C_j>0, j=1,2,3,$ and $\hsc_0>0$ such that, for all $\eps\leq \theta\leq \pi/2-\eps$ 
and $0<\hbar<\hbar_0$,
\begin{multline}
\label{e:rightFromLeftNoRightBoundary}
\|v\|_{H_\hbar^1(\Otr\setminus B_{R_1+\eps})}
\leq C_1 
\|(-\hbar^2\Delta_\theta-1)v\|_{L^2(\Otr\setminus B_{R_1})}
+C_2\exp(-C_3 \hsc^{-1})
\|v\|_{H_{\hbar}^1(B_{R_1+\eps}\setminus B_{R_1})}.
\end{multline}
\end{lemma}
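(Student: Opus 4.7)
The plan is to prove the Carleman estimate by exponential conjugation in the spirit of Lebeau--Robbiano, exploiting the fact that in the scaling region $r > R_1$ the operator $-\hbar^2\Delta_\theta - 1$ is ``absorbing'' in the sense that the imaginary part of its principal symbol has a definite sign (as recorded implicitly in Lemma \ref{l:nearBoundary} and the preceding discussion in Section~\ref{subsec:PML}). First I would pick a radial weight $\phi \in C^\infty([R_1, R_{\tr}])$ that is strictly decreasing on $[R_1, R_1 + \eps]$, with $\phi(R_1) = c_0 > 0$ and $\phi \equiv 0$ on $[R_1 + \eps/2, R_{\tr}]$, and with $|\phi'|$ small enough that the conjugated operator
\begin{equation*}
P_\phi := e^{\phi/\hbar}(-\hbar^2\Delta_\theta - 1)e^{-\phi/\hbar}
\end{equation*}
is only a perturbation of $-\hbar^2\Delta_\theta - 1$ whose semiclassical principal symbol differs by $O(|\phi'|) + O(|\phi'|^2)$.

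The second step is to establish an unweighted estimate of the type
\begin{equation*}
\|w\|_{H^1_\hbar(\Otr\setminus B_{R_1})} \lesssim \|P_\phi w\|_{L^2(\Otr\setminus B_{R_1})} + \|w\|_{H^1_\hbar(B_{R_1+\eps/2}\setminus B_{R_1})}
\end{equation*}
for any $w$ vanishing on $\Gamma_{\tr}$. This is done exactly as in Lemma \ref{l:nearBoundary}: one tests against $\bar w$, integrates by parts, and uses that $\Im(A\xi\cdot\bar\xi) \gtrsim |\xi|^2$ in the scaling region together with the control of $\Re \int (-\hbar^2\Delta_\theta w)\bar w$ via a G\aa rding-type inequality; the choice of $\phi$ ensures that the perturbation coming from conjugation does not destroy the ellipticity. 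The boundary term on $\Gamma_{\tr}$ vanishes because $w = 0$ there, and an interior cut-off absorbs the term on the inner boundary into the $\|w\|_{H^1_\hbar(B_{R_1+\eps/2}\setminus B_{R_1})}$ piece.

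The third step is to unravel the conjugation by setting $w := e^{\phi/\hbar} v$. Since $\phi \equiv 0$ on $[R_1+\eps/2, R_{\tr}]$, we have $w = v$ on $\Otr \setminus B_{R_1+\eps/2}$, so in particular on the larger region $\Otr \setminus B_{R_1+\eps}$ the weight is harmless. In the ``intermediate'' shell $B_{R_1+\eps}\setminus B_{R_1+\eps/2}$ we discard the weight by monotonicity. On the right-hand side, the data term becomes $\|e^{\phi/\hbar}(-\hbar^2\Delta_\theta - 1)v\|_{L^2(\Otr \setminus B_{R_1})}$, which is bounded by $e^{c_0/\hbar}$ times $\|(-\hbar^2\Delta_\theta - 1)v\|_{L^2(\Otr\setminus B_{R_1})}$ --- but we can afford this since on the other hand the inner-boundary remainder term $\|e^{\phi/\hbar}v\|_{H^1_\hbar(B_{R_1+\eps/2}\setminus B_{R_1})}$ is controlled by $e^{c_0/\hbar}\|v\|_{H^1_\hbar(B_{R_1+\eps}\setminus B_{R_1})}$. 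A standard trick to recover the clean estimate \eqref{e:rightFromLeftNoRightBoundary} is to iterate with a second weight taking a smaller maximum value, or equivalently to pass to a shifted weight that produces decay: after shifting $\phi \mapsto \phi - \phi(R_1+\eps)$, the data factor becomes $O(1)$ while the boundary factor picks up the prefactor $\exp(-(\phi(R_1+\eps) - \phi(R_1))/\hbar) = \exp(-C_3/\hbar)$, which gives the exponentially small constant $C_2\exp(-C_3\hbar^{-1})$.

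The main obstacle I expect is obtaining the unweighted energy estimate uniformly in $\eps \leq \theta \leq \pi/2 - \eps$: the structure of $-\Delta_\theta$ degenerates as $\theta \to 0$ (no imaginary part) and as $\theta \to \pi/2$ (coefficients blow up), so one must check that the G\aa rding constants in step 2 depend continuously on $\theta$ on any compact sub-interval, which ultimately reduces to the sign computations in Lemma \ref{lem:strong_elliptic} and Remark \ref{rem:strong_elliptic}. A secondary technical point is justifying the conjugation rigorously on $\Otr$ with Dirichlet condition on $\Gamma_{\tr}$ when $\Gamma_{\tr}$ is only Lipschitz, which is handled by density and the fact that $v \in H^2$ away from $\Gamma_{\tr}$ and $H^1_0$ up to $\Gamma_{\tr}$.
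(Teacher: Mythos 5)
The paper does not actually prove this lemma --- it is imported verbatim from \cite[Lemma 4.2]{GLS2} --- so your attempt is measured against what a correct conjugation argument would require. Your overall strategy (exponentially conjugate, prove an energy estimate for the conjugated operator using the dissipativity of $-\hbar^2\Delta_\theta-1$ in the scaling region, and generate the inner-shell remainder from a cut-off commutator) is the right kind of argument, but the orientation of the weight is backwards, and this is fatal to the bookkeeping. To obtain \eqref{e:rightFromLeftNoRightBoundary} --- an $O(1)$ constant on the data term over $\Otr\setminus B_{R_1}$ together with an exponentially \emph{small} constant on the $B_{R_1+\eps}\setminus B_{R_1}$ term --- the weight $\phi$ must attain its maximum (say $0$) on $\Otr\setminus B_{R_1+\eps}$, be $\leq 0$ wherever the data is measured with an $O(1)$ constant, and be $\leq -C_3<0$ on the shell where the cut-off commutator is supported; that is, $\phi$ must \emph{increase} with $r$ across $[R_1,R_1+\eps]$. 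You chose the opposite: $\phi$ decreasing, with $\phi(R_1)=c_0>0$ and $\phi\equiv 0$ outside $B_{R_1+\eps/2}$. Unwinding $w=e^{\phi/\hbar}v$ with that choice yields
\begin{equation*}
\|v\|_{H^1_\hbar(\Otr\setminus B_{R_1+\eps})}\ \lesssim\ e^{c_0/\hbar}\,\big\|(-\hbar^2\Delta_\theta-1)v\big\|_{L^2(\Otr\setminus B_{R_1})}\ +\ e^{c_0/\hbar}\,\|v\|_{H^1_\hbar(B_{R_1+\eps}\setminus B_{R_1})},
\end{equation*}
i.e.\ exponentially \emph{large} constants on both terms, which is far weaker than the target and useless for the application in \S\ref{subsec:PML}, where the factor $\exp(-C_3\hbar^{-1})$ is needed to absorb the $\hbar^{-M-1}$ loss from \eqref{eq:Jeff5}.

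The proposed repair does not work. Replacing $\phi$ by $\phi - \phi(R_1+\eps)$ is conjugation by the scalar $e^{-\phi(R_1+\eps)/\hbar}$, which leaves $P_\phi$ unchanged and rescales both sides of the weighted inequality by the same factor, so it cannot convert $e^{+c_0/\hbar}$ into $e^{-C_3/\hbar}$; moreover, with your $\phi$ one has $\phi(R_1+\eps)-\phi(R_1)=-c_0<0$, so your claimed prefactor $\exp(-(\phi(R_1+\eps)-\phi(R_1))/\hbar)$ is actually $\exp(+c_0/\hbar)$. Iterating with a second weight of smaller maximum cannot help either: the relative exponential factors between the two right-hand-side terms are determined by the values of $\sup\phi$ over the respective regions for a \emph{single} admissible weight. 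Once the weight is oriented correctly ($\phi'\geq 0$, $\phi\equiv 0$ for $r\geq R_1+\eps$, $\phi\leq -C_3$ on $\operatorname{supp}\nabla\chi\subset\{R_1+\eps/4\leq r\leq R_1+3\eps/4\}$), the remaining --- and genuinely substantive --- task is to show that the conjugated operator still satisfies the energy estimate for such a $\phi$ with $|\phi'|$ small, uniformly in $\eps\leq\theta\leq\pi/2-\eps$; this is possible because $f_\theta'(r)\geq c(\eps)>0$ for $r\geq R_1+\eps/4$ by \eqref{e:fProp}, so the imaginary part of the symbol of $-\hbar^2\Delta_\theta-1$ has a definite sign there, but it is precisely where \cite{GLS2} must construct a weight adapted to $f_\theta$ rather than an arbitrary small one.
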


\subsubsection{Proof of the bound \eqref{eq:PML_bound} on $\vP$}

Since $\vP:= (1-\varphi_\tr)v$, 
\begin{align}\label{eq:Jeff1}
(P_{\hsc,\theta} -I)\vP 
&= (P_{\hsc,\theta} -I)(1-\varphi_\tr) v 
= (1-\varphi_\tr)g 
+ [P_{\hsc,\theta}, \varphi_\tr]v,
\end{align}
and the fact that $\varphi_\tr\equiv 1$ on $B_{R_1+\delta}$ implies that $\supp\,\vP \subset \Otr\setminus B_{R_1+\delta}$.
Thus, applying Lemma \ref{l:nearBoundary} with $\eps=\min\{\epsilon,\delta\}$, we see that the bound \eqref{eq:boundary1} implies that 
\begin{align*}
\|\vP\|_{H^1_\hsc(\Otr\setminus B_{R_1+\delta})} \lesssim \big\| (P_{\hsc,\theta} -I)\vP\big\|_{L^2(\Otr)} 
\lesssim \|g\|_{\cH(\Otr)} + \big\| [ P_{\hsc,\theta}, \varphi_\tr]v\big\|_{L^2(\Otr)}.
\end{align*}
Now, by direct computation and the fact that $\supp\nabla\varphi_\tr \subset B_{R_1+ 2\delta}\setminus B_{R_1+\delta}$,
\beq\label{eq:Jeff2}
\big\| [ P_{\hsc,\theta}, \varphi_\tr]v\big\|_{L^2(\Otr)} \lesssim \hsc\| v\|_{H^1_\hsc( B_{R_1+ 2\delta}\setminus B_{R_1+\delta})},
\eeq
so that 
\begin{align}\label{eq:Jeff3}
\|\vP\|_{H^1_\hsc(\Otr\setminus B_{R_1+\delta})}
\lesssim \|g\|_{\cH(\Otr)}+\hsc\| v\|_{H^1_\hsc( B_{R_1+ 2\delta}\setminus B_{R_1+\delta})}.
\end{align}
Using \eqref{eq:Jeff1} again, we have 
\beqs
\|P_{\hsc,\theta}\vP\|_{\cH(\Otr)} \lesssim \|g\|_{\cH(\Otr)} + \big\| [ P_{\hsc,\theta}, \varphi_\tr]v\big\|_{L^2(\Otr)} + \|\vP\|_{L^2(\Otr)},
\eeqs
and combining this with \eqref{eq:Jeff2} and \eqref{eq:Jeff3} (and recalling that $\supp \,\vP \subset  \Otr\setminus B_{R_1+\delta}$) we find that 
\beq\label{eq:Jeff4}
\|\vP\|_{\cH(\Otr)} + 
\|P_{\hsc,\theta}\vP\|_{\cH(\Otr)} \lesssim \|g\|_{\cH(\Otr)} + 
\hsc\| v\|_{H^1_\hsc( B_{R_1+ 2\delta}\setminus B_{R_1+\delta})}.
\eeq
Our plan is to use the Carleman estimate \eqref{e:rightFromLeftNoRightBoundary} to bound this last term in terms of $\|g\|_{\cH(\Otr)}$.
We first claim that \eqref{eq:res_PML} implies that
\beq\label{eq:Jeff5}
\| v\|_{H^1_\hsc( B_{R_1+\delta}\setminus B_{R_1})}\lesssim \hsc^{-1-M}\|g\|_{\cH(\Otr)};
\eeq
indeed, this follows by the combination of (i) the fact that $P_{\hsc,\theta}= -\hsc^2 \Delta_\theta$ for $R\geq R_1$, (ii) the fact that $-\Delta_\theta$ is elliptic (by, e.g., \cite[Theorem 4.32]{DyZw:19}), (iii) elliptic regularity (to obtain control of the $H^2_\hsc$ norm of $v$), and then (iv) interpolation (to obtain control of the $H^1_\hsc$ norm of $v$).
Then, the combination of \eqref{e:rightFromLeftNoRightBoundary} (with $\eps =\min\{\epsilon,\delta\}$) and \eqref{eq:Jeff5} implies that 
\begin{align*}
\| v\|_{H^1_\hsc( B_{R_1+ 2\delta}\setminus B_{R_1+\delta})} \lesssim \Big(1 + \exp(-C_3 \hsc^{-1}) \hsc^{-1-M} \Big)\|g\|_{\cH(\Otr)}.
\end{align*}
Combining this last inequality with \eqref{eq:Jeff4} and reducing $\hsc_0$ if necessary, the result \eqref{eq:PML_bound} follows.

\subsection{Proof of the bound (\ref{eq:HF_eq}) on $\vH$ (the high-frequency component)} \label{subsec:high}

\paragraph{Decomposing into parts that are ``near to'' or ``far from'' the black box.}

 Let $\varphi_0$, $\widetilde \varphi_0 \in C^\infty_{\rm comp}(\Rea^d;[0,1])$ be such that $\varphi_0 \equiv 1$ near $B_{R_0}$ and $\widetilde \varphi_0 \equiv 1$ in a neighbourhood of  
$\supp \,\varphi_0$, with $\supp \,\varphi_0 \subset \supp  \,\widetilde \varphi_0 \subset B_{R_1(1 - \delta)}$, so that, in particular, 
\beq\label{eq:support_prop1}
\refop= P_\hsc=P_{\hsc,\theta} \quad\text{  on the supports of } \varphi_0 \tand \widetilde \varphi_0.
\eeq
In addition, let $\varphi_1 := 1 - \varphi_0$ and let $\widetilde \varphi_1 \in C^\infty(\Rea^d; [0,1])$ be supported away from the black box $B_{R_0}$ and such that $\widetilde \varphi_1 \equiv 1$ in a neighbourhood of  $\supp \,\varphi_1$.
Finally, let $\widetilde\varphi_\tr\in C^\infty_{\rm comp}(\Rea^d;[0,1])$ be as in \S\ref{subsec:abdec}; i.e., equal to one on the support of $\varphi_{\rm tr}$ and so that $\supp \,\widetilde \varphi_{\rm tr} \subset B_{R_1(1 + 3\delta)}$; see Figure \ref{fig:cutoff_high}.
(Observe then that a tilde denotes a function with larger support than the corresponding function without the tilde.)

\begin{figure}
\begin{center}
    \includegraphics[width=\textwidth]{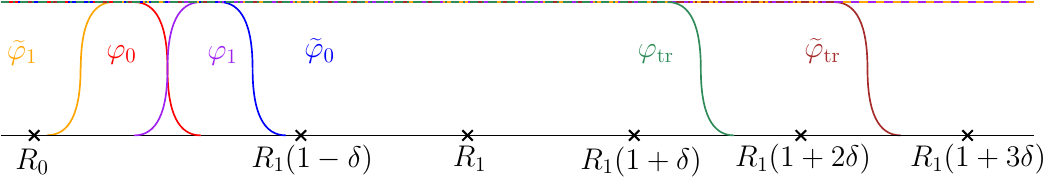}
  \end{center}
    \caption{The cut-off functions $\varphi_0, \widetilde{\varphi}_0, \varphi_1, \widetilde{\varphi}_1, \varphi_\tr$, and $\widetilde{\varphi}_\tr$ described at the start of \S\ref{subsec:high}.}
  \label{fig:cutoff_high}
\end{figure}

These definitions imply the following support properties
\beq\label{eq:support_prop}
\supp (1-\widetilde{\varphi}_{\rm tr})\cap \supp \,\varphi_{\rm tr}=\emptyset,\,\,
\supp (1-\widetilde{\varphi}_{0})\cap \supp \,\varphi_{0}=\emptyset,\,\,\tand\,\,
\supp (1-\widetilde{\varphi}_{1})\cap \supp \,\varphi_{1}=\emptyset.
\eeq
Starting from the definition $\vH:= \Pihigh \varphi_\tr v$ \eqref{eq:vHigh_vLow}, 
using that $\varphi_0+ \varphi_1=1$, the first and third support properties in \eqref{eq:support_prop}, Lemma \ref{lem:funcloc1}, and that $\varphi_0\varphi_\tr= \varphi_0$, we obtain that
\begin{align}\nonumber
\vH = \Pihigh \varphi_0 \varphi_\tr v + \Pihigh \varphi_1 \varphi_\tr v
&= \Pihigh \varphi_0 \varphi_\tr v +\widetilde\varphi_1 \Pihigh \varphi_1 \varphi_\tr v + \residualD \varphi_\tr v\\ \nonumber
&= \Pihigh \varphi_0 \varphi_\tr v +\widetilde\varphi_\tr \widetilde\varphi_1 \Pihigh \varphi_1 \varphi_\tr v + \residualD \widetilde\varphi_\tr v\\ \nonumber
&= \Pihigh \varphi_0 v +\widetilde\varphi_\tr\widetilde\varphi_1 \Pihigh \varphi_1 \varphi_\tr v + \residualD \widetilde\varphi_\tr v\\
&=: \vHnear + \vHfar + \residualD \widetilde\varphi_\tr v.
\label{eq:vH_decomp1}
\end{align}

\bre[The decomposition of $\vH$]
This decomposition of $\vH$ into ``near'' and ``far'' components is different from the non-truncated case in \cite{LSW4}. The reason we do it is we want the function to which $\Pihigh$ is applied in $\vHnear$ to be supported away from the scaling region (i.e., supported where $\refop= P_{\hsc,\theta}$)
-- see \eqref{eq:eqPiH} below. The component $\vHfar$ can then be dealt with via Lemma \ref{lem:funcloc2} (since it involves cut-offs supported away from the black box) and semiclassical ellipticity; see Step 4 below.
\ere

\paragraph{Overview of the rest of the proof of \eqref{eq:HF_eq}.}

We proceed in four steps; Steps 1-3 obtain the bound 
\begin{equation} \label{eq:vHnear_bound}
\Vert \vHnear \Vert_{\mathcal H^{\sharp} } + \big\| 
P_{\hsc,\theta}
\vHnear\big\|_{\mathcal H^{\sharp} }  \lesssim \Vert g \Vert_{\mathcal H(\Otr)},
\end{equation}
on $\vHnear$ and are the analogues of Steps 1-3 in \cite[\S3.2]{LSW4} that deal with $u_{\rm High}$ (although Step 1 is more involved because of the presence of the two operators $\refop$ and $P_{\hsc,\theta}$ as opposed to just $\refop$). Step 4 obtains the bound 
\begin{equation} \label{eq:vHfar_bound}
\Vert \vHfar \Vert_{\mathcal H^{\sharp} } + \big\| P_{\hsc,\theta}\vHfar\big\|_{\mathcal H^{\sharp} }  \lesssim \Vert g \Vert_{\mathcal H(\Otr)},
\end{equation}
on $\vHfar$ using ideas from Steps 2 and 3 (in a simplified setting). 

\paragraph{Step 1:~An abstract argument in $\cH^\sharp$ to bound $\vHnear$.}
Since $\Pihigh$ commutes with $\refop$ (by Part \ref{it:fc3} of Theorem \ref{thm:fundfc}) and $\refop=P_{\hsc,\theta}$ on $\supp \,\varphi_0\subset B_{R_0}$,
\begin{align} \nonumber
(\refop-I)\Pihigh({\varphi}_0 v) &= \Pihigh (\refop-I)({\varphi}_0 v) \\ &\hspace{-1cm}=  \Pihigh (P_{\hsc,\theta} -I)(\varphi_0 v) = \Pihigh \varphi_0 g+\Pihigh [P_{\hsc,\theta}, \varphi_0 ] v
= \Pihigh \varphi_0 g+\Pihigh [\refop,\varphi_0 ] v.\label{eq:eqPiH}
\end{align}
(Note that, strictly speaking, we should be writing the commutator 
$[P_{\hsc,\theta},\varphi_0 ]$ as $[P_{\hsc,\theta}, M_{\varphi_0} ]$, where 
multiplication is defined in the black-box setting by \eqref{eq:mult}; however, we abuse this notation slightly for simplicity.)
For $\lambda \in \mathbb R$, let
$$
f(\lambda) := (\lambda - 1)^{-1}(1 - \psi_{\mu'})(\lambda),
$$ 
and observe that $f\in C_0(\mathbb R)$ (defined by \eqref{eq:C0})  by the second property in \eqref{eq:psi_prop}.
Using (\ref{eq:PiPi}), the fact that the Borel calculus is an algebra
homomorphism (Part \ref{it:fc3} of Theorem \ref{thm:fundfc}), and
finally (\ref{eq:eqPiH}), we get 
\beq\label{eq:PiHcom}
\Pihigh (\varphi_0 v) = \Pihigh' \Pihigh (\varphi_0 v)
= f(\refop)(\refop-I) \Pihigh (\varphi_0 v) = f(\refop) \big(  \Pihigh \varphi_0 g+\Pihigh [\refop, \varphi_0] u \big).
\eeq
Since $f \in C_0(\mathbb R)$, $f(\refop)$ is uniformly bounded from $\Hilb^{\sharp}\to \Hilb^{\sharp}$ by Part \ref{it:fc5} of Theorem \ref{thm:fundfc}. 
Combining this fact with  (\ref{eq:PiHcom}), we obtain
$$
\norm{\Pihigh (\varphi_0 v)}_{\Hilb^{\sharp}} \lesssim  \norm{\Pihigh (\varphi_0 g)}_{\Hilb^{\sharp}}+ \big\|\Pihigh [\refop,\varphi_0] v\big\|_{\Hilb^{\sharp}}.
$$
Writing $\refop\Pihigh  = \Pihigh  + (\refop - I)\Pihigh $ and using (\ref{eq:eqPiH}) again, we obtain
$$
\norm{\Pihigh (\varphi_0 v)}_{\Hilb^{\sharp}} + \big\|\refop \Pihigh (\varphi_0 v)\big\|_{\Hilb^{\sharp}}  \lesssim  \norm{\Pihigh (\varphi_0 g)}_{\Hilb^{\sharp}}+ \big\|\Pihigh [\refop,\varphi_0] v\big\|_{\Hilb^{\sharp}}.
$$
Hence, by (\ref{eq:boundPi})
\begin{align}
\norm{\vHnear}_{\Hilb^{\sharp}}+ \big\|\refop \vHnear\big\|_{\Hilb^{\sharp}}  &\lesssim  \norm{\varphi g}_{\Hilb^{\sharp}}+ \big\|\Pihigh [\refop,\varphi_0] v\big\|_{\Hilb^{\sharp}} \nonumber  \\
& \lesssim \norm{ g}_{\cH(\Otr)}+ \big\|\Pihigh [\refop,\varphi_0] v\big\|_{\Hilb^{\sharp}}. \label{eq:high1}
\end{align}
We now seek to convert the $\|\refop \vHnear\|_{\Hilb^{\sharp}}$ on the left-hand side of this last bound into $\|P_{\hsc,\theta} \vHnear\|_{\Hilb^{\sharp}}$ using that $\refop=P_{\hsc,\theta}$ on $\supp \, {\varphi}_0$ and pseudolocality of the functional calculus.
With $\widetilde{\varphi}_0\in C^\infty_{\rm comp}(\Rea^d;[0,1])$ defined as above,
the definition of $\vHnear$ \eqref{eq:vH_decomp1}, the second support property in \eqref{eq:support_prop}, and Lemma \ref{lem:funcloc1} then imply that 
\begin{align*}
\vHnear = \widetilde{\varphi}_0 \Pihightheta (\varphi_0 v) + \residualD {\varphi}_0 v.
\end{align*}
By \eqref{eq:support_prop1} and a further use of Lemma \ref{lem:funcloc1},
\begin{align*}
P_{\hsc,\theta}\vHnear &= P_{\hsc,\theta} \widetilde{\varphi}_0 \Pihightheta (\varphi_0 v) + P_{\hsc,\theta} \residualD \widetilde{\varphi}_0 v\\
&= \refop \widetilde{\varphi}_0 \Pihightheta (\varphi_0 v) + P_{\hsc,\theta} \residualD {\varphi}_0 v\\
&
= \refop \Pihightheta (\varphi_0 v) + \refop \residualD {\varphi}_0 v+P_{\hsc,\theta} \residualD {\varphi}_0 v\\
&= \refop \vHnear + \refop \residualD {\varphi}_0 v+P_{\hsc,\theta} \residualD {\varphi}_0 v.
\end{align*}
Therefore, by the resolvent estimate \eqref{eq:res_PML} and the bound \eqref{eq:high1}, 
\begin{align}\nonumber
\norm{\vHnear}_{\Hilb^{\sharp}}+ \big\|P_{\hsc,\theta} \vHnear\big\|_{\Hilb^{\sharp}}
&\lesssim \norm{\vHnear}_{\Hilb^{\sharp}}+ \big\|\refop \vHnear\big\|_{\Hilb^{\sharp}} + \|g\|_{\cH(\Otr)}\\
& \lesssim \norm{ g}_{\cH(\Otr)}+ \big\|\Pihigh [\refop,\varphi_0] v\big\|_{\Hilb^{\sharp}}. \label{eq:high1a}
\end{align}

\paragraph{Step 2:~Viewing $\Pihigh [\refop,\varphi_0]$ as a semiclassical pseudodifferential operator on $\mathbb T^d_{R_{\sharp}}$.}

To prove \eqref{eq:vHnear_bound} from \eqref{eq:high1a}, it therefore remains to bound the commutator term $\Pihigh [\refop,\varphi_0] u$. Since $[\refop,\varphi_0]$ is supported away from $B_{R_0}$, we 
can write the high-frequency cut-off in terms of a semiclassical pseudodifferential operator thanks to Lemma \ref{lem:funcloc2}. 

Recall that  $\varphi_0$ is compactly supported in $B_{R_1}$ and equal to one near $B_{R_0}$, 
Let $\phi \in C^\infty_{\rm comp}(\mathbb R^d;[0,1])$ be supported in $B_{R_1}$, equal to zero near $B_{R_0}$, and such that
\begin{equation} \label{eq:suppphi}
\phi \equiv 1 \,\,\text{ near } \,\,\supp \nabla \varphi_0.
\end{equation}
Then, since $\refop=Q_\hsc$ on $\supp \,\varphi_0$,
\begin{equation} \label{eq:comsupp}
[\refop, \varphi_0] =[Q_\hsc, \varphi_0]=[Q_\hsc, \varphi_0]\phi =\phi[Q_\hsc, \varphi_0]= \phi[Q_\hsc, \varphi_0]\phi.
\end{equation}

Let $\chi \in C_{\rm comp}^\infty(\mathbb R^d)$ be supported in $B_{R_{1}}$, equal to zero near $B_{R_0}$, and equal to one near $\supp \phi$. Using (\ref{eq:comsupp}) and Lemma \ref{lem:funcloc1} with $\psi_1 =1-\chi$ and $\psi_2=\chi\phi=\phi$, we obtain that
\begin{align} 
\Pihigh [\refop,\varphi_0] =\Pihigh \phi[Q_\hsc,\varphi_0] \phi
&= \chi \Pihigh \chi \phi [Q_\hsc,\varphi_0] \phi  + \residualD \nonumber \\
 &= \chi \Pihigh \chi [Q_\hsc,\varphi_0] \phi  + \residualD.
 \label{eq:PiHproper}
\end{align}
Lemma \ref{lem:funcloc2} with $f(\refop) = \psi_\mu(\refop) = \Pilow$ implies that $\Pilow^\Psi:= \psi_\mu(Q_\hsc) \in \Psi^{-\infty}_\hsc(\mathbb T^d_{R_\sharp})$ satisfies
$$
\chi \Pilow \chi = \chi \Pilow^\Psi \chi  + \residualD.
$$ 
Hence, taking $\Pihigh^{\Psi} := I - \Pilow^{\Psi}=(1-\psi_\mu)(Q_\hsc) \in \Psi^0_\hsc(\mathbb T^d_{R_{\sharp}})$,
\begin{equation} \label{eq:PiHP}
\chi \Pihigh \chi = \chi \Pihigh^{\Psi}\chi  + \residualD
\end{equation}
i.e., modulo negligible terms, $\chi \Pihigh \chi $ is a high-frequency cut-off defined from the semiclassical pseudodifferential calculus. 
We here emphasise  that, since $\chi$ is supported in $B_{R_{1}}$ and vanishes near $B_{R_0}$, $\chi\Pihigh^{\Psi}\chi $ can be seen as
an element of \emph{both}  $\mathcal L(\mathcal H ^{\sharp})$ \emph{and} $\Psi^0_\hsc(\mathbb T^d_{R_{\sharp}})$.

\ble
With 
$\Pilow^{\Psi}:= \psi_\mu(Q_\hsc)$ and 
$\Pihigh^{\Psi} := (1-\psi_\mu)(Q_\hsc)$,
\begin{equation} \label{eq:PiLPWF}
\WFh  \Pilow^\Psi \subset  q_\hsc^{-1}\big(\supp \,\psi_\mu\big) = \{|q_\hsc| \leq 2 \mu\}
\eeq
and 
\beq\label{eq:PiHPWF}
\WFh  \Pihigh^\Psi \subset  q_\hsc^{-1}\big(\supp  (1- \psi_\mu)\big) = \{|q_\hsc| \geq \mu\}.
\end{equation}
\ele

\bpf[Reference for the proof]
See \cite[Lemma 3.1]{LSW4}, where this is proved using Lemma \ref{lem:funcloc2}.
\epf

\

Now, by (\ref{eq:PiHproper}) and (\ref{eq:PiHP}), for any $N$ and any $m$,
$$
\big\Vert \Pihigh [\refop,\varphi_0] v \big\Vert_{\mathcal H^{\sharp}} 
\leq \big\Vert\chi\Pihigh^{\Psi} \chi [Q_\hsc,\varphi_0] \phi v \big\Vert_{\mathcal H^{\sharp}} + C_{N,m} \hsc^N \big\Vert  [Q_\hsc,\varphi_0]   \phi v \big\Vert_{\mathcal D_\hsc^{\sharp, -m}} +  C'_{N} \hsc^N \big\Vert \widetilde \phi v \big\Vert_{\mathcal H^{\sharp}},
$$
with $\widetilde \phi$ compactly supported in $B_{R_{1}} \backslash B_{R_0}$ and equal to one on $\supp\, \phi$. Taking $m=1$, then $N=M+1$ and using the resolvent estimate (\ref{eq:res_PML}) we get
 \begin{align} \nonumber
\big\Vert \Pihigh [\refop,\varphi_0] v \big\Vert_{\mathcal H^{\sharp}} &
\leq 
 \big\Vert\chi \Pihigh^{\Psi}  \chi [Q_\hsc,\varphi_0] \phi v \big\Vert_{\mathcal H^{\sharp}} + C''_{M+1} \hsc^{M+1} \big\Vert  \widetilde \phi v \big\Vert_{\mathcal H} \nonumber \\ \nonumber
&\lesssim \big\Vert \chi \Pihigh^{\Psi} \chi [Q_\hsc,\varphi_0] \phi v \big\Vert_{\mathcal H^{\sharp}} +  \big\Vert  g \big\Vert_{\mathcal H},\\
& =\big\Vert \chi \Pihigh^{\Psi} \chi [Q_\hsc,\varphi_0] \phi v \big\Vert_{\mathcal H^{\sharp}} +  \big\Vert  g \big\Vert_{\mathcal H}.
 \label{eq:apprPiH}
\end{align}

\paragraph{Step 3:~A semiclassical elliptic estimate in $\mathbb T^d_{R_{\sharp}}$.}

Combining \eqref{eq:high1a} and \eqref{eq:apprPiH}, we see that to prove \eqref{eq:vHnear_bound} we only need to bound 
$ \chi\Pihigh^{\Psi} \chi[Q_\hsc  ,\varphi_0] \phi v$ in $L^2(\mathbb T_{R_{\sharp}}^d)$.
To do this, we use the semiclassical elliptic parametrix construction given by Theorem \ref{thm:para}. 

\begin{lemma}
The operator $Q_\hsc - 1 $ is semiclassically elliptic
on 
$\WFh(\hsc^{-1} \chi\Pihigh^{\Psi}\chi[Q_\hsc ,  \varphi_0])$.
\end{lemma}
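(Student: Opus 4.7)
The plan is to show this is essentially an immediate consequence of the wavefront set inclusion \eqref{eq:PiHPWF} combined with the semiclassical ellipticity property of $Q_\hsc-1$ established in Lemma \ref{lem:mu_elliptic}(i).

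First, I would use the fact that the operator wavefront set is sub-multiplicative:~for any semiclassical pseudodifferential operators $A, B$ on $\mathbb{T}^d_{R_\sharp}$, $\WFh(AB) \subset \WFh(A) \cap \WFh(B)$ (and multiplication operators by cut-offs like $\chi$ or $\varphi_0$ have wavefront set contained in their spatial support, interpreted as fibred over $T^*\mathbb{T}^d_{R_\sharp}$). Applied to the product $\hsc^{-1} \chi\Pihigh^{\Psi}\chi[Q_\hsc,\varphi_0]$, this gives
\[
\WFh\big(\hsc^{-1} \chi\Pihigh^{\Psi}\chi[Q_\hsc,\varphi_0]\big) \subset \WFh(\Pihigh^\Psi).
\]
(The factor of $\hsc^{-1}$ simply rescales the order of the commutator, which is $O(\hsc)$ from the standard pseudodifferential commutator formula, and does not affect the wavefront set.)

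Next, by \eqref{eq:PiHPWF}, $\WFh(\Pihigh^\Psi) \subset \{|q_\hsc|\geq\mu\}$, and by our choice $\mu\geq\mu_0$ together with \eqref{eq:John1}, this set coincides with $\{q_\hsc\geq\mu\}$. Therefore
\[
\WFh\big(\hsc^{-1} \chi\Pihigh^{\Psi}\chi[Q_\hsc,\varphi_0]\big) \subset \{(x,\xi) : q_\hsc(x,\xi)\geq\mu\}.
\]

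Finally, by Lemma \ref{lem:mu_elliptic}(i), on the set $\{q_\hsc\geq\mu\}$ we have $\langle\xi\rangle^{-2}(q_\hsc(x,\xi)-1)\geq c_{\rm ell}>0$, which is precisely the definition of semiclassical ellipticity of $Q_\hsc-1$ (whose principal symbol is $q_\hsc-1$) on this region of phase space. The hardest part of this lemma is not really its present statement but rather making sure that the parameter $\mu$ was chosen so that \eqref{eq:John1} and \eqref{eq:newdefmu} hold simultaneously; this was already dealt with in Lemma \ref{lem:mu_elliptic}, so the present claim is now a one-line consequence of that lemma and of the wavefront set inclusion for $\Pihigh^\Psi$.
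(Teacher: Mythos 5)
Your argument is correct and is essentially identical to the paper's proof: both use the product rule \eqref{eq:WFprod} (together with \eqref{eq:support}) to reduce to $\WFh \Pihigh^\Psi$, then \eqref{eq:PiHPWF} and \eqref{eq:John1} to land in $\{q_\hsc \geq \mu\}$, and finally the ellipticity \eqref{eq:newdefmu} guaranteed by the choice of $\mu$ in Lemma \ref{lem:mu_elliptic}(i). Nothing is missing.
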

\begin{proof}
By (\ref{eq:WFprod}), (\ref{eq:support}),  (\ref{eq:PiHPWF}), and \eqref{eq:John1}, 
$$
\WFh \big(\hsc^{-1} \chi\Pihigh^{\Psi}\chi[Q_\hsc , \varphi_0]\big) \subset \WFh  {\Pihigh^{\Psi}} 
 \subset \{ q_\hsc \geq \mu \}.
$$
But, on $\{ q_\hsc \geq \mu\}$, by definition of $\mu$ (\ref{eq:newdefmu}),
$
\langle \xi \rangle^{-2} (q_\hsc(x,\xi) - 1)\geq c_{\rm ell} >0,
$
and the proof is complete.
\end{proof}

Since $\hsc^{-1} \chi\Pihigh^{\Psi}\chi[Q_\hsc, \varphi_0] \in \Psi^1_\hsc(\mathbb T^d_{R_{\sharp}})$ by Theorem \ref{thm:basicP},
we can therefore apply the elliptic parametrix construction given by Theorem \ref{thm:para} with $A = \hsc^{-1}\chi \Pihigh^{\Psi}\chi[Q_\hsc- 1, \varphi_0]$, $B = Q_\hsc- 1$, and $\ell = 1$, $m= 2$. Hence, there exists $S \in \Psi^{-1}_\hsc(\mathbb T^d_{R_{\sharp}})$ and $R = O(\hsc^\infty)_{\Psi^{-\infty}_\hsc}$ with
\begin{equation} \label{eq:wfQ}
\WFh  S \subset \WFh  \big( \hsc^{-1} \Pihigh^{\Psi}[Q_\hsc, \varphi_0] \big)
\quad\tand\quad
\chi\Pihigh^{\Psi}\chi[Q_\hsc, \varphi_0] =  \hsc S ( Q_\hsc- 1) + R.
\eeq
We apply both sides of this identity to $\phi v$ and then use 
that $\phi$ is equal to zero 
near $B_{R_0}$ and supported in $B_{R_1}$, and thus $Q_\hsc= P_\hsc=P_{\hsc, \theta}$ on $\supp\, \phi$
; the result is that
\begin{align}
\chi\Pihigh^{\Psi}\chi[Q_\hsc, \varphi_0] \phi v =  \hsc S ( Q_\hsc- 1) \phi v + R \phi v \nonumber 
&=  \hsc S \phi ( Q_\hsc- 1) v +\hsc S [ Q_\hsc, \phi] v + R \phi v  \nonumber\\
&=  \hsc S \phi g +\hsc S [ Q_\hsc, \phi] v + R \phi v. \label{eq:parau}
\end{align}
The following lemma combined with (\ref{eq:WFdis}) shows that 
\begin{equation} \label{eq:commneg}
S [ Q_\hsc, \phi] = O(\hsc^\infty)_{\Psi^{-\infty}_\hsc}.
\end{equation}

\begin{lemma}\label{lem:WFintersect}
$
\WFh  S \cap \WFh   [ Q_\hsc, \phi] = \emptyset.
$
\end{lemma}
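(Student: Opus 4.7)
The plan is to show the required disjointness by controlling the $x$--space projections of the two operator wavefront sets. The main observation is that both wavefront sets are supported, in the base, on the sets where $\varphi_0$ and $\phi$ respectively have non-vanishing derivatives, and the cut-off $\phi$ was chosen in \eqref{eq:suppphi} so that these two supports are disjoint.

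First I would note that, by the second inclusion in \eqref{eq:wfQ} together with \eqref{eq:WFprod},
\[
\WFh S \subset \WFh\big(\hsc^{-1}\Pihigh^{\Psi}[Q_\hsc,\varphi_0]\big) \subset \WFh\big([Q_\hsc,\varphi_0]\big).
\]
Next, multiplication by a function $\psi\in C^{\infty}_{\rm comp}(\mathbb{R}^d)$ is an element of $\Psi^{0}_\hsc(\mathbb{T}^d_{R_\sharp})$ with (real) symbol $\psi(x)$ independent of $\xi$, so by the semiclassical symbol calculus the commutator $[Q_\hsc,\psi]\in\hsc\Psi^1_\hsc(\mathbb{T}^d_{R_\sharp})$ has full symbol given by an asymptotic expansion each of whose terms is a linear combination of products of derivatives of $q_\hsc$ with derivatives of $\psi$ of order at least one. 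Hence, applied to $\psi=\varphi_0$ and $\psi=\phi$,
\[
\WFh\big([Q_\hsc,\varphi_0]\big) \subset \supp\nabla\varphi_0 \times\mathbb{R}^d_\xi, \qquad \WFh\big([Q_\hsc,\phi]\big) \subset \supp\nabla\phi \times \mathbb{R}^d_\xi,
\]
where the inclusions are in the fiber-compactified cotangent space in the usual way.

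Finally, by the choice \eqref{eq:suppphi}, $\phi\equiv 1$ on an open neighbourhood of $\supp\nabla\varphi_0$, so $\nabla\phi\equiv 0$ on that neighbourhood, and therefore $\supp\nabla\phi \cap \supp\nabla\varphi_0=\emptyset$. Combining this with the two inclusions above gives the disjointness of $\WFh S$ and $\WFh[Q_\hsc,\phi]$, as claimed. The only possible obstacle is bookkeeping with the fiber-infinity part of $\WFh$, but since the disjointness already holds in the $x$-projections there is nothing to check at fiber infinity.
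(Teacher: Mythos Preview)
Your proof is correct and follows essentially the same route as the paper: you bound $\WFh S$ via \eqref{eq:wfQ} and \eqref{eq:WFprod} by $\WFh[Q_\hsc,\varphi_0]$, then observe that the commutators $[Q_\hsc,\varphi_0]$ and $[Q_\hsc,\phi]$ have wavefront sets contained over $\supp\nabla\varphi_0$ and $\supp\nabla\phi$ respectively, and finally use \eqref{eq:suppphi} to conclude these base supports are disjoint. Your symbol-expansion justification for the commutator wavefront-set inclusion is slightly more explicit than the paper's (which simply cites the definition of $Q_\hsc$), and your remark on fiber infinity is a welcome clarification, but the argument is the same.
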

\begin{proof}
By (\ref{eq:wfQ}) and the definition of $Q_\hsc$ \eqref{eq:Qdef},
\beqs
\WFh  S \subset \WFh  [Q_\hsc, \varphi_0] \subset 
\big\{ (x,\xi) \,:\, x\in 
\supp  \nabla \varphi_0, \,\,
\xi \in\mathbb R^d\big\}.
\eeqs
Similarly, 
$$
\WFh   [ Q_\hsc, \phi] \subset
\big\{ (x,\xi) \,:\, x\in 
\supp  \nabla \phi_0, \,\,
\xi \in\mathbb R^d\big\}.
$$
Now, by (\ref{eq:suppphi}), $\supp  \nabla \varphi_0$ and $\supp  \nabla \phi$ are disjoint, and the result follows.
\end{proof}

\

Therefore, by (\ref{eq:parau}), (\ref{eq:commneg}) and the definition of $O(\hsc^\infty)_{\Psi^{-\infty}_\hsc}$ (\ref{eq:residual}), for any $N$, there exists $C_N, C'_N >0$ such that
\begin{align*}
\Vert \chi\Pihigh^{\Psi}\chi[Q_\hsc, \varphi_0] \phi v \Vert_{L^2(\mathbb T^d_{R_{\sharp}})}
&\lesssim \hsc \Vert S \phi g \Vert_{L^2(\mathbb T^d_{R_{\sharp}})} + C_N \hsc^N \Vert \widetilde \phi v \Vert_{L^2(\mathbb T^d_{R_{\sharp}})} + C'_N \hsc^N \Vert \phi v \Vert_{L^2(\mathbb T^d_{R_{\sharp}})} \\
&\lesssim \hsc \Vert S \phi g \Vert_{L^2(\mathbb T^d_{R_{\sharp}})} + C_N \hsc^N \Vert \widetilde \phi v \Vert_{\mathcal H} + C'_N \hsc^N \Vert \phi v \Vert_{\mathcal H},
\end{align*}
where $\widetilde \phi$ is compactly supported in $B_{R_1} \backslash B_{R_0}$ and equal to one on $\supp\, \phi$.
Taking $N:=M+1$, using the resolvent estimate (\ref{eq:res_PML}),
and then using that $S \in \Psi^{-1}(\mathbb T^d_{R_{\sharp}}) \subset  \Psi^{0}(\mathbb T^d_{R_{\sharp}})$ together with Part (iii) of Theorem \ref{thm:basicP}, 
we  obtain that
\begin{align*}\nonumber
\Vert \chi\Pihigh^{\Psi}\chi[Q_\hsc, \varphi_0] \phi v \Vert_{L^2(\mathbb T^d_{R_{\sharp}})} \lesssim \hsc \Vert S \phi g \Vert_{L^2(\mathbb T^d_{R_{\sharp}})} + \hsc \Vert g \Vert_{\mathcal H(\Otr)} 
 \lesssim \hsc \Vert \phi g \Vert_{L^2(\mathbb T^d_{R_{\sharp}})} + \hsc \Vert g \Vert_{\mathcal H(\Otr)}
\lesssim \hsc \Vert g\Vert_{\cH}.
\end{align*}
Combining this last estimate with (\ref{eq:high1a})  and (\ref{eq:apprPiH}) we obtain the desired bound 
\eqref{eq:vHnear_bound} on $\vHnear$.

\paragraph{Step 4:~Obtaining the bound \eqref{eq:vHfar_bound} on $\vHfar$ using the ideas from Steps 2 and 3.}

We now show that 
\beq\label{eq:Monday2}
\vHfar:=  \widetilde \varphi_{\rm tr} \widetilde \varphi_{1}
\Pihigh
\varphi_1 \varphi_{\rm tr}v 
=\widetilde \varphi_{\rm tr} \widetilde \varphi_{1} 
\Pihigh
\widetilde \varphi_{1} \widetilde \varphi_\tr
\varphi_1 \varphi_{\rm tr}v 
\eeq
 satisfies the bound \eqref{eq:vHfar_bound}.
Since $ \widetilde \varphi_{1} $ is supported away from $B_{R_0}$, 
exactly as in Step 2,
$\Pilow^\Psi := \psi_\mu(Q_\hsc)\in \Psi^{-\infty}_\hsc(\mathbb T^d_{R_\sharp})$
  and $\Pihigh^\Psi := (1-\psi_\mu)(Q_\hsc)\in \Psi^{0}_\hsc(\mathbb T^d_{R_\sharp})$
 satisfy
\beq\label{eq:Monday1}
 \widetilde \varphi_{1} \Pilow  \widetilde \varphi_{1} 
  = \widetilde \varphi_{1} 
  \Pilow^\Psi 
   \widetilde \varphi_{1}   + \residualD
 \quad\tand\quad
  \widetilde \varphi_{1}\Pihigh
  \widetilde \varphi_{1} 
  =
   \widetilde \varphi_{1} 
  \Pihigh^{\Psi}
   \widetilde \varphi_{1}
+\residualD.
\eeq
Now, by \eqref{eq:Monday2}, \eqref{eq:Monday1}, and the facts that $\widetilde{\varphi}_\tr\varphi_\tr = \varphi_\tr$ and $\widetilde \varphi_1\varphi_1 =\varphi_1$,
\beqs
\vHfar = \widetilde \varphi_{\rm tr} \widetilde \varphi_{1} 
\Pihigh^\Psi
\varphi_1 \varphi_{\rm tr}v + \residual \varphi_\tr v.
\eeqs

\begin{lemma}\label{lem:Gaston_late}
The operator $\widetilde{Q}_{\hsc,\theta} - 1 $ is semiclassically elliptic
on 
$\WFh(\widetilde \varphi_{1} \widetilde \varphi_{\rm tr}
  \Pihigh^{\Psi}
 \varphi_{1}  \varphi_{\rm tr})$.
\end{lemma}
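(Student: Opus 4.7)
\medskip

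\noindent\textbf{Proof proposal for Lemma \ref{lem:Gaston_late}.}
The plan is essentially the same as in the lemma that precedes it (the one showing semiclassical ellipticity of $Q_\hsc-1$ on $\WFh(\hsc^{-1}\chi\Pihigh^\Psi\chi[Q_\hsc,\varphi_0])$), except that now one must work with the scaled operator $\widetilde{Q}_{\hsc,\theta}$, whose ellipticity on high frequencies is controlled by Lemma \ref{lem:mu_elliptic}(ii) rather than Lemma \ref{lem:mu_elliptic}(i). The strategy is to localise the wavefront set spatially (using the supports of the cut-offs $\widetilde\varphi_1, \widetilde\varphi_\tr, \varphi_1, \varphi_\tr$) and in momentum (using the frequency cut-off $\Pihigh^\Psi$), and then observe that the resulting region of phase space is precisely the one on which Lemma \ref{lem:mu_elliptic}(ii) furnishes ellipticity.

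\medskip

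More concretely, I would first recall that, by the standard properties of the wavefront set of products recapped in Appendix \ref{app:sct} (in particular \eqref{eq:WFprod} and \eqref{eq:support}), and the fact that multiplication by a smooth cut-off is a pseudodifferential operator with wavefront set contained in $\supp\,(\cdot)\times\mathbb R^d_\xi$,
\beqs
\WFh\bigl(\widetilde\varphi_1\widetilde\varphi_\tr\Pihigh^\Psi\varphi_1\varphi_\tr\bigr)
\;\subset\;\bigl\{(x,\xi):x\in\supp\widetilde\varphi_1\cap\supp\widetilde\varphi_\tr\bigr\}\cap \WFh\Pihigh^\Psi.
\eeqs
Since $\widetilde\varphi_1\equiv 0$ on a neighbourhood of $B_{R_0}$ and $\supp\widetilde\varphi_\tr\subset B_{R_1+3\delta}$ (equivalently, $B_{R_1(1+3\delta)}$, i.e., the region where $\widetilde{Q}_{\hsc,\theta}=P_{\hsc,\theta}$ by \eqref{eq:widetildef}), the spatial projection of this wavefront set is contained in the annulus $B_{R_1(1+3\delta)}\setminus B_{R_0}$. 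In momentum, using \eqref{eq:PiHPWF},
\beqs
\WFh\Pihigh^\Psi\;\subset\;\{q_\hsc\geq\mu\}.
\eeqs

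\medskip

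Combining these two containments, I get
\beqs
\WFh\bigl(\widetilde\varphi_1\widetilde\varphi_\tr\Pihigh^\Psi\varphi_1\varphi_\tr\bigr)
\;\subset\;\bigl\{(x,\xi)\,:\,x\in B_{R_1(1+3\delta)}\setminus B_{R_0},\ q_\hsc(x,\xi)\geq\mu\bigr\}.
\eeqs
Lemma \ref{lem:mu_elliptic}(ii) states that on exactly this set (for $\epsilon\leq\theta\leq\pi/2-\epsilon$ and $\mu\geq\mu_1$),
\beqs
\langle\xi\rangle^{-2}\bigl|\widetilde q_{\hsc,\theta}(x,\xi)-1\bigr|\;\geq\;c_{\rm ell}>0,
\eeqs
which is the definition of semiclassical ellipticity of $\widetilde{Q}_{\hsc,\theta}-1\in\Psi^2_\hsc(\mathbb T^d_{R_\sharp})$ on the given wavefront set. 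This completes the argument.

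\medskip

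I do not expect any genuine obstacle: the only slightly delicate point is making sure that $\widetilde\varphi_\tr$ is supported inside the region where $\widetilde{Q}_{\hsc,\theta}$ agrees with $P_{\hsc,\theta}$ (so that Lemma \ref{lem:mu_elliptic}(ii) rather than a weaker bound applies), but this is baked into the choices of $\delta$ and of the cut-offs in \S\ref{subsec:abdec}--\ref{subsec:high}. Everything else is a bookkeeping exercise with the calculus of wavefront sets.
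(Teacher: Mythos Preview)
Your proposal is correct and follows essentially the same approach as the paper: localise the wavefront set spatially via the cut-offs to $B_{R_1(1+3\delta)}\setminus B_{R_0}$, localise in momentum via \eqref{eq:PiHPWF} to $\{q_\hsc\geq\mu\}$, and then invoke Lemma \ref{lem:mu_elliptic}(ii). The only minor omission is that \eqref{eq:PiHPWF} actually gives $\{|q_\hsc|\geq\mu\}$, and one needs \eqref{eq:John1} to replace this by $\{q_\hsc\geq\mu\}$ before applying Lemma \ref{lem:mu_elliptic}(ii); the paper cites \eqref{eq:John1} explicitly for this step.
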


\begin{proof}
First recall that $\supp (\varphi_1 \varphi_\tr)\subset \supp (\widetilde \varphi_{1} \widetilde \varphi_{\rm tr}) \subset B_{R_1(1 +3 \delta)}\setminus B_{R_0}$.
Using this property, along with  (\ref{eq:WFprod}), (\ref{eq:support}), (\ref{eq:PiHPWF}), and \eqref{eq:John1}, 
we find that 
\begin{align*}
\WFh(\widetilde \varphi_{1} \widetilde \varphi_{\rm tr}
  \Pihigh^{\Psi}
\varphi_{1} \varphi_{\rm tr})
&\subset \Big\{ (x,\xi) \, :\, x\in B_{R_1(1 +3 \delta)}\setminus B_{R_0}\Big\} \cap 
\WFh   \Pihigh^{\Psi}\\
&\subset \Big\{ (x,\xi) \, :\, x\in B_{R_1(1 +3 \delta)}\setminus B_{R_0}\Big\} \cap 
\big\{ (x,\xi) \,:\, 
q_\hsc(x,\xi)\geq \mu\big\}.
\end{align*}
By Lemma \ref{lem:mu_elliptic}, $\widetilde{Q}_{\hsc,\theta}-1$ is semiclassically elliptic on the set on the right-hand side of the last displayed inclusion, and the proof is complete.
\end{proof}

\

We now apply Theorem \ref{thm:para} with $A = \widetilde \varphi_{1} \widetilde \varphi_{\rm tr}
  \Pihigh^{\Psi}
\varphi_{1} \varphi_{\rm tr}$, $B = \widetilde{Q}_{\hsc,\theta}- 1$, $\ell = 0$, and $m= 2$; observe that the assumptions of Theorem \ref{thm:para} are then satisfied by  Lemma \ref{lem:Gaston_late}.   
      Hence, there exists $\widetilde{S} \in \Psi^{-2}_\hsc(\mathbb T^d_{R_{\sharp}})$ and $\widetilde{R} = O(\hsc^\infty)_{\Psi^{-\infty}_\hsc}$ with
\begin{equation} \label{eq:wfQ2}
\WFh  \widetilde{S} \subset
\WFh(\widetilde \varphi_{1} \widetilde \varphi_{\rm tr}
  \Pihigh^{\Psi}
\varphi_{1} \varphi_{\rm tr})
\quad\tand\quad
\widetilde \varphi_{1} \widetilde \varphi_{\rm tr}
  \Pihigh^{\Psi}
 \varphi_{1} \varphi_{\rm tr}=  \widetilde{S} ( \widetilde{Q}_{\hsc,\theta}- 1) + \widetilde{R}.
\eeq
We now apply the equality in \eqref{eq:wfQ2} to $\widetilde{\chi}v$ where $\widetilde{\chi}\in C^\infty_{\rm comp}(\Rea^d;[0,1])$ is such that $\widetilde{\chi}\equiv 1$ on a neighbourhood of $ \supp ({\varphi}_1 {\varphi}_{\tr})$ and $\supp\, \widetilde{\chi} \subset B_{R_1(1+3\delta)}\setminus B_{R_0}$; thus
\begin{align}\nonumber
\widetilde \varphi_{1} \widetilde \varphi_{\rm tr}
  \Pihigh^{\Psi}
\varphi_{1}  \varphi_{\rm tr}v
&=
\widetilde \varphi_{1} \widetilde \varphi_{\rm tr}
  \Pihigh^{\Psi}
 \varphi_{1}  \varphi_{\rm tr}\widetilde{\chi}v\\
&  =  \widetilde{S} ( \widetilde{Q}_{\hsc,\theta}- 1)\widetilde{\chi}v + \widetilde{R}\widetilde{\chi}v=  \widetilde{S} \widetilde{\chi}( \widetilde{Q}_{\hsc,\theta}- 1)v +
\widetilde{S}[\widetilde{Q}_{\hsc,\theta},\widetilde{\chi}]v + 
 \widetilde{R}\widetilde{\chi}v.\nonumber
\end{align}
By construction $\widetilde{Q}_{\hsc,\theta}= P_{\hsc,\theta}$ on $B_{R_1(1+3\delta)}\setminus B_{R_0}$ (see \eqref{eq:widetildeQ} and \eqref{eq:widetildef}); thus $\widetilde{\chi}( \widetilde{Q}_{\hsc,\theta}- 1)v = \widetilde{\chi} g$ and
\begin{align}
\widetilde \varphi_{1} \widetilde \varphi_{\rm tr}
  \Pihigh^{\Psi}
\varphi_{1}  \varphi_{\rm tr}v
=\widetilde{S} \widetilde{\chi}g +
\widetilde{S}[\widetilde{Q}_{\hsc,\theta},\widetilde{\chi}]v + 
 \widetilde{R}\widetilde{\chi}v.\label{eq:laugh1}
\end{align}
Arguing exactly as in Lemma \ref{lem:WFintersect}, using \eqref{eq:wfQ2}
and the fact that $\supp\,\nabla\widetilde{\chi}\cap\supp ({\varphi}_1 {\varphi}_{\tr})=\emptyset$, we find that 
\beqs
\WFh \widetilde S \cap \WFh[ \widetilde{Q}_{\hsc,\theta}, \widetilde{\chi}] = \emptyset 
\quad\text{ and thus }\quad
\widetilde S[ \widetilde{Q}_{\hsc,\theta}, \widetilde{\chi}]= O(\hsc^\infty)_{\Psi^{-\infty}_\hsc}.
\eeqs
Using this in \eqref{eq:laugh1} and then 
taking the $H^2_\hsc(\mathbb{T}^d_{R_\sharp})$ norm,
using the definitions of $O(\hsc^\infty)_{\Psi^{-\infty}_\hsc}$ and $\residualD$, 
we obtain that, given $N>0$ there exists $C_{N}>0$ such that 
\begin{align*}
\big\|
\widetilde \varphi_{1} \widetilde \varphi_{\rm tr}
  \Pihigh^{\Psi}
\varphi_{1}  \varphi_{\rm tr}v
   \big\|_{H^2_\hsc(\mathbb{T}^d_{R_\sharp})}
    \lesssim \big\| \widetilde{S} \widetilde{\chi} g\big\|_{H^2_\hsc(\mathbb{T}^d_{R_\sharp})} + C_N \hsc^N \big\|\widetilde{\chi}_{\rm alt} v \big\|_{L^2(\mathbb{T}^d_{R_\sharp})}
   \end{align*}
  where $\widetilde{\chi}_{\rm alt}$ is compactly supported in $B_{R_\sharp}\setminus B_{R_0}$ and equal to one on a neighbourhood of $\supp\,\widetilde{\chi}$.
By Part (iii) of Theorem \ref{thm:basicP}, and the fact that $\widetilde{S} \in \Psi^{-2}_\hsc(\mathbb T^d_{R_{\sharp}})$,
\beqs
\big\|
\widetilde \varphi_{1} \widetilde \varphi_{\rm tr}
  \Pihigh^{\Psi}
 \varphi_{1}  \varphi_{\rm tr}v
   \big\|_{H^2_\hsc(\mathbb{T}^d_{R_\sharp})}
     \lesssim  \|g\|_{\cH^\sharp} + C_N \hsc^N \| v\|_{\cH(\Otr)}.
\eeqs
The bound \eqref{eq:vHfar_bound} on 
   $\vHfar   :=\widetilde \varphi_{\rm tr} \widetilde \varphi_{1} 
\Pihigh
\varphi_1 \varphi_{\rm tr}v$ 
   then follows by combining this last inequality with the resolvent estimate \eqref{eq:res_PML}.

\subsection{Proof of the decomposition (\ref{eq:decomp2}) of $\vL$   (the low-frequency component)
and associated bounds on $\vAnear$ and $\vAfar$
} \label{subsec:low}

\subsubsection{Decomposing $\Pilow$ using Assumption 2 in Theorem \ref{thm:mainbb}}

By Assumption 2 in Theorem \ref{thm:mainbb}, there exists $E_\infty = \residualD$ with
\begin{equation} \label{eq:Edec}
\mathcal E(\refop) = E  + E_\infty,
\end{equation}
and the low-frequency estimate (\ref{eq:lowenest}) holds. 
By (\ref{eq:Lambda}) (a consequence of the definition of the constant $\Lambda$ \eqref{eq:Lambda}), $\mathcal E$ is nowhere zero on the support of $\psi_\mu$; therefore  the function $\psi_\mu/\mathcal E$ is well-defined and in $C_0(\mathbb R)$ (defined by \eqref{eq:C0}). The definition
of $\Pilow$ (\ref{eq:PiL}) and Part \ref{it:fc3} of Theorem \ref{thm:fundfc} imply that
\begin{equation} \label{eq:divbyE}
 \Pilow =\psi_\mu(\refop)  = \mathcal E(\refop)\left (\frac{1}{\mathcal E} \psi_\mu\right)(\refop) = E \circ\bigg( \left [\frac{1}{\mathcal E} \psi_\mu\bigg](\refop)\right)+ E_\infty\circ\left(\left [\frac{1}{\mathcal E} \psi_\mu\right](\refop)\right).
\end{equation}
Then, by Part \ref{it:fc5} of Theorem \ref{thm:fundfc} and the fact that $E_\infty = \residualD$,
\begin{equation} \label{eq:bigres}
E_\infty\circ\left(\left [\frac{1}{\mathcal E} \psi_\mu\right](\refop)\right)
  = \residualD.
\end{equation}

\subsubsection{The decomposition \eqref{eq:rho=1} of $\vL$ when $\rho=1$ in \eqref{eq:lowenest}}

We first assume that $\rho = 1$ and establish the decomposition \eqref{eq:rho=1}, together with the bounds (\ref{eq:decLF5}) and \eqref{eq:decLF5a}
on $\ulow$. 
In this case, we let
\beq\label{eq:vlow_def}
\vlow:=  E  \circ\left(\left [\frac{1}{\mathcal E} \psi_\mu\right](P^\sharp_\hsc)\right) \varphi_\tr v,
\eeq
so that  (\ref{eq:rho=1}) holds by \eqref{eq:divbyE} and \eqref{eq:bigres}.
Moreover, since $\vlow$ involves a compactly-supported function of $P^\sharp_\hsc$,
by the reasoning below \eqref{eq:boundPi}, $\vlow \in \mathcal{D}^{\sharp,\infty}_\hsc$.
Then, using (in this order) the low-frequency estimate (\ref{eq:lowenest}), Part \ref{it:fc5} of Theorem \ref{thm:fundfc}, and finally the resolvent estimate (\ref{eq:res_PML}), we get
\begin{align*}
 \Vert D(\alpha) \vlow  \Vert_{\mathcal H^{\sharp}} &= \N{ D(\alpha) 
 E  \circ\left(\left [\frac{1}{\mathcal E} \psi_\mu\right](P^\sharp_\hsc)\right) 
 \varphi_\tr v}_{\mathcal H^{\sharp}}
\leq C_{\mathcal E}(\alpha, \hsc) \N{ \left [\frac{1}{\mathcal E} \psi_\mu\right](P^\sharp_\hsc) \varphi_\tr v }_{\mathcal H^{\sharp} } \\
&\leq C_{\mathcal E}(\alpha, \hsc) \,\sup_{\lambda \in \mathbb R} \left| \frac{1}{\mathcal E(\lambda)} \psi_{\mu}(\lambda) \right| \Vert \varphi_\tr v \Vert_{\mathcal H^{\sharp} }
= C_{\mathcal E}(\alpha, \hsc)\, \sup_{\lambda \in \mathbb R} \left| \frac{1}{\mathcal E(\lambda)} \psi_{\mu}(\lambda) \right| \Vert \varphi_\tr v \Vert_{\mathcal H(\Otr) } \\
&\hspace{5.5cm}\lesssim  C_{\mathcal E}(\alpha, \hsc) \,\sup_{\lambda \in \mathbb R} \left| \frac{1}{\mathcal E(\lambda)} \psi_{\mu}(\lambda) \right| \hsc^{-1-M}\Vert g \Vert_{\mathcal H (\Otr)};
\end{align*}
thus (\ref{eq:decLF5}) holds. 
To establish \eqref{eq:decLF5a}, observe that 
\beq\label{eq:nosleep1}
\|v \|_{\cD^{\sharp,m}_\hsc((B_{R_1(1+\e)})^c)}\leq \big\|(1-\widetilde{\varphi}_\tr) v\big\|_{\cD^{\sharp,m}_\hsc},
\eeq
since $\widetilde\varphi_\tr \equiv 0$ on $(B_{R_1(1+\e)})^c$.
Then by (\ref{eq:Edec}), \eqref{eq:bigres}, Part \ref{it:fc3} of Theorem \ref{thm:fundfc}, pseudo-locality of the functional calculus (Lemma \ref{lem:funcloc1}), and the first support property in \eqref{eq:support_prop},
\begin{align}\nonumber
(1-\widetilde\varphi_\tr)
 E  \circ\left(\left [\frac{1}{\mathcal E} \psi_\mu\right](\refop)\right) 
\varphi_\tr
&= (1-\widetilde\varphi_\tr) \mathcal E(\refop)\left (\frac{1}{\mathcal E} \psi_\mu\right)(\refop) \varphi_\tr + \residualD \\
&= (1-\widetilde\varphi_\tr) \psi_\mu(\refop) \varphi_\tr + \residualD  = \residualD. \nonumber
\end{align}
The bound \eqref{eq:decLF5a} then follows by combining this with \eqref{eq:nosleep1} and the resolvent estimate \eqref{eq:res_PML}.

\bre[The decomposition is independent of $\mathcal{E}$ if $E_\infty=0$]
The last part of Theorem \ref{thm:mainbb} is the claim that when $E_\infty=0$, the decomposition is independent of $E$. To establish this in the case $\rho=1$, observe that   \eqref{eq:vlow_def} and \eqref{eq:divbyE} imply that if $E_\infty=0$, then $\vlow = \vL=\psi_\mu(\refop)\varphi_\tr v$ (which is independent of $\mathcal{E}$).
\ere

\subsubsection{Cut-off functions for the case $\rho\neq 1$}\label{subsubsec:cutoff}

We first define the cut-off functions used to bound $\vL$, displayed in Figure \ref{fig:line_func}.
Whereas the cut-off functions used in the bound on $\vH$ (in \S\ref{subsec:high}) were denoted $\varphi$, $\phi$, and $\chi$ (sometimes with tildes), in this section we use the notation $\rho_j$ and $\gamma_j$, $j=1,2$. Recall that $\rho$ is the cut-off function in the assumption \eqref{eq:lowenest}.

Given $R_0$, $R_1$, and $\rho$,
 let $\RfarA, \RfarB, \RlocB, \RlocA,$ be such that 
$R_0<\RfarA< \RfarB< \RlocB< \RlocA<R_1$ and
  $\rho = 1$ on a neighbourhood of $B_{\RlocA}$.
  
Let 
$\rho_{1}\in C^\infty_{\rm comp}(\mathbb T^d_{R_\sharp};[0,1])$ be such that $\supp  (1 - \rho_{1}) \subset (B_{\RfarB})^c $
 and $\supp \, \rho_{1} \Subset B_{\RlocB} $
Let $\rho_2 \in C^\infty_{\rm comp}(\mathbb T^d_{R_\sharp};[0,1])$ be supported in $B_{\RlocB}$ and such that $\rho_2 \equiv 1$ on $\supp  \,\rho_1$, i.e., 
 \beq\label{eq:support_prop2}
 \supp(1-\rho_2)\cap \supp\,\rho_1= \emptyset.
 \eeq

Let $\gamma_1 \in C^\infty(\mathbb T^d_{R_\sharp};[0,1])$ be such that $\gamma_1\equiv 0 $ on a neighbourhood of $B_{R_0}$, 
such that $\gamma_1 \equiv 1$ on a neighbourhood of $B_{R_1(1+2\delta)}\setminus B_{\RfarA}$, and $\gamma_1 \equiv 0$ on $(B_{R_1(1+3\delta)})^c$.
A key feature of this definition is that 
 \beq\label{eq:support_prop3}
\supp \,(1-\gamma_1)\cap\supp\big((1-\rho_1)\varphi_\tr\big)=\emptyset.
\eeq
Finally, let $\gamma_2 \in C^\infty(\mathbb T^d_{R_\sharp};[0,1])$ be equal to zero on $B_{\RfarB}$ and such that $\gamma_2 \equiv 1$ on $\supp  (1-\rho_1)$; i.e.,
 \beq\label{eq:support_prop4}
\supp \,(1-\gamma_2)\cap\supp\big(1-\rho_1)=\emptyset.
\eeq

\begin{figure}
\begin{center}
    \includegraphics[width=\textwidth]{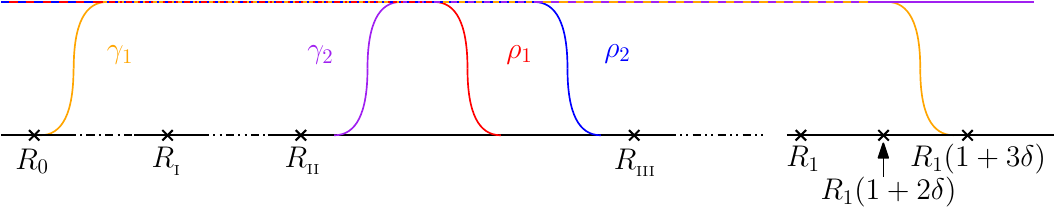}
  \end{center}
    \caption{The cut-off functions $\rho_1, \rho_2, \gamma_1, \gamma_2$ defined at the start of \S\ref{subsec:low}.}
  \label{fig:line_func}
\end{figure}

\subsubsection{Decomposing into parts that are ``near to'' or ``far from'' the black box when $\rho\neq 1$}\label{sec:decomp_vLow}
 
 We split $\vL$ in the following way, using the pseudo-locality of the functional calculus (i.e., Lemma \ref{lem:funcloc1}) and the support properties \eqref{eq:support_prop2} and \eqref{eq:support_prop3},
\begin{align*}\nonumber
\vL := 
\psi_\mu(\refop)
 \varphi_{\rm tr} v
&= \psi_\mu(\refop)
 \rho_1 \varphi_{\rm tr} v + \psi_\mu(\refop)
 (1-\rho_1) \varphi_{\rm tr} v \\
\nonumber
&= 
\psi_\mu(\refop)
 \rho_1 \varphi_{\rm tr} v + \gamma_1 
\psi_\mu(\refop)
 (1-\rho_1) \varphi_{\rm tr} v + \residualD \varphi_{\rm tr}v \\
&=: \vLnear + \vLfar + \residualD \varphi_{\rm tr}v.
\end{align*}
 
We now split $\vLnear$ and $\vLfar$ further, with this decomposition summarised in Figure \ref{fig:split_uL}.
We highlight that the arguments from here on are identical to the corresponding arguments in \cite{LSW4} (in \cite[\S3.3.3-\S3.3.4]{LSW4}).

\begin{figure}
\hspace{-18em}
%\begin{adjustwidth*}{}{-18em}
 \begin{tikzpicture}%
  [>=stealth,
   shorten >=1pt,
   align = center,
   node distance=4cm and 4.5cm,
   on grid
  ]
  \node(0) {};
 \node (1)[right=of 0]  {$\vL := \Pilow \varphi_\tr v$};
\node (21) [below=of 0] {\begin{minipage}{\textwidth}
            \begin{gather*} 
\vLnear \\
:= \Pilow \rho_1 \varphi_\tr v
            \end{gather*}
        \end{minipage}};
\node (22) [right=of 21] {\begin{minipage}{\textwidth}
            \begin{gather*} 
            %\hspace{1cm}
            \vLfar  \\
            := \gamma_1\Pilow (1-\rho_1) \varphi_\tr v
            \end{gather*}
        \end{minipage}};
\node (23) [right=of 22] {\begin{minipage}{\textwidth}
            \begin{gather*} 
            %\hspace{1cm}
            \residualD \varphi_\tr v
            \end{gather*}
        \end{minipage}};

\node (32) [below=of 21] {
\begin{minipage}{\textwidth}
            \begin{gather*} 
\residualD \varphi_\tr v
            \end{gather*}
        \end{minipage}};
\node (31) [left=of 32] {\begin{minipage}{\textwidth}
            \begin{gather*} 
            \vAnear :=E \circ\left(\left [\frac{1}{\mathcal E} \psi_\mu\right](\refop)\right)  \rho_1 v \\ 
            \text{\small regular near $B_{R_0}$ thanks} \\
            \text{\small to the low-frequency estimate,} \\
            \text{\small negligible away from $B_{R_0}$} \\
            \end{gather*}
        \end{minipage}};
\node (33) [below=of 22] {\begin{minipage}{\textwidth}
            \begin{gather*} 
\residualD \varphi_\tr v\\   
            \end{gather*}
        \end{minipage}};
\node (34) [right=of 33] {\begin{minipage}{\textwidth}
            \begin{gather*} 
            \vAfar \\ 
            \text{\small part given by} \\
            \text{\small a Fourier multiplier} \\
            \text{\small on the torus $\mathbb T^d$,} \\
            \text{\small entire away from $B_{R_0}$,} \\
             \text{\small negligible near $B_{R_0}$}           
            \end{gather*}
        \end{minipage}};
\path[->]
(1) edge node [left] {\small part near $B_{R_0}$ \;} (21)
    edge node [right]{%\hspace{0.4cm} 
    {\small part away} \\ {\small from $B_{R_0}$}} (22)
(1) edge node {} (23)
(21) edge node {} (31)
    edge node {} (32)
(22) edge node {} (33)
    edge node {} (34)
%(32) edge node {} (4)
%(33) edge node {} (4)
;
\end{tikzpicture}
%\end{adjustwidth*}
\caption{The decomposition of $\vL$ when $\rho\neq 1$, described in \S\ref{sec:decomp_vLow}-\S\ref{sec:334}}  \label{fig:split_uL}
\end{figure}

\subsubsection{The part near the black box $\vLnear$}
By \eqref{eq:divbyE}, and \eqref{eq:bigres}, along with the fact that $\rho_1\varphi_\tr=\rho_1$, 
\begin{align}
\vLnear =\psi_\mu(\refop) 
\rho_1
v 
&= 
E \circ \left(\left[ \frac{1}{\mathcal E} \psi_\mu \right] (\refop)\right) \rho_1 
v
+\residualD \varphi_{\rm tr}v=: \vAnear + \residualD\varphi_\tr v. \label{eq:vAnear}
\end{align}
Since $\vAnear$ involves a compactly-supported function of $\refop$
by the reasoning below \eqref{eq:boundPi} $\vAnear$ is in $\mathcal{D}^{\sharp,\infty}_\hsc$.

\bre[The decomposition is independent of $\mathcal{E}$ if $E_\infty=0$]
The last part of Theorem \ref{thm:mainbb} is the claim that the decomposition is independent of $\cE$ if $E_\infty=0$.
To establish this when $\rho\neq 1$, observe that the only part of the definition of the decomposition where $\cE$ enters is in the decomposition $\vLnear =\vAnear + \residualD v$.
Furthermore, if $E_\infty=0$, then, by \eqref{eq:divbyE} and \eqref{eq:vAnear}, we can define 
$\vAnear:= \psi_\mu(\refop) \rho_1 v$ (which is independent of $\cE$) and have 
$\vLnear= \vAnear$.
\ere

\paragraph{Proof of (\ref{eq:decLF1}) and (\ref{eq:decLF2}) for 
$\vAnear$.} \label{sec:333}

Using (in this order) the definition of $\vAnear$ \eqref{eq:vAnear}, the fact that $\rho=1$ on $B_{\RlocA}$,
the low-frequency estimate (\ref{eq:lowenest}), Part \ref{it:fc5} of Theorem \ref{thm:fundfc}, and finally the resolvent estimate (\ref{eq:res_PML})  we obtain 
\begin{align*}
 \Vert D(\alpha) \vAnear  \Vert_{\mathcal H^{\sharp}(B_{\RlocA})} 
\leq \N{ \rho D(\alpha)
  E  \circ\left(\left [\frac{1}{\mathcal E} \psi_\mu\right](\refop)\right) 
\rho_1 v}_{\mathcal H^{\sharp} } 
&\leq C_{\mathcal E}(\alpha, \hsc) \N{
\left(\left [\frac{1}{\mathcal E} \psi_\mu\right](\refop)\right)
\rho_1 v}_{\mathcal H^{\sharp} }\\
&\hspace{-1cm}\leq C_{\mathcal E}(\alpha, \hsc) \sup_{\lambda \in \mathbb R} \left| \frac{1}{\mathcal E(\lambda)} \psi_{\mu}(\lambda) \right| \Vert \rho_1 v \Vert_{\mathcal H(\Otr) }\\
&\hspace{-1.5cm}\lesssim  C_{\mathcal E}(\alpha, \hsc) \sup_{\lambda \in \mathbb R} \left| \frac{1}{\mathcal E(\lambda)} \psi_{\mu}(\lambda) \right| \hsc^{-1-M}\Vert g \Vert_{\mathcal H(\Otr) };
\end{align*}
thus (\ref{eq:decLF1}) holds, where the $\sup_{\lambda \in \mathbb R}$ becomes $\sup_{\lambda \in [-\Lambda, \Lambda]}$ because of the support property \eqref{eq:Lambda} of $\psi_\mu$.

The proof of \eqref{eq:decLF2} is very similar to the proof of \eqref{eq:decLF5a} above.
Since $\rho_2 \equiv 0$ on $(B_{{\RlocB}})^c$,
\begin{align}
\Vert  \vAnear  \Vert_{\mathcal D^{m, \sharp}((B_{\RlocB})^c)}
&\leq \N{(1-\rho_2)
 E  \circ\left(\left [\frac{1}{\mathcal E} \psi_\mu\right](\refop)\right) 
\rho_1 v }_{\mathcal D^{m, \sharp}}.\label{eq:nosleep2}
\end{align}
By (\ref{eq:Edec}), Part \ref{it:fc3} of Theorem \ref{thm:fundfc}, pseudo-locality of the functional calculus (Lemma \ref{lem:funcloc1}), and the support property \eqref{eq:support_prop2},
\begin{align*}\nonumber
(1-\rho_2) 
 E  \circ\left(\left [\frac{1}{\mathcal E} \psi_\mu\right](\refop)\right) 
\rho_1 &= (1-\rho_2) \mathcal E(\refop)\left (\frac{1}{\mathcal E} \psi_\mu\right)(\refop) \rho_1 + \residualD \\
&= (1-\rho_2) \psi_\mu(\refop) \rho_1 + \residualD  = \residualD. 
\end{align*}
Combining this with \eqref{eq:nosleep2} and then using the resolvent estimate (\ref{eq:res_PML}), we obtain (\ref{eq:decLF2}).

\subsubsection{The term away from the black box $\vLfar$.} \label{sec:334}

We now study
\beq\label{eq:vLfar}
\vLfar:= 
\gamma_1 \Pilow(1-\rho_1) \varphi_{\rm tr} v
\eeq
which is in $\cD_\hsc^{\sharp, \infty}$ by the fact that $\Pilow : \cD^\sharp\to \cD_\hsc^{\sharp, \infty}$ (see \S\ref{subsec:abdec}) and the smoothness and support properties of $\gamma_1$ (see \S\ref{subsubsec:cutoff}).

\paragraph{Step 1:~
expressing $\vLfar$ in terms of $\vAfar$}

Since $\supp (1-\gamma_1)$ and $\supp(1-\rho_1)$ are disjoint (see Figure \ref{fig:line_func}), 
the pseudo-locality of the functional calculus given by Lemma \ref{lem:funcloc1} implies that
\begin{align*}
\gamma_1 \Pilow(1-\rho_1)
= \gamma_1 \Pilow \gamma_1 (1-\rho_1)+  \residualD.
\end{align*}
Therefore, by Lemma \ref{lem:funcloc2} (and exactly as in \S\ref{subsec:high}), $\Pilow^\Psi:= \psi_\mu(Q_\hsc) \in\Psi^{\infty}_\hsc(\mathbb T^d_{R_\sharp})$ with
\begin{equation} \label{eq:A:far:1}
\gamma_1 \Pilow (1-\rho_1) = \gamma_1 \Pilow^\Psi \gamma_1 (1-\rho_1) +  \residualD,
\end{equation}
and, by \eqref{eq:PiLPWF},
$\WFh  \Pilow^\Psi \subset \{ |q_\hsc| \leq 2\mu\}.$ 
Therefore, by \eqref{eq:Qnew},
there exists $\lambda>1$ such that 
\begin{equation} \label{eq:low:a:A}
\WFh  \Pilow^\Psi \subset 
\Big\{ (x,\xi) \,:\, x \in \mathbb T^d_{R_\sharp},\,\, \xi \in B_{\lambda/2}\Big\}.
\end{equation}
Now, let 
$\widetilde{\varphi} \in C^\infty_{\rm comp}(\Rea^d;[0,1])$ be supported in $[-\lambda^2,\lambda^2]$ and equal to one on $[-\lambda^2/4, \lambda^2/4]$. By
(\ref{eq:support})
and (\ref{eq:low:a:A}), $\WFh  \big( 1 - \Optorus(\widetilde{\varphi}(|\xi|^2)) \big) \cap\WFh  \big(\Pilow^\Psi \big)= \emptyset$. Therefore, by (\ref{eq:WFdis}), as operators on the torus,
\begin{equation}\label{eq:E1} 
\Pilow^\Psi = \Optorus(\widetilde{\varphi}(|\xi|^2)) \Pilow^\Psi + \residualP. 
\end{equation}
Since $\gamma_1=0$ on a neighbourhood of $B_{R_0}$, 
by the definitions of $P^\sharp$ \eqref{eq:defref}, $\|\cdot\|_{\mathcal{D}_\hsc^{\sharp,m}}$ \eqref{eq:BB:norms}, and $\|\cdot\|_{H_\hsc^{2m}(\mathbb{T}^d_{R^\sharp})}$ \eqref{eq:Hhnorm}, given $m>0$ there exists $C_j(m)>0, j=1,2,$ such that 
\beq\label{eq:normequiv}
C_1(m) \N{\gamma_1 w}_{\mathcal{D}_\hsc^{\sharp,m}} \leq \N{\gamma_1 w}_{H^{2m}_\hsc(\mathbb{T}^d_{R^\sharp})} \leq C_2(m) 
\N{\gamma_1 w}_{\mathcal{D}_\hsc^{\sharp,m}} \quad \tfa w \in \mathcal{D}_\hsc^{\sharp,m},
\eeq
and thus $\gamma_1 \residualP \gamma_1 = \residualD$. 
Therefore, combining this with \eqref{eq:E1} and (\ref{eq:A:far:1}), we obtain that
\beq\label{eq:PiLrho1}
\gamma_1 \Pilow (1-\rho_1) = \gamma_1 \Optorus(\widetilde{\varphi}(|\xi|^2)) \Pilow^\Psi \gamma_1 (1-\rho_1) + \residualD.
\eeq
We let
\beq\label{eq:uainf}
\vAfar := \gamma_1 \Optorus(\widetilde{\varphi}(|\xi|^2)) \Pilow^\Psi \gamma_1 (1-\rho_1)\varphi_\tr v,
\eeq
so that the combination of \eqref{eq:vLfar}, \eqref{eq:PiLrho1}, and \eqref{eq:uainf} implies that 
\beqs
\vLfar=\vAfar + \residualD \varphi_\tr v.
\eeqs
Observe that $\vAfar\in \cD(\Omega_\tr)$ because of the presence of $\gamma_1$ at the start of the expression (which causes $\vAfar$ to be zero on $\Gamma_\tr$).

\paragraph{Step 2:~proving that $\vAfar$ is regular in $(B_{\RfarA})^c$ (i.e., the bound (\ref{eq:decLF3})).}

By the definition  of $\vAfar$ \eqref{eq:uainf} and the fact that $\gamma_1 = 1$ on $(B_{\RfarA})^c$,
\begin{align} \label{eq:A:far:2}
\Vert \partial^\alpha \vAfar  \Vert_{\mathcal H((B_{\RfarA})^c)} &= \Big\Vert \partial^\alpha\Optorus(\widetilde{\varphi}(|\xi|^2)) \Pilow^\Psi \gamma_1 (1-\rho_1)\varphi_{\rm tr} v \Big\Vert_{\mathcal H((B_{\RfarA})^c)} \nonumber \\
&\leq  \Big\Vert \partial^\alpha\Optorus(\widetilde{\varphi}(|\xi|^2)) \Pilow^\Psi \gamma_1 (1-\rho_1)\varphi_{\rm tr} v \Big\Vert_{L^2(\mathbb T^d_{R_\sharp})}.
\end{align}
We now bound the right-hand side of \eqref{eq:A:far:2}. By Lemma \ref{lem:toruscalculus}, $\Optorus(\widetilde{\varphi}(|\xi|^2))$ is given 
as a Fourier multiplier on the torus (defined by \eqref{eq:A:mult}), i.e.,
\begin{equation} \label{eq:A:far:3}
\Optorus(\widetilde{\varphi}(|\xi|^2)) = \widetilde{\varphi}(-\hsc^2 \Delta).
\end{equation}
Let $w\in L^2(\mathbb T^d_{R_\sharp})$ be arbitrary, and let $\widehat w(j)$ be the Fourier coefficients of $w$. By (\ref{eq:A:mult}),
$$
\widetilde{\varphi}(-\hsc^2 \Delta) w =\sum_{j \in \ZZ^d} \widehat w(j) \widetilde{\varphi}(\hsc^2 \lvert j\rvert ^2 \pi^2/R_\sharp^2) e_j,
$$
where the normalised eigenvectors $e_j$ are defined by (\ref{eq:A:eigen}). Hence, for any multi-index $\alpha$,
\begin{align*}
\partial^\alpha \widetilde{\varphi}(-\hsc^2 \Delta) w &=\sum_{j \in \ZZ^d} \widehat w(j) \widetilde{\varphi}(\hsc^2 \lvert j\rvert ^2 \pi^2/R_\sharp^2) \left( \frac{\ri \pi j}{R_{\sharp}}\right)^\alpha e_j = \sum_{j \in \ZZ^d, \; |j| \leq \frac{\lambda R_{\sharp}}{{\hsc\pi}}} \widehat w(j) \widetilde{\varphi}(\hsc^2 \lvert j\rvert ^2 \pi^2/R_\sharp^2) \left( \frac{\ri \pi j}{R_{\sharp}}\right)^\alpha e_j,
\end{align*}
since $\widetilde{\varphi}$ is supported in $B(0, \lambda^2)$.
Therefore
\begin{align} \label{eq:low:A:3}
\Vert \partial^\alpha \widetilde{\varphi}(-\hsc^2 \Delta) w \Vert_{L^2(\mathbb T^d_{R_{\sharp}})}^2 &=  \sum_{j \in \ZZ^d, \; |j| \leq \frac{\lambda R_{\sharp}}{\hsc\pi}}\left| \widehat w(j) \widetilde{\varphi}(\hsc^2 \lvert j\rvert ^2 \pi^2/R_\sharp^2) \left( \frac{\ri \pi j}{R_{\sharp}}\right)^\alpha \right|^2  \nonumber \\&\leq \lambda^{2|\alpha|} \hsc^{-2|\alpha|}  \sum_{j \in \ZZ^d} | \widehat w(j) | ^2 = \lambda^{2|\alpha|} \hsc^{-2|\alpha|} \Vert w \Vert^2_{L^2(\mathbb T^d_{R_{\sharp}})}.
\end{align}
We now use (\ref{eq:low:A:3}) with 
$$
w:= \Pilow^\Psi \gamma_1 (1-\rho_1)\varphi_{\rm tr} v,
$$
and combine the resulting estimate with (\ref{eq:A:far:2}) and (\ref{eq:A:far:3}). Using the fact that $\Pilow^\Psi \in \Psi^{\infty}(\mathbb T^d_{R_\sharp})$, $\gamma_1 = 0$ on a neighbourhood of $B_{R_0}$, and the resolvent estimate \eqref{eq:res_PML}, we get
\begin{align*}
\Vert \partial^\alpha \vAfar  \Vert_{\mathcal H((B_{\RfarA})^c)} 
&\leq\lambda^{|\alpha|} \hsc^{-|\alpha|} \Vert \Pilow^\Psi \gamma_1 (1-\rho_1)\varphi_{\rm tr} v \Vert_{{L^2}(\mathbb T^d_{R_{\sharp}})} \lesssim \lambda^{|\alpha|} \hsc^{-|\alpha|} \Vert \gamma_1 (1-\rho_1)\varphi_{\rm tr} v \Vert_{{L^2}(\mathbb T^d_{R_{\sharp}})} \\
&= \lambda^{|\alpha|} \hsc^{-|\alpha|} \Vert \gamma_1 (1-\rho_1)\varphi_{\rm tr} v \Vert_{\mathcal H} \leq \lambda^{|\alpha|} \hsc^{-|\alpha|} \hsc^{-1-M} \Vert g \Vert_{\mathcal H(\Otr)};
\end{align*}
hence (\ref{eq:decLF3}) holds.

\paragraph{Step 3:~proving that $\vAfar$ is negligible in $B_{\RfarB}$ (i.e., the bound (\ref{eq:decLF4})).}

It therefore remains  to show (\ref{eq:decLF4}). 

By (\ref{eq:WFprod}), (\ref{eq:support}), and the support property \eqref{eq:support_prop4},
\beqs
\WFh  \Big( (1-\gamma_2) \Optorus(\widetilde{\varphi}(|\xi|^2)) \Pilow^\Psi \Big) \cap {\WFh  (1-\rho_1)} = \emptyset.
\eeqs
Then, by (\ref{eq:WFdis}),
$$
(1-\gamma_2) \Optorus(\widetilde{\varphi}(|\xi|^2)) \Pilow^\Psi (1-\rho_1) = O(\hsc^\infty)_{\Psi^{-\infty}_\hsc}
$$
as a pseudo-differential operator on the torus.
Multiplying by $\gamma_1$ on the right and on the left, and then using
the fact that $\gamma_1=0$ on $B_{R_0}$ and the norm equivalence
\eqref{eq:normequiv}, we find
\begin{equation} \label{eq:A:far:fin}
(1-\gamma_2) \gamma_1 \Optorus(\widetilde{\varphi}(|\xi|^2)) \Pilow^\Psi \gamma_1 (1-\rho_1) =   \residualD
\end{equation}
as an element of $\mathcal L(\mathcal H^\sharp)$. 
On the other hand, since $\gamma_2 = 0$ on a neighbourhood of  $B_{\RfarB}$,
$$
\Vert \vAfar \Vert_{\mathcal D^{\sharp, m}_\hsc (B_{\RfarB})} = \Vert (1-\gamma_2)\vAfar \Vert_{\mathcal D^{\sharp, m}_\hsc (B_{\RfarB})}.
$$
Then (\ref{eq:decLF4}) follows from combining this last equation with the definition of $\ulow^\infty$ \eqref{eq:uainf},  (\ref{eq:A:far:fin}),  and the resolvent estimate (\ref{eq:res_PML}). 

The proof of Theorem \ref{thm:mainbb} is now complete.

\section{Proofs of Theorems \ref{thm:LSW3} and \ref{thm:LSW4}}\label{sec:LSW34}

These proofs follow very closely the proofs of \cite[Theorem D]{LSW4} and \cite[Theorem B]{LSW4}, i.e., the analogous decompositions for outgoing Helmholtz solutions; this is because (as highlighted after Theorem \ref{thm:mainbb}) the assumptions of Theorem \ref{thm:mainbb} are (by design) the same as the assumptions of the abstract decomposition result in \cite[Theorem A]{LSW4}.
For completeness, we sketch here the ideas behind these proofs.

\subsection{Set-up common to both proofs}\label{sec:common}

Let $\hsc := k^{-1}$ and define $\mathcal H$ and $P_{\hsc}$ as in Lemma \ref{lem:obstacle} with $\obstacle_- = \emptyset$. 
By Lemma \ref{lem:obstacle}, $P_{\hsc}$ is a semiclassical black-box operator on $\mathcal H$. The reference operator is given by $P^\sharp_\hsc = -\hsc^{2} c_{\rm scat}^2 \nabla \cdot (A_{\rm scat}\nabla )$.
Let
\beq\label{eq:H}
\subsetH:= \big\{\hsc : \hsc=k^{-1} \text{ with } k\in K\big\}.
\eeq
The assumption that the solution operator is polynomially bounded (in the sense of Definition \ref{def:poly_bound}) 
means that 
the bound \eqref{eq:res} holds with $\subsetH$ given by \eqref{eq:H}; i.e., the assumption in Point 1 of Theorem \ref{thm:mainbb} is satisfied.
Define $P_{\hsc,\theta}$ by \eqref{e:defP}. In this notation, the PML problem \eqref{eq:PML} becomes $(P_{\hsc,\theta} -I)v = \hsc^2 g$.

\subsection{Sketch proof of Theorem \ref{thm:LSW3}}\label{sec:LSW3proof}

We now construct $\mathcal E$ and $E $ satisfying the assumptions in Point 2 of Theorem \ref{thm:mainbb} under Assumption \ref{ass:LSW3}.
Let $\Lambda>0$ be as in Theorem \ref{thm:mainbb}, and let $\mathcal E \in C^\infty_{\rm comp}(\mathbb R)$ be such that $\mathcal E = 1$ in $[-\Lambda, \Lambda]$, and $\mathcal E = 0$ outside
$[-2\Lambda, 2\Lambda]$. The results of Helffer-Robert  \cite{HeRo:83} 
imply that
$\mathcal E (P^\sharp_\hsc) = \mathcal E(-\hsc^{2} c_{\rm scat}^2 \nabla \cdot \big(A_{\rm scat}\nabla))$ is a pseudo-differential operator on $\mathbb T^d_{R_\sharp}$ (see the discussion in \S\ref{sec:idea2} under the paragraph ``Ingredient 5''). Then, arguing as in Step 1 in \S\ref{sec:334}, we obtain that there exists $\Lambda_0>0$ such that 
\begin{equation*} 
 \mathcal E(\refop)
   = \Optorus(\widetilde{\varphi}(|\xi|^2))   \mathcal E
   (\refop)
   + O(\hsc^\infty)_{\Psi^{-\infty}_\hsc}.
\end{equation*}
with $\widetilde{\varphi} \in C^\infty_{\rm comp}(\Rea^d;[0,1])$ supported in $B(0, \Lambda_0^2)$ and equal to one on $B(0,\Lambda_0^2/4)$.
By Lemma \ref{lem:toruscalculus},
\beqs
 \mathcal E(\refop)
   = \widetilde{\varphi}(-\hsc^2\Delta)  \mathcal E
   (\refop)
    + O(\hsc^\infty)_{\Psi^{-\infty}_\hsc},
\eeqs
so that 
\beq\label{eq:LSW3E0}
\tif \quad E  :=  \widetilde{\varphi}(-\hsc^2\Delta)  \mathcal E(\refop)
\quad\text{ then }\quad \mathcal E(P^\sharp_\hsc) = E  + \residual.
\eeq
We now need to show that an estimate of the form \eqref{eq:lowenest} is satisfied. Since $\widetilde{\varphi}$ is compactly supported in $B(0, \Lambda_0^2)$, the definition of $E $ \eqref{eq:LSW3E0} and the same argument used to show the bound \eqref{eq:low:A:3} imply that 
\beqs
\Vert \partial^\alpha E  v \Vert_{L^2(\mathbb T^d_{R_\sharp})} \leq  \Lambda_0^{|\alpha|} \hsc^{-|\alpha|} \Vert  \mathcal E
(\refop v)
\Vert_{L^2(\mathbb T^d_{R_\sharp})} 
\eeqs
for all $v \in L^2(\mathbb T^d_{R_\sharp})$ and multi-indices $\alpha$.
Then, since $\mathcal E(\refop)
\in \Psi^{-\infty}_\hsc(\mathbb T^d_{R_\sharp})$, Part (iii) of Theorem \ref{thm:basicP} implies that there exists $C>0$ such that 
\beqs
\Vert \partial^\alpha E  v \Vert_{L^2(\mathbb T^d_{R_\sharp})} \leq C \Lambda_0^{|\alpha|} \hsc^{-|\alpha|} \Vert v \Vert_{L^2(\mathbb T^d_{R_\sharp})}  
\eeqs
for all $v \in L^2(\mathbb T^d_{R_\sharp})$ and multi-indices $\alpha$.
Therefore, the assumption in Point 2 of Theorem \ref{thm:mainbb} is satisfied with $D(\alpha):=\partial^\alpha$, $C_{\mathcal E}(\alpha, \hsc) := C\Lambda_0^{|\alpha|} \hsc^{-|\alpha|} $ and $\rho = 1$. 

The bound \eqref{eq:decLFLSW3} on $\vlow$ follows immediately from 
\eqref{eq:decLF5}.
The bound \eqref{eq:decHFLSW3} on $\vhigh$ follows from \eqref{eq:decHF} after using 
(i) Green's identity and Lemma \ref{lem:strong_elliptic} to obtain a bound on the $H^1$ semi-norm, and then (ii) Lemma \ref{lem:strong_elliptic}  and $H^2$ regularity 
by \cite[Theorem 4.18]{Mc:00}. 

\subsection{Sketch proof of Theorem \ref{thm:LSW4}}\label{sec:LSW4proof}

Theorem \ref{thm:LSW4} is based on the following result, which is Theorem \ref{thm:mainbb} specialised to the case when the regularity estimate inside the black box comes from a heat flow estimate.

\begin{corollary} \label{cor:heat}
Let $P_{\hsc}$ be a semiclassical black-box operator on $\mathcal H$ satisfying the polynomial resolvent estimate 
 (\ref{eq:res}) in $\subsetH \subset (0, \hsc_0]$. 
Assume further that (i)  $P^{\sharp}_{\hsc}\geq a(\hsc) >0$ for some $a(\hsc) >0$, and (ii) 
for some $\alpha$-family of black-box differentiation operators $(D(\alpha))_{\alpha \in \frak A}$ (Definition \ref{def:BBdiff}), there exists $\rho\in C^\infty(\mathbb T^d_{R_\sharp})$ equal to one near $B_{R_0}$ such that, for some family of subsets $I(\hbar, \alpha) \subset [0, +\infty)$, the following localised heat-flow estimate holds, 
\begin{equation} \label{eq:heatflow}
\N{\rho D(\alpha) \re^{-t P^\sharp_{\hsc}}}_{\mathcal H^{\sharp} \rightarrow \mathcal H^{\sharp} } \leq C(\alpha, t, \hbar ) \quad \tfa  \alpha \in \frak A , \;t\in I(\hbar, \alpha), \; \hsc \in \subsetH.
\end{equation}

Given $\e>0$, there exist $\hsc_1>0$, $C_j>0,\, j=1,2,3,$ and $\lambda>1$ such that 
for all $R_{\tr}>(1+\e)R_1$, $B_{R_{\tr}}\subset  \Omega_{\tr}\Subset \mathbb{R}^d$ with Lipschitz boundary, $\e<\theta<\pi/2-\e$, 
all $g\in \cH(\Otr)$, 
and all $\hsc \in \subsetH \cap (0,\hsc_1]$, the following holds.
The solution $v \in \mathcal D(\Otr)$ to 
\beqs
(P_{\hsc,\theta} -I)v = g \,\,\ton \Otr\quad \tand \quad v= 0 \,\,\ton \Gamma_\tr
\eeqs
exists and is unique and there exists $\vhigh\in \cD(\Otr),\vlow\in \cD^{\sharp,\infty}_\hsc$ and $\vres\in \cD^{\sharp,\infty}_\hsc$ such that
\beqs
v= \vhigh + \vlow + \vres
\eeqs
and $\vhigh,\vlow,$ and $\vres$ satisfy the following properties.
The component $\vhigh\in \cD(\Otr)$ 
satisfies \eqref{eq:decHF}.
There exist
$\RfarA, \RfarB, \RlocB, \RlocA$ with
$R_0<\RfarA<\RfarB<\RlocB<\RlocA<R_1$ 
such that $\vlow\in \cD^{\sharp,\infty}_\hsc$ decomposes as
\beqs
\vlow = \vAnear + \vAfar,
\eeqs
where $ \vAnear\in \mathcal{D}^\sharp$ is regular near the black box and negligible away from it, in the sense that
\begin{equation} \label{eq:decLF1heat}
\Vert D(\alpha) \vAnear \Vert_{\mathcal H^{\sharp}(B_{\RlocA}) } \leq C_2 
\left(\inf_{t\in I(\hsc,\alpha)} C(\alpha,\hsc,t) \re^{\Lambda t}\right)
\hsc^{-1-M} \Vert g \Vert_{\mathcal H(\Otr)} \,\, \tfa  \hsc \in \subsetH \cap (0,\hsc_1], \alpha \in \mathfrak A,
\end{equation}
and, for any $N,m>0$ there exists $C_{N,m}>0$ (independent of $\theta$) such that \eqref{eq:decLF2} holds
and  $ \vAfar\in \cD(\Otr)$ is entire away from the black box and negligible near it, in the sense that \eqref{eq:decLF3} holds
and, for any $N,m>0$ there exists $C_{N,m}>0$ (independent of $\theta$) such that \eqref{eq:decLF4} holds.
Finally, $\vres\in \cD^{\sharp,\infty}_\hsc$ is negligible in the sense that for any $N,m>0$ there exists $C_{N,m}>0$ (independent of $\theta$) such that \eqref{eq:vresidual} holds.
\end{corollary}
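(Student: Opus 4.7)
The plan is to apply Theorem \ref{thm:mainbb} with a $t$-parameterised family of choices of $\mathcal{E}$ corresponding to backward heat flow, and then exploit the fact that when $E_\infty = 0$ the decomposition is independent of $\mathcal{E}$. Fix $\alpha \in \mathfrak{A}$ and $t \in I(\hsc, \alpha)$. Since $\refop \geq a(\hsc) > 0$ by hypothesis, the spectrum of $\refop$ is contained in $[a(\hsc), \infty)$. I would construct $\mathcal{E}_t \in C_0(\mathbb{R})$ such that
\beqs
\mathcal{E}_t(\lambda) = \re^{-t\lambda} \tfor \lambda \geq a(\hsc)/2, \quad \tand\quad \mathcal{E}_t \text{ is nowhere zero on } [-\Lambda, \Lambda],
\eeqs
by interpolating smoothly on $(-\infty, a(\hsc)/2)$ between the exponential and some smooth compactly-supported profile bounded below by an absolute constant on $[-\Lambda, a(\hsc)/2]$. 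By the Borel functional calculus, $\mathcal{E}_t(\refop) = \re^{-t\refop}$ \emph{exactly}, i.e.,~\eqref{eq:Friday1} holds with $E = \re^{-t\refop}$ and $E_\infty = 0$.

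With this choice, the low-frequency estimate \eqref{eq:lowenest} reduces precisely to the localised heat-flow hypothesis \eqref{eq:heatflow}, with $C_{\mathcal{E}_t}(\alpha, \hsc) = C(\alpha, t, \hsc)$ and the same cut-off $\rho$. The construction of $\mathcal{E}_t$ below $a(\hsc)/2$ can be arranged so that
\beqs
\sup_{\lambda \in [-\Lambda, \Lambda]} \big|\mathcal{E}_t(\lambda)^{-1}\big| \lesssim \re^{t\Lambda},
\eeqs
since on $[a(\hsc)/2, \Lambda]$ the reciprocal is $\re^{t\lambda} \leq \re^{t\Lambda}$, and on the remaining interval the reciprocal is uniformly bounded by our choice of extension. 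Hence both assumptions of Theorem \ref{thm:mainbb} are satisfied.

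Applying Theorem \ref{thm:mainbb} with this $\mathcal{E}_t$ yields the decomposition $v = \vhigh + \vlow + \vres$ satisfying all of \eqref{eq:decHF}, \eqref{eq:decLF2}, \eqref{eq:decLF3}, \eqref{eq:decLF4}, and \eqref{eq:vresidual} (whose statements do not involve $\mathcal{E}$ at all), together with the $t$-dependent bound
\beqs
\|D(\alpha) \vAnear\|_{\cH^\sharp(B_{\RlocA})} \leq C_2\, C(\alpha, t, \hsc)\, \re^{t\Lambda}\, \hsc^{-M-1}\, \|g\|_{\cH(\Otr)}
\eeqs
from \eqref{eq:decLF1}. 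Because $E_\infty = 0$ in the present construction, the final clause of Theorem \ref{thm:mainbb} guarantees that $\vhigh, \vlow, \vAnear, \vAfar$ and all of the stated constants are independent of $\mathcal{E}_t$ and hence of $t$. Consequently, $t$ can be chosen to minimise the right-hand side, yielding \eqref{eq:decLF1heat}.

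The main obstacle will be ensuring that the construction of $\mathcal{E}_t$ is uniform enough that no hidden $t$- or $\hsc$-dependence slips into the $\re^{t\Lambda}$ factor beyond what is advertised. Since $\Lambda$ is fixed by Theorem \ref{thm:mainbb} and $a(\hsc)$ is given, it suffices to pick a single profile on $(-\infty, a(\hsc)/2]$ (scaled by $\re^{-ta(\hsc)/2}$ at the matching point) that is bounded below by an absolute constant times $\re^{-t\Lambda}$ on $[-\Lambda, a(\hsc)/2]$; such a profile is immediate to write down. Everything else -- existence, uniqueness of $v$, the bounds on $\vhigh$, $\vAfar$, the negligible parts of $\vAnear$, and $\vres$ -- follows verbatim from Theorem \ref{thm:mainbb}, since the bounds themselves do not mention $\mathcal{E}$.
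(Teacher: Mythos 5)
Your proposal is correct and follows essentially the same route as the paper: choose a $t$-parameterised family $\mathcal E_t$ agreeing with $\re^{-t\lambda}$ on the spectrum so that $\mathcal E_t(\refop)=\re^{-t\refop}$ exactly, identify \eqref{eq:lowenest} with the heat-flow hypothesis \eqref{eq:heatflow}, and use the $E_\infty=0$ clause of Theorem \ref{thm:mainbb} to make the decomposition $t$-independent and then take the infimum over $t$. The only (cosmetic) difference is that the paper simply takes $\mathcal E_t(\lambda):=\re^{-t|\lambda|}$, which is already in $C_0(\mathbb R)$, nowhere vanishing, and gives $\sup_{[-\Lambda,\Lambda]}|\mathcal E_t^{-1}|=\re^{t\Lambda}$ directly, avoiding your interpolation construction.
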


The proof of Corollary \ref{cor:heat} is identical to the proof of \cite[Corollary 4.1]{LSW4}; since the proof is so short, however, we include it for completeness.

\

\begin{proof}[Proof of Corollary \ref{cor:heat}]
For $\alpha \in \mathfrak A$ and $\hbar \in \subsetH$, let $t\in I(\hbar, \alpha)$, and $\mathcal E_t(\lambda) := \re^{-t|\lambda|}$. Since $P^{\sharp}_{\hsc}\geq a(\hsc) >0$, $\operatorname{Sp}P^{\sharp}_{\hsc} \subset [a(\hsc), \infty)$. Therefore, by 
Parts  \ref{it:fc6} and  \ref{it:fc5} of Theorem \ref{thm:fundfc}, $\re^{-t P^\sharp_{\hsc}} = \mathcal E_t(P^\sharp_{\hsc})$. Such an $\mathcal E_t$ is in $C_0(\mathbb R)$, never vanishes, and satisfies (\ref{eq:lowenest}) with $E_t := \mathcal E_t(P^\sharp_{\hsc})$ and $C_{\mathcal E_t}(\alpha, \hsc) := C(\alpha, \hbar, t)$
by   (\ref{eq:heatflow}). From Theorem \ref{thm:mainbb}, we therefore obtain the above decomposition $\vlow, \vAnear,  \vAfar,  \vhigh$.
Since $\mathcal E_t(P^\sharp_{\hsc}) = E_t$ (i.e., $E_\infty=0$), by the final part of Theorem \ref{thm:mainbb}, the decomposition is constructed independently of $\mathcal E_t$, and hence independently of $t$. The result then follows, with the infimum in $t$ in (\ref{eq:decLF1heat}) coming from (\ref{eq:decLF1}) and the fact that this estimate in valid for any $t\in I(\hbar, \alpha)$.
\end{proof}

\

Theorem \ref{thm:LSW4} is proved using Corollary \ref{cor:heat} with the following heat-flow estimate as \eqref{eq:heatflow}.

\begin{theorem}\mythmname{Heat equation estimate from \cite{EsMoZh:17}}\label{thm:heat}
Suppose that 
Assumption \ref{ass:LSW4} holds with $A_{\rm scat}$ and $c_{\rm scat}$ analytic in 
$B_{R_*}$ for some $R_0<R_*<R_{\rm scat}$.
Let $P^\sharp_\hsc$ denote the associated black-box reference operator on the torus (as described in \S\ref{subsec:bb}).

Given $\rho \in C^\infty_{\rm comp}(\Rea^d;[0,1])$ with $\supp\,\rho \subset B_{R_*}$,
there exists $C>0$ such that 
for all $t \in (0, 1]$ and for all $\EMZ\in[0,1]$
\beq\label{eq:EMZheat}
\norm{\rho\pa^\alpha \re^{t \hsc^{-2}P_\hbar^\sharp} }_{L^2\to L^2}
 \leq 
\exp(t^{-\EMZ})
  \abs{\alpha}!\,  C^{\abs{\alpha}}t^{(\EMZ-1)|\alpha|/2}.
\eeq
  \end{theorem}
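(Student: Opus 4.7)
The plan is to observe that the semigroup in question is in fact independent of $\hsc$, and then to appeal to the interior Gevrey-type parabolic regularity estimates of \cite{EsMoZh:17} for equations with analytic coefficients.

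First, set $L := -c_{\rm scat}^2 \nabla\cdot(A_{\rm scat}\nabla)$ and let $L^\sharp$ denote its black-box extension to the torus $\mathbb T^d_{R_\sharp}$, constructed via the reference-operator procedure of \S\ref{subsec:bb}. Since the black-box operator $P_\hbar$ from Lemma \ref{lem:obstacle} is purely of second order and has no lower-order terms, we have $P^\sharp_\hbar = \hsc^2 L^\sharp$, so via the Borel functional calculus (Theorem \ref{thm:fundfc}), $\re^{sP^\sharp_\hbar/\hsc^2} = \re^{sL^\sharp}$ on $\cH^\sharp$ for all real $s$ for which these operators are bounded. The estimate \eqref{eq:EMZheat} therefore reduces to the $\hsc$-independent parabolic regularity bound
\beqs
\N{\rho\,\pa^\alpha \re^{-tL^\sharp} f}_{L^2(\mathbb T^d_{R_\sharp})} \leq \exp(t^{-\EMZ})\,|\alpha|!\,C^{|\alpha|}\,t^{(\EMZ-1)|\alpha|/2}\,\N{f}_{L^2(\mathbb T^d_{R_\sharp})}
\eeqs
for the heat semigroup $\re^{-tL^\sharp}$, where $t\in(0,1]$ and $\EMZ\in[0,1]$.

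Next I would apply \cite{EsMoZh:17} to $w(t):=\re^{-tL^\sharp} f$, which solves the parabolic equation $(\pa_t + L^\sharp)w = 0$ with $w(0)=f$. Since $A_{\rm scat}$ and $c_{\rm scat}$ are analytic on the ball $B_{R_*}$ containing $\supp\rho$, \cite{EsMoZh:17} provides precisely the required scale of Gevrey-type interior estimates, in which $\EMZ$ interpolates between the classical parabolic smoothing bound at $\EMZ=0$ (yielding $t^{-|\alpha|/2}$) and the Gevrey-$1$ (analytic) regularity estimate at $\EMZ=1$ (yielding $|\alpha|!\,C^{|\alpha|}$ together with the characteristic $\exp(1/t)$ singularity as $t\to 0^+$). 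To translate the torus/black-box setting into the open-set-in-$\Rea^d$ framework of \cite{EsMoZh:17}, I would fix a cut-off $\widetilde\rho\in C^\infty_{\rm comp}(B_{R_*};[0,1])$ equal to one on a neighbourhood of $\supp\rho$, apply the estimate of \cite{EsMoZh:17} to $\widetilde\rho w$, and absorb the commutator $[L^\sharp,\widetilde\rho]w$ (which is supported in $B_{R_*}\setminus\supp\rho$, where only $C^\infty$ regularity is needed for standard parabolic smoothing) as a lower-order source term via Duhamel's formula.

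The main obstacle is the translation between the formulation of \cite{EsMoZh:17} and our torus/black-box setting. Since the desired estimate is entirely local on $\supp\rho\subset B_{R_*}$, and since both the periodic identifications of $\mathbb T^d_{R_\sharp}$ and the contents of the black box inside $B_{R_0}$ lie strictly away from $\supp\rho$, the localization via $\widetilde\rho$ renders these features invisible to the argument. A secondary technical point is the uniform dependence on the Gevrey parameter $\EMZ\in[0,1]$: this is present in \cite{EsMoZh:17} but must be extracted with care, since the classical ($\EMZ=0$) and analytic ($\EMZ=1$) endpoint bounds arise from conceptually distinct mechanisms (parabolic dissipation versus holomorphic extension of the semigroup), with the intermediate values of $\EMZ$ following by interpolation between the endpoints.
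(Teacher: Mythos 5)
Your opening reduction is correct and is exactly what the paper does: since $P^\sharp_\hbar=\hsc^2 L^\sharp$ with $L^\sharp$ independent of $\hsc$, the bound is an $\hsc$-free statement about the heat semigroup of $L^\sharp$ (reading the exponent as $\re^{-tL^\sharp}$, consistent with Corollary \ref{cor:heat}), and the paper likewise just extracts \eqref{eq:EMZheat} from \cite[Theorem 1.1 and Lemma 2.7]{EsMoZh:17}, deferring the details of the translation to \cite[Proof of Theorem 4.3]{LSW4}.

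There is, however, a genuine gap in your localization step. You assert that "the contents of the black box inside $B_{R_0}$ lie strictly away from $\supp\rho$", so that the estimate is purely interior and the obstacle is invisible. This is false: the hypothesis is $\supp\rho\subset B_{R_*}$ with $R_0<R_*$, so $\supp\rho$ may contain all of $B_{R_0}$ and hence a neighbourhood of the Dirichlet obstacle $\Omega_-$ --- and in the application (Corollary \ref{cor:heat} feeding into Theorem \ref{thm:mainbb}) the cut-off $\rho$ is taken \emph{equal to one} near $B_{R_0}$, since the whole point is to show that $\vAnear$ is analytic up to $\Gamma_D$. The semigroup here is the Dirichlet heat semigroup on $\Omega_+$ glued into the torus, so \eqref{eq:EMZheat} must be an analytic-regularity estimate \emph{up to the boundary} $\Gamma_D$; this is precisely why Assumption \ref{ass:LSW4} requires $\Omega_-$ to be analytic, and why the paper invokes \cite[Theorem 1.1]{EsMoZh:17}, a quantitative space-time analyticity result for parabolic problems with boundary conditions in analytic domains, rather than classical interior analytic hypoellipticity. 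Your interior-cutoff-plus-Duhamel scheme says nothing on the part of $\supp\rho$ meeting $\overline{\Omega_-}$, which is where the estimate is actually needed. (A secondary point: the family in $\EMZ$ is not obtained by abstract interpolation between conceptually distinct endpoint mechanisms --- note that the factor $|\alpha|!\,C^{|\alpha|}$ is present for every $\EMZ$, including $\EMZ=0$ --- but is read off from the explicit constants in the estimates of \cite{EsMoZh:17}.)
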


\bpf[References for the proof of Theorem \ref{thm:heat}]
Since the operator $\re^{t\hsc^{-2} P_\hbar^\sharp}$
is just the variable coefficient heat
operator for time $t$, the estimate \eqref{eq:EMZheat} can be
extracted from the heat equation bounds in \cite[Theorem 1.1 and Lemma
2.7]{EsMoZh:17}; see \cite[Proof of Theorem 4.3]{LSW4} for more
detail.
\epf

\

We therefore apply Corollary \ref{cor:heat} to the specific set up in \S\ref{sec:common}, noting that 
the heat-flow estimate (\ref{eq:heatflow}) is then satisfied with $D(\alpha) := \partial^\alpha$,
$$
C(\alpha, \hbar, t) := 
 \exp\big((\hsc^2t)^{-\EMZ}\big)
  \abs{\alpha}!\,  C^{\abs{\alpha}}  \big(\hsc^2 t)^{(\EMZ-1)|\alpha|/2},
 \quad\tand\quad I(\hbar, \alpha) := (0, \hbar ^{-2}]
$$
(the heat-flow given by the functional calculus, appearing in (\ref{eq:heatflow}), is indeed the solution of the heat equation; see, e.g., \cite[Theorem  VIII.7]{ReeSim72}).

To obtain Theorem \ref{thm:LSW4} from Corollary \ref{cor:heat}, we then only need to show that (i) $\vhigh$ satisfies \eqref{eq:decHFDir}, and (ii) $\vAnear$ satisfies \eqref{eq:decLFDir1}.
The proof of (i) is identical to the proof that $\vhigh$ in Theorem \ref{thm:LSW3} satisfies \eqref{eq:decHFLSW3}. 
For (ii), we carefully choose $t$ and $\tau$ as functions of $|\alpha|$ and $\hsc$ to obtain  \eqref{eq:decLFDir1}; 
for the details, see \cite[\S4.1]{LSW4}.

\appendix 

\section{Semiclassical pseudodifferential operators on the torus} \label{app:sct}

Recall that for $R_\sharp >0$,
$
 \mathbb T^d_{R_{\sharp}} := {\mathbb{R}^d}/{(2R_\sharp\mathbb{Z})^d}.
 $
This appendix reviews the material about semiclassical pseudodifferential operators on $\mathbb T^d_{R_{\sharp}}$
used in \S \ref{subsec:high}-\S\ref{subsec:low}, and appearing in Lemma  \ref{lem:funcloc2}, with our default references being  \cite{Zw:12} and \cite[Appendix E]{DyZw:19}. 

\paragraph{Semiclassical Sobolev spaces.} 
We consider functions or distributions on the torus as periodic
functions or distributions on $\mathbb R^d.$  To eliminate
confusion between Fourier series and integrals, for $f \in
L^2(\torus_{R_\sharp}^d)$  we define the Fourier coefficients for $j \in \ZZ^d$
\begin{equation} \label{eq:A:eigen}
\widehat f(j) := \int_{\torus_{R_\sharp}^d} f(x) \overline{e_j}(x) \, \rd x, \quad\text{ where }\quad
e_j(x) = (2R_\sharp)^{-d/2} \exp{\big(\ri\pi j\cdot  x/R_\sharp\big)}.
\end{equation}
The Fourier inversion formula 
and the action of the operator $(\hsc D)^\alpha$ on the torus are then, respectively, 
$$
f = \sum_{j \in \ZZ^d} \widehat f(j) e_j
\quad\tand\quad (\hsc D)^\alpha f  =\sum_{j \in \ZZ^d} (\hsc\pi j/R_\sharp)^\alpha \widehat f(j) e_j.
$$
We work on the spaces defined by the boundedness of these operators, namely
\beq
H_\hsc ^ m (\mathbb T_{R_\sharp}^d):= \Big\{ u\in L^2(\mathbb T_{R_\sharp}^d),
\; \langle j \rangle^m \widehat f(j) \in \ell^2(\ZZ^d) \Big\},
\qquad
\label{eq:Hhnorm}
\Vert u \Vert_{H_\hsc^m(\mathbb T_{R_\sharp}^d)} ^2 :=\sum
\langle \hsc j\rangle^{2m}
 \lvert \widehat f(j)\rvert^2,
\eeq
where $\langle j \rangle := (1+ |j|^2)^{1/2}$; see \cite[\S8.3]{Zw:12}, \cite[\S E.1.8]{DyZw:19}. 
In this appendix, we abbreviate $H_\hsc ^ m (\mathbb T_{R_\sharp}^d)$ to $H_\hsc ^ m$ and $L^2(\mathbb T_{R_\sharp}^d)$ to $L^2$.

Since  these spaces are defined for positive integer $m$ by boundedness of $(hD)^\alpha$ with
$\lvert \alpha \rvert =m$ (and can be extended to $m \in \RR$ by
interpolation and duality), they agree with localized versions of the
corresponding spaces on $\mathbb R^d$ defined by the semiclassical Fourier transform 
$$
\mathcal F_{\hsc}u(\xi) := \int_{\mathbb R^d} \exp\big( -\ri x \cdot \xi/\hsc\big)
u(x) \, \rd x
\quad\tand\quad
\Vert u \Vert_{H_\hsc^m(\mathbb R^d)} ^2 := (2\pi \hsc)^{-d} \int_{\mathbb R^d} \langle \xi \rangle^m  |\mathcal F_\hsc u(\xi)|^2 \, \rd \xi.
$$

\paragraph{Phase space.}
The set of all possible positions $x$ and momenta (i.e.~Fourier variables) $\xi$ is denoted by $T^*\mathbb T_{R_\sharp}^d$; this is known informally as ``phase space". Strictly, $T^*\mathbb T_{R_\sharp}^d :=\mathbb T_{R_\sharp}^d\times (\mathbb R^d)^*$, but 
for our purposes, we can consider $T^*\mathbb T_{R_\sharp}^d$ as $\{(x,\xi)
: \bx\in \mathbb T^d_{R_\sharp}, \xi\in\Rea^d\}$.  We also use the analogous
notation for $T^* \mathbb R^d$ where appropriate.

To deal uniformly near fiber-infinity with the behavior of
functions on phase space, we also consider the \emph{radial
  compactification} in the fibers of this space,
$
\Tbar^* \torus^d_{R_\sharp}:= \mathbb T^d \times B^d,
$
where $B^d$ denotes the closed unit ball, considered as the closure of the
image of $\mathbb R^d$ under the radial compactification map 
$\RC: \xi \mapsto \xi/(1+\langle \xi
\rangle);$
see \cite[\S E.1.3]{DyZw:19}.
Near the boundary of the
ball, $\lvert \xi\rvert^{-1}\circ \RC^{-1}$ is a smooth function, vanishing to
first order at the boundary, with $(\lvert \xi\rvert^{-1}\circ \RC^{-1}, \widehat\xi\circ\RC^{-1})$
thus furnishing local coordinates on the ball near its boundary.  The boundary of the
ball should be considered as a sphere at infinity consisting of all
possible \emph{directions} of the momentum variable.  Where
appropriate (e.g., in dealing with finite values of $\xi$ only), we abuse notation by dropping the composition with $\RC$ from our
notation and simply identifying $\mathbb R^d$ with the interior of $B^d$.

\paragraph{Symbols, quantisation, and semiclassical pseudodifferential operators.} 

A symbol on $\mathbb R^d$ is a function on $T^*\mathbb{R}^d$ that is also allowed to depend on $\hsc$, and thus can be considered as an $\hsc$-dependent family of functions.
Such a family $a=(a_\hsc)_{0<\hsc\leq\hsc_0}$, with $a_\hsc \in C^\infty({\mathbb R^d})$, 
is a \emph{symbol
of order $m$} on the $\mathbb R^d$, written as $a\in S^m(\mathbb R^d)$, if for any multi-indices $\alpha, \beta$
\beqs
| \partial_x^\alpha \partial^\beta_\xi a(x,\xi) | \leq C_{\alpha, \beta} \langle \xi \rangle^{m -|\beta|} \quad\tfa (x,\xi) \in T^*\mathbb R^d \text{ and for all } 0<\hsc\leq \hsc_0,
\eeqs
where $C_{\alpha, \beta}$ does not depend on $\hsc$; see \cite[p.~207]{Zw:12}, \cite[\S E.1.2]{DyZw:19}. 

For $a \in S^m(\mathbb R^d)$, we define the \emph{semiclassical quantisation} of $a$ on $\mathbb R^d$, denoted by $\operatorname{Op}_{\hsc}(a)$
\beq \label{Oph}
\big(\operatorname{Op}_{\hsc}(a) v\big)(x) := (2\pi \hsc)^{-d} \int_{\mathbb R^d} \int_{ \Rea^d} 
\exp\big(\ri (x-y)\cdot\xi/\hsc\big)\,
a(x,\xi) v(y) \,\rd y  \rd \xi;
\eeq
\cite[\S4.1]{Zw:12} \cite[Page 543]{DyZw:19}. The integral in
\eqref{Oph} need not converge, and can be understood \emph{either} as
an oscillatory integral in the sense of \cite[\S3.6]{Zw:12},
\cite[\S7.8]{Ho:83}, \emph{or} as an iterated integral, with the $y$
integration performed first; see \cite[Page 543]{DyZw:19}.  It can be
shown that for any symbol $a,$ $\Op_\hsc(a)$ preserves Schwartz
functions, and extends by duality to act on tempered distributions
\cite[\S4.4]{Zw:12}

We use below that if $a=a(\xi)$
depends only on $\xi$, then
$
\Op_{\hsc}(a) = \F_\hsc^{-1} M_a \F_{\hsc},
$
where $M_a$ denotes multiplication by $a$; i.e., in this case $\Op_{\hsc}(a)$
is just a Fourier multiplier on $\mathbb R^d.$

We now return to considering the torus:~if $a(x,\xi) \in S^m(\mathbb R^d)$ and
is periodic, and if $v$ is a distribution on the torus, we can view
$v$ as a periodic (hence, tempered) distribution on $\mathbb R^d,$ and
define
$$
\big(\operatorname{Op}^{\torus^d_{R_\sharp}}_{\hsc}(a) v\big)=\big(\operatorname{Op}_{\hsc}(a) v\big),
$$
since the right side is again periodic; for details see, e.g., \cite[\S 5.3.1]{Zw:12}.  

If $A$ can be written in the form above, i.\,e.\ $A = \operatorname{Op}^{\torus^d_{R_\sharp}}_{\hsc}(a)$ with $a\in S^m$, we say that $A$ is a \emph{semiclassical pseudodifferential operator of order $m$} on the torus and
we write $A \in \Psi_{\hsc}^m(\mathbb T_{R_\sharp}^d)$; furthermore that we often abbreviate $ \Psi_{\hsc}^m(\mathbb T_{R_\sharp}^d)$ to $\Psi_{\hsc}^m$ in this Appendix. We use the notation $a \in \hsc^l S^m$  if $\hsc^{-l} a \in S^m$; similarly 
$A \in \hsc^l \Psi_\hsc^m$ if 
$\hsc^{-l}A \in \Psi_\hsc^m$.

\begin{theorem}\mythmname{Composition and mapping properties of
semiclassical pseudodifferential operators \cite[Theorem 8.10]{Zw:12}, \cite[Proposition E.17 and Proposition E.19]{DyZw:19}}\label{thm:basicP} If $A\in \Psi_{\hsc}^{m_1}$ and $B  \in \Psi_{\hsc}^{m_2}$, then

(i) $AB \in \Psi_{\hsc}^{m_1+m_2}$,

(ii) $[A,B] \in \hsc\Psi_{\hsc}^{m_1+m_2-1}$,

(iii) For any $s \in \mathbb R$, $A$ is bounded uniformly in $\hsc$ as an operator from $H_\hsc^s$ to $H_\hsc^{s-m_1}$.
\end{theorem}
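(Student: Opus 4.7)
The three statements are standard facts in the semiclassical pseudodifferential calculus, and my plan is to prove them by reducing from the torus $\mathbb{T}^d_{R_\sharp}$ to $\mathbb{R}^d$ via the periodicity convention adopted above, and then invoking the corresponding classical arguments. Concretely, because $\Op^{\mathbb T^d_{R_\sharp}}_\hsc(a)$ is defined by applying $\Op_\hsc(a)$ to a periodic distribution, and the resulting distribution is again periodic, any identity for the $\mathbb{R}^d$ calculus that holds modulo lower-order symbols immediately descends to one on the torus. Thus the entire proof can be organised as: (1) prove the three properties for $\Op_\hsc(a)$ on $\mathbb{R}^d$; (2) check that periodicity of $a$ is preserved under the constructions used; (3) transfer the result to $\Op^{\mathbb T^d_{R_\sharp}}_\hsc(a)$.

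For part (i), the key step is to establish the composition formula: if $A=\Op_\hsc(a)$ and $B=\Op_\hsc(b)$ with $a\in S^{m_1}$, $b\in S^{m_2}$, then $AB=\Op_\hsc(c)$ for a symbol $c\in S^{m_1+m_2}$ with asymptotic expansion
\begin{equation*}
c(x,\xi)\sim\sum_{\alpha}\frac{(-\ri\hsc)^{|\alpha|}}{\alpha!}\,\partial_\xi^\alpha a(x,\xi)\,\partial_x^\alpha b(x,\xi).
\end{equation*}
This is obtained by writing $AB$ as an iterated oscillatory integral, applying stationary phase (or, equivalently, the Fourier representation combined with Taylor expansion and a Borel summation to realise the formal series as an honest symbol), and bounding the remainder. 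The periodicity of $a$ and $b$ in $x$ is preserved termwise, so the resulting $c$ is again periodic, giving $AB\in\Psi^{m_1+m_2}_\hsc(\mathbb T^d_{R_\sharp})$.

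Part (ii) is an immediate consequence of the composition expansion in (i): the $|\alpha|=0$ term of $c$ is $ab$, which coincides for $AB$ and $BA$, so it cancels in the commutator. The next term is proportional to $\hsc\{a,b\}$, and hence $[A,B]=\Op^{\mathbb T^d_{R_\sharp}}_\hsc(c')$ with $c'\in\hsc S^{m_1+m_2-1}$. No new analytic input is needed beyond the uniformity of the remainder estimates in (i).

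Part (iii) follows the standard strategy: first reduce to the case $m_1=0$ and $s=0$ by noting that multiplication by $\langle\hsc D\rangle^t$ (on the torus, a Fourier multiplier in $j\in\mathbb Z^d$) is an isomorphism $H^s_\hsc\to H^{s-t}_\hsc$ and lies in $\Psi^t_\hsc$, so conjugating we may assume the order is zero. For the zero-order case I would apply the classical $T^*T$ argument: if $A\in\Psi^0_\hsc$ then $A^*A\in\Psi^0_\hsc$ with symbol bounded on phase space, and the Calder\'on--Vaillancourt theorem (equivalently, a Schur test applied to the Schwartz kernel, which is concentrated near the diagonal at scale $\hsc$ by repeated integration by parts in the oscillatory integral) yields uniform $L^2$-boundedness; on the torus this is implemented using Fourier series in place of the Fourier transform, but the kernel estimates are identical. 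The hardest technical step is the symbol-level remainder control in the composition formula for (i), since one has to combine Taylor expansion with the weight $\langle\xi\rangle^{m_i-|\beta|}$; however, this is completely standard and carried out in detail in \cite[Theorem 4.18 and Theorem 8.10]{Zw:12} and \cite[Propositions E.17, E.19]{DyZw:19}, which we may cite directly.
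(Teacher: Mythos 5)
The paper does not prove this result at all: it is quoted directly from the cited references (\cite[Theorem 8.10]{Zw:12}, \cite[Propositions E.17 and E.19]{DyZw:19}), so there is no in-paper argument to compare against. Your outline — composition via the stationary-phase expansion with periodicity preserved termwise, the commutator from cancellation of the leading term, and $L^2$-boundedness by reduction to order zero plus Calder\'on--Vaillancourt — is a correct summary of exactly those standard proofs, so it is consistent with the paper's (citation-only) treatment.
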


\paragraph{Residual class.} 
We say that $A =O(\hsc^\infty)_{\Psi^{-\infty}_\hsc}$ if, for any $s>0$ and $N\geq 1$, there exists $C_{s,N}>0$ such that
\beq \label{eq:residual}
\Vert A \Vert_{H_\hsc^{-s} \rightarrow H_\hsc^{s}} \leq C_{N,s} \hsc^N;
\eeq
i.e.~$A\in \Psi_\hsc^{-\infty}$ and furthermore all of its operator norms are bounded by any algebraic power of $\hsc$.

\paragraph{Principal symbol $\sigma_{\hsc}$.}
Let the quotient space $ S^m/\hsc S^{m-1}$ be defined by identifying elements 
of  $S^m$ that differ only by an element of $\hsc S^{m-1}$. 
For any $m$, there is a linear, surjective map
$
\sigma^m_{\hsc}:\Psi_\hsc ^m \to S^m/\hsc S^{m-1},
$
called the \emph{principal symbol map}, 
such that, for $a\in S^m$,
\beq\label{eq:symbolone}
\sigma_\hsc^m\big(\Op^{\torus^d_{R_\sharp}}_\hsc(a)\big) = a \quad\text{ mod } \hsc S^{m-1};
\eeq
see \cite[Page 213]{Zw:12}, \cite[Proposition E.14]{DyZw:19} (observe that \eqref{eq:symbolone} implies that 
$\operatorname{ker}(\sigma^m_{\hsc}) = \hsc\Psi_\hsc ^{m-1}$).

When applying the map $\sigma^m_{\hsc}$ to 
elements of $\Psi^m_\hsc$, we denote it by $\sigma_{\hsc}$ (i.e.~we omit the $m$ dependence) and we use $\sigma_{\hsc}(A)$ to denote one of the representatives
in $S^m$ (with the results we use then independent of the choice of representative).

\paragraph{Operator wavefront set $\WFh$.}  
We say that $(x_0,\zeta_0) \in {\Tbar}^*\mathbb T_{R_\sharp}^d$ is \emph{not}
in the \emph{semiclassical operator wavefront set} of
$A = \Optorus(a) \in \Psi_{\hsc}^m$, denoted by
$\WFh A$, if there exists a neighbourhood $U$ of
$(x_0,\zeta_0)$ such that for all multi-indices $\alpha, \beta$ and all
$N\geq 1$ there exists $C_{\alpha,\beta,U,N}>0$ (independent of
$\hsc$) such that, for all $0<\hsc\leq \hsc_0$,
\beq\label{eq:microsupport} |\partial_x^\alpha \partial_\xi^\beta
a(x,\xi)| \leq C_{\alpha, \beta, U, N} \hsc^N \langle \xi \rangle^{-N}\quad\tfa (x,\RC(\xi))\in U.
\eeq
For $\zeta_0=\RC(\xi_0)$ in the interior of $B^d,$ the factor $\ang{\xi}^{-N}$ is moot, and the definition
merely says that
outside its semiclassical operator wavefront set an operator
is the quantization of a symbol that vanishes faster than any algebraic power of $\hsc$; see \cite[Page
194]{Zw:12}, \cite[Definition E.27]{DyZw:19}.  For $\zeta_0\in \partial
B^d=S^{d-1},$ by contrast, the definition says that the symbol decays rapidly
in a conic neighborhood of the direction $\zeta_0,$ in addition to decaying in $\hsc.$

Properties of the
semiclassical operator wavefront set that we use in \S
\ref{subsec:high} and \S\ref{subsec:low} are 
\begin{equation} \label{eq:WFempt}
\WFh  A = \emptyset \quad\text{ if and only if }\quad A = O(\hsc^\infty)_{\Psi^{-\infty}_\hsc},
\end{equation}
(see  \cite[E.2.3]{DyZw:19}),
\beq \label{eq:WFprod} \WFh
(AB) \subset \WFh A \cap
\WFh B, \eeq (see \cite[\S 8.4]{Zw:12},
\cite[E.2.5]{DyZw:19}), 
\beq \label{eq:WFdis} \WFh
(A) \cap \WFh (B) = \emptyset \quad\text{ implies that } \quad AB =
O(\hsc^\infty)_{\Psi^{-\infty}_\hsc}, 
\eeq
 (as a consequence of \eqref{eq:WFempt} and \eqref{eq:WFprod}), and \beq\label{eq:support}
\WFh\big( \Op_\hsc(a)\big) \subset \supp \,a \eeq
(since
$(\supp \, a)^c \subset (\WFh( \Op_\hsc(a)))^c$ by
\eqref{eq:microsupport}).

\paragraph{Ellipticity.} 
We  say that $B\in \Psi_\hsc^m$ is \emph{elliptic} at $(x_0,
  \zeta_0) \in  \Tbar^*\mathbb T_{R_\sharp}^d$ if there exists a
  neighborhood $U$ of $(x_0, \zeta_0)$ and $c>0$, independent of $\hsc$, such that 
\beqs
\langle \xi \rangle^{-m} \big|\sigma_\hsc(B)(x,\xi)\big| \geq c \quad \tfa (x,\RC(\xi))\in U  \text{ and for all } 0<\hsc\leq \hsc_0.
\eeqs

A key feature of elliptic operators is that they are microlocally
invertible; this is reflected in the following result.
\newcounter{fnnumber}
\begin{theorem}\mythmname{Elliptic parametrix \cite[Proposition E.32]{DyZw:19}} \footnote{We highlight that working in a compact manifold
allows us to dispense with the proper-support assumption appearing in \cite[\S4]{LSW3}, \cite[Proposition E.32, Theorem E.33]{DyZw:19}.\label{fn:comp}} \setcounter{fnnumber}{\thefootnote}
 \label{thm:para} 
Let $A \in \Psi_\hsc^{\ell}(\mathbb T_{R_\sharp}^d)$ and $B \in \Psi_\hsc^{m}(\mathbb T_{R_\sharp}^d)$ be such 
that $B$ is elliptic on $\operatorname{WF_\hsc}(A)$.
Then there exist $S, S' \in \Psi_\hsc^{\ell-m}(\mathbb T_{R_\sharp}^d)$ such that
$$
A = BS + O(\hsc^\infty)_{\Psi^{-\infty}_\hsc} = S'B + O(\hsc^\infty)_{\Psi^{-\infty}_\hsc},
$$ 
with
$
\WFh  S \subset \WFh  A$  and 
$\WFh  S' \subset \WFh  A.
$
\end{theorem}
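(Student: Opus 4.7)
The plan is to construct $S$ by the standard iterative symbol-inversion scheme, with a completely symmetric argument producing $S'$. Since $B$ is elliptic on $\WFh A$, I would first choose a closed neighborhood $K$ of $\WFh A$ inside the elliptic set of $B$, so that $|\sigma_\hsc(B)(x,\xi)| \geq c\langle\xi\rangle^m$ on $K$. Pick $\chi_0 \in S^0(\mathbb{T}^d_{R_\sharp})$ with $\chi_0 \equiv 1$ on a neighborhood of $\WFh A$ and $\supp \chi_0 \subset K$; then $s_0 := \chi_0 \sigma_\hsc(A)/\sigma_\hsc(B)$ defines an element of $S^{\ell-m}$ (extended by zero), and I set $S_0 := \Optorus(s_0)$. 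By the symbolic composition rule (Theorem \ref{thm:basicP}(i)) one has
\[
B S_0 = \Optorus(\chi_0 \sigma_\hsc(A)) + \hsc T_1, \qquad T_1 \in \Psi_\hsc^{\ell-1}.
\]
Because $\supp(1-\chi_0)$ is disjoint from $\WFh A$, the symbol of $A$ satisfies the $\hsc^\infty \langle\xi\rangle^{-\infty}$ decay condition \eqref{eq:microsupport} there, so $A - \Optorus(\chi_0\sigma_\hsc(A)) = O(\hsc^\infty)_{\Psi_\hsc^{-\infty}}$. Hence $BS_0 = A + \hsc E_1$ modulo $O(\hsc^\infty)_{\Psi_\hsc^{-\infty}}$, and by tracking supports in the asymptotic composition formula, $\WFh E_1 \subset \supp \chi_0$.

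Next I iterate: inductively, given $S_0,\dots,S_{j-1}$ with $B\sum_{i<j}\hsc^i S_i = A + \hsc^j E_j$ modulo $O(\hsc^\infty)$, where $E_j \in \Psi_\hsc^{\ell-j}$ and $\WFh E_j \subset K$, I choose a cutoff $\chi_j$ equal to one near $\WFh E_j$ and supported inside a slightly smaller neighborhood of $\WFh A$ (still inside $K$), and set $S_j := -\Optorus(\chi_j \sigma_\hsc(E_j)/\sigma_\hsc(B)) \in \Psi_\hsc^{\ell-m-j}$. The same symbolic-composition argument shows $BS_j = -E_j + \hsc E_{j+1}'$ mod $O(\hsc^\infty)$, eliminating the leading error and producing a new remainder $E_{j+1} \in \Psi_\hsc^{\ell-j-1}$ with $\WFh E_{j+1} \subset \supp \chi_j \subset K$. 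Borel summation in the semiclassical calculus (see \cite[Theorem 4.15]{Zw:12}) then yields $S \in \Psi_\hsc^{\ell-m}$ with $S - \sum_{i<J}\hsc^i S_i \in \hsc^J \Psi_\hsc^{\ell-m-J}$ for every $J$; consequently $BS - A$ lies in $\hsc^J \Psi_\hsc^{\ell-J}$ for every $J$, i.e.\ $BS = A + O(\hsc^\infty)_{\Psi_\hsc^{-\infty}}$.

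The wavefront-set bound $\WFh S \subset \WFh A$ is obtained by applying the construction not with a single $K$ but with a countable nested sequence $K_1 \supset K_2 \supset \cdots$ of compact neighborhoods of $\WFh A$ shrinking to $\WFh A$ itself (possible since $\WFh A$ is closed and the compactness of $\mathbb{T}^d_{R_\sharp}$ means the cosphere boundary is compact too); arranging $\WFh S_j \subset K_j$ at every iteration and Borel-summing gives $\WFh S \subset \bigcap_j K_j = \WFh A$. The operator $S'$ is produced by the mirror-image procedure, expanding $S'B$ instead of $BS$ and using the right composition formula. The main technical hassle, as in all parametrix arguments, is verifying that each iteration's error strictly improves in the symbol order \emph{and} has wavefront set contained in the previous one; once the support bookkeeping for the cutoffs $\chi_j$ is set up correctly this is automatic, and in the compact setting of the torus there are no issues with proper support (cf.~footnote \ref{fn:comp}).
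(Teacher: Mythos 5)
The paper does not actually prove this statement; it is quoted from \cite[Proposition E.32]{DyZw:19}, and your iterative symbol-inversion scheme is precisely the standard proof of that cited result, so the construction of the parametrix modulo $O(\hsc^\infty)_{\Psi^{-\infty}_\hsc}$ is fine in outline. (One small imprecision: $A-\Optorus(\chi_0\sigma_\hsc(A))$ is only $O(\hsc)\Psi_\hsc^{\ell-1}$ if $\sigma_\hsc(A)$ denotes the principal symbol, since that determines $A$ only modulo $\hsc\Psi_\hsc^{\ell-1}$; you should divide the \emph{full} symbol of $A$ by $\sigma_\hsc(B)$, or note that the discrepancy is absorbed into $\hsc E_1$ --- which your conclusion implicitly does.)

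The genuine gap is in your argument for $\WFh S\subset \WFh A$. Borel summation of $S\sim\sum_j\hsc^jS_j$ gives $\WFh S\subset\overline{\bigcup_j\WFh S_j}$, \emph{not} $\bigcap_j K_j$: to show a point $(x_0,\xi_0)\notin\WFh A$ is outside $\WFh S$ you must show the symbol of $S$ is $O(\hsc^\infty\langle\xi\rangle^{-\infty})$ near that point, and even if $(x_0,\xi_0)\notin K_J$ for some $J$, the finitely many low-order terms $s_0,\dots,s_{J-1}$ are supported in the larger sets $K_0,\dots,K_{J-1}$, which may well contain $(x_0,\xi_0)$; the leading term $s_0$ alone then contributes an $O(1)$ symbol there, so the nested-neighborhood bookkeeping does not close. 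The correct (and simpler) route is to observe that the wavefront containment is automatic term by term: $s_0=\chi_0\,a/\sigma_\hsc(B)$ with $a$ the full symbol of $A$ is $O(\hsc^\infty\langle\xi\rangle^{-\infty})$ wherever $a$ is (the denominator being bounded below on $\supp\chi_0$), so $\WFh S_0\subset\WFh A$; then inductively $\WFh E_{j+1}\subset\WFh(BS_j)\cup\WFh E_j\subset\WFh S_j\cup\WFh E_j\subset\WFh A$ by \eqref{eq:WFprod}, hence $\WFh S_j\subset\WFh A$ for all $j$ with a single cutoff $\chi$, and the Borel sum inherits $\WFh S\subset\WFh A$. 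With that replacement your proof matches the cited one; the mirror construction for $S'$ is then unproblematic.
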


\paragraph{Functional Calculus.}

The main properties of the functional calculus in the black-box
  context are recalled in
  \S\ref{sec:BBFC}; here we record a simple result that we need about
  functions of the flat Laplacian.
  
For $f$ a Borel function, the operator $f(-\hsc^2 \Lap)$ is defined on
smooth functions on the torus (and indeed on distributions if $f$ has
polynomial growth) by the functional calculus for the flat Laplacian,
i.e., by the Fourier multiplier
\begin{equation} \label{eq:A:mult}
f(-\hsc^2 \Lap) v=\sum_{j \in \ZZ^d} \widehat v(j) f(\hsc^2 \lvert j\rvert ^2 \pi^2/R_{\sharp}^2) e_j.
\end{equation}
The following lemma shows that $f(-\hsc^2 \Lap)$ is precisely
the quantization of $f(\lvert\xi\rvert^2)$; since our quantization
procedure was defined in terms of Fourier transform rather than
Fourier series, this is not obvious a priori.

\begin{lemma}\mythmname{\cite[Lemma A.3]{LSW4}}\label{lem:toruscalculus}
  For $f \in S^m(\RR^1)$ (i.e., $f$ is a function of only one variable),
  $
f(-\hsc^2 \Lap) = \Op_{\hsc} f(\lvert\xi\rvert^2).
  $
  \end{lemma}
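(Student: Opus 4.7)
The plan is to verify the identity on the orthonormal basis $\{e_j\}_{j\in\mathbb{Z}^d}$ of $L^2(\mathbb{T}^d_{R_\sharp})$ and extend by linearity/continuity, exploiting the fact that a symbol depending only on $\xi$ quantizes to a Fourier multiplier.

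First I would check that $f(|\xi|^2)$ really is an admissible (semiclassical) symbol, so that $\Op_\hsc(f(|\xi|^2))$ is defined as an operator on $\mathcal{S}'(\mathbb{R}^d)$ in the sense of \eqref{Oph}. Since $f\in S^m(\mathbb{R}^1)$ is smooth, the chain rule gives $|\partial_\xi^\beta f(|\xi|^2)|\leq C_\beta\langle\xi\rangle^{2m-|\beta|}$, so $f(|\xi|^2)\in S^{2m}(\mathbb{R}^d)$ as a $\xi$-only symbol. By the discussion after \eqref{Oph}, this implies $\Op_\hsc(f(|\xi|^2))=\mathcal{F}_\hsc^{-1} M_{f(|\xi|^2)}\mathcal{F}_\hsc$ acts on tempered distributions; periodic distributions are tempered, so the action on $v\in\mathcal{D}'(\mathbb{T}^d_{R_\sharp})$ is well defined via this Fourier multiplier realisation.

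Next I would compute the action on a single Fourier mode. Treating $e_j(x)=(2R_\sharp)^{-d/2}\exp(i\pi j\cdot x/R_\sharp)$ as a tempered distribution, its semiclassical Fourier transform is
\[
\mathcal{F}_\hsc e_j = (2R_\sharp)^{-d/2}(2\pi\hsc)^d \,\delta_{\xi-\hsc\pi j/R_\sharp}.
\]
Applying the multiplier $f(|\xi|^2)$ and then $\mathcal{F}_\hsc^{-1}$, one obtains
\[
\Op_\hsc(f(|\xi|^2))\,e_j \;=\; f\!\left(\hsc^2 |j|^2 \pi^2/R_\sharp^2\right) e_j,
\]
which is exactly the eigenvalue assigned to $e_j$ by the functional-calculus definition \eqref{eq:A:mult} of $f(-\hsc^2\Delta)$.

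Finally I would pass from basis functions to arbitrary $v$. For $v\in C^\infty(\mathbb{T}^d_{R_\sharp})$, the Fourier series $v=\sum_j \hat{v}(j)e_j$ converges in $\mathcal{D}'$ (in fact in every $H^s$), and since $f(|\xi|^2)$ grows at most polynomially, the series $\sum_j \hat{v}(j) f(\hsc^2|j|^2\pi^2/R_\sharp^2)e_j$ converges in $\mathcal{D}'$ as well. The operator $\Op_\hsc(f(|\xi|^2))$, being a Fourier multiplier on $\mathcal{S}'(\mathbb{R}^d)$, is continuous on $\mathcal{D}'(\mathbb{T}^d_{R_\sharp})$, so it commutes with the convergent series; the per-mode identity of the previous step then gives the conclusion on smooth $v$, and hence on all periodic distributions by density.

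The main obstacle I anticipate is the purely bookkeeping step of justifying Fourier multiplier identities on periodic tempered distributions: one must be careful with normalisation constants between the semiclassical $\mathcal{F}_\hsc$ (an integral transform with a $1/\hsc$ scaling) and the Fourier-series coefficients $\hat{v}(j)$ (which use $e_j$ normalised on a box of side $2R_\sharp$). This is a routine but error-prone calculation; once the delta-function identity for $\mathcal{F}_\hsc e_j$ is pinned down with the correct constants, everything else falls out mechanically.
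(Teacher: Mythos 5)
Your proposal is correct and follows essentially the same route as the cited proof: since the symbol depends only on $\xi$, $\Op_\hsc(f(|\xi|^2))$ is the Fourier multiplier $\mathcal F_\hsc^{-1}M_{f(|\xi|^2)}\mathcal F_\hsc$ on tempered (hence on periodic) distributions, and evaluating it on each mode $e_j$ gives the eigenvalue $f(\hsc^2|j|^2\pi^2/R_\sharp^2)$ matching \eqref{eq:A:mult}, after which one sums the Fourier series. Your preliminary check that $f(|\xi|^2)\in S^{2m}(\Rea^d)$ and your bookkeeping of the normalisation constants in $\mathcal F_\hsc e_j$ are both accurate, so the argument is complete.
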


\section*{Acknowledgements}

We thank the referee for their constructive comments on the paper. JG acknowledges support from EPSRC grants EP/V001760/1 and EP/V051636/1.
EAS acknowledges support from EPSRC grant EP/1025995/1. 
JW was partially supported
by Simons Foundation grant 631302, NSF grant DMS--2054424, and a
Simons Fellowship.

\footnotesize{
\bibliographystyle{plain}
\bibliography{biblio_combined_sncwadditions}
}

\end{document}